\newcommand{\vre}{\varepsilon}
\newcommand\RR{\mathbb R}
\newcommand\ZZ{\mathbb Z}
\newcommand\supp{\operatorname{supp}}
\newcommand\vol{\operatorname{vol}}
\newcommand\norm[1]{\left\|#1\right\|}
\newcommand\abs[1]{\left|#1\right|}
\newcommand\set[1]{\left\{{#1}\right\}}
\newtheorem{thm}{Theorem}[section]
\newtheorem{lem}[thm]{Lemma}
\newtheorem{prop}[thm]{Proposition}
\newtheorem{defi}[thm]{Definition}
\newtheorem{rem}[thm]{Remark}
\numberwithin{equation}{section}
\title{Ergodic theory and the duality principle on homogeneous spaces}
\date{\today}
\author{Alexander Gorodnik}
\address{School of Mathematics \\ University of Bristol \\ Bristol, U.K.}
\email{a.gorodnik@bristol.ac.uk}
\thanks{The first author was supported in part by EPSRC, ERC, and RCUK}
\author{Amos Nevo}
\address{Department of Mathematics, Technion, Israel}
\email{anevo@tx.technion.ac.il}
\thanks{The second author was supported by ISF grant}
\begin{document}
\begin{abstract}
We prove mean and pointwise ergodic theorems for the action of a discrete lattice subgroup 
 in a connected algebraic Lie group 
  on infinite volume
homogeneous algebraic varieties. 
Under suitable necessary conditions, our results are quantitative, namely we establish 
rates of convergence in the mean and pointwise ergodic theorems, which can be estimated explicitly.  
Our results give a precise and in most cases optimal quantitative form to the duality principle governing dynamics on homogeneous spaces. We illustrate their scope in a variety of equidistribution problems. 
\end{abstract}

\maketitle
{\small \tableofcontents}

\section{Introduction}\label{sec:intro}
\subsection{Ergodic theory and the duality principle on homogeneous spaces}

The classical framework of ergodic theory usually includes a 
compact space $X$ equipped with finite measure and an action of a countable group $\Gamma$
which preserves this measure. In order to study the distribution of the orbits
$x \Gamma$ in $X$, one chooses an increasing sequence $\{\Gamma_t\}_{t\ge t_0}$ of finite subsets
of $\Gamma$ and considers the averaging operators
$$
\pi_{X}(\lambda_t)\phi(x)=\frac{1}{|\Gamma_t|}\sum_{\gamma\in \Gamma_t} \phi(x\gamma),
$$
defined for functions $\phi$ on $X$. One of the fundamental problems in ergodic theory
is to understand the asymptotic behaviour of $\pi_{X}(\lambda_t)\phi$ as $t\to \infty$.
This question has been studied extensively when $\Gamma$ is an amenable group and the averages are supported on F\o lner sets (see \cite{n1} for a survey, and \cite{AAB} for a detailed recent discussion). 
Subsequently pointwise ergodic theorems were established for some classes of nonamenable groups, including 
 lattice subgroups in semisimple algebraic groups, with the averages supported on norm balls (see \cite{gn} for a comprehensive discussion).

The situation when $X$ is a non-compact locally compact space equipped with an infinite Radon measure
is also of great interest, but it involves new highly nontrivial challenges.
Indeed, in this case, the averages $\pi_{X}(\lambda_t)\phi$
considered above typically converge to zero. In order
to obtain significant information about the distribution of orbits, it is natural to introduce the (normalized) orbit-sampling operators
$$
\pi_{X}(\lambda_t)\phi(x)=\frac{1}{V(t)}\sum_{\gamma\in \Gamma_t} \phi(x\gamma),
$$
where $V(t)$ is a suitable normalization, which one would like to choose
so as to guarantee that the limit as $t\to\infty$ exists and is nontrivial. 
While it is well-known \cite[Th.~2.4.2]{Aa})) that 
for an action of a single transformation no such normalization exists,
we shall achieve this objective for an extensive family of actions of lattices on infinite-volume homogeneous spaces, and proceed to develop 
a systematic and quantitative ergodic theory for the operators $\pi_X(\lambda_t)$. 
Our general results describe, in particular, the distribution of lattice orbits
on the de-Sitter space, answering questions raised by Arnol'd \cite[1996-15, 2002-16]{A}.

The methods that we develop in order to obtain this goal are of very general nature
and amount to establishing a quantitative form of an abstract duality principle for homogeneous spaces.
Namely, if $X\simeq H\backslash G$ is a homogeneous space of a locally compact
second countable group $G$ and $\Gamma$ is a discrete lattice subgroup in $G$, we reduce 
the ergodic-theoretic properties of the $\Gamma$-orbits on $X$ to the ergodic-theoretic properties of the $H$-orbits in the (dual) action
of $H$ on $G/\Gamma$. 

We will develop below an axiomatic framework in which the quantitative duality 
principle will be established in full generality (Sections 2-7). 
    Our principal motivation for taking an abstract approach is the fact that the present  paper does not exhaust the range of validity and the  diverse applications of the quantitative ergodic theorems that we develop. Most importantly, essentially all of our arguments carry over with minor modifications to the case of general $S$-algebraic groups over fields of characteristic zero.  We also note that many of our arguments carry over to homogeneous spaces of adele groups, as well as to $S$-algebraic groups over fields of positive characteristic.  To illustrate their utility, we refer to \cite{ggn} for an application of quantitative ergodic duality arguments to Diophantine approximation on homogeneous algebraic varieties in the $S$-algebraic set-up, answering some long-standing questions raised originally by S. Lang \cite{La}. We plan to return to the quantitative duality principle and its applications in the context of homogenous spaces of $S$-algebraic groups in the future, but in the interest of brevity will confine ourselves in the present paper to connected Lie groups and their homogeneous spaces.

Let us now note that the subject of ergodic theory of non-amenable groups acting on infinite-measure spaces is full of surprises and exhibits several remarkable features which do not arise in the classical case of amenable groups acting on probability spaces.  Let us mention the following ones. 

\begin{enumerate}
\item As already noted, the very existence of a normalization $V(t)$ 
for the orbit-sampling operators $\pi_X(\lambda_t)$ is impossible in the case of $\ZZ$-actions ; but we will also encounter the remarkable phenomenon that the growth 
of the sampling sets $\Gamma_t$ may be exponential in $t$, while the normalization $V(t)$ is polynomial in $t$. Thus, for a point $x$ in a given bounded set  $D\subset X=H\setminus G$, the set of return points $x\cdot \Gamma_t  \cap D $  is {\it logarithmic} in the size of  $x\cdot \Gamma_t$, and yet the set of return points is almost surely equidistributed in $D$. In general, the set of return points will be exponentially small compared to the set of orbit points. 
\item The ergodic theorems we prove assert that under suitable conditions the averages $\pi_X(\lambda_t)$ converge in a suitable sense  to a limiting distribution :
$$\lim_{t\to \infty} \pi_{X}(\lambda_t)\phi(x) =\int_X \phi\,d\nu_x\,. $$
However,  the limiting distribution may fail to be invariant under the $\Gamma$-action, and may depend non-trivially on the initial point $x$, exhibiting distinctly non-amenable phenomena. 
\item When the dual action of $H$ on $G/\Gamma$ has a suitable spectral gap, we will establish an effective rate of convergence of the orbit-sampling operators to the limiting distribution, in the mean and sometimes pointwise. As a consequence, we will obtain quantitative ergodic theorems for actions on homogeneous spaces. Again these are new and distinctly non-amenable phenomena. 
\end{enumerate}
We remark that the ergodic theorems we establish have another significant set of applications  which involves ratio ergodic theorems on homogeneous spaces, a subject raised originally by Kazhdan \cite{K} for the Euclidean group.
We will state some ratio ergodic theorems and comment further on this subject below.


%
%
%

Before turning to the exact statements of our main results in the next section, let us make one further
comment on their scope. As explained in \cite{gn}, the ergodic theory of non-amenable groups has to
contend with the absence of asymptotic invariance and transference arguments that play a pivotal role in
amenable ergodic theory. An indispensable tool 
to compensate for this absence is the existence of detailed quantitative volume estimates for the
sampling sets involved in our analysis. These estimates include 
quantitative volume asymptotics, as well as quantitative stability and regularity properties, which will
be explained in detail and exploited in our analysis below. 
The verification of these volume estimates is an intricate and challenging task which played a central
role in \cite{gn}. Here we elaborate on it further to the extent required to 
establish our  principal objective, which is the systematic development of ergodic theory for lattice
subgroups of algebraic groups acting on homogeneous  algebraic varieties.
In principle, our results hold whenever the required volume estimates are valid, and there are grounds to expect that such volume estimates may be satisfied beyond the case of algebraic groups. However, the volume estimates are definitely not valid for completely general Lie groups and their homogeneous manifolds, as demonstrated in  \cite[12.2]{GW}.
For this reason, we will restrict the discussion to algebraic groups acting on algebraic homogeneous spaces, and with the
sampling sets being defined by a homogeneous polynomial, or in some cases, a norm. 

\subsection{Statement of the main results}
Let us start be introducing notation that will be in force throughout the paper. 
Let $G\subset\hbox{SL}_d(\mathbb{R})$ be a connected closed subgroup.
Let $H\subset G$ be a closed subgroup, let $X=H\setminus G$ be the corresponding homogeneous space, and let $\Gamma$ be a discrete lattice in $G$.

For  a proper function $P$ positive except at $0$,
we consider the family of finite sets $\Gamma_t=\{\gamma\in \Gamma:\, \log P(\gamma)\le t\}$. 
Our main object of study will be the associated orbit-sampling operators $\sum_{\gamma\in \Gamma_t} \phi(x\gamma)$ with $x\in X$ and $\phi:X\to \mathbb{R}$,
whose properties reflect the distribution of the orbits of $\Gamma$ in $X$.  We will use normalization functions $V(t)$ of two kinds for the orbit sampling operators. The first is defined intrinsically:
$$\pi_X(\lambda_t)\phi(x)=\frac{1}{\vol (H_t)}\sum_{\gamma\in \Gamma_t} \phi(x\gamma),$$
where $H_t=\{h\in H:\, \log P(h)\le t\}$.
and the second reflects our knowledge of the volume asymptotics of $H_t$ (when applicable) :
$$\pi_X{(\tilde{\lambda}_t)\phi(x)=\frac{1}{e^{at}t^b}}\sum_{\gamma\in \Gamma_t} \phi(x\gamma)$$
with $a\ge 0$ and $b\ge 0$. In the context of algebraic groups with the sets $H_t$ defined by  the homogeneous polynomial $P$, the 
volume of $H_t$ does indeed have $c\,e^{at}t^b$ as its main term, so that the two operators are comparable.

We fix a smooth measure $\xi$ on $X$ with strictly positive density. Our discussion below 
will focus on a fixed (but arbitrary) compact set $D\subset X$. We assume that $D=\overline{\text{ Int } (D)}$ and that the boundary of $\text{ Int }(D)$ has zero measure, and call $D$ a compact domain in this case.  we denote by $L^p(D)$ the space of $L^p$-integrable functions $\phi$
with $\supp(\phi)\subset D$ equipped with the norm
$$
\|\phi\|_{L^p(D)}=\left(\int_D|\phi|^p \, d\xi\right)^{1/p}.
$$  
Since the space $L^p(D)$ does not depend on the measure $\xi$ and different $\xi$'s
lead to equivalent norms, we suppress $\xi$ from the notation.   
We also denote by $L^p_l(D)$ the space of Sobolev function with support in $D$
(see Section \ref{sec:notation}) and by $L^p_l(D)^+$ the subset of $L^p_l(D)$
consisting of nonnegative functions.

Our main results are formulated in the three theorems stated below. We consider three possibilities 
for the structure of the stability group $H$ and the volume growth of the sets $H_t$, as measured by our choice of Haar measure on $H$. 
As noted above, we will require stringent regularity conditions on $H_t$ and its volume, and for this reason will assume from now on that $G$ and $H$ are almost algebraic groups of $\hbox{SL}_d(\RR)$ (namely they are of finite index in their Zariski closure over $\RR$), and that $P$ is a homogeneous polynomial on the linear space $\hbox{Mat}_d(\mathbb{R})$. 

The first main result, 
Theorem \ref{th:main1},  deals with case of where the volume growth of $H_t$ in polynomial in $t$, so
that the normalization factor $V(t)$ of the sampling operators is polynomial as well. As we shall see
below in Lemma \ref{l:growth}, this forces $H$ to be isomorphic to the almost direct product of an
$\RR$-diagonalizable torus and a compact group. Theorem  \ref{th:main1} establish a mean and pointwise
ergodic theorem for the normalized sampling operators $\pi_X(\tilde{\lambda}_t)$  in every $L^p$-space, $
1 \le p < \infty$. Under the further assumptions that $G$ is semisimple, the lattice $\Gamma$ is
irreducible, and the action of $G$ on $L_0^2(G/\Gamma)$ has strong spectral gap,
it establishes a pointwise ergodic theorem with a polynomial rate of convergence for Sobolev functions.
We recall that a unitary representation of a connected semisimple group is said to have a {\it strong spectral gap} if 
its restriction to every simple factor  $L$ 
is isolated from the trivial representation of $L$.

Our second main result,  Theorem \ref{th:main2},  deals with the case where $G$ is semisimple,  the volume growth of $H_t$ is exponential, and allows $H$ to be either amenable or non-amenable. It establishes a mean ergodic theorems for the normalized sampling operators  in $L^p$, $ 1 \le p < \infty$, as well as a pointwise ergodic theorem for Sobolev functions, and a pointwise ergodic theorem with a rate of convergence when the functions are subanalytic.

Our third main result, Theorem \ref{th:main3},  assumes that of the underlying algebraic groups $G$ and $H$, at least one is semisimple, that the lattice $\Gamma$ is irreducible and that  the stability group $H$ is non-amenable subgroup which is non-amenably embedded $G$ (a term we will define below). Under these conditions,  the conclusions of Theorem \ref{th:main2} can be significantly strengthened, and we prove a mean and pointwise ergodic theorem  for the normalized sampling operators in every $L^p$, $ 1 < p < \infty$. Under a suitable strong spectral gap assumption, the pointwise ergodic theorem holds with a rate of convergence, provided  the function is subanalytic. 

We note that in the generality in which Theorem \ref{th:main2} and Theorem \ref{th:main3} are stated, the quantitative statement for  subanalytic functions is optimal, and  
 therefore the statements of the ergodic theorems are of optimal form.

Our fourth main result, Theorem \ref{th:main5}, assumes that $H$ is semisimple,  that the sets $G_t$ are defined by a norm, and that the volume of the sets $H_t$ is purely exponential.  When $H$ acts with a strong spectral gap on $G/\Gamma$, we prove mean, maximal and pointwise ergodic theorems with exponentially fast rate of convergence for the normalized sampling operators, for all functions in $L^p(D)$, $1 < p < \infty$. This result is of optimal form, and dispenses entirely with the assumption that the function is subanalytic. 

Finally, in Theorem \ref{th:main4} we note that the results just stated imply a wide variety of ratio ergodic theorems on homogeneous spaces.

Let us now turn to stating the main results in precise terms.

\subsection{Polynomial normalization of the sampling operators}

\begin{thm}\label{th:main1}
Assume that 
\begin{itemize}
\item $G$ is an arbitrary almost algebraic group, 
\item for $x\in X$, the stability group $\hbox{\rm Stab}_G(x)=H$ is of finite index 
is an almost direct product of a 
compact subgroup and an abelian diagonalisable subgroup,
\item the action of $\Gamma$ on the homogeneous space $X$ is ergodic.
\end{itemize}
Then there exist $b\in \mathbb{N}_{>0}$ and $t_0\in \RR_{>0}$ such that 
the sampling operators
$$
\pi_{X}(\tilde{\lambda}_t)\phi(x):=\frac{1}{t^b}\sum_{\gamma\in \Gamma_t} \phi(x\gamma)
$$
satisfy the following :
\begin{enumerate}
\item[(i)] {\rm Strong maximal inequality.}
For every $1<p\le\infty$,  compact domain $D$ of $X$, and $\phi\in L^p(D)$,
$$
\left\| \sup_{t\ge t_0} |\pi_{X}(\tilde{\lambda}_t)\phi|\right\|_{L^p(D)}\ll_{p,D} \|\phi\|_{L^p(D)}.
$$

\item[(ii)] {\rm Mean ergodic theorem.  }
For every $1\le p<\infty$,  compact domain $D$ of $X$, and $\phi\in L^p(D)$,
$$
\left\| \pi_{X}(\tilde{\lambda}_t)\phi(x) -\int_X \phi\,d\nu \right\|_{L^p(D)}\to 0
$$
as $t\to \infty$, where $\nu$ is a (nonzero) $G$-invariant measure on $X$.

\item[(iii)] {\rm Pointwise ergodic theorem.}
For every $1\le p\le \infty$, compact domain $D$ of $X$, and $\phi\in L^p(D)$,
$$
\lim_{t\to\infty} \pi_{X}(\tilde{\lambda}_t)\phi(x) = \int_X \phi\,d\nu
$$
for almost every $x\in X$.

\item[(iv)] {\rm Quantitative mean ergodic theorem in Sobolev spaces.}
Assume, in addition, that the group $G$ is semisimple, $\Gamma$ is an irreducible lattice in $G$, and 
$G$ has a strong spectral gap in $L_0^2(G/\Gamma)$.
Then there exists $l\in \mathbb{N}$ such that 
for every $1<p<q\le\infty$,  compact domain $D$ of $X$, and $\phi\in L_l^q(D)^+$,
the following estimate holds with $\delta_{p,q}>0$,
$$
\left\| \pi_{X}(\tilde{\lambda}_t)\phi(x) -\int_X \phi\,d\nu \right\|_{L^p(D)}\ll_{p,q, D} t^{-\delta_{p,q}}\|\phi\|_{L_l^q(D)}
$$
for all $t\ge t_0$.

\end{enumerate}
\end{thm}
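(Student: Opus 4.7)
The plan is to invoke the quantitative duality principle to be developed in Sections~2--7, which replaces the study of the orbit-sampling operators $\pi_X(\tilde\lambda_t)$ on $X=H\setminus G$ by that of the $H$-averaging operators
$$\beta_t f \;:=\; \frac{1}{\vol(H_t)}\int_{H_t} \pi(h) f\,dh$$
on $G/\Gamma$. Concretely, one smooths a test function $\phi\in L^p(D)$ by a bump function supported in a small neighbourhood of the identity in $G$, producing a lift $\tilde\phi$ on $G/\Gamma$, and shows that $\pi_X(\tilde\lambda_t)\phi(Hg)$ is within a controlled error of $\beta_t\tilde\phi(g\Gamma)$. By Lemma~\ref{l:growth}, the hypothesis that $H$ is an almost direct product of a compact group and an $\RR$-diagonalisable torus forces $\vol(H_t)\sim ct^b$ for some integer $b\ge 1$, which justifies the polynomial normalisation $t^b$ and shows that $\{H_t\}$ is a tempered F\o lner family in the amenable group $H$.

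With this reduction in place, parts (i)--(iii) follow from the ergodic theory of amenable group actions on the probability space $G/\Gamma$. The pointwise and maximal theorems follow from Lindenstrauss's theorem for tempered F\o lner sequences combined with Calder\'on-type transference, while the $L^p$-mean theorem is a standard F\o lner argument. Ergodicity of the $\Gamma$-action on $X$ translates by duality into ergodicity of the $H$-action on $G/\Gamma$, so $\beta_t\tilde\phi\to\int_{G/\Gamma}\tilde\phi\,dm$; transferring back identifies the limit as $\int_X \phi\,d\nu$, with $\nu$ a nonzero $G$-invariant measure on $X$ determined by the choice of Haar measures.

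For (iv), the strong spectral gap hypothesis supplies, via the Cowling--Haagerup--Howe framework, exponential decay of matrix coefficients on $L^2_0(G/\Gamma)$ for Sobolev vectors:
$$|\langle\pi(g)u,v\rangle| \;\ll\; e^{-\eta\, d_G(g,e)}\,\|u\|_{L^2_l}\,\|v\|_{L^2_l}.$$
Expanding $\|\beta_t\tilde\phi_0\|_2^2$ over $H_t\times H_t$ and changing variables, together with the fact that $d_G(h,e)$ is comparable to $\log P(h)$ on the diagonalisable torus and $\int_H P(h)^{-\eta}\,dh<\infty$, yields the $L^2$-rate $\|\beta_t\tilde\phi_0\|_2\ll t^{-b/2}\|\tilde\phi_0\|_{L^2_l}$. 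Interpolating this with the trivial $L^\infty$-bound $\|\pi_X(\tilde\lambda_t)\phi\|_\infty\ll\|\phi\|_\infty$ on the compact domain $D$ then delivers the polynomial rate $t^{-\delta_{p,q}}$ for all $1<p<q\le\infty$.

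The principal obstacle is making the duality reduction genuinely quantitative: because $\vol(H_t)$ grows only polynomially, the F\o lner ratios $\vol(H_{t+\varepsilon})/\vol(H_t)$ are delicate, and the smoothing error between $\pi_X(\tilde\lambda_t)\phi$ and $\beta_t\tilde\phi$ must be balanced carefully against the Sobolev regularity of $\phi$. This requires the Lipschitz-type volume stability estimates and the detailed asymptotic analysis of $H_t$ based on the homogeneity of $P$ that are developed in the paper's abstract framework.
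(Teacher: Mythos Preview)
Your proposal is correct and follows essentially the same approach as the paper: reduce via duality to $H$-averages $\beta_t$ on $G/\Gamma$, exploit that $H_t$ is a F\o lner family in the amenable group $H$ for parts (i)--(iii), and use matrix-coefficient decay for Sobolev vectors (the Kleinbock--Margulis estimate) to get the polynomial rate in (iv). Two small remarks: the paper proceeds via the \emph{regular} F\o lner property $\rho(H_tH_t^{-1})\ll\rho(H_t)$ (Proposition~\ref{p:folner}) and the Calder\'on-type transference of \cite[Th.~6.6]{n1}, rather than Lindenstrauss's tempered theorem, though either route works here; and your closing comment that the F\o lner ratios are ``delicate'' in the polynomial-growth case is backwards --- the subexponential case is the benign one (Proposition~\ref{p:alpha_sub} gives the needed volume regularity directly), whereas the genuine delicacy in the (HA2)-type estimates arises in the exponential-growth setting of Theorems~\ref{th:main2}--\ref{th:main3}.
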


Let us illustrate Theorem \ref{th:main1} by giving a pointwise ergodic
theorem for an action of a solvable group of exponential growth on
a space with infinite measure.

Let $\Delta$ be a lattice in $\mathbb{R}^d$,
and let $a$ be a $\mathbb{R}$-diagonalisable hyperbolic element of $\hbox{SL}_d(\mathbb{R})$ that leaves $\Delta$ invariant.
The group $\Gamma:=\left<a\right>\ltimes\Delta$ is a lattice in $G=\RR \ltimes\RR^n$, and  acts on $\mathbb{R}^d$ 
by affine transformations:
\begin{equation}\label{eq:affine}
x\cdot (a^n,v)=xa^n+v, \quad\quad  x\in \mathbb{R}^d,\;\; (a^n,v)\in\Gamma.
\end{equation}
Let $\lambda_{\max} >1$ denote the maximum of absolute values of the eigenvalues of $a$
and $\lambda_{\min}<1$ denotes the minimum of absolute values of the eigenvalues of $a$.
We fix a norm on $\mathbb{R}^d$ and consider the averaging sets 
\begin{equation}\label{eq:ggamma_t}
\Gamma_t=\{(a^n,v)\in \Gamma:\,\, n\in [t/\log(\lambda_{\min}), t/\log(\lambda_{\max})],\, \log \|v\|\le t\}
\end{equation}
Then Theorem \ref{th:main1} applies to the averages
$\sum_{\gamma\in\Gamma_t} \phi(x\gamma)$ on $\mathbb{R}^d$.
In particular, for every $\phi\in L^1(\mathbb{R}^d)$ with compact support,
\begin{equation}\label{eq:affine1}
\lim_{t\to\infty} \frac{1}{t}\sum_{\gamma\in\Gamma_t} \phi(v\gamma)=\frac{1}{\vol(\mathbb{R}^d/\Delta)}\int_{\mathbb{R}^d} \phi(x)\, dx
\quad\hbox{for almost every $v\in \mathbb{R}^d$.}
\end{equation}
Note that while the cardinality of the sets $\Gamma_t$ grows exponentially
(namely, $|\Gamma_t|\sim c\, e^{dt}t$ as $t\to \infty$ with $c>0$), the correct normalisation turns out to be
linear in this case. 
We refer to Section \ref{sec:affine1} below where this example is discussed in detail.

Finally, we note that Theorem \ref{th:main1} {\rm holds as stated for the sets $\Gamma_t$ defined when the homogeneous polynomial $P$ is replaced by any vector space norm, on  not necessarily a polynomial one 
(see Remark \ref{r:gen_norm} below.) }

\subsection{Exponential normalization of the sampling sets} We now turn to consider the situation where the growth of the sets $H_t$ is exponential, and begin by stating our second main result. 

\begin{thm}\label{th:main2}
Assume that
\begin{itemize}
\item the group $G$ is semisimple,  $\Gamma$ is an irreducible lattice in $G$,  and $G$ has a strong spectral gap in $L_0^2(G/\Gamma)$, 
\item for $x\in X$, the stability group $H=\hbox{\rm Stab}_G(x)$
is not of finite index in an almost direct product of a 
compact subgroup and an abelian $\mathbb{R}$-diagonalisable subgroup,
\item the action of $\Gamma$ on the homogeneous space $X=G/H$ is ergodic. 

\end{itemize}
Then there exist $a\in \mathbb{Q}_{> 0}$, $b\in \mathbb{N}_{\ge 0}$ and $t_0\in\mathbb{R}_{>0}$ such that 
the averages
$$
\pi_{X}(\tilde{\lambda}_t)\phi(x):=\frac{1}{e^{at} t^b}\sum_{\gamma\in \Gamma_t} \phi(x\gamma)
$$
satisfy for some $l\in \mathbb{N}_{\ge 0}$,
\begin{enumerate}
\item[(i)] {\rm Strong maximal inequality.} 
For every $1<p\le \infty$,  compact doamin $D$ of $X$, and $\phi\in L_l^p(D)^+$,
$$
\left\| \sup_{t\ge t_0} |\pi_{X}(\tilde{\lambda}_t)\phi|\right\|_{L^p(D)}\ll_{l, p,D} \|\phi\|_{L_l^p(D)}.
$$

\item[(ii)] {\rm Mean ergodic theorem.} There exists
a family of absolutely continuous measures $\{\nu_x\}_{x\in X}$ on $X$, with positive continuous
densities such that for every $1\le p<\infty$,  compact domain $D$ of $X$,  and 
$\phi\in L^p(D)$,
$$
\left\| \pi_{X}(\tilde{\lambda}_t)\phi(x) -\int_X \phi\,d\nu_x \right\|_{L^p(D)}\to 0
$$
as $t\to \infty$.


\item[(iii)] {\rm Pointwise ergodic theorem.}
For every $1<p\le\infty$, compact domain $D$ of $X$, and bounded $\phi\in L_l^p(D)^+$,
$$
\lim_{t\to\infty} \pi_{X}(\tilde{\lambda}_t)\phi(x) = \int_X \phi\,d\nu_x
$$
for almost every $x\in X$.

\item[(iv)] {\rm Quantitative pointwise ergodic theorem.}
For every $1<p\le \infty$,  compact domain $D$ of $X$, and a nonnegative continuous subanalytic function $\phi\in L_l^p(D)$,
the following asymptotic expansion holds 
$$
\pi_{X}\tilde{(\lambda}_t)\phi(x)=\int_X \phi\,d\nu_x+
\sum_{i=1}^b c_i(\phi,x) t^{-i}  +O_{x,\phi}\left(e^{-\delta(x,\phi) t}\right)
$$
for almost every $x\in X$ and all $t\ge t_0$ with some $\delta(x,\phi)>0$.

\end{enumerate}
\end{thm}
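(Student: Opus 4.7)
The strategy is to apply the abstract quantitative duality principle developed in Sections~2--7 to reduce all four assertions to quantitative statements about the $H$-action on $G/\Gamma$, where the strong spectral gap hypothesis furnishes the necessary effective equidistribution. Given a nonnegative test function $\phi$ supported in a compact domain $D \subset X = H\backslash G$ and a base point $x = Hg_x$, I would first construct, via a base-point-dependent smoothing/folding procedure, an auxiliary function $F_{\phi,x}$ on $G/\Gamma$ whose Sobolev norm is controlled by that of $\phi$, and establish the duality identity
$$\pi_X(\tilde\lambda_t)\phi(x) \;=\; \frac{\vol(H_t)}{e^{at}t^b}\cdot\pi_{G/\Gamma}(\beta_t)F_{\phi,x}(g_x\Gamma) \;+\; \text{error},$$
where $\beta_t := \vol(H_t)^{-1}\chi_{H_t}$ is the normalized $H$-averaging operator on $G/\Gamma$ and the error term is controlled in Sobolev norm by volume regularity of $H_t$.

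The two main technical inputs are then a precise volume expansion for $H_t$ and an effective mean ergodic theorem for $\beta_t$. Since $H$ fails the compact-by-$\RR$-diagonalizable hypothesis, Lemma~\ref{l:growth} (applied to the homogeneous polynomial $P$) should yield an expansion $\vol(H_t) = e^{at}(c_0 t^b + c_1 t^{b-1} + \cdots) + O(e^{(a-\delta_0)t})$ with $a \in \QQ_{>0}$, $b \in \NN_{\geq 0}$, together with stability bounds $\vol(H_{t+\vare}\sm H_{t-\vare}) \ll \vare\,\vol(H_t)$, which serve as the quantitative substitute for the asymptotic invariance absent in the non-amenable regime. Simultaneously, the strong spectral gap of $G$ on $L^2_0(G/\Gamma)$ restricts to exponential decay of matrix coefficients for $\beta_t$, giving an effective mean ergodic theorem
$$\bigl\|\pi_{G/\Gamma}(\beta_t)f - {\textstyle\int_{G/\Gamma} f\,d\mu}\bigr\|_{L^2(G/\Gamma)} \;\ll\; e^{-\delta t}\,\|f\|_{L^2_l(G/\Gamma)}$$
for a sufficiently large Sobolev index $l$, uniformly over regular $f$.

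Given these inputs, the ergodic assertions (i)--(iii) follow in a standard order. For the mean ergodic theorem (ii), passing $t \to \infty$ in the duality identity identifies the limit as $\int_X \phi\,d\nu_x$, where $\nu_x$ is the pushforward to $X$ of the Haar measure on $G/\Gamma$ twisted by the base-point-dependent folding; the continuous positive density and non-trivial dependence on $x$ are inherited from this construction. For the strong maximal inequality (i) and the pointwise theorem (iii), the effective equidistribution rate combined with a Sobolev embedding allows one to dominate $\sup_{t \geq t_0}\pi_X(\tilde\lambda_t)\phi$ by a maximal function on $G/\Gamma$, whose $L^p$-boundedness for $p > 1$ follows by interpolating the $L^2$-spectral bound against the trivial $L^\infty$-bound; the Banach principle on a dense subspace of smooth test functions then delivers the pointwise convergence.

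The hard part will be assertion (iv), the quantitative pointwise asymptotic expansion for subanalytic $\phi$. Plugging the full volume expansion of $\vol(H_t)$ into the duality identity formally produces the polynomial correction terms $c_i(\phi,x)t^{-i}$, and the exponential remainder is delivered by the effective $H$-equidistribution. The delicate issue is twofold. First, subanalytic functions are not smooth, so the Sobolev norms required by the spectral estimate must be obtained via a preparation-theorem approximation argument (in the style of Lion--Rolin), with the approximation parameter optimized against $e^{-\delta t}$ to recover an effective exponential tail. Second, upgrading from $L^p$-mean rate of convergence to almost-everywhere rate requires a Borel--Cantelli argument along a geometric sequence of times $t_n$, coupled with an oscillation bound over dyadic intervals furnished by the stability estimates; the rate $\delta(x,\phi)$ is necessarily $x$-dependent because the Sobolev norm of $F_{\phi,x}$ may degenerate near the boundary of $D$ or at points where the stabilizer becomes exceptional.
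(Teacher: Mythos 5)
Your architecture for parts (i)--(iii) is essentially the paper's: lift $\phi$ to a function $F$ on $G/\Gamma$ by a smoothing/folding construction, sandwich the discrete sampling operator between $H$-averages of $F$ over skew balls $H_t[g_1,g_2]$, and feed in the effective mean/pointwise ergodic theorems for $H$ acting on $G/\Gamma$ in Sobolev spaces (which is exactly what the strong spectral gap buys, via Theorems \ref{th:mean_quant_H} and \ref{th:erg_exp}), together with the volume asymptotics and H\"older regularity of $H_t$ from Theorem \ref{th:book} (not Lemma \ref{l:growth}, which only characterizes subexponential type, but that is a misattribution rather than a gap). The identification of $\nu_x$ as $\Theta(\mathsf{s}(x),\mathsf{s}(y))\,d\xi(y)$ and the density argument for (ii) are also as in the paper.

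There is, however, a genuine gap in your treatment of (iv), in two respects. First, the polynomial correction terms $c_i(\phi,x)t^{-i}$ do \emph{not} come from ``plugging the volume expansion of $\vol(H_t)$ into the duality identity.'' The duality/spectral machinery only controls the difference $\pi_X(\Lambda_t)\phi(x)-\pi_X(\Lambda^G_t)\phi(x)$ with an exponentially small error (Theorem \ref{th_dual_pointwise}); the corrections arise from the asymptotic expansion of the \emph{continuous} average
$\int_{G_t}\phi(xg)\,dm(g)=\int_D\phi(y)\,\rho(H_t[\mathsf{s}(x),\mathsf{s}(y)])\,d\xi(y)$,
whose kernel $\rho(H_t[\mathsf{s}(x),\mathsf{s}(y)])$ depends nontrivially on both $x$ and $y$ and is not reducible to $\vol(H_t)$ times a constant. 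Establishing that this integral equals $e^{at}\bigl(\sum_{i=0}^{b}c_i(\phi,x)t^i\bigr)+O(e^{(a-\delta)t})$ is the content of Proposition \ref{p:infinity}, proved by taking a Mellin-type transform and applying rectilinearization of subanalytic functions to $(y,h)\mapsto \phi(y)P(\mathsf{s}(x)^{-1}h\mathsf{s}(y))^{-1}$. Second, and relatedly, you have misidentified the role of subanalyticity: it is not needed to recover Sobolev regularity (the hypothesis already places $\phi$ in $L^p_l(D)$), but precisely to make the transform argument work and yield a full expansion with exponentially small remainder; this is also where the $x$- and $\phi$-dependence of $\delta(x,\phi)$ originates, not from degeneration of $\|F_{\phi,x}\|$ near $\partial D$. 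Without some substitute for Proposition \ref{p:infinity}, your Lion--Rolin-style approximation of $\phi$ would at best reprove the qualitative pointwise theorem (iii), since the continuous average converges to $\int_X\phi\,d\nu_x$ only by dominated convergence, with no rate.
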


Let us give an example of application of  Theorem \ref{th:main2}.
Let $\Gamma$ be a lattice in $\hbox{SL}_d(\mathbb{R})$ and $\Gamma_t=\{\gamma\in\Gamma:\, \log \|\gamma\|\le t\}$
denote the norm balls with respect to the standard Euclidean norm
$\|\gamma\|=\left(\sum_{i,j=1}^d \gamma_{ij}^2\right)^{1/2}$.
We consider the action of $\Gamma$ on the projective space $\mathbb{P}^{d-1}(\mathbb{R})$.
Then Theorem \ref{th:main2}(iv) implies that  for any nonnegative continuous subanalytic function
$\phi\in L_l^p(\mathbb{P}^{d-1}(\mathbb{R}))$ with $p>1$ (for some explicit $l\ge 0$)  and for almost every $v\in X$,
there exists $\delta>0$ such that
\begin{equation}\label{eq:projective}
\frac{1}{e^{(d^2-d)t}} \sum_{\gamma\in\Gamma_t}
  \phi(v\gamma)= c_{d}(\Gamma) \int_{\mathbb{P}^{d-1}(\mathbb{R})} \phi(w)\, d\xi(w)
  +O_{\phi,v}(e^{-\delta t}),
\end{equation}
where $c_{d}(\Gamma)>0$, $\delta=\delta(v,\phi)> 0$,  and $\xi$ denote the $\hbox{SO}_d(\mathbb{R})$-invariant probability 
measure on $\mathbb{P}^{d-1}(\mathbb{R})$. This example is in more detail discussed
in Section \ref{sec:proj}. Further examples and applications of Theorem \ref{th:main2} are discussed in Section \ref{sec:examples}.

 Note that in Theorem \ref{th:main2} the group  $H$ can be a solvable, for example. In that case, no rate of convergence 
can possibly hold for the operators $\pi_X(\tilde{\lambda}_t)$ acting in Lebesgue space, and results in Sobolev spaces are the best that can be achieved. 
The same remark applies of course
to Theorem \ref{th:main1}.

For a special class of homogeneous spaces $X$, we obtain an
improved version of Theorem \ref{th:main2} with $L^p$-norms in place of Sobolev norms, to which we now turn.

\subsection{Non-amenable stabilizers and quantitative ergodic theorems}

\begin{thm}\label{th:main3}
Assume that at least one of the following conditions is satisfied. 
\begin{itemize}
\item  $G$ is an arbitrary almost algebraic group, the stability group $H=\hbox{\rm Stab}_G(x)$ is semisimple, and $H$ 
has a strong spectral gap in $L_0^2(G/\Gamma)$, 
\item $G$ is a semisimple group which has a strong spectral gap in $L_0^2(G/\Gamma)$,  and $H$ is any
  almost algebraic subgroup which is unimodular and non-amenably embedded (see Definition \ref{NA
    embedding} below).  

\end{itemize}
Then there exist $a\in \mathbb{Q}_{> 0}$, $b\in \mathbb{N}_{\ge 0}$ and $t_0\in \mathbb{R}$ such that 
the normalized sampling operators 
$$
\pi_{X}(\tilde{\lambda}_t)\phi(x):=\frac{1}{e^{at} t^b}\sum_{\gamma\in \Gamma_t} \phi(x\gamma)
$$
satisfy the following
\begin{enumerate}
\item[(i)] {\rm Strong maximal inequality.}
For every $1<p\le \infty$,  compact doamin $D$ of $X$, and $\phi\in L^p(D)$,
$$
\left\| \sup_{t\ge t_0} |\pi_{X}(\tilde{\lambda}_t)\phi|\right\|_{L^p(D)}\ll_{p,D} \|\phi\|_{L^p(D)}.
$$

\item[(ii)] {\rm Pointwise ergodic theorem.}
For every $1<p\le\infty$,  compact domain $D$ of $X$, and $\phi\in L^p(D)$,
$$
\lim_{t\to\infty} \pi_{X}(\tilde{\lambda}_t)\phi(x) = \int_X \phi\,d\nu_x
$$
for almost every $x\in X$.

\item[(iii)] {\rm Quantitative pointwise ergodic theorem.}
For every $1<p\le \infty$,  compact domain $D$ of $X$, and a nonnegative continuous subanalytic function
$\phi$ with $\supp(\phi)\subset D$, the following asymptotic expansion holds 
$$
\pi_{X}(\tilde{\lambda}_t)\phi(x)=\int_X \phi\,d\nu_x+
\sum_{i=1}^b c_i(\phi,x) t^{-i}  +O_{\phi,x}\left(e^{-\delta(x,\phi) t}\right)
$$
for almost every $x\in X$ and all $t\ge t_0$ with some $\delta(x,\phi)>0$.

\end{enumerate}
\end{thm}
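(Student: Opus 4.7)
The plan is to leverage the quantitative duality principle developed in Sections 2--7. The central idea is to convert the $\Gamma$-averaging operator $\pi_X(\tilde\lambda_t)$ on $X = H\backslash G$ into an averaging operator associated with the dual $H$-action on $G/\Gamma$. Concretely, for $x = Hg_x\in X$, one picks a bump function $\psi_\vare$ on $H$ supported in a neighborhood of the identity with $\int_H\psi_\vare\,dh =1$, and lifts $\phi\in L^p(D)$ to a function $\Phi_\vare$ on $G/\Gamma$ via the usual averaging over $\Gamma$ of $\psi_\vare\otimes\phi$. The duality identity then expresses $\vol(H_t)^{-1}\sum_{\gamma\in \Gamma_t}\phi(x\gamma)$, modulo boundary errors of order $\vare$, as the $H$-average $\vol(H_t)^{-1}\int_{H_t}\Phi_\vare(hg_x\Gamma)\,dh$.

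Under either hypothesis of the theorem the representation of $H$ on $L_0^2(G/\Gamma)$ has a strong spectral gap: directly when $H$ is semisimple, and via the non-amenable embedding when $G$ is semisimple and $H$ is non-amenably embedded (Definition \ref{NA embedding}). The essential structural point, and the reason we can dispense with the Sobolev norms used in Theorem \ref{th:main2}, is that non-amenable spectral-gap averages on $H$ yield quantitative $L^p$-maximal and mean ergodic theorems without smoothing, by Kunze--Stein type estimates and tempered matrix-coefficient decay. This produces, for $\Phi\in L^p(G/\Gamma)$ with $1<p\le\infty$, an $L^p$-bounded $H$-maximal operator $\sup_t |\beta_t\Phi|$ and an exponential mean rate $\|\beta_t\Phi -\int\Phi\|_{L^p}\ll e^{-\eta t}\|\Phi\|_{L^p}$ for some $\eta>0$, where $\beta_t\Phi(g\Gamma) = \vol(H_t)^{-1}\int_{H_t}\Phi(hg\Gamma)\,dh$.

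Transferring back through the duality identity proves (i) and (ii): the $H$-maximal inequality yields the $X$-maximal inequality, and combined with norm-convergence of $\beta_t$, a standard interpolation/Borel--Cantelli argument gives almost-everywhere convergence. For the quantitative statement (iii) we exploit subanalyticity in two complementary ways. First, a nonnegative subanalytic $\phi$ admits controlled $\vare$-regularizations for which the error in the duality identity is an effective power of $\vare$. Second, in the almost-algebraic setting with $H_t$ cut out by a polynomial, $\vol(H_t)$ admits a finite asymptotic expansion $e^{at}t^b\bigl(c_0+c_1 t^{-1}+\cdots+c_b t^{-b}\bigr)+O(e^{(a-\delta')t})$. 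Combining this with the exponential equidistribution rate $\int_{H_t}\Phi_\vare(hg_x\Gamma)\,dh=\vol(H_t)\int\Phi_\vare + O(e^{(a-\delta)t})$ and optimizing $\vare=\vare(t)$ produces the displayed asymptotic expansion, with the residue absorbed into $O_{\phi,x}(e^{-\delta(x,\phi)t})$.

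The main technical obstacle is balancing the smoothing scale $\vare$ against the exponential rate coming from the spectral gap. The $\vare$-dependent boundary errors in the identification between $\Gamma$-sums on $X$ and $H$-integrals on $G/\Gamma$ must be optimized as a function of $t$ so that they remain negligible next to $\vol(H_t)$, while simultaneously the $L^p$ (rather than Sobolev) bounds on the $H$-side must survive the limit $\vare\to 0$ uniformly in $t$. The non-amenable embedding hypothesis supplies precisely the matrix-coefficient decay required for this balancing to succeed without invoking derivatives of $\phi$, and this is the structural feature that distinguishes Theorem \ref{th:main3} from the Sobolev-based Theorem \ref{th:main2}.
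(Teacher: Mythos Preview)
Your proposal is essentially correct and follows the paper's approach: the duality machinery of Sections~3--7 (specifically Theorems~\ref{th_max_ineq}, \ref{th_dual_Ger}, \ref{th_dual_pointwise} with $l=0$) reduces everything to $L^p$-ergodic theorems for the $H$-action on $G/\Gamma$, and these are supplied by Theorem~\ref{th:erg_semi} (semisimple $H$) or Theorem~\ref{thm:non-amenable-erg} (non-amenably embedded $H$) via Kunze--Stein/tempered-decay estimates, exactly as you describe.

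One correction regarding part~(iii): subanalyticity of $\phi$ is \emph{not} used to produce controlled $\vare$-regularizations feeding into the duality identity. The $\vare$-error in the duality comparison is controlled by the volume-regularity conditions (HA2), (HA2$'$) (Proposition~\ref{p:reg}), which hold for any $\phi$ and rely only on the polynomial nature of $P$. Subanalyticity enters solely in the analysis of the continuous $G$-average: Proposition~\ref{p:infinity} uses rectilinearization of subanalytic functions to obtain the precise asymptotic expansion
\[
\int_{G_t}\phi(xg)\,dm(g)=e^{at}\Bigl(\sum_{i=0}^b c_i(\phi,x)t^i\Bigr)+O_{\phi,x}\bigl(e^{(a-\delta)t}\bigr),
\]
and this is then combined with the exponential bound $|\pi_X(\lambda_t)\phi(x)-\pi_X(\lambda_t^G)\phi(x)|\ll_{\phi,x} e^{-\delta t}$ from Theorem~\ref{th_dual_pointwise}. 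So the two ingredients you list are correct, but their roles are swapped: subanalyticity drives the volume expansion, not the smoothing.
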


To exemplify our general results, 
let us consider the action of a lattice $\Gamma$ in the orthogonal group $\hbox{SO}_{d,1}(\mathbb{R})^0$
on the quadratic surface
$$
X=\{x\in\mathbb{R}^{d+1}:\, x_1^2+\cdots +x_d^2-x_{d+1}^2=1\},
$$
The space $X$ is known as the de-Sitter space, and the problem of distribution
of orbits of $\Gamma$ in $X$ was raised by Arnol'd (see \cite[1996-15, 2002-16]{A}).
Theorem \ref{th:main1} solves the problem for $d=2$, and 
Theorem \ref{th:main3} solves this problem for general $d\ge 3$.
It will be convenient to use the polar coordinate system on $X$:
\begin{equation}\label{eq:polar}
\mathbb{R}\times S^{d-1} \to X: (r,\omega)\mapsto (\omega_1\cosh r,\ldots, \omega_d\cosh r,\sinh r).
\end{equation}
For $d=2$, we obtain from Theorem \ref{th:main1} that for every $\phi\in L^1(X)$ with compact support and
for almost every $v\in X$,
\begin{equation}\label{eq:quad0}
\lim_{t\to\infty} \frac{1}{t} \sum_{\gamma\in\Gamma_t}
  \phi(v\gamma)= c_{2}(\Gamma)
 \int_{X} \phi(r,\omega)\, (\cosh r)\,dr\, d\omega.
\end{equation}
for some $c_2(\Gamma)>0$.
For $d\ge 3$, we obtain from Theorem \ref{th:main3}
that for every nonnegative continuous subanalytic function $\phi$ with compact support
and almost every $v\in X$, the following asymptotic expansion holds
\begin{equation}\label{eq:quad1}
\frac{1}{e^{(d-2)t}} \sum_{\gamma\in\Gamma_t}
  \phi(v\gamma)= \frac{c_{d}(\Gamma)}{\left(1+v_d^2\right)^{(d-2)/2}} \int_{X} \phi(r,\omega)\, 
\frac{(\cosh r)^{d-1}dr\, d\omega}{\left(1+(\sinh r)^2\right)^{(d-2)/2}}
+O_{v,\phi}(e^{-\delta t})
\end{equation}
for some $c_d(\Gamma)>0$ and $\delta=\delta(v,\phi)>0$.
Note that the limit measure in this case is not a $\Gamma$-invariant measure,
and moreover it depends nontrivially on the initial point $v$.
Further applications of Theorem \ref{th:main3} are discussed in Section \ref{sec:examples}.

To motivate the discussion immediately below, let us note that in the present example it is in fact possible to obtain a much stronger conclusion. Both the restriction that $\phi$ is subanalytic, as well as the dependence $\delta(v,\phi) $ on $v$ and $\phi$ can be dispensed with, as follows from Theorem \ref{th:main5}.  We will discuss this example in more detail in Section \ref{sec:quad} below.

\subsection{Volume regularity and quantitative ergodic and ratio theorems}

\subsubsection{On the role of volume regularity in the proofs} 
A interesting feature exhibited in Theorem \ref{th:main2}(iv) and Theorem \ref{th:main3}(iii) is that the
quality of quantitative ergodic theorems  stated in them is genuinely constrained. While we assume
spectral gap conditions which imply exponential norm decay of the averaging operators 
supported on $H_t$ acting on $L^2_0(G/\Gamma)$, the dual operators $\pi_X(\tilde{\lambda}_t) $ on $L^2(D)$ do not satisfy such an exponential norm or pointwise decay estimate. For subanalytic function on $D$, the asymptotic expansion :
$$
\pi_{X}(\tilde{\lambda}_t)\phi(x)=\int_X \phi\,d\nu_x+
\sum_{i=1}^b c_i(\phi,x) t^{-i}  +O_{\phi,x}\left(e^{-\delta(x,\phi) t}\right)
$$
which holds for almost every $x\in X$ with some $\delta(x,\phi)>0$, implies that if $b> 0$ the rate of
almost sure convergence to the limiting distribution is at least $t^{-1}$, but typically not faster, so it is {\it not} exponential.  Let us now explain the reason for the occurrence of this phenomenon, and then state a substantial improvement to the quantitative ergodic theorems under suitable conditions. 

A fundamental reduction that appears repeatedly in our analysis below 
is the comparison of the normalized sampling operators on a $\Gamma$-orbit in $X$ :
$$
\pi_{X}(\lambda_t)\phi(x)=\frac{1}{\vol (H_t)}\sum_{\gamma\in \Gamma_t} \phi(x\gamma),
$$
with the normalized  sampling operators on the $G$-orbit in $X$ :

$$
\pi_{X}(\lambda_t^G)\phi(x)=\frac{1}{\vol (H_t)}\int_{g\in G_t} \phi(xg)dm(g)\,
$$
where here we use the intrinsic normalization by $\vol(H_t)$ rather than by $e^{at}t^b$. 

It will develop that the difference $\pi_X(\lambda_t)-\pi_X(\lambda_t^G)$ can be estimated very well
under very general assumptions. It converges to zero in the mean and pointwise, and in fact with an
exponentially fast quantitative rate if the corresponding operators supported on $H_t$ satisfy the spectral gap estimates and quantitative ergodic theorem in $L^2(G/\Gamma)$. Furthermore,  
$\pi_X(\lambda_t^G)\phi(x)-\int_D \phi\, d\nu_x$,  converges to zero almost surely, and  this identifies the limit of $\pi_X(\lambda_t)\phi(x)$ in the mean and pointwise ergodic theorems, as the limiting density  $\nu_x$. 
To  obtain a quantitative ergodic theorem for $\pi_X(\lambda_t)$ all we need to do is make the latter convergence result quantitative.  To understand the limitations here, let us note the following alternative expression for $\pi_X(\lambda_t^G)$ and its connection with the limiting distribution  $\tilde\nu_x$.  Let $\mathsf{p}_X$ denotes the canonical projection from $G$ to $X=H\setminus G$, let $\mathsf{s}$ denotes a measurable section from $X$ to $G$ which is bounded on compact sets, and $\xi$ denote the canonical density on $X$. Then  by the discussion preceding equation \eqref{eq:point} below, with $H_t[g_1,g_2]=H\cap g_1G_t g_2^{-1}$,
\begin{align*}
\pi_X(\lambda^G_t)\phi(x) 
&=
\frac{1}{\rho(H_t)}\int_{G_t} \phi(\mathsf{p}_X(\mathsf{s}(x)g))\, dm(g)\\
&=\frac{1}{\rho(H_t)}\int_{(y,h):\, \mathsf{s}(x)^{-1}h\mathsf{s}(y)\in G_t} \phi(\mathsf{p}_X(h\mathsf{s}(y)))\, d\rho(h)d\xi(y)\\
&=\int_D\phi(y)\frac{\rho(H_t[\mathsf{s}(x),\mathsf{s}(y)])}{\rho(H_t)}\,d\xi(y).
\end{align*}

Now for every $g_1,g_2\in G$, we shall show that the limit
$$
\Theta(g_1,g_2):=\lim_{t\to\infty} \frac{\rho(H_t[g_1,g_2])}{\rho(H_t)} 
$$
exists (see Lemma \ref{p:alpha}), and defines the family of  limiting distributions $\nu_x\,,\, x\in X$
(see \eqref{eq:nu_x}). 

Consequently, the convergence properties of $\pi_X(\lambda_t^G)\phi(x)$ to $\int_D \phi(y)d\nu_x(y)$ are determined by the convergence properties of 
$\rho(H_t[\mathsf{s}(x),\mathsf{s}(y)])/\rho(H_t)$ to the limiting distribution $\nu_x$. Thus the regularity properties of the volume of the sets $H_t$ are crucial, and control
the final conclusion in the quantitative ergodic theorem.  The parameter  $a$ and $b$  that appear in the quantitative results for the operators $\pi_X(\lambda_t)$ acting on subanalytic functions 
in Theorem \ref{th:main2}(iv) and Theorem \ref{th:main3}(iii), are those that appear in the development
$\vol(H_t)= e^{at}(c_b t^b+\cdots + c_0)+ O(e^{(a-\delta_0)t})$ given by Theorem \ref{th:book}(i) below. 

The foregoing discussion shows that the results for subanalytic functions are optimal as stated, but suggest that under the stronger assumption that the asymptotic development of the volume holds with $b=0$, the results in the ergodic theorem should be stronger. We will now state our fourth main result, which gives the optimal formulation of quantitative ergodic theorems under this additional assumption.   As we will see below,  this assumption is satisfied by a large collection of important examples.

\begin{thm}\label{th:main5}
Assume that the following conditions are satisfied. 
\begin{itemize}
\item  $G\subset \hbox{\rm SL}_d(\RR)$ is an arbitrary almost algebraic group,
\item the stability group $H=\hbox{\rm Stab}_G(x)\subset G$ is semisimple and 
has a strong spectral gap in $L_0^2(G/\Gamma)$, 
\item the homogeneous polynomial $P$ is replaced by a norm on $\hbox{\rm Mat}_d(\RR)$, 
\item the volumes of $H_t$ satisfy $\vol(H_t)\sim c\,e^{at}$ as $t\to\infty$, with $a,c>0$.
\end{itemize}
Then the normalized sampling operators $\pi_{X}(\tilde{\lambda}_t)\phi(x)=\frac{1}{e^{at}}\sum_{\gamma\in
  \Gamma_t} \phi(x\gamma)$ satisfy, for every compact domain $D\subset X $ and $t\ge t_0$,
\begin{enumerate}
\item[(i)] {\rm Quantitative mean ergodic theorem in Lebesgue spaces.}
For $ 1 < p < \infty$,  a suitable $\delta_p > 0$ independent of $D$,  and for every $\phi\in L^p(D)$ 
$$ \norm{\pi_{X}(\tilde{\lambda}_t)\phi(x)-\int_X \phi\,d\nu_x}_{L^p(D)} \ll_{p,D} \norm{\phi}_{L^p(D)} e^{-\delta_p t} \,.$$

\item[(ii)] {\rm Quantitative  maximal ergodic theorem in Lebesgue spaces.} The family $\pi_X(\tilde{\lambda}_t)$ satisfies the $(L^p,L^w)$-exponential strong 
maximal inequality  (for  some $1< w <  p< \infty$), namely there exists $\delta_{p,w}>0$ such that for every $\phi\in L^p(D)$, 
$$
\left\|\sup_{t\ge t_0} e^{\delta_{p,w} t}
 \left|\pi_X(\tilde{\lambda}_t)\phi(x)-\int_D \phi\,d\nu_x\right|\right\|_{L^w(D)}
\ll_{p,w,D}\|\phi\|_{L^p(D)}.
$$

\item[(iii)] {\rm Uniform quantitative pointwise ergodic theorem in Lebesgue spaces.}
For every $\phi \in L^p(D)$, $ 1 < p < \infty$, and almost every $x\in X$,
$$\abs{\pi_{X}(\tilde{\lambda}_t)\phi(x)-\int_X \phi\,d\nu_x}\ll_{x,\phi,D} e^{-\delta_p t}\,,$$
where $\delta_p > 0$ is independent of $x$, $\phi$ and $D$. 
\end{enumerate}
\end{thm}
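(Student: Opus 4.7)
The strategy is to control the error $\pi_X(\tilde\lambda_t)\phi(x) - \int_X \phi \, d\nu_x$ through the decomposition highlighted in the discussion preceding the statement,
$$
\pi_X(\tilde\lambda_t)\phi - \int_X\phi\,d\nu_x = \bigl(\pi_X(\tilde\lambda_t)\phi - \pi_X(\tilde\lambda_t^G)\phi\bigr) + \bigl(\pi_X(\tilde\lambda_t^G)\phi - \int_X\phi\,d\nu_x\bigr),
$$
and to show that each summand decays exponentially in $L^p(D)$. The essential new input, absent from Theorems \ref{th:main2} and \ref{th:main3}, is that $\vol(H_t)\sim c\,e^{at}$ carries no polynomial factor $t^b$: this eliminates the obstruction that previously forced a $\sum c_i(x,\phi)\,t^{-i}$ expansion on subanalytic test functions and permits working directly in Lebesgue spaces.

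For the first summand I would invoke the quantitative duality machinery developed in the paper. Since $H$ is semisimple and has a strong spectral gap on $L_0^2(G/\Gamma)$, the Haar-averaging operators supported on $H_t$ satisfy an exponentially fast mean ergodic theorem on $G/\Gamma$; transferring this bound by the duality mechanism, while using $\vol(H_t)=c\,e^{at}(1+O(e^{-\delta_0 t}))$ to pass from the intrinsic normalization by $\vol(H_t)$ to the normalization $e^{at}$ without degrading the exponent, yields
$$
\bigl\|\pi_X(\tilde\lambda_t)\phi - \pi_X(\tilde\lambda_t^G)\phi\bigr\|_{L^p(D)} \ll_{p,D} e^{-\delta_p t}\|\phi\|_{L^p(D)}
$$
for each $1<p<\infty$. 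For the second summand, the identity derived in the discussion preceding the theorem gives
$$
\pi_X(\tilde\lambda_t^G)\phi(x) - \int_X\phi\,d\nu_x = \int_D \phi(y)\left(\frac{\rho(H_t[\mathsf{s}(x),\mathsf{s}(y)])}{e^{at}} - c\,\Theta(\mathsf{s}(x),\mathsf{s}(y))\right)d\xi(y),
$$
so the task reduces to proving the uniform volume regularity estimate
$$
\sup_{(g_1,g_2)\in K}\bigl|\rho(H_t[g_1,g_2]) - c\,e^{at}\,\Theta(g_1,g_2)\bigr| \ll_K e^{(a-\delta_0)t}
$$
for each compact $K\subset G\times G$. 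This is the main technical obstacle, and I would derive it from Theorem \ref{th:book} in its pure-exponential ($b=0$) regime, together with the Lipschitz stability of norm balls under small group translations; the hypothesis that $H_t$ is defined by a norm rather than by a homogeneous polynomial, and the semisimplicity of $H$, are both essential here. Once this uniform estimate is in hand, integration against $\phi$ via H\"older's inequality in $y$ yields an exponential $L^p(D)$-bound on the second summand, and combining the two bounds proves (i).

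To obtain (ii) I would upgrade the exponential $L^p$-mean bound to an $(L^p,L^w)$-exponential strong maximal inequality by the standard interpolation scheme used in the ergodic theory of non-amenable groups: pass to a dyadic skeleton $\{t_n=n\eta\}$, dominate $\sup_{t\ge t_0}$ on the skeleton by a series that is summable in $L^w$ for $w$ slightly below $p$, and use the $L^\infty$-boundedness of $\pi_X(\tilde\lambda_t)$ as the trivial endpoint. Part (iii) then follows from (ii) by a Borel--Cantelli argument applied on the dyadic skeleton, bridged to all $t$ by a regularity estimate that controls $\sup_{t \in [t_n, t_{n+1}]}|\pi_X(\tilde\lambda_t)\phi(x)|$ in terms of $|\pi_X(\tilde\lambda_{t_n})\phi(x)|$; this bridging uses the fact that $H_t$ is a norm ball, so that $\vol(H_{t+\varepsilon})/\vol(H_t)\to e^{a\varepsilon}$ uniformly in $t$. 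The uniformity of $\delta_p$ in (iii) with respect to $D$ reflects that the rate $\delta_0$ in the volume regularity estimate depends only on the global volume asymptotics and the spectral gap, not on the compact set where the translates range.
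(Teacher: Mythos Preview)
Your overall strategy matches the paper's proof very closely: the same two-term decomposition, the same appeal to the quantitative duality theorem (Theorem~\ref{th_dual_Ger} with $l=0$) combined with the spectral-gap ergodic theorem for semisimple $H$ (Theorem~\ref{th:erg_semi}) for the first summand, the same kernel representation
\[
\pi_X(\lambda_t^G)\phi(x)-\int_X\phi\,d\nu_x=\int_D\phi(y)\Bigl(\tfrac{\rho(H_t[\mathsf{s}(x),\mathsf{s}(y)])}{\rho(H_t)}-\Theta(\mathsf{s}(x),\mathsf{s}(y))\Bigr)\,d\xi(y)
\]
for the second, and the same dyadic-skeleton argument (as in the proof of Theorem~\ref{th_dual_pointwise}) for parts (ii) and (iii).

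There is, however, a genuine gap in how you propose to obtain the key uniform volume estimate
\[
\rho(H_t[g_1,g_2])=c(g_1,g_2)e^{at}+O_K\bigl(e^{(a-\delta_0)t}\bigr),\qquad (g_1,g_2)\in K.
\]
You say you would derive it from Theorem~\ref{th:book} in the $b=0$ regime together with Lipschitz stability of norm balls. But Theorem~\ref{th:book} is proved by resolution of singularities applied to a \emph{single} polynomial, and when applied to the parametric family $h\mapsto P(g_1^{-1}hg_2)$ it yields an error term $O_{g_1,g_2}(e^{(a-\delta_0(g_1,g_2))t})$ in which neither $\delta_0$ nor the implied constant is a priori uniform over compacta. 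The Lipschitz containments of Lemma~\ref{l:ccc} show that the leading coefficient $c(g_1,g_2)$ is continuous (this is Proposition~\ref{p:alpha}), but they do not force uniformity of the subleading error. The paper makes this point explicitly in the remark following the proof: ``the argument with resolution of singularities used in Theorem~\ref{th:book} applies to general groups. A very similar estimate holds, but its uniformity in $x,y$ is not clear.'' Instead, the paper obtains the required uniform asymptotic by citing \cite[Thm.~3]{Mau}, whose proof uses the explicit Cartan-decomposition formulas for Haar measure on the semisimple group $H$ together with the triangle inequality for the norm --- an approach essentially disjoint from resolution of singularities. This is precisely why both the semisimplicity of $H$ and the norm hypothesis (as opposed to a general homogeneous polynomial) appear among the assumptions of Theorem~\ref{th:main5}.
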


\begin{rem}\label{b> 0}{\rm  \begin{enumerate}
\item We note that when the volume growth is not purely exponential, namely when $b > 0$, the quantitative mean and pointwise ergodic theorems hold as stated 
in Theorem \ref{th:main5}(i) and (iii), but the speed is $t^{-\eta_p}$(with $\eta_p > 0$) rather than
$e^{-\delta_p t}$ (see Subsection 10.4).
\item In \cite[Thm. 1.6]{gn3}, 
we have applied Theorem \ref{th:main5} to the case of a dense $S$-arithmetic lattice in a connected semisimple Lie group, which acts by isometries on the group variety itself.  We have established  there an exponentially fast quantitative equidistribution theorem, {\it for every starting point, and with a fixed rate}, in the case of H\"older functions. The statement of \cite[Thm. 1.6]{gn3} refers to the case of norms with purely exponential growth of balls. When the growth is not purely exponential, the rate of equidistribution is polynomial.   
\end{enumerate}
}
\end{rem}

We note that the assumptions of Theorem \ref{th:main5} are verified in many interesting cases, when we replace the polynomial 
$P$ by a norm with purely exponential growth of balls. These include 
Examples 11.1 of the action of $\hbox{SO}_{n,1}(\mathbb{R})^0$ on de-Sitter space, Example 11.4 of dense subgroups of semisimple Lie groups acting 
by translation on the group, Example 11.5 of distribution of values of indefinite quadratic forms, and Example 1.7 of affine actions of lattices. 
These examples will be explained in detail in \S 11.

\subsubsection{Ratio ergodic theorems}
In the case of a single transformation acting on an infinite, $\sigma$-finite measure space, it is well-known that a general ratio ergodic theorem holds, and 
it is natural to consider ergodic ratio theorems in our context as well.  The results stated above  describe the limiting behavior of the operators $\pi_X(\tilde{\lambda}_t)$, and so it immediately follows that they imply a limit theorem for their ratios. We record this fact in the following result.

\begin{thm}\label{th:main4} 
Let $G$, $H$, $X$, $\Gamma$ and $P$ be as in the previous section. Consider a compact domain $D\subset X$, and any two functions $\phi,\psi\in L^p(D)$ of compact support contained in $D$, with $\psi$ non-negative and non-zero.  
\begin{enumerate}
\item[(i)] {\rm Ratio ergodic theorem.}
Assume that the conditions of Theorem \ref{th:main1},  or Theorem \ref{th:main3}  are satisfied. Then 
as $t\to\infty$,
$$\frac{\sum_{\gamma\in \Gamma_t} \phi(x\gamma)}{\sum_{\gamma\in \Gamma_t} \psi(x\gamma)}\longrightarrow \frac{\int_D \phi\, d\nu_x}{\int_D \psi\, d\nu_x}$$
for almost every $x\in D$.  Under the conditions of Theorem 1.2, the same result applies provided $\phi$ and $\psi$ as above are in $L^p_l(D)$. 
\item[(ii)] {\rm Quantitative ratio ergodic theorem.}
Assume that the conditions of Theorems \ref{th:main2} (iv) or Theorem \ref{th:main3}(iii) hold. Let $b$ and $\delta$ be the volume growth parameters stated there. Assume that $\phi, \psi\in C_c(D)$ are subanalytic and non-negative (and in  $L^p_l(D)$ when assuming the conditions of Theorem \ref{th:main2}(iv)). Then convergence in the ratio ergodic theorem takes place at the rate 
$$\abs{\frac{\sum_{\gamma\in \Gamma_t} \phi(x\gamma)}{\sum_{\gamma\in \Gamma_t} \psi(x\gamma)}-\frac{\int_D \phi\, d\nu_x}{\int_D \psi \,d\nu_x}}\ll_{x,\phi,\psi} E(t)$$
where $E(t)=\frac1t$ if $b>0$, and $E(t)=e^{-\delta_p t}$ for some $\delta_p(x,\phi,\psi) > 0$, if $b=0$.  
\item[(iii)] {\rm Uniform quantitative ratio ergodic theorem.}
Under the conditions of  Theorem \ref{th:main5}, the exponential rate of convergence $\delta_p$ in the ratio ergodic theorem stated in (ii) is independent of $\phi$, $\psi$, $ x$ and $D$. 

\end{enumerate}
\end{thm}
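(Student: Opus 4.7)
The plan is to derive the ratio theorem as a direct consequence of the pointwise and quantitative pointwise ergodic theorems already established (Theorems \ref{th:main1}--\ref{th:main5}), by exploiting the fact that the normalization factor $e^{at}t^b$ appears in both numerator and denominator and therefore cancels. The key observation is that for any normalizing constant $N(t) > 0$,
\begin{equation*}
\frac{\sum_{\gamma\in\Gamma_t}\phi(x\gamma)}{\sum_{\gamma\in\Gamma_t}\psi(x\gamma)}
= \frac{N(t)^{-1}\sum_{\gamma\in\Gamma_t}\phi(x\gamma)}{N(t)^{-1}\sum_{\gamma\in\Gamma_t}\psi(x\gamma)}
= \frac{\pi_X(\tilde{\lambda}_t)\phi(x)}{\pi_X(\tilde{\lambda}_t)\psi(x)}\,,
\end{equation*}
where we take $N(t)=t^b$ in the setting of Theorem \ref{th:main1} and $N(t)=e^{at}t^b$ in the other settings. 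So the problem reduces to controlling the individual behaviour of $\pi_X(\tilde{\lambda}_t)\phi(x)$ and $\pi_X(\tilde{\lambda}_t)\psi(x)$, together with the positivity of the denominator's limit.

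For part (i), I would apply the corresponding pointwise ergodic theorem (Theorem \ref{th:main1}(iii), Theorem \ref{th:main2}(iii), or Theorem \ref{th:main3}(ii)) separately to $\phi$ and to $\psi$, yielding almost surely
\begin{equation*}
\pi_X(\tilde{\lambda}_t)\phi(x)\to \int_D \phi\, d\nu_x, \qquad \pi_X(\tilde{\lambda}_t)\psi(x)\to \int_D \psi\, d\nu_x.
\end{equation*}
The crucial verification is that $\int_D \psi\, d\nu_x > 0$ almost surely, so that we are not dividing by zero in the limit. This is where the structural input about the limiting distributions is used: in Theorem \ref{th:main1} the measure $\nu$ is a $G$-invariant smooth measure and so has strictly positive density on $D$, while in Theorems \ref{th:main2}, \ref{th:main3} and \ref{th:main5} the measures $\nu_x$ are stated to have positive continuous densities. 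Since $\psi$ is nonnegative, supported in $D$, and not identically zero, $\int_D \psi\, d\nu_x>0$ for every $x$; then the quotient of the limits gives the stated ratio. A standard elementary lemma (if $A_t\to\alpha$ and $B_t\to\beta\neq 0$ then $A_t/B_t\to \alpha/\beta$) completes the argument.

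For parts (ii) and (iii), I would use the algebraic identity
\begin{equation*}
\frac{A_t}{B_t}-\frac{\alpha}{\beta}=\frac{(A_t-\alpha)\beta-(B_t-\beta)\alpha}{\beta B_t},
\end{equation*}
with $A_t=\pi_X(\tilde{\lambda}_t)\phi(x)$, $B_t=\pi_X(\tilde{\lambda}_t)\psi(x)$, $\alpha=\int_D\phi\,d\nu_x$ and $\beta=\int_D\psi\,d\nu_x>0$. The relevant quantitative ergodic theorem (Theorem \ref{th:main2}(iv), Theorem \ref{th:main3}(iii), or Theorem \ref{th:main5}(iii)) yields bounds $|A_t-\alpha|\ll_{x,\phi} E(t)$ and $|B_t-\beta|\ll_{x,\psi} E(t)$ with $E(t)=t^{-1}$ when $b>0$ and $E(t)=e^{-\delta_p t}$ when $b=0$; the lower-order terms $\sum c_i(\phi,x)t^{-i}$ appearing in the asymptotic expansions of Theorems \ref{th:main2}(iv) and \ref{th:main3}(iii) are absorbed into the $O(t^{-1})$ error in the case $b>0$. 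Since $B_t\to\beta>0$ almost surely, for $t$ sufficiently large (depending on $x$ and $\psi$) we have $B_t\ge \beta/2$, so the denominator in the identity above is bounded below by $\beta^2/2$, yielding the stated rate. For part (iii), the uniformity of the exponent $\delta_p$ in Theorem \ref{th:main5}(iii) (independent of $\phi,\psi,x,D$) is inherited by the ratio: the implied constant depends on $x,\phi,\psi$ through the individual error constants and through the lower bound on $B_t$, but the exponential rate itself is the minimum of the two individual rates and hence retains the uniformity.

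The only subtle point in this otherwise routine derivation is ensuring that the bound $B_t\ge \beta/2$ holds from some $t_1(x,\psi)$ onward so that the denominator does not degenerate; this is the main place where the implied constants acquire their dependence on $\psi$ and $x$. Beyond this, the argument is essentially purely algebraic, and no new analytic input beyond the previously established pointwise/quantitative theorems is required.
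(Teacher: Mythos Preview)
Your proposal is correct and matches the paper's approach: the paper does not give a separate proof of Theorem \ref{th:main4} but simply states that ``the results stated above describe the limiting behavior of the operators $\pi_X(\tilde{\lambda}_t)$, and so it immediately follows that they imply a limit theorem for their ratios.'' Your write-up makes this explicit, including the positivity of $\int_D\psi\,d\nu_x$ (via Proposition~\ref{p:alpha} and the stated positivity of the densities) and the elementary algebraic control of the ratio, which are exactly the ingredients the paper has in mind.
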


Consider now  the problem of establishing equidistribution for the ratio of averages, namely convergence for every single starting point, when the functions are continuous, or satisfy (say) a H\"older regularity condition.  In the case of  dense subgroups of isometries of Euclidean spaces this problem was introduced half a century ago by Kazhdan \cite{K}. In \cite{gn3},  we have established a quantitative ratio equidistribution theorem for certain dense subgroups of semisimple Lie groups acting on the group manifold, with respect to H\"older functions. This result has the remarkable feature that it holds for every single starting point $x\in X$, with the same rate of exponentially fast convergence (provided the volume growth of balls is purely exponential). 
This result is  based on the quantitative ergodic theorem in $L^2$ stated in Theorem \ref{th:main5}.

\subsection{Comments on the development of the duality principle}

Given an lcsc group $G$, and two closed subgroups $H_1$ and $H_2$, the  principle of duality
(namely, that properties of the $H_1$-action on $G/H_1$ and  the properties 
of the $H_2$-actions on $H_1\setminus G$ are closely related) has been a mainstay of homogeneous dynamics for at least half a century. It has been used in a diverse array of different applications, as demonstrated in the list below.  Needless to say, the list constitutes just a small sample of the extensive literature in homogeneous dynamics which can be construed as applying duality arguments in one form or another.  

As to general duality properties, let us mention the following often-used fundamental facts.  

\begin{itemize}
\item Minimality : $H_1$ is minimal on $G/H_2$ if and only if $H_2$ is minimal on $H_1\setminus G$. This fact and its application to Diophantine approximation in the context of flows on nilmanifolds has been discussed already in \cite{AGH}, see e.g. Chapter VIII by Auslander and Green and Chapter XI by Greenberg. 
\item Ergodicity : $H_1$ is ergodic on $G/H_2$ if and only if $H_2$ is ergodic on $H_1\setminus G$. This result is known as Moore's ergodicity theorem \cite{Mo}.   
\item Amenability : $H_1$ acts amenably on $G/H_2$ if and only if $H_2$ acts amenably on $H_1\setminus G$. The notion of amenability used here is that defined by Zimmer, see  \cite{Z1}. 
\end{itemize}

Let us now restrict the discussion to the case when $G$ is semisimple, $H_2=\Gamma$ is an irreducible lattice, and  $H_1$ is a minimal parabolic subgroup, or a unipotent subgroup of it. 

\begin{itemize}
\item Unique ergodicity of horocycle flows $U$ on $G/\Gamma$ when $G=\hbox{SL}_2(\RR)$ has been proved by Furstenberg \cite{Fu} using the dynamical properties of the $\Gamma$-action on $U\setminus G$.
This method has subsequently been generalized by Veech \cite{V2} for semisimple groups.  
\item Dani and Raghavan \cite{DR} studied orbits of frames under discrete linear groups, and in
  particular considered the connection between the density properties of $\Gamma$ acting on $U\setminus
  G$, and $U$ acting on $G/\Gamma$, where $U$ is a horospherical unipotent subgroup.  Minimality of the
  action of a horospherical group when the lattice is uniform has been proved earlier by Veech
  \cite{V1}. 
Related equidistribution results (namely, the convergence of sampling operators $\pi_X(\lambda_t)$ in the
space of continuous functions) were established in \cite{L1,n,G2}.

\item  Dani \cite{D1} established the topological version of Margulis factor theorem, namely that continuous $\Gamma$-equivariant factors of the boundary $P\setminus G$ are necessarily of the form $Q\setminus G$, where $Q$ is a parabolic subgroup containing $P$. The proof uses the minimality properties of the $P$-action on $G/\Gamma$.  Special cases have previously been established by Zimmer \cite{Z2}, Spatzier \cite{Sp}, and a generalization was established by Shah \cite{Sh}. 
\item Using duality-type arguments, equidistribution of the lattice orbits on the boundary $P\setminus G$
for $G=\hbox{SL}_d(\mathbb{R})$ was established in \cite{G1}. 
The method  utilizes the equidistribution of averages on $P$ acting on
$C_c(G/\Gamma)$.  Subsequently equidistribution of the $\Gamma$-orbits on the boundary was established
for general semisimple groups in \cite{go}, and using a different method in \cite{gm}.  
\end{itemize}

Let us now proceed with $G$ a semisimple group, $H_2=\Gamma$ an irreducible lattice subgroup, $H_1=H$ an algebraic subgrpup, and assume that $H\setminus G$ has a $G$-invariant measure. 

\begin{itemize}
\item The problem of counting the number of points of an orbit in a ball for  isometry groups of
  manifolds of (variable) negative curvature was already considered in Margulis' thesis (see \cite{M1}). The
  basic connection between this problem and the mixing property of the geodesic flow
  established there has been a major source of influence in the development of duality arguments.
\item  Several problems in Diophantine approximation in Euclidean spaces can be approached via duality arguments, a fact originally due to Dani \cite{D2} who studied it systematically.  This fact is referred to as the Dani correspondence by Kleinbock and Margulis \cite{KM1,KM2}, who have extended it further. 
\item Duality considerations also played an important role in Margulis' celebrated solution of Oppenheim
  conjecture (see \cite{M2}), where the method of proof utilizes the dynamics of
  $\hbox{SO}_{2,1}(\mathbb{R})$ in the space of lattices $\hbox{SL}_3(\RR)/\hbox{SL}_3(\mathbb{Z})$ in order  
to analyse the values of the quadratic form on integer points, namely on an orbit of the lattice $\hbox{SL}_3(\ZZ)$.  A quantitative approach to this problem was subsequently developed by Dani and Margulis \cite{DM} and Eskin, Margulis and Mozes \cite{EMM}.  
 \item The general lattice point counting problem on homogeneous spaces is to give precise asymptotics for the number of points in discrete orbits of $\Gamma$ in $H\setminus G$ contained in a ball, and the most studied case is when $H$ is a symmetric subgroup. 
By duality, any such discrete orbit determines canonically  a closed orbit of $H$ in $G/\Gamma$. Quantitative results in the lattice point counting problem can be established by applying harmonic analysis to the 
``$H$-periods'' of suitable automorphic functions on $G/\Gamma$, namely by estimating their integral on such closed orbits.  This spectral approach in the case of higher rank semisimple groups and symmetric subgroups was first applied by  Duke, Rudnick and Sarnak \cite{drs}. 
\item Establishing the main term in the lattice point counting problem for $\Gamma$ on $H\setminus G$ can
  be reduced to establishing equidistribution of translates of the closed $H$-orbit $ H
  \Gamma\subset G/\Gamma$.  Eskin and McMullen \cite{EM} have established the equidistribution of these
  translates using the  mixing property on $A$ on $G/\Gamma$.  Eskin,
  Mozes  and Shah \cite{EMS} have established equidistribution of translates of a closed $H$-orbit 
using the theory of unipotent flows on $G/\Gamma$. The mixing method can also be made quantitative, as shown by Maucourant
  \cite{Mau} and \cite{BO}, using  estimates for rates of mixing in Sobolev spaces.  

\item  An important development regarding the duality principle and equidistribution of lattice actions on homogeneous varieties is due to Ledrappier. In  \cite{L1,L2}
he considered the action of a lattice subgroup of $\hbox{SL}_2(\RR)$ on the plane $\RR^2$, and used
duality arguments to establish convergence of the sampling operators supported on $\Gamma_t$, in the
space of continuous functions. Ledrappier's results  revealed for the first time  several of the
remarkable features that arise in the context of infinite-volume homogeneous spaces, including the
appearance of a limiting density different than the invariant measure, the fact that the limiting density
is not necessarily invariant under the lattice action, and the fact that it  depends on the initial point in the
orbit under  consideration. Subsequently Ledrappier and Pollicott \cite{lp}\cite{lp2} have generalised
these results to $\hbox{SL}_2$ over other fields. Another appearance of duality argument is due to 
Maucourant, who in his thesis considered the $ \Gamma$-action on $G/A$ where $A$ is an $\RR$-split torus, via ergodic
theorems for $A$ acting on $G/\Gamma$. 
A systematic general approach to the duality principle for a large class of groups and homogeneous spaces, including an analysis of the limiting distribution and the requisite properties of volume regularity was carried out in \cite{GW}, and applied to obtain diverse  equidistribution results for actions of lattices on homogeneous spaces.  
\item The quantitative solution of the lattice point counting problem which appears in \cite{gn2} can be viewed as a quantitative duality argument,  applied to the case where the subgroup $H$ is in fact equal to $G$. 
\item The Boltzmann-Grad limit for periodic Lorentz gas has been studied by Marklof and Str\"ombergsson
  \cite{ms1,ms2}. Using periodicity, one can reduce the original problem to analysing distribution
on the space of lattices $\hbox{SL}_d(\RR)/\hbox{SL}_d(\mathbb{Z})$, which can be treated 
using the theory of unipotent flows.
\end{itemize}
Given the centrality of the duality principle in homogeneous dynamics, it is a most natural problem  to establish mean and pointwise ergodic theorems in Lebesgue spaces for lattice actions on homogeneous varieties, including a rate of convergence in the presence of a spectral gap. The present paper is devoted to the solution of this problem, which we expect will have several significant applications, including a quantitative approach to Diophantine 
approximation on homogeneous algebraic varieties, developing \cite{ggn} further. 
 However, we are not aware of any previous results in the literature establishing ergodic theorems in Lebesgue spaces for infinite volume homogeneous spaces.    
\subsection{Organization of the paper}

In Section \ref{sec:notation} we set-up notations that will be used throughout the paper.
Sections \ref{sec:max_ineq}--\ref{sec:quant_pointwise}  form the core of the paper.
In this part we develop 
the general ergodic-theoretic duality framework in the setting of locally compact topological
groups and their homogeneous spaces. 
Section \ref{sec:volume} is devoted to the crucial issue of volume regularity, and contains a number of results
concerning regularity properties of balls defined by polynomials in algebraic group.
In Section \ref{sec:alg} we develop ergodic theory of actions of algebraic
groups on probability measure space, which are subsequently used as an input
for the general duality principle developed in the previous sections.
In Section \ref{sec:proof} we complete the proof of the four main theorems stated in the introduction.
Finally, Section \ref{sec:examples} is devoted to examples illustrating our main results.

\subsection*{ Acknowledgements.} The authors would like to thank Barak Weiss for his very valuable
contributions to this project. We also would like to express our gratitude to the Ergodic Theory Group
at the F\'ed\'eration Denis Poisson who gave us an opportunity to explain the present work
in the lecture series ``Th\'eorie ergodique des actions de groupe'' held in Tours in April, 2011.
A.G. would like to thank the Princeton University and especially E. Lindenstrauss for their hospitality
during 2007--2008 academic year when part of this work has been completed.

\section{Basic notation}\label{sec:notation} 

In this section we introduce notation that will be used throughout the paper.  
\subsection{Sobolev norms}\label{sec:sobolev}
Given a compact domain $D$ of $\mathbb{R}^d$, we denote by
$L^p_l(D)$ the space of functions $\phi$ on $\mathbb{R}^d$ with support contained in 
$D$ for which all distributional derivatives of $\phi$
of order at most $l$ are in $L^p(\mathbb{R}^d)$.
The space $L^p_l(D)$ is equipped with the Sobolev norm
$$
\|\phi\|_{L^p_l (D)}=\sum_{0\le|s|\le l} \|\mathcal{D}^s\phi\|_{L^p(\mathbb{R}^d)}, 
$$
where $\mathcal{D}^s$ denotes the partial derivative corresponding to the multi-index $s$.
More generally, let $D$ be a compact subset of a manifold $X$ of dimension $d$.
We pick a system of coordinate charts 
$\{\omega_i:\mathbb{R}^d\to M\}_{i=1}^n$ that cover $D$
and a partition of unity $\{\psi_i\}_{i=1}^n$ subordinate to this system of charts
such that $\sum_{i=1}^n \psi_i=1$ on $D$.
We denote by $L^p_l(D)$ the space of functions $\phi$ on $X$ with support contained in 
$D$ such that for every $i=1,\ldots,n$, the function
$(\phi\cdot \psi_i) \circ \omega_i$ belongs to $L^p_l(\mathbb{R}^d)$.
The space $L^p_l(D)$ is equipped with the Sobolev norm
\begin{equation}\label{eq:sobolev_norm}  
\|\phi\|_{L^p_l(D)}=\sum_{i=1}^n \|(\phi\cdot \psi_i)\circ \omega_i\|_{L^p_l}.
\end{equation}
Clearly, this definition of the Sobolev norm depends on a choice of the system of
coordinate charts and the partition of unity, but using compactness of $D$,
one checks that different
choices lead to equivalent norms, and for this reason we have suppressed
$\omega_i$'s and $\psi_i$'s from the notation.

\subsection{Homogeneous spaces and measures}\label{sec:space}
Let $G$ be a locally compact second countable (lcsc) group, $H$ a closed subgroup of $G$,
and $X:= H\backslash G$ is a homogeneous space of $G$. In the case when our discussion
involves Sobolev norms, we always assume, in addition, that $G$ is a connected Lie group and $H$ is a 
closed Lie subgroup. Then $X$ has the structure of a smooth manifold.
We denote by $\mathsf{p}_X$ the natural projection map 
$$
\mathsf{p}_X:G\to X: g\mapsto H g,
$$
and by $\mathsf{s} :X\to G$ a measurable section of this map, 
i.e., a map such that $\mathsf{p}_X\circ \mathsf{s}=id$.
Since $G$ is locally compact, such a section
always exists, with the additional property that it is  bounded on compact sets. 
We also use the notation:
$$
\mathsf{r}:=\mathsf{s}\circ\mathsf{p}_X: G\to G\quad\hbox{and} \quad \mathsf{h}(g):=\mathsf{r}(g)g^{-1}\in H.
$$
Then for $g\in G$,
\begin{equation}\label{eq:r_h}
g= \mathsf{h}(g)^{-1} \mathsf{r}(g).
\end{equation}
We note that these maps satisfy
\begin{equation}\label{eq:invariance}
\mathsf{p}_X(hg)=\mathsf{p}_X(g)\quad\hbox{and}\quad \mathsf{h}(hg)=\mathsf{h}(g)h^{-1}
\end{equation}
for $g\in G$ and $h\in H$.

Let $\Gamma$ be a lattice subgroup in $G$, that is, a discrete subgroup of finite covolume.
We set $Y:=G/\Gamma$ and denote by 
$$
\mathsf{p}_Y:G\to Y: g\mapsto g\Gamma
$$
the natural projection map.

Let $m$ be a left Haar measure on $G$ and $\rho$ a left Haar measure $H$.
It follows from invariance that the measure $m$ has the decomposition
\begin{equation}\label{eq:measure}
\int_G f(g)\, dm(g)=\int_{H\times X} f(h\cdot \mathsf{s}(x))\,d\rho(h)d\xi(x),\quad \phi\in L^1(G),
\end{equation}
where $\xi$ is a Borel measure on $X$. We note that the measure $\xi$ does depend on the section
$\mathsf{s}$. If fact, if $\mathsf{s}_1$ and $\mathsf{s}_2$ are two sections, then $\mathsf{s}_1(x)\mathsf{s}_2 (x)^{-1}\in H$ and the corresponding
measures $\xi_1$ and $\xi_2$ on $X$ are related by
\begin{equation}\label{eq:1_2}
d\xi_1(x)=\Delta_H(\mathsf{s}_1(x)\mathsf{s}_2(x)^{-1})\, d\xi_2(x),
\end{equation}
where $\Delta_H$ denotes the modular function for the Haar measure on $H$.
In particular, when $H$ is unimodular, the measure $\xi$ is canonically defined.

Our arguments below involve functions supported in a compact domain  $D$ of $X$, and the measurable
section $\mathsf{s}:D\to G$ which 
we choose so that $\mathsf{s}(D)$ is bounded in $G$. 
It follows that  $\xi(D)<\infty$. We denote by $L^p(D)$ the space of functions $\phi$
on $X$ with $\supp(\phi)\subset D$ and
$$
\|\phi\|_{L^p(D)}:=\left(\int_D |\phi|^p\, d\xi\right)^{1/p}<\infty.
$$
While this norm depends on a choice of $\mathsf{s}$, it follows from (\ref{eq:1_2})
that different choices lead to equivalent norms. 

Let $\mu$ be the Haar measure on $Y=G/\Gamma$ induced by $m$.
Namely, $\mu$ is defined by $\mu(A)=m(\mathsf{p}_Y^{-1}(A)\cap \mathcal{F})$ where $\mathcal{F}\subset G$
is a fundamental domain for the right action of $\Gamma$ on $G$.
We normalise Haar measure $m$ on $G$ so that $\mu(G/\Gamma)=1$.

Throughout the paper, 
unless stated otherwise, we use the measure $m$ on $G$,
the measure $\xi$ on $X$, and the measure $\mu$ on $Y$.

Let $\{G_t\}$ be an increasing sequence of compact subsets of $G$. For a subset $S$ of
$G$, we set 
$$
S_t:=S\cap G_t,
$$
and more generally for $g_1,g_2\in G$, we set
\begin{equation}\label{eq:ss_t}
S_t[g_1,g_2]:=S\cap g_1G_tg_2^{-1}.
\end{equation}
Let now $H$ be a closed subgroup. Assuming that the limit
$$
\Theta(g_1,g_2):=\lim_{t\to\infty} \frac{\rho(H_t[g_1,g_2])}{\rho(H_t)}
$$
exists (see, for instance, Lemma \ref{p:alpha} below), we consider 
the family of measures $\nu_x$ on $X$ indexed by $x\in X$ and defined by
\begin{align}\label{eq:nu_x}
d\nu_x(z):=\Theta(\mathsf{s}(x),\mathsf{s}(z))\,d\xi(z),\quad z\in X.
\end{align}
We note that while the measure $\xi$ depends on a choice of the section $\mathsf{s}$,
the measures $\nu_x$ are canonically defined. 

In the case when $G$ is a Lie group and
$H$ is a closed subgroup, the measures $m$ and $\rho$ are defined by smooth differential forms on $X$.
Since the section $\mathsf{s}$ can be chosen locally smooth,
it follows that the measures $\nu_x$ are absolutely continuous.

We note that the "skew balls" $H_t(g_1,g_2)$ and the limiting measures $\nu_x$ were originally introduced 
in \cite{GW}, in which further details can be found.

\subsection{Ergodic-theoretic terminology}

Let $(X,\xi)$ be a standard Borel space equipped with an action of an lcsc group $L$ 
which is denoted by $\pi_X$.
For a family of finite Borel 
measures $\vartheta_t$ on $L$, we denote by $\pi_X(\vartheta_t)$ the averaging operator
defined on measurable functions $\phi$ on $X$ by
\begin{equation}\label{eq:operator}
\pi_X(\vartheta_t)\phi(x)=\int_L \phi(x\cdot l)\,d\vartheta_t(l),\quad x\in X.
\end{equation}

\begin{defi}
{\rm 
Let $(\mathbb{B},\|\cdot\|_{\mathbb{B}})$ be a Banach space of measurable functions on $X$.
\begin{itemize} 
\item 
We say that the family of measures $\vartheta_t$ satisfies the {\it strong maximal inequality} in 
$\mathbb{B}$ with respect to a seminorm $\|\cdot \|$
if for every $\phi \in \mathbb{B}$, 
$$
\left\| \sup_{t\ge 1}
  |\pi_X(\vartheta_t)\phi|\right\|\ll \|\phi\|_{\mathbb{B}}.
$$
\item We say that the family of  measures $\vartheta_t$ satisfies the
{\it mean ergodic theorem} in $\mathbb{B}$
if for every $\phi\in \mathbb{B}$, the sequence
$\pi_X(\vartheta_t)\phi$ converges in the norm of $\mathbb{B}$ as $t\to \infty$.

\item We say that the family of measures $\vartheta_t$ satisfies the {\it quantitative mean ergodic theorem} in
$\mathbb{B}$ with respect to a seminorm $\|\cdot \|$ on $\mathbb{B}$ 
if for every $\phi\in \mathbb{B}$, the limit of $\pi_X(\vartheta_t)\phi$ as $t\to\infty$ 
exists in the norm of $\mathbb{B}$ and
$$
\left\|
  \pi_X(\vartheta_t)\phi - \left(\lim_{t\to\infty}\pi_X(\vartheta_t)\phi\right) \right\|\le E(t)\, \|\phi\|_{\mathbb{B}}
$$
where $E(t)\to 0$ as $t\to\infty$.

\item We say that the family of  measures $\vartheta_t$ satisfies the
{\it pointwise ergodic theorem} in $\mathbb{B}$
if for every $\phi\in \mathbb{B}$, the sequence
$\pi_X(\vartheta_t)\phi(x)$ converges as $t\to \infty$ for almost every $x\in X$.

\end{itemize}
}
\end{defi}

We note that our measures $\theta_t$ will be either atomic measures on the finite sets $\Gamma_t$, or absolutely continuous bounded Borel measures on compact subsets of the   lcsc group $H$.
Therefore measurability of the maximal functions when the supremum is taken over all $t\in \RR_+$  follows from standard arguments. 

The space of particular interest  to us is the spaces $L^p(X)$ consisting of $L^p$-integrable
functions on $X$. However, when $(X,\xi)$ has infinite measure, 
it is natural to consider a filtration of $X$ by domains $D$
with finite measure and the spaces $L^p(D)$ of $L^p$-integrable functions
with support contained in $D$. Moreover, when $X$ has a manifold structure,
we also consider the spaces $L_l^p(D)$ of Sobolev functions introduced in Section \ref{sec:sobolev}.

Given a space of functions $\mathbb{B}$, we denote $\mathbb{B}^+$ the cone 
in $\mathbb{B}$ consisting of nonnegative functions.

\subsection{Orbit sampling operators}
We conclude this section by introducing the operators that will be in the centre of our
discussion. Here and throughout the paper 
we use the notation introduced in Section \ref{sec:space}. 

We consider the normalized orbit sampling operators, defined for a function $\phi:X\to\mathbb{R}$ and $x\in X=H \setminus G$ by 
\begin{equation}\label{eq:lambda}
\pi_{X}(\lambda_t)(\phi)(x):=\frac{1}{\rho(H_t)}\sum_{\gamma\in\Gamma_t} \phi(x\cdot\gamma).
\end{equation}
As noted in the discussion of the duality principle in the Introduction, our plan is to deduce the asymptotic behavior of these sampling operators
from the asymptotic behavior of the averaging operators on $Y=G/\Gamma$ defined by
\begin{equation}\label{eq:b_beta}
\pi_{Y}(\beta^{g_1,g_2}_t)(F)(y):=\frac{1}{\rho(H_t[g_1,g_2])}\int_{H_t[g_1,g_2]}
F(h^{-1}\cdot y)\, d\rho(h).
\end{equation}
for $g_1,g_2\in G$, a function $F:Y\to \mathbb{R}$, and $y\in Y$.

To simplify notation, we also set $\beta_t=\beta_t^{e,e}$.

\section{The basic norm bounds and the strong maximal inequality}\label{sec:max_ineq}

\subsection{Coarse admissibility}\label{sec:CA}
The goal of this section is prove the strong maximal inequality for the operators
$\pi_{X}(\lambda_t)$ defined in (\ref{eq:lambda}), based on the validity of a maximal inequality for the operators defined in (\ref{eq:b_beta}). 
We shall use notation from Section \ref{sec:notation} and assume that the sets $G_t$ satisfy
the following additional regularity properties (see the definition of coarse admissibility in  \cite{gn}):
\begin{enumerate}
\item[(CA1)] for every bounded $\Omega\subset G$, there exists $c>0$ such that
$$
\Omega\cdot G_t \cdot \Omega\subset G_{t+c}
$$
for all $t\ge t_0$.
\item[(CA2)] for every $c>0$,
$$
\sup_{t\ge t_0} \frac{\rho (H_{t+c})}{\rho (H_{t})}<\infty.
$$
\end{enumerate}

\begin{thm}\label{th_max_ineq}
Assume that properties {\rm (CA1)} and  {\rm (CA2)} hold. Then
\begin{enumerate}
\item[(i)] For every $1\le p\le \infty$, compact domain $D\subset X$ and $\phi\in L^p(D)$,
$$
\left\| \pi_X(\lambda_t)\phi\right\|_{L^p(D)}\ll_{p,D}\, \|\phi\|_{L^p(D)}.
$$
\item[(ii)]
Let $1\le p\le \infty$ and $l\in\mathbb{Z}_{\ge 0}$. Assume that 
for every compact domain $B\subset Y$, the averages $\beta_t$ supprted on $H_t$ satisfy
the strong maximal inequality in $L_l^p(B)^+$ with respect to $\|\cdot \|_{L^p(B)}$,
namely, for every $F\in L_l^p(B)^+$, 
$$
\left\| \sup_{t\ge t_0} \pi_Y(\beta_t)F\right\|_{L^p(B)}\ll_{p,l,B}\,\|F\|_{L^p_l(B)}.
$$
Then for every compact domain $D\subset X$,
the family of measures $\lambda_t$ satisfies the strong maximal inequality
in $L_l^p(D)^+$ with respect to $\|\cdot \|_{L^p(D)}$,
namely, for every $\phi\in L_l^p(D)^+$, 
$$
\left\| \sup_{t\ge t_0} \pi_X(\lambda_t)\phi\right\|_{L^p(D)}\ll_{p,l,D}\,\|\phi\|_{L^p_l(D)}.
$$
\end{enumerate}
\end{thm}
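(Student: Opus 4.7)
Both statements will be deduced from a duality comparison between the orbit-sum over $\Gamma_t$ on $X=H\setminus G$ and the $H$-average of a canonical lift to $Y=G/\Gamma$. Fix a small symmetric compact neighborhood $O_H$ of $e$ in $H$ and a symmetric nonnegative $\chi\in C_c^\infty(H)$ with $\supp\chi\subset O_H$ and $\int_H\chi\,d\rho=1$. Writing each $g\in G$ uniquely as $g=\mathsf h(g)^{-1}\mathsf s(\mathsf p_X(g))$ via \eqref{eq:r_h}, set $\Phi_\phi(g):=\chi(\mathsf h(g))\,\phi(\mathsf p_X(g))$, which has bounded support inside $O_H^{-1}\mathsf s(D)$, and lift it to $Y$ by $F_\phi(g\Gamma):=\sum_{\gamma\in\Gamma}\Phi_\phi(g\gamma)$. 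In the local smooth coordinates $(h,x)\mapsto h\,\mathsf s(x)$ the function $\Phi_\phi$ is the tensor product $\chi(h)\,\phi(x)$, so for a suitable compact domain $B\subset Y$ one has $\|F_\phi\|_{L^p_l(B)}\ll_D\|\chi\|_{C^l}\,\|\phi\|_{L^p_l(D)}$.

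\medskip
\textbf{Pointwise domination.} For $x\in D$ and $\gamma\in\Gamma_t$ with $x\gamma\in D$, factor $\mathsf s(x)\gamma=h_\gamma\,\mathsf s(x\gamma)$ with $h_\gamma\in H$; by (CA1) applied with $\Omega=\mathsf s(D)\cup\mathsf s(D)^{-1}$, $h_\gamma\in H_{t+c_0(D)}$. For any $x\in D$ and $h_0\in O_H$ a direct unfolding gives
\[
\pi_Y(\beta_t)F_\phi(h_0^{-1}\mathsf s(x)\Gamma)=\frac1{\rho(H_t)}\sum_{\gamma\in\Gamma}\phi(x\gamma)\int_{H_t}\chi(h^{-1}h_0^{-1}h_\gamma)\,d\rho(h).
\]
Choose $c>0$ (depending on $D$ and $O_H$) large enough, using (CA1) once more with $\Omega=O_H$, so that for every $\gamma\in\Gamma_{t-c}$ with $x\gamma\in D$ one has $h_0^{-1}h_\gamma\cdot O_H\subset H_t$; the inner integral then equals $\int_H\chi\,d\rho=1$. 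Dropping nonnegative terms and invoking (CA2) to replace $\rho(H_t)^{-1}$ by $\rho(H_{t-c})^{-1}$ up to a uniform constant yields
\[
\pi_X(\lambda_{t-c})\phi(x)\le C\,\pi_Y(\beta_t)F_\phi(h_0^{-1}\mathsf s(x)\Gamma)\qquad(x\in D,\ h_0\in O_H,\ t\ge t_0).
\]

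\medskip
\textbf{Passage to $L^p$.} Integrating the pointwise bound against the probability measure $\rho(O_H)^{-1}\mathbf 1_{O_H}(h_0)\,d\rho(h_0)$ and taking the supremum in $t$ inside the $H$-integral (by the elementary inequality $\sup\int\le\int\sup$) gives $M_X(x)\le C''\int_{O_H}M_Y(h_0^{-1}\mathsf s(x)\Gamma)\,d\rho(h_0)$, where $M_X:=\sup_{t\ge t_0}\pi_X(\lambda_t)\phi$ and $M_Y:=\sup_{t\ge t_0}\pi_Y(\beta_t)F_\phi$. Raising to the $p$-th power, applying Jensen's inequality, and invoking the measure decomposition \eqref{eq:measure} (absorbing the bounded modular contribution on $O_H$) rewrites $\int_D\int_{O_H}|M_Y(h_0^{-1}\mathsf s(x)\Gamma)|^p\,d\rho\,d\xi$ as a bounded multiple of $\int_{O_H^{-1}\mathsf s(D)}|M_Y(g\Gamma)|^p\,dm(g)$; the latter is dominated by $\|M_Y\|_{L^p(B)}^p$ times the finite $\Gamma$-covering multiplicity of the projection on the precompact set $O_H^{-1}\mathsf s(D)$. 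Combining with the hypothesis $\|M_Y\|_{L^p(B)}\ll\|F_\phi\|_{L^p_l(B)}$ and the estimate from the first paragraph proves (ii). Part (i) follows either from the same scheme without the supremum, or more simply by Riesz--Thorin interpolation between the endpoint estimates at $p=\infty$ (where $|\pi_X(\lambda_t)\phi(x)|\le\|\phi\|_\infty\cdot|\Gamma_t\cap\{\gamma:x\gamma\in D\}|/\rho(H_t)$ is controlled uniformly by a standard lattice-counting argument using (CA1) and (CA2)) and at $p=1$ (by Fubini and the same change of variables).

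\medskip
\textbf{Main obstacle.} The principal difficulty is the final $L^p$-transfer: the slice $\mathsf p_Y\circ\mathsf s(D)$ has $\mu$-measure zero in $Y$ whenever $\dim H>0$, so one cannot bound evaluation on this slice by $\|M_Y\|_{L^p(B)}$ directly. The role of the symmetric bump $\chi$ and the auxiliary $O_H$-smearing is precisely to thicken the pointwise estimate into an honest integral over an open subset of $Y$; it is at this step that both coarse admissibility axioms are indispensable, (CA1) to ensure that the $H$-thickening remains inside $H_t$ up to a uniform shift, and (CA2) to guarantee that this shift costs only a bounded constant in the normalization $\rho(H_t)$.
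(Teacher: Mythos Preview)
Your approach is essentially the same as the paper's: the lift $\Phi_\phi=\chi(\mathsf h(\cdot))\phi(\mathsf p_X(\cdot))$ and its $\Gamma$-periodisation $F_\phi$ are exactly the functions $f$ and $F$ of \eqref{eq:funcion}, your pointwise domination is the content of the comparison inequality \eqref{eq:max1}--\eqref{eq:max_last}, and your $L^p$-transfer via the $O_H$-smearing, Jensen, and covering multiplicity is the combination of \eqref{eq:measure} with Lemmas~\ref{lem:norm_est1} and~\ref{lem:norm_est2}. The only cosmetic difference is that the paper works directly with $g\in\Omega=\supp(\chi)^{-1}\mathsf s(D)$ rather than parametrising by $(h_0,x)\in O_H\times D$, and for part~(i) the paper uses the main comparison argument together with Jensen (your first option) rather than the interpolation alternative you sketch.
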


We start by establishing a relation between $L^p$-norms 
on the spaces $G$ and $Y=G/\Gamma$.

\begin{lem}\label{lem:norm_est1}
Let $1\le p\le \infty$ and $\Omega$ be a compact domain of $G$. Then
for every $F\in L^{p}(Y)$,
$$
\|F\circ \mathsf{p}_Y\|_{L^{p}(\Omega)}\ll_{p,\Omega}\,\|F\|_{L^p(\mathsf{p}_Y(\Omega))}.
$$
\end{lem}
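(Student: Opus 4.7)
The plan is to exploit the fact that the projection $\mathsf{p}_Y : G \to Y$ restricted to the compact set $\Omega$ is finite-to-one with multiplicity bounded by a constant depending only on $\Omega$. The key ingredient is the standard quotient integration formula relating $m$ and $\mu$, namely
$$\int_G \phi \, dm = \int_Y \sum_{\gamma \in \Gamma} \phi(g\gamma) \, d\mu(g\Gamma),$$
valid for nonnegative measurable $\phi$, which is just a reformulation of the construction of $\mu$ via a fundamental domain recalled in Section \ref{sec:space}.

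The first step is to apply this formula to $\phi(g) = |F(\mathsf{p}_Y(g))|^p \mathbf{1}_\Omega(g)$. Since $F \circ \mathsf{p}_Y$ is $\Gamma$-right-invariant, the inner sum collapses to $|F(y)|^p N(y)$, where
$$N(y) := \#\{\gamma \in \Gamma : g\gamma \in \Omega\}$$
for any representative $g$ of $y$ (a quick check shows this is independent of the choice). The second step is to verify that $N(y)$ is uniformly bounded. Clearly $N(y) = 0$ unless $y \in \mathsf{p}_Y(\Omega)$, in which case one may pick $g \in \Omega$; any $\gamma$ satisfying $g\gamma \in \Omega$ then lies in $\Omega^{-1}\Omega \cap \Gamma$, which is finite because $\Omega^{-1}\Omega$ is compact and $\Gamma$ is discrete. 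Let $N_\Omega$ denote this uniform bound.

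Combining the two steps yields
$$\int_\Omega |F \circ \mathsf{p}_Y|^p \, dm \;\le\; N_\Omega \int_{\mathsf{p}_Y(\Omega)} |F|^p \, d\mu,$$
hence $\|F \circ \mathsf{p}_Y\|_{L^p(\Omega)} \le N_\Omega^{1/p}\,\|F\|_{L^p(\mathsf{p}_Y(\Omega))}$ for $1 \le p < \infty$. The case $p = \infty$ is immediate since $\mathsf{p}_Y$ is a surjection onto its image.

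I do not expect any genuine obstacle here: the estimate is essentially a discrete counting argument grafted onto the unfolding identity for the quotient measure. The only mildly delicate point is the well-definedness and uniform boundedness of the multiplicity $N(y)$, both of which follow at once from the $\Gamma$-invariance of the fiber and the discreteness of $\Gamma$ in $G$.
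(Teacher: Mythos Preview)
Your proof is correct. Both your argument and the paper's hinge on the same underlying fact---that $\mathsf{p}_Y$ has bounded fiber multiplicity over a compact set---but the implementations differ. The paper chooses a finite measurable partition $\Omega=\sqcup_{i=1}^n\Omega_i$ with $\mathsf{p}_Y$ injective on each piece, notes that $\|(F\circ\mathsf{p}_Y)\chi_{\Omega_i}\|_{L^p(G)}=\|F\chi_{\mathsf{p}_Y(\Omega_i)}\|_{L^p(Y)}$ by the construction of $\mu$, and then applies the triangle inequality to obtain the constant $n$. You instead invoke the unfolding formula once and bound the resulting multiplicity function $N(y)$ pointwise by $|\Omega^{-1}\Omega\cap\Gamma|$, obtaining the constant $N_\Omega^{1/p}$. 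Your route is slightly more direct and yields a constant that is in principle sharper (and with the correct $p$-dependence), while the paper's partition argument is marginally more elementary in that it only needs the local measure-preserving property of $\mathsf{p}_Y$ rather than the full unfolding identity. Either way the content is the same.
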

 
\begin{proof}
We consider a measurable partition $\Omega=\sqcup_{i=1}^n \Omega_i$ 
such that the map $\mathsf{p}_Y$ is one-to-one on each $\Omega_i$. 
Then by the triangle inequality and the definition of the measure on $Y$,
\begin{align*}
\|F\circ \mathsf{p}_Y\|_{L^{p}(\Omega)}&\le \sum_{i=1}^n\|(F\circ \mathsf{p}_Y)\chi_{\Omega_i}\|_{L^{p}(G)}
=\sum_{i=1}^n\|F\chi_{\mathsf{p}_Y(\Omega_i)}\|_{L^{p}(Y)}
\le n \|F\|_{L^{p}(\mathsf{p}_Y(\Omega))}.
\end{align*}
This proves the lemma.

\end{proof}

Given functions $\phi\in L_l^p(D)$ with $D\subset X$ and $\chi\in C_c^l(H)$ (the space of continuous functions with compact support and $l$ continuous derivatives),
we introduce functions $f:G\to\mathbb{R}$ and $F:Y\to\mathbb{R}$ defined by
\begin{align}\label{eq:funcion}
f(g):=\chi(\mathsf{h}(g))\phi(\mathsf{p}_X(g))\quad\hbox{and}\quad
F(g\Gamma):=\sum_{\gamma\in\Gamma} f(g\gamma).
\end{align}
We note that if $\phi \in L^1(D)$, then 
it follows from (\ref{eq:measure}) that $f\in L^1(G)$, and, hence, $F\in L^1(Y)$.
The following lemma gives a more general estimate.

\begin{lem}\label{lem:norm_est2}
For every $1\le p\le \infty$, $l\in\mathbb{N}_{\ge 0}$, and compact  domain $D\subset X$,
\begin{equation}\label{eq:norm_est2}
\|F\|_{L_l^{p}(Y)}\ll_{p,l,D,\chi}\, \|\phi\|_{L_l^{p}(D)}.
\end{equation}
\end{lem}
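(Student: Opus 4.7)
The plan is to break the bound into two reductions: from $\phi$ on $X$ to $f$ on $G$, and then from $f$ on $G$ to its periodisation $F$ on $Y$. Throughout, the essential point is that $f$ has compact support, so all passages between local Sobolev norms involve only finitely many coordinate charts.

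\medskip

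\noindent\textbf{Step 1 (Compact support and smoothing the section).} Since $D$ is compact and $\mathsf{s}$ is bounded on compact sets, the set $\mathsf{s}(D)$ is precompact in $G$. Combined with the compactness of $K:=\supp(\chi)\subset H$, the identities $f(g)=\chi(\mathsf{h}(g))\phi(\mathsf{p}_X(g))$ and $g=\mathsf{h}(g)^{-1}\mathsf{s}(\mathsf{p}_X(g))$ force
$\supp(f)\subset\Omega:=K\cdot\mathsf{s}(D)$, which is compact in $G$. Because $\mathsf{p}_X\colon G\to X$ is a smooth submersion with fibre $H$, one can replace $\mathsf{s}$ on a neighbourhood $U$ of $D$ by a smooth section (obtained from a partition of unity subordinate to trivialising charts of $\mathsf{p}_X$); this does not affect the statement of the lemma since different choices of section produce equivalent Sobolev norms, by an analogue of \eqref{eq:1_2} for the Sobolev scale.

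\medskip

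\noindent\textbf{Step 2 (Transfer to $G$).} With a smooth section over $U$, the maps $\mathsf{h}$ and $\mathsf{p}_X$ are smooth on $\Omega$, so in any coordinate chart on $G$ intersecting $\Omega$ one can apply the Leibniz rule to differentiate the product $f=(\chi\circ\mathsf{h})\cdot(\phi\circ\mathsf{p}_X)$ up to order $l$. The derivatives of $\chi\circ\mathsf{h}$ are bounded in $L^\infty(\Omega)$ by $C(\chi,D,l)$, while the derivatives of $\phi\circ\mathsf{p}_X$ are, via the chain rule and compactness of $\Omega$, controlled by the Sobolev derivatives of $\phi$ pulled back under the charts of $X$ used to define $\|\phi\|_{L^p_l(D)}$. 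Summing over a finite atlas covering $\Omega$ and invoking the change-of-variables comparison between the chosen charts gives
$$
\|f\|_{L^p_l(G)}\ll_{p,l,D,\chi}\|\phi\|_{L^p_l(D)}.
$$

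\medskip

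\noindent\textbf{Step 3 (Periodisation to $Y$).} Because $\supp(f)\subset\Omega$ is compact and $\Gamma$ is discrete, there is a constant $N=N(\Omega,\Gamma)$ such that for every $g\in G$ at most $N$ of the translates $g\gamma$ ($\gamma\in\Gamma$) lie in $\Omega$; consequently the sum defining $F$ has at most $N$ non-zero terms at each point. Cover the compact projection $\mathsf{p}_Y(\Omega)$ by finitely many charts on $Y$ each of which admits a smooth lift to a chart in $G$ (possible because $\mathsf{p}_Y$ is a local diffeomorphism). In these lifted charts the distributional derivatives of $F$ up to order $l$ are represented by sums of at most $N$ translated derivatives of $f$. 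Applying the inequality $(\sum_{j=1}^N a_j)^p\le N^{p-1}\sum_{j=1}^N a_j^p$ and the right-$\Gamma$-invariance of $m$ exactly as in the proof of Lemma~\ref{lem:norm_est1}, we obtain
$$
\|F\|_{L^p_l(Y)}\ll_{p,l,\Omega}\|f\|_{L^p_l(G)}.
$$
Combining Steps~2 and~3 yields \eqref{eq:norm_est2}.

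\medskip

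\noindent\textbf{Main obstacle.} The only delicate point is that the section $\mathsf{s}$ furnished in Section~\ref{sec:space} is merely measurable, while the estimate requires at least $C^l$-smoothness of $f$ for $l\ge 1$. Resolving this requires Step~1, i.e.\ replacing $\mathsf{s}$ by a locally smooth section on a neighbourhood of $D$ and verifying that the Sobolev norms on $D$ are independent of this refined choice up to multiplicative constants. Once this is done the rest of the argument is a routine combination of Leibniz, chain rule, and the finite multiplicity of $\Gamma$-translates on compact sets.
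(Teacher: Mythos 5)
Your proposal is correct and follows essentially the same route as the paper: reduce via a partition of unity to a neighbourhood admitting a smooth section so that $f$ is genuinely $C^l$-regular, bound $\|f\|_{L^p_l}$ by $\|\phi\|_{L^p_l(D)}$ using the product structure of $\mathsf{p}_X^{-1}(U)\cong H\times U$, and control the periodisation using the local injectivity of $\mathsf{p}_Y$ on the compact set $\Omega$. The only cosmetic differences are that the paper decomposes $F=\sum_i F_i$ via a partition of unity on $G$ subordinate to sets where $\mathsf{p}_Y$ is a diffeomorphism and applies the triangle inequality, whereas you invoke the uniform finite multiplicity of $\Gamma$-translates meeting $\Omega$ together with the power-mean inequality — both are standard and equivalent here.
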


\begin{proof}
We first consider the case when $l=0$.
It follows from (\ref{eq:r_h}) that 
$$
\supp(f)\subset \Omega:=\supp(\chi)^{-1} \mathsf{s}(D).
$$
Recall that we choose the section $\mathsf{s}$ to be bounded on $D$,
so that $\Omega$ is bounded. 
Since 
\begin{equation}\label{eq:f}
f(h\cdot \mathsf{s}(x))=\chi(h^{-1})\phi(x)\quad\hbox{for $(h,x)\in H\times X$,}
\end{equation}
it follows from (\ref{eq:measure}) that
\begin{equation}\label{eq:est_1}
\|f\|_{L^p(\Omega)}\ll_\chi \|\phi\|_{L^{p}(D)}.
\end{equation}
Let  $\Omega=\sqcup_{i=1}^n \Omega_i$ be a measurable partition
such that the map $\mathsf{p}_Y$ is one-to-one on each $\Omega_i$. 
We set $f_i=f\chi_{\Omega_i}$ and $F_i(g)=\sum_{\gamma\in\Gamma} f_i(g\gamma)$,
where the sum is finite because $f_i$ has compact support.
Then 
\begin{equation}\label{eq:est_2}
\|F_i\|_{L^p(Y)}=\|f_i\|_{L^p(\Omega)}\le \|f\|_{L^p(\Omega)}.
\end{equation}
Since $F=\sum_{i=1}^nF_i$, estimate (\ref{eq:norm_est2}) now follows
from (\ref{eq:est_1}) and (\ref{eq:est_2}).

Now let $l>0$. In this case, $G$ and $H$ are assumed to be Lie groups, and every point in $X$ has an
open neighbourhood $U$ with a smooth section $\mathsf{s}:U\to G$ of the factor map $\mathsf{p}_X$.
Using a partition of unity, we reduce the proof to the case when $\supp(\phi)$
is contained in one of these neighbourhoods $U$. Since the map
$$
(\mathsf{h},\mathsf{p}_X): \mathsf{s}^{-1} (U)\to H\times U
$$
defines a diffeomorphism on its domain, it follows from equation (\ref{eq:f}) that 
\begin{equation}\label{eq:est_1_1}
\|f\|_{L_l^p(\Omega)}\ll_\chi \|\phi\|_{L_l^{p}(D)}.
\end{equation}
Let
$\{\psi_i\}_{j=1}^{n}$ be  a partition of unity on $G$
such that $\sum_{i=1}^{n}\psi_i=1$ on $\Omega$,
and for every $i=1,\ldots n$, the map $\mathsf{p}_Y$ is a diffeomorphism on $\supp(\psi_i)$.
Then for functions
$$
F_i(g):=\sum_{\gamma\in\Gamma} (f\psi_i)(g\gamma),
$$
we have
\begin{equation}\label{eq:est_2_1}
\|F_i\|_{L_l^{p}(Y)}=\|f\psi_i\|_{L_l^{p}(\Omega)}\ll \|f\|_{L_l^{p}(\Omega)} \|\psi_i\|_{C^l}.
\end{equation}
Since $F=\sum_{i=1}^n F_i$, we deduce that
$$
\|F\|_{L_l^{p}(Y)}\le \sum_{i=1}^n \|F_i\|_{L_l^{p}(Y)},
$$
and the claim follows from (\ref{eq:est_1_1}) and (\ref{eq:est_2_1}).
\end{proof}
 
\subsection{Coarse geometric comparison argument}

\begin{proof}[Proof of Theorem \ref{th_max_ineq}]
Writing $\phi=\phi^+-\phi^-$ with $\phi^+=\max(\phi,0)$
and $\phi^-=\max(-\phi,0)$, we reduce 
the proof of the first part of the theorem to the case when $\phi\in L^p(D)^+$.

Let $\phi\in L_l^p(D)^+$ and $\chi\in C_c^l(H)$ be  a nonnegative function such that $\int_H \chi\,
d\rho=1$. We define functions $f:G\to\mathbb{R}$ and $F:Y\to\mathbb{R}$
as in (\ref{eq:funcion}). We note that by equation (\ref{eq:f}) we have 
$$
\supp(f)\subset \Omega:=\supp(\chi)^{-1}\mathsf{s}(D)\quad\hbox{and}\quad
\supp(F)\subset B:=\mathsf{p}_Y(\Omega).
$$
Since the section $\mathsf{s}$ is chosen to be bounded on $D$, it follows that $\Omega$ and $B$
are bounded. By \eqref{eq:measure},
\begin{align}\label{eq:mmm}
\left\|   \pi_X(\lambda_t)\phi\right\|_{L^p(D)} 
\ll_D\left\| 
  \pi_X(\lambda_t)(\phi)\circ\mathsf{p}_X\right\|_{L^p(\Omega)}, 
\end{align}
and 
\begin{align}\label{eq:mmmm}
\left\| \sup_{t\ge t_0}
  \pi_X(\lambda_t)\phi\right\|_{L^p(D)} 
\ll_D \left\| \sup_{t\ge t_0}
  \pi_X(\lambda_t)(\phi)\circ\mathsf{p}_X\right\|_{L^p(\Omega)} 
\end{align}

We claim that there exists $c>0$ (depending only on $D$ and $\chi$)
such that for every $g\in \Omega$ and $t\ge t_0$,
\begin{equation}\label{eq:max1}
\sum_{\gamma\in\Gamma_t}\phi(\mathsf{p}_X(g\gamma))\le
\int_{H_{t+c}}
F(\mathsf{p}_Y(h^{-1}g))\,d\rho(h)=\int_{H_{t+c}} F(h^{-1}g\Gamma)d\rho(h).
\end{equation}
Indeed, for $g\in \Omega$ and 
$\gamma\in \Gamma_t$ such that $\mathsf{p}_X (g\gamma)\in \hbox{supp}(\phi)\subset D$, we have
by (\ref{eq:r_h}),
$$
\mathsf{h}(g\gamma)^{-1} \mathsf{s} (\mathsf{p}_X(g\gamma))=g\gamma\in \Omega G_t.
$$
Hence, by (CA1),  there exists $c>0$ such that
$$
\mathsf{h}(g\gamma)^{-1}\supp(\chi)\subset \Omega G_t\mathsf{s}(D)^{-1}\supp(\chi)\subset G_{t+c},
$$
and thus also 
$$
\supp(\chi) \subset \mathsf{h}(g\gamma) H_{t+c}.
$$
Then we conclude using (\ref{eq:invariance}) and (\ref{eq:funcion}) that 
\begin{align*}
\phi(\mathsf{p}_X(g\gamma)) & = \phi(\mathsf{p}_X(g\gamma))
\int_{\mathsf{h}(g\gamma)H_{t+c}} \chi(h)\, d\rho(h)
= \phi(\mathsf{p}_X(g\gamma)) \int_{H_{t+c}} \chi(\mathsf{h}(g\gamma)h) \, d\rho(h)\\
&= \int_{H_{t+c}} \chi(\mathsf{h}(h^{-1}g\gamma)) \phi(\mathsf{p}_X(h^{-1}g\gamma)) \, d\rho(h)
 =  \int_{H_{t+c}} f(h^{-1}g\gamma)\, d\rho(h).
\end{align*}
This implies that
$$
\sum_{\gamma\in\Gamma_t}\phi(\mathsf{p}_X(g\gamma))=
\sum_{\gamma\in\Gamma_t}\int_{H_{t+c}}
f(h^{-1}g\gamma)\,d\rho(h),
$$
and since $f$ is nonnegative, by \ref{eq:funcion} we have 
$$
\sum_{\gamma\in\Gamma_t}\phi(\mathsf{p}_X(g\gamma))\le 
\sum_{\gamma\in\Gamma}\int_{H_{t+c}}
f(h^{-1}g\gamma)\,d\rho(h)=
$$
$$=\int_{H_{t+c}}
F(\mathsf{p}_Y(h^{-1}g))\,d\rho(h)=\int_{H_{t+c}}F(h^{-1}g\Gamma)d\rho(h).
$$
This proves (\ref{eq:max1}). 

Now it follows from (\ref{eq:max1}) and (CA2), using also (\ref{eq:lambda}) and (\ref{eq:b_beta}) that
on $\Omega$,
\begin{align}\label{eq:max_last}
\pi_X(\lambda_t)(\phi)\circ\mathsf{p}_X
\le \left(\sup_{t\ge t_0} \frac{\rho(H_{t+c})}{\rho(H_{t})}\right)
 \pi_Y(\beta_{t+c})F\circ \mathsf{p}_Y
\ll \pi_Y(\beta_{t+c})F \circ \mathsf{p}_Y.
\end{align}
Hence, by (\ref{eq:mmm}), Lemma \ref{lem:norm_est1},  Jensen's inequality, and Lemma \ref{lem:norm_est2},
\begin{align*}
\left\|\pi_X(\lambda_t)\phi \right\|_{L^p(D)} 
&\ll  \left\| \pi_Y(\beta_{t+c})(F)\circ\mathsf{p}_Y \right\|_{L^p(\Omega)}
 \ll \left\| \pi_Y(\beta_{t+c})F \right\|_{L^{p}(B)}\\
&\le \left\| F \right\|_{L^{p}(B)}\ll \|\phi\|_{L^p(D)}.
\end{align*}
This proves the first part of the theorem.

To prove the second part of the theorem,
we observe that by Lemma \ref{lem:norm_est2}, we have $F\in L_l^p(B)^+$,
and by the strong maximal inequality for the family $\beta_t$ in $L_l^p(B)^+$ with respect to $\|\cdot \|_{L^p(B)}$,
the function $\sup_{t\ge t_0} \pi_Y(\beta_{t+c})F$ is in $L^p(B)$.
Hence, by (\ref{eq:mmmm}), (\ref{eq:max_last}), and Lemma \ref{lem:norm_est1},
\begin{align*}
\left\| \sup_{t\ge t_0}
  \pi_X(\lambda_t)\phi\right\|_{L^p(D)} 
\ll  \left\|\sup_{t\ge t_0} \pi_Y(\beta_{t+c})(F)\circ\mathsf{p}_Y
\right\|_{L^{p}(\Omega)}
 \ll \left\|\sup_{t\ge t_0} \pi_Y(\beta_{t+c})F
\right\|_{L^{p}(B)}.
\end{align*}
Now the second part of the theorem
follows from the strong maximal inequality in $L_l^p(B)^+$ combined with Lemma \ref{lem:norm_est2}.
\end{proof}

\section{The mean ergodic theorem}\label{sec:mean}
\subsection{Average admissibility of the restricted sets}
The goal of this section is prove the mean ergodic theorem  for the averages
$\pi_{X}(\lambda_t)$ defined in (\ref{eq:lambda}), based on the mean ergodic theorems for the averages $\pi_Y(\beta_t)$ defined in (\ref{eq:b_beta}).
We shall use notation from Section \ref{sec:notation} and assume that the sets $G_t$ satisfy additional regularity properties, as follows. Note that properties (A2) and (A2') depend on a parameter $r\in (1,\infty)$, but for simplicity we suppress this dependence in the notation. 
\begin{enumerate}
\item[(A1)] For every $\vre\in (0,1)$, there exists a neighbourhood $O_\vre$
of identity in $G$ such that 
$$
O_\vre\cdot G_t\cdot O_\vre\subset G_{t+\vre}
$$
for all $t\ge t_0$.

\item[(A2)] 
For every $u\in G$, and for every compact $\Omega\subset G$, there exists $\omega(\vre)>0$
that converges to $0$ as $\vre\to 0^+$ such that 
$$
\left(\int_{\Omega} (\rho(H_{t+\vre}[u,v])- \rho(H_t[u,v]))^r\,dm(v)\right)^{1/r}\le  \omega(\vre) \rho(H_t)
$$ 
for all $t\ge t_0$ and $\vre\in (0,1)$.

\item[(A2$^\prime$)] 
For every compact $\Omega\subset G$, there exists $\omega(\vre)>0$
that converges to $0$ as $\vre\to 0^+$ such that 
$$
\left(\int_{\Omega\times \Omega} (\rho(H_{t+\vre}[u,v])- \rho(H_t[u,v]))^r\,dm(u)dm(v)\right)^{1/r}\le  \omega(\vre) \rho(H_t).
$$ 
for all $t\ge t_0$ and $\vre\in (0,1)$.

\item[(A3)] For every $g_1,g_2\in G$, the limit
$$
\Theta(g_1,g_2):=\lim_{t\to\infty} \frac{\rho(H_t[g_1,g_2])}{\rho(H_t)} 
$$
exists.

\item[(S)] every $x\in X$ has a neighborhood $U$ such that there exists a continuous
section $\mathsf{s}:U\to G$ of the factor map $\mathsf{p}_X$.
\end{enumerate}

Property (A2$^\prime$) is  used only in the proof of the mean ergodic theorem 
(Theorem \ref{th:mean}), and property (A2) is  used only in the proof of the pointwise ergodic theorem
(Theorem \ref{th:pointwise}).

\begin{thm}\label{th:mean}
Let $1\le p< \infty$.
Assume that {\rm (CA1), (CA2), (A1), (A2$^\prime$)} with some $r\ge p$, {\rm (A3), (S)} hold,
and for every $g_1,g_2\in G$ and every compact $B\subset Y$,
the family $\beta_t^{g_1,g_2}$ satisfies the mean ergodic theorem 
in $L^p(B)$, namely, for every $F\in L^p(B)$,
$$
\left\| \pi_Y(\beta_t^{g_1,g_2})F-\int_Y F\,
  d\mu\right\|_{L^p(B)}\to 0\quad\hbox{as $t\to\infty$.}
$$
Then for every compact domain $D\subset X$, 
the family $\lambda_t$ satisfies the mean ergodic theorem
in $L^p(D)$, namely,
for every $\phi\in L^p(D)$ and for almost every $x\in D$
\begin{equation}\label{eq:mean_0}
\left\| \pi_X(\lambda_t)\phi(x)-\int_X \phi\,
  d\nu_x\right\|_{L^p(D)}\to 0\quad\hbox{as $t\to\infty$,}
\end{equation}
where $\nu_x$ is the measure defined in (\ref{eq:nu_x}).
\end{thm}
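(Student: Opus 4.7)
My strategy is the duality principle: compare $\pi_X(\lambda_t)$ with the continuous $G$-orbit average
\[
\pi_X(\lambda_t^G)\phi(x) := \frac{1}{\rho(H_t)}\int_{G_t}\phi(xg)\,dm(g),
\]
then use the assumed mean ergodic theorem for $\beta_t^{g_1,g_2}$ on $Y$ to handle the residual discrete-vs-continuous discrepancy. By Theorem~\ref{th_max_ineq}(i) and density, I may assume $\phi$ is continuous and compactly supported in the interior of $D$. Unfolding the Haar integral via the decomposition (\ref{eq:measure}) yields
\[
\pi_X(\lambda_t^G)\phi(x) = \int_D \phi(z)\,\frac{\rho(H_t[\mathsf{s}(x),\mathsf{s}(z)])}{\rho(H_t)}\,d\xi(z),
\]
and (A3) together with a uniform bound on the kernel over compact sets in $(x,z)$ (obtained from (A1) and (CA2) by a covering argument) enables dominated convergence, giving $\pi_X(\lambda_t^G)\phi\to\int_X\phi\,d\nu_x$ in $L^p(D)$.

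To estimate the difference, fix $\vre>0$, pick $\chi_\vre\in C_c(H)^+$ with $\int_H\chi_\vre\,d\rho=1$ and $\supp(\chi_\vre)\subset O_\vre\cap H$ (with $O_\vre$ from (A1)), and form $f_\vre(g)=\chi_\vre(\mathsf{h}(g))\phi(\mathsf{p}_X(g))$ together with its $\Gamma$-periodization $F_\vre(y)=\sum_{\gamma\in\Gamma}f_\vre(g_y\gamma)$, which lies in $L^p(B)$ for a compact $B\subset Y$ by Lemma~\ref{lem:norm_est2}. Using $\phi(\mathsf{p}_X(u))=\int_H f_\vre(h^{-1}u)\,d\rho(h)$ and swapping sums with integrals as in the proof of Theorem~\ref{th_max_ineq}, the discrete sampling on $X$ converts into an $H$-average of $F_\vre$ against the indicator of the skew ball $H_t[\mathsf{s}(x),\mathsf{s}(z)]$, modulo a boundary discrepancy supported in a shell of width $\vre'\to 0$ as $\vre\to 0$ (controlled via (A1)). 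A parallel unfolding shows that $\pi_X(\lambda_t^G)\phi$ admits the analogous representation with $F_\vre$ replaced by $\int_Y F_\vre\,d\mu$, since the $G$-integral averages out the $H$-fibre content. The difference is therefore
\[
\int_D \phi(z)\,\Bigl[\pi_Y(\beta_t^{\mathsf{s}(x),\mathsf{s}(z)})F_\vre-{\textstyle\int_Y}F_\vre\,d\mu\Bigr]\!(\mathsf{p}_Y(\mathsf{s}(x)))\,\frac{\rho(H_t[\mathsf{s}(x),\mathsf{s}(z)])}{\rho(H_t)}\,d\xi(z)+R_{t,\vre}(x).
\]

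The assumed MET for the $\beta_t^{g_1,g_2}$ on $L^p(B)$, applied jointly in $(x,z)$ via Fubini against the bounded weight, drives the bracketed term to zero in $L^p(D)$ as $t\to\infty$ uniformly in $\vre$; subsequently, $\vre\to 0$ eliminates $R_{t,\vre}$, whose $L^p(D)$-norm is bounded by $C\omega(\vre)\|\phi\|_\infty$ via Minkowski's integral inequality and (A2$^\prime$), the latter being precisely where the hypothesis $r\ge p$ converts the joint $L^r$-control of shell volumes on $\Omega\times\Omega$ into the required $L^p(D)$-bound. The main obstacle is this final step: promoting the pointwise-in-$(x,z)$ sandwich to an $L^p(D)$-bound requires the joint volume regularity in (A2$^\prime$), while the continuity of the section $\mathsf{s}$ afforded by (S) ensures the integrands make sense as functions of $x\in D$.
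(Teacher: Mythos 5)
Your overall architecture coincides with the paper's: compare $\pi_X(\lambda_t)$ with the continuous average $\pi_X(\lambda_t^G)$, identify the limit of the latter via the kernel $\rho(H_t[\mathsf{s}(x),\mathsf{s}(z)])/\rho(H_t)$ and (A3), smooth the discrete orbit sum by a bump function $\chi_\vre$ on $H$, periodize to $F_\vre$ on $Y=G/\Gamma$, invoke the mean ergodic theorem for $\beta_t^{g_1,g_2}$, and absorb the $\vre$-shell errors using (A2$^\prime$). The reduction to a dense class and the first-$t$-then-$\vre$ order of limits are also as in the paper (note in passing that your claim of convergence ``uniformly in $\vre$'' is neither true nor needed: $F_\vre$ depends on $\vre$, so the rate in the MET does too; the $\limsup_{t\to\infty}$ followed by $\vre\to 0$ argument is what actually closes the loop).

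There is, however, a genuine gap at the central duality step. Your representation of the difference involves $\bigl[\pi_Y(\beta_t^{\mathsf{s}(x),\mathsf{s}(z)})F_\vre-\int_Y F_\vre\,d\mu\bigr]$ evaluated at the point $\mathsf{p}_Y(\mathsf{s}(x))$, with the pair of parameters $(g_1,g_2)=(\mathsf{s}(x),\mathsf{s}(z))$ ranging over a continuum and, crucially, with $g_1$ coupled to the evaluation point on $Y$. The hypothesis only gives, for each \emph{fixed} pair $(g_1,g_2)$, convergence of $\pi_Y(\beta_t^{g_1,g_2})F-\int_Y F\,d\mu$ to zero in $L^p(B)$-norm in the $Y$-variable; it gives no pointwise control at a prescribed $y$, and no uniformity in $(g_1,g_2)$. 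Consequently ``Fubini against the bounded weight'' cannot be applied: freezing $(g_1,g_2)$ and integrating over $y$ is exactly what the diagonal coupling $y=\mathsf{p}_Y(g_1)$ forbids. The missing idea is the discretization carried out in the paper's local analysis: cover the relevant compact set $\Omega\subset G$ by finitely many translates $v_iO_\vre$ and $u_jO_\vre$, use (A1) to sandwich $H_t[u,\mathsf{r}(u\gamma)]$ between $H_{t\mp c\vre}[u_j,\mathsf{r}(v_i)]$, and thereby replace the continuum of parameters by the finite family $(u_j,\mathsf{r}(v_i))$. One then applies the MET only finitely many times, to the error terms $\mathcal{E}_t^{ij}$, and transfers the resulting $L^p(B)$-smallness back to $L^p(\Omega_j)$ via Lemma \ref{lem:norm_est1}; the cost of the discretization is a shift $t\mapsto t\pm 3\vre$ whose effect is controlled by (A2$^\prime$) through Lemma \ref{c:reg}(ii). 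Without this step your argument does not go through as written.
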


Besides the discrete averages $\pi_X(\lambda_t)$ supported on $\Gamma$, it will be also convenient
to consider their continuous analogues which are defined by
\begin{equation}\label{lambdaG}
\pi_X(\lambda^G_t)\phi(x):=\frac{1}{\rho(H_t)} \int_{G_t}\phi(xg)dm(g),\quad \phi\in L^p(D),
\end{equation}
for every $x\in X$.
It would be convenient in the proof to use averages without normalisation. Namely, we set
\begin{equation}\label{Lambda}
\pi_X(\Lambda_t)\phi(x):= \sum_{\gamma\in \Gamma\cap G_t}\phi(x\gamma)\quad\hbox{and}\quad
\pi_X(\Lambda^G_t)\phi(x):= \int_{G_t}\phi(xg)dm(g).
\end{equation}
We start the proof of Theroem \ref{th:mean} with a lemma:

\begin{lem}\label{c:reg}
 Let $D$ be a compact domain in $X$, $1\le p<q\le \infty$, and $\phi\in L^{q}(D)$.
\begin{enumerate}
\item[(i)] 
Suppose that {\rm (A2)} holds with $r=q/(q-1)$ (here $r=1$ if $q=\infty$).
Then for every $x\in X$,
the estimate 
$$
\left|\pi_X(\Lambda_{t+\vre}^G)\phi(x)-\pi_X(\Lambda_t^G)\phi(x)\right|
\ll_{q,x,D}\, \omega(\vre) \rho(H_t)\|\phi\|_{L^q(D)}
$$
holds for all $t\ge t_0$ and $\vre\in (0,1)$.

\item[(ii)] 
Suppose that {\rm (A2$^\prime$)} holds where $r=pq/(q-p)$ (here $r=p$ if $q=\infty$).
Then the estimate 
$$
\left\|\pi_X(\Lambda_{t+\vre}^G)\phi-\pi_X(\Lambda_t^G)\phi\right\|_{L^p(D)}
\ll_{p,q,D}\, \omega(\vre) \rho(H_t)\|\phi\|_{L^q(D)}
$$
holds for all $t\ge t_0$ and $\vre\in (0,1)$.
\end{enumerate}
\end{lem}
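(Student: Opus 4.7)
\textbf{Proof proposal for Lemma \ref{c:reg}.}

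The strategy is to first derive an explicit integral representation of $\pi_X(\Lambda^G_t)\phi(x)$ in terms of the skew-ball volumes $\rho(H_t[\mathsf{s}(x),\mathsf{s}(y)])$, and then apply H\"older's inequality in conjunction with assumptions (A2) and (A2$^\prime$) to estimate the difference at scales $t$ and $t+\vre$.

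\emph{Step 1: Integral representation.} Using the substitution $g' = \mathsf{s}(x) g$ (left-invariance of $m$), followed by the decomposition $g'=h\mathsf{s}(y)$ from \eqref{eq:measure}, one computes
$$
\pi_X(\Lambda^G_t)\phi(x) \;=\; \int_{\mathsf{s}(x)G_t}\phi(\mathsf{p}_X(g'))\,dm(g') \;=\; \int_D \phi(y)\,\rho(H_t[\mathsf{s}(x),\mathsf{s}(y)])\,d\xi(y),
$$
so that, setting $\Delta_t(x,y):=\rho(H_{t+\vre}[\mathsf{s}(x),\mathsf{s}(y)])-\rho(H_t[\mathsf{s}(x),\mathsf{s}(y)])\ge 0$,
$$
I(x) \;:=\; \pi_X(\Lambda^G_{t+\vre})\phi(x)-\pi_X(\Lambda^G_t)\phi(x) \;=\; \int_D \phi(y)\,\Delta_t(x,y)\,d\xi(y).
$$

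\emph{Step 2: Conversion from $d\xi$ to $dm$.} Fix a nonnegative $\chi\in C_c(H)$ with $\int_H \chi\,d\rho=1$ and support in a compact set $K\subset H$, and set $\Omega:=K\cdot\mathsf{s}(D)$, which is compact by boundedness of $\mathsf{s}$ on $D$. For $v=h\mathsf{s}(y)$ with $h\in K$, the computation
$$
H\cap \mathsf{s}(x)G_tv^{-1} = \bigl(H\cap \mathsf{s}(x)G_t\mathsf{s}(y)^{-1}\bigr)h^{-1}
$$
combined with boundedness of $\Delta_H$ on the compact set $K$ gives $\rho(H_t[\mathsf{s}(x),v])\asymp_K \rho(H_t[\mathsf{s}(x),\mathsf{s}(y)])$, and similarly for $H_{t+\vre}$. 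Integrating $\chi$ against $\rho$ and applying \eqref{eq:measure} therefore yields
$$
\int_D \Delta_t(\mathsf{s}(x),\mathsf{s}(y))^r\,d\xi(y) \;\ll_{K}\; \int_\Omega \bigl(\rho(H_{t+\vre}[\mathsf{s}(x),v])-\rho(H_t[\mathsf{s}(x),v])\bigr)^r\,dm(v),
$$
and, doing the analogous manipulation in the $x$-variable,
$$
\int_D\int_D \Delta_t(\mathsf{s}(x),\mathsf{s}(y))^r\,d\xi(x)\,d\xi(y) \;\ll_{K}\; \int_\Omega\int_\Omega \bigl(\rho(H_{t+\vre}[u,v])-\rho(H_t[u,v])\bigr)^r\,dm(u)\,dm(v).
$$

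\emph{Step 3: Proof of (i).} Let $q'=q/(q-1)$ (so $q'=r$ in the sense of (A2) with $r=q/(q-1)$). By H\"older's inequality in $y$,
$$
|I(x)| \;\le\; \|\phi\|_{L^q(D)}\Bigl(\int_D \Delta_t(x,y)^{q'}\,d\xi(y)\Bigr)^{1/q'}.
$$
Apply Step 2 with exponent $q'$ and then (A2) with $u=\mathsf{s}(x)$: the inner integral is bounded by $(\omega(\vre)\rho(H_t))^{q'}$ up to a constant depending on $D$ and $q$, yielding (i).

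\emph{Step 4: Proof of (ii).} The hypothesis $r=pq/(q-p)$ is equivalent to $1/r=1/p-1/q$, which in turn gives $1/r+1/q+1/p'=1$ where $p'=p/(p-1)$. Apply the three-term H\"older inequality in $y$ to the product $\Delta_t\cdot\phi\cdot 1$:
$$
|I(x)| \;\le\; \xi(D)^{1/p'}\,\|\phi\|_{L^q(D)}\Bigl(\int_D \Delta_t(x,y)^r\,d\xi(y)\Bigr)^{1/r}.
$$
Raising to the $p$-th power and integrating in $x$, and noting that $p/r<1$ so that Jensen's inequality gives
$$
\int_D\Bigl(\int_D \Delta_t(x,y)^r\,d\xi(y)\Bigr)^{p/r}\,d\xi(x) \;\le\; \xi(D)^{1-p/r}\Bigl(\int_D\int_D \Delta_t(x,y)^r\,d\xi(x)\,d\xi(y)\Bigr)^{p/r},
$$
we obtain $\|I\|_{L^p(D)}\ll_{D,p,q}\|\phi\|_{L^q(D)}\bigl(\int\!\int \Delta_t^r\,d\xi\,d\xi\bigr)^{1/r}$. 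The two-variable form of Step 2 together with (A2$^\prime$) bounds the double integral by $(\omega(\vre)\rho(H_t))^r$, completing (ii).

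\emph{Main obstacle.} The routine parts are the H\"older manipulations; the delicate step is the measure conversion in Step 2. One must verify carefully that substituting section values $\mathsf{s}(x),\mathsf{s}(y)$ by arbitrary elements of a compact neighborhood $\Omega\subset G$ only contributes multiplicative factors from $\Delta_H$ restricted to a compact set of $H$, so that the skew-ball regularity assumptions (A2) and (A2$^\prime$), which are formulated on compact subsets of $G$ in the Haar measure $m$, can be transferred back to integrals over $D$ against $\xi$.
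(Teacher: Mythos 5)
Your proof is correct and follows essentially the same route as the paper's: the integral representation $\pi_X(\Lambda_t^G)\phi(x)=\int_D\phi(y)\,\rho(H_t[\mathsf{s}(x),\mathsf{s}(y)])\,d\xi(y)$ obtained from \eqref{eq:measure}, H\"older's inequality with the stated exponents, and the transfer of the $\xi$-integral over $D$ to an $m$-integral over a compact $\Omega\subset G$ (via translation of the skew balls by elements of a compact subset of $H$ and boundedness of $\Delta_H$ there) so that (A2) and (A2$^\prime$) can be invoked. The only cosmetic difference is that in part (ii) the paper applies two successive two-term H\"older inequalities where you use a single three-term one followed by Jensen; the exponents and the conclusion are identical.
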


\begin{proof}
To prove (i), we are required to estimate
$$
\left|\int_{G_{t+\vre}} \phi(x g)\, dm(g)-\int_{G_t} \phi(x g)\, dm(g)\right|
=\left|\int_{G_{t+\vre}-G_t} \phi(x g)\, dm(g)\right|.
$$
It follows from invariance of $m$, the equivariance of $\mathsf{p}_X$ and (\ref{eq:measure}) that
\begin{align}\label{eq:long}
\int_{G_{t+\vre}-G_t} \phi(x g)\, dm(g)
=&
\int_{G_{t+\vre}-G_t} \phi(\mathsf{p}_X(\mathsf{s}(x)g))\, dm(g)\\
=&\int_{\mathsf{s}(x)(G_{t+\vre}-G_t)} \phi(\mathsf{p}_X(g))\, dm(g)\nonumber \\
=&\int_{(y,h):\, \mathsf{s}(x)^{-1}h\mathsf{s}(y)\in G_{t+\vre}-G_t} \phi(\mathsf{p}_X(h\mathsf{s}(y)))\, d\rho(h)d\xi(y)\nonumber\\
=&\int_X\phi(y)\left(\rho(H_{t+\vre}[\mathsf{s}(x),\mathsf{s}(y)])-
\rho(H_{t}[\mathsf{s}(x),\mathsf{s}(y)])\right)\,d\xi(y),\nonumber
\end{align}
Hence, by H\"older's  inequality,
\begin{align*}
&\left|\int_{G_{t+\vre}-G_t} \phi(x g)\, dm(g)\right|\\
\le & 
\|\phi\|_{L^q(D)}
\left(\int_{D}\left(\rho(H_{t+\vre}[\mathsf{s}(x),\mathsf{s}(y)])-
\rho(H_{t}[\mathsf{s}(x),\mathsf{s}(y)])\right)^{r}\,d\xi(y)\right)^{1/r},
\end{align*}
where $r=q/(q-1)$ is the exponent conjugate to $q$.
We pick a compact set $O$ of  $H$ with positive measure and put $\Omega=O\mathsf{s}(D)$.
Then it follows from (\ref{eq:measure}) and (A2) that since $\mathsf{s}(x)$ and $\mathsf{s}(y)$ vary in a compact set 
\begin{align*}
&\int_{D}\left(\rho(H_{t+\vre}[\mathsf{s}(x),\mathsf{s}(y)])-
\rho(H_{t}[\mathsf{s}(x),\mathsf{s}(y)])\right)^{r}\,d\xi(y)\\
\ll_O&\;\; 
\int_{\Omega}\left(\rho(H_{t+\vre}[\mathsf{s}(x),v])-
\rho(H_{t}[\mathsf{s}(x),v])\right)^{r}\,dm(v)\\
\ll &\;\;\; \omega(\vre)^r \rho(H_t)^{r}.
\end{align*}
This implies the first part of the lemma.

To prove the second part, we are required to estimate
\begin{align*}
&\int_D\left|\int_{G_{t+\vre}-G_t} \phi(x g)\, dm(g)\right|^p\, d\xi(x)\\
=&\int_D \left|\int_X\phi(y)\left(\rho(H_{t+\vre}[\mathsf{s}(x),\mathsf{s}(y)])-
\rho(H_{t}[\mathsf{s}(x),\mathsf{s}(y)])\right)\,d\xi(y)\right|^p\,d\xi(x)
\end{align*}
Since $\supp(\phi)\subset D$, it follows from (\ref{eq:long}) and H\"older's inequality that
\begin{align*}
&\left|\int_{G_{t+\vre}-G_t} \phi(x g)\, dm(g)\right|^p\\
\le & \xi(D)^{p(1-1/p)}
\int_D|\phi(y)|^p\left(\rho(H_{t+\vre}[\mathsf{s}(x),\mathsf{s}(y)])-
\rho(H_{t}[\mathsf{s}(x),\mathsf{s}(y)])\right)^p\,d\xi(y).
\end{align*}
Moreover, integrating over $x\in D$ w.r.t. $\xi$ and applying H\"older's inequality one more time, we deduce that
\begin{align*}
&\int_{D\times D}|\phi(y)|^p\left(\rho(H_{t+\vre}[\mathsf{s}(x),\mathsf{s}(y)])-
\rho(H_{t}[\mathsf{s}(x),\mathsf{s}(y)])\right)^p\,d\xi(x)d\xi(y)\\
\le & \xi(D)^{p/q}\|\phi\|^p_{L^q(D)}
\left(\int_{D\times D}\left(\rho(H_{t+\vre}[\mathsf{s}(x),\mathsf{s}(y)])-
\rho(H_{t}[\mathsf{s}(x),\mathsf{s}(y)])\right)^{ps}\,d\xi(x)d\xi(y)\right)^{1/s},
\end{align*}
where $s=q/(q-p)$ is the exponent conjugate to $q/p$ (here $s=1$ if $q=\infty$).
Hence, we conclude that
\begin{align*}
& \int_D\left|\int_{G_{t+\vre}-G_t} \phi(x g)\, dm(g)\right|^p\, d\xi(x)\\
\ll&
\|\phi\|^p_{L^q(D)}
\left(\int_{D\times D}\left(\rho(H_{t+\vre}[\mathsf{s}(x),\mathsf{s}(y)])-
\rho(H_{t}[\mathsf{s}(x),\mathsf{s}(y)])\right)^{ps}\,d\xi(x)d\xi(y)\right)^{1/s}.
\end{align*}
As in the first part of the argument,
we pick a compact set $O$ of $H$ with positive measure and set $\Omega=O\mathsf{s}(D)$.
Then it follows from (\ref{eq:measure}) and (A2$^\prime$) that
\begin{align*}
&\int_{D\times D}\left(\rho(H_{t+\vre}[\mathsf{s}(x),\mathsf{s}(y)])-
\rho(H_{t}[\mathsf{s}(x),\mathsf{s}(y)])\right)^{ps}\,d\xi(x)d\xi(y)\\
\ll_O&\;\;
\int_{\Omega\times \Omega}\left(\rho(H_{t+\vre}[u,v])-
\rho(H_{t}[u,v])\right)^{ps}\,dm(u)dm(v)\\
\ll &\;\;\; \omega(\vre)^{ps} \rho(H_t)^{ps}.
\end{align*}
This implies the required estimate.
\end{proof}

The proof of Theorem \ref{th:mean} now continues with 
\subsection{Geometric comparison argument}

We first observe that for every $\phi,\psi\in L^p(D)$ and $x\in X$ 
we have 
\begin{align*}
\left\| \pi_X(\lambda_t)\phi-\pi_X(\lambda_t)\psi \right\|_{L^p(D)}&\ll 
\left\| \phi-\psi \right\|_{L^p(D)},\\
\left\|\int_X \phi\, d\nu_x- \int_X \psi\, d\nu_x \right\|_{L^p(D)}
&\ll \left\| \phi-\psi \right\|_{L^p(D)}.
\end{align*}
The first estimate is proved in Theorem \ref{th_max_ineq}(i). 
To prove the second estimate we observe 
that it follows from (CA1)--(CA2) that the density $\Theta(x,\cdot)$ of
the measure $\nu_x$ is uniformly bounded on $D$, with the bound uniform as $x$ varies in compact sets in $G$.
Hence, since $\xi(D)<\infty$, the second estimate follows from H\"older's inequality, by definition of $\nu_x$.

The above  estimates imply that it is sufficient to verify (\ref{eq:mean_0})
for a dense family of functions in $L^p(D)$, and we shall prove
that (\ref{eq:mean_0}) holds for $\phi \in L^q(D)$ with $q>p$ such that $r=pq/(q-p)$.
Since every such $\phi$
can be written as $\phi=\phi^+-\phi^-$ with $\phi^+\ge 0$ and $\phi^-\ge 0$
are in $L^q(D)$, the proof reduces to the case when $\phi\ge 0$.
Moreover, because condition (S) is satisfied, decomposing $\phi$ as a finite sum of functions with small supports,
we reduce the proof to the situation when there exists a section $\mathsf{s}:X\to G$
of the factor map $\mathsf{p}_X:G\to H\backslash G=X$ such that $\mathsf{s}|_D$ is continuous.

Using \eqref{eq:measure}, we deduce that for every $x\in X$, we have
\begin{align*}
\pi_X(\lambda^G_t)\phi(x) 
&=
\frac{1}{\rho(H_t)}\int_{G_t} \phi(\mathsf{p}_X(\mathsf{s}(x)g))\, dm(g)\\
&=\frac{1}{\rho(H_t)}\int_{(y,h):\, \mathsf{s}(x)^{-1}h\mathsf{s}(y)\in G_t} \phi(\mathsf{p}_X(h\mathsf{s}(y)))\, d\rho(h)d\xi(y)\\
&=\int_D\phi(y)\frac{\rho(H_t[\mathsf{s}(x),\mathsf{s}(y)])}{\rho(H_t)}\,d\xi(y).
\end{align*}
By (CA1)--(CA2), 
$\frac{\rho(H_t[\mathsf{s}(x),\mathsf{s}(y)])}{\rho(H_t)}$
is bounded uniformly when $x$ is fixed and $y$ varies on $ D$. Hence, it follows from (A3) and
the dominated convergence theorem (since $\phi$ is certainly in $L^1(D)$) , that for every $x\in X$,
\begin{equation}\label{eq:point}
\pi_X(\lambda^G_t)\phi(x) \to \int_D \phi\, d\nu_x =\int_X \phi\, d\nu_x\quad\hbox{as $t\to\infty$}.
\end{equation}

To obtain convergence in $L^p(D)$, consider the difference 
$$\abs{\pi_X(\lambda^G_t)\phi(x) - \int_D \phi\, d\nu_x}^p= \abs{\int_D\phi(y)\left(\frac{\rho(H_t[\mathsf{s}(x),\mathsf{s}(y)])}{\rho(H_t)}-\Theta(\mathsf{s}(x),\mathsf{s}(y))\right)\,d\xi(y)}^p$$
Therefore, applying the dominated convergence theorem one more time, and the fact that $\phi\in L^p(D)$,  we deduce by integrating over  $x\in D$ that 
the convergence also holds in $L^p(D)$.

To conclude the proof of Theorem \ref{th:mean}, it remains to show that
\begin{equation}\label{eq_main0}
\left\| \pi_X(\lambda_t)\phi-\pi_X(\lambda^G_t)\phi\right\|_{L^p(D)}\to 0\quad\hbox{as $t\to\infty$}
\end{equation}
for every $\phi$ in the dense subset of $L^p(D)^+$ we chose, namely every $\phi\in L^q(D)^+$.
This calls for a comparison between the discrete average supported on $x\cdot \Gamma_t$, and the continuous averages supported on $x\cdot G_t$. We will therefore apply local analysis to effect this comparison. 

Let $\vre\in (0,1/42)$ and $O$ be a compact symmetric neighborhood of identity in $G$ such that
\begin{align}
O\cdot G_t\cdot g^{-1} O g&\subset
G_{t+\vre}\quad\hbox{for every $t\ge t_0$ and $g\in\mathsf{s}(D)$,}\label{eq:pert}\\
G_t\cdot \mathsf{s}(y O)^{-1} &\subset G_{t+\vre}\cdot \mathsf{s}(y)^{-1}\quad\hbox{for 
every $t\ge t_0$ and $y\in D$}.\label{eq:pert2}
\end{align}
Such a neighbourhood exists by (A1) and continuity of $\mathsf{s}$ on $D$.
Let $\chi\in C_c(H)^+$ with $\supp(\chi)\subset O\cap H$ be 
normalized so that $\int_H \chi\, d\rho=1$, and let
$f:G\to\mathbb{R}$ be defined as in (\ref{eq:funcion}).

We claim that for $u\in G$,
\begin{equation}\label{eq_claim2_0}
\pi_X(\Lambda_t)\phi(\mathsf{p}_X(u))\le \sum_{\gamma\in\Gamma}\int_{H_{t+\vre}[u,\mathsf{r}(u\gamma)]} f(h^{-1}u\gamma)\,d\rho(h),
\end{equation}
and
\begin{equation}\label{eq_claim2a_0}
\pi_X(\Lambda_t)\phi(\mathsf{p}_X(u))\ge \sum_{\gamma\in\Gamma}\int_{H_{t-\varepsilon}[u,\mathsf{r}(u\gamma)]} f(h^{-1}u\gamma)\,d\rho(h).
\end{equation}
To establish (\ref{eq_claim2_0}), it suffices to consider $\gamma\in \Gamma_t$ satisfying $\mathsf{p}_X (u\gamma)\in \hbox{supp}(\phi)\subset D$, where we have
by (\ref{eq:r_h}),
$$
\mathsf{h}(u\gamma)^{-1}\mathsf{r} (u\gamma)=u\gamma\in uG_t,
$$
and by (\ref{eq:pert}),
$$
\mathsf{h}(u\gamma)^{-1}\mathsf{r} (u\gamma)\left(\mathsf{r} (u\gamma)^{-1}{O} \mathsf{r} (u\gamma)\right)=
\mathsf{h}(u\gamma)^{-1}{O} \mathsf{r} (u\gamma)\subset 
u G_t\cdot \mathsf{r} (u\gamma)^{-1} {O} \mathsf{r} (u\gamma)
\subset uG_{t+\vre}.
$$
Hence,
$$
\mathsf{h}(u\gamma)^{-1}{O} \subset uG_{t+\vre}\mathsf{r} (u\gamma)^{-1}
$$
and thus also 
$$
\supp(\chi) \subset \mathsf{h}(u\gamma) H_{t+\vre}[u,\mathsf{r} (u\gamma)].
$$
Therefore by invariance of $\rho$ and (\ref{eq:invariance}),
\[
\begin{split}
\phi(\mathsf{p}_X(u\gamma)) & = \phi(\mathsf{p}_X(u\gamma))
\int_{\mathsf{h}(u\gamma)H_{t+\vre}[u,\mathsf{r}(u\gamma)]} \chi(h)\, d\rho(h)\\ 
&= \phi(\mathsf{p}_X(u\gamma)) \int_{H_{t+\vre}[u,\mathsf{r}(u\gamma)]} \chi(\mathsf{h}(u\gamma)h) \, d\rho(h)\\
&= \int_{H_{t+\vre}[u,\mathsf{r}(u\gamma)]} \chi(\mathsf{h}(h^{-1}u\gamma)) \phi(\mathsf{p}_X(h^{-1}u\gamma)) \, d\rho(h)\\
& =  \int_{H_{t+\vre}[u,\mathsf{r}(u\gamma)]} f(h^{-1}u\gamma)\, d\rho(h),
\end{split}
\]
and we conclude that
$$
\pi_X(\Lambda_t)\phi(\mathsf{p}_X(u))\le \sum_{\gamma\in\Gamma_t}\int_{H_{t+\vre}[u,\mathsf{r}(u\gamma)]} f(h^{-1}u\gamma)\,d\rho(h),
$$
Since $f\ge 0$, we can sum over all $\gamma\in \Gamma$, and this implies (\ref{eq_claim2_0}).

To prove \eqref{eq_claim2a_0}, we observe that 
for $\gamma\in \Gamma-\Gamma_t$ such that $\mathsf{p}_X(u\gamma)\in\hbox{supp}(\phi)\subset D$, we have
\begin{equation}\label{eq:new1}
\supp(\chi)\cap \mathsf{h}(u\gamma) H_{t-\vre}[u, \mathsf{r} (u\gamma)] = \emptyset.
\end{equation}
Indeed, if $h\in \supp(\chi)\subset O$ belongs to this intersection, then
$$
u^{-1}\mathsf{h}(u\gamma)^{-1} h \mathsf{r} (u\gamma) \in G_{t-\vre},
$$
and by (\ref{eq:r_h}) and \eqref{eq:pert},
$$
\gamma=u^{-1}\mathsf{h}(u\gamma)^{-1}\mathsf{r} (u\gamma)
\in G_{t-\vre}\cdot \mathsf{r} (u\gamma)^{-1}  h^{-1} \mathsf{r} (u\gamma)\subset G_t,
$$
which gives a contradiction. Thus we can now deduce from \eqref{eq:new1} that
\begin{align*}
 &\sum_{\gamma\in\Gamma} \int_{H_{t-\vre}[u,\mathsf{r}(u\gamma)]} f(h^{-1}u\gamma)\,d \rho(h)\\
=&\sum_{\gamma\in\Gamma} \int_{H_{t-\vre}[u, \mathsf{r}(u\gamma)]} \chi (\mathsf{h}(h^{-1}u\gamma)) \phi(\mathsf{p}_X(hu\gamma))\,d \rho(h)\\
=& \sum_{\gamma\in\Gamma} \phi(\mathsf{p}_X(u\gamma)) \int_{\mathsf{h}(u\gamma) H_{t-\vre}[u,\mathsf{r}(u\gamma)]} \chi (h)\,d\rho(h) \\ 
=& \sum_{\gamma\in\Gamma_t} \phi(\mathsf{p}_X(u\gamma))
\int_{\mathsf{h}(u\gamma)H_{t-\vre}[u,\mathsf{r}(u\gamma)]} \chi\,d\rho
\le \sum_{\gamma\in\Gamma_t} \phi(\mathsf{p}_X(u\gamma))= \pi_X(\Lambda_t)\phi(\mathsf{p}_X(u)),
 \end{align*}
which proves (\ref{eq_claim2a_0}). 

A similar line of reasoning also yields
the analogous estimates for the averages $\pi_X(\Lambda^G_t)$. 
Namely, for $u\in G$ 
\begin{equation}\label{eq_claim2b_0}
\pi_X(\Lambda^G_t)\phi(\mathsf{p}_X(u))\le \int_G\int_{H_{t+\vre}[u,\mathsf{r}(ug)]} f(h^{-1}ug)\,d\rho(h) dm(g),
\end{equation}
and
\begin{equation}\label{eq_claim2a'_0}
\pi_X(\Lambda^G_t)\phi(\mathsf{p}_X(u))\ge \int_G\int_{H_{t-\vre}[u,\mathsf{r}(ug)]} f(h^{-1}ug)\,d\rho(h)dm(g).
\end{equation}

\subsection{Local analysis}

Let us now begin with the next step of the argument. 
Let $\Omega:=\supp(\chi)^{-1}\mathsf{s}(D)$ and $B=\mathsf{p}_Y(\Omega)$.
Then $\supp(f)\subset \Omega$, and 
there exists a finite subset $\{v_i\}_{i=1}^n\subset \Omega$ such that
$$
\Omega\subset \bigcup_{i=1}^n v_i O.
$$
Then it follows from \eqref{eq:pert2} that for every $g\in Hv_iO$,
\begin{equation}\label{eq:round}
H_{t-\vre}[u,\mathsf{r}(v_i)]\subseteq H_t[u,\mathsf{r}(g)]\subseteq H_{t+\vre}[u,\mathsf{r}(v_i)].
\end{equation}
We fix a finite measurable partition
$$
\supp(f)=\bigsqcup_{i=1}^n C_i\quad\hbox{such that $C_i\subset v_i O$.}
$$
Let 
$$
f_{i}=f\cdot\chi_{C_i}\quad\hbox{and}\quad F_i(\mathsf{p}_Y(g))=\sum_{\gamma\in\Gamma}f_{i}(g\gamma),
$$
where the sum is finite because $f_i$ has compact support.
We note that by the definition of $\mu$, we have
$\int_Y F_i\, d\mu=\int_X f_i\,dm$, and by Lemma \ref{lem:norm_est2},
$F_i\in L^q(B)$. Now we deduce from (\ref{eq:round}) that
\begin{align}\label{eq_step1_0}
\sum_{\gamma\in\Gamma} \int_{H_t[u,\mathsf{r}(u\gamma)]} f(h^{-1}u\gamma)\,d\rho(h)
&=\sum_{i=1}^n\sum_{\gamma\in\Gamma} \int_{H_t[u,\mathsf{r}(u\gamma)]} f_i(h^{-1}u\gamma)\,d\rho(h) \nonumber\\
&\le \sum_{i=1}^n\sum_{\gamma\in\Gamma} \int_{H_{t+\vre}[u,\mathsf{r}(v_i)]} f_{i}(h^{-1}u\gamma)\,d\rho(h)\nonumber\\
&=\sum_{i=1}^n\int_{H_{t+\vre}[u,\mathsf{r}(v_i)]} F_{i}(h^{-1}\mathsf{p}_Y(u))\,d\rho(h).
\end{align}
A similar argument also gives the lower estimate
\begin{align}\label{eq_step1a_0}
\sum_{\gamma\in\Gamma} \int_{H_t[u,\mathsf{r}(u\gamma)]} f(h^{-1}u\gamma)\,d\rho(h)
\ge \sum_{i=1}^n\int_{H_{t-\vre}[u,\mathsf{r}(v_i)]} F_{i}(h^{-1}\mathsf{p}_Y(u))\,d\rho(h).
\end{align}

There exists a finite subset $\{u_j\}_{j=1}^m\subset \Omega$ and a finite measurable partition
$$
\Omega=\bigsqcup_{j=1}^m \Omega_j\quad\hbox{such that $\Omega_j\subset u_jO$.}
$$
By \eqref{eq:pert}, for every $u\in u_j{O}$,
$$
H_{t-\vre}[u_j,\mathsf{r}(v_i)]\subseteq H_t[u,\mathsf{r}(v_i)]\subseteq H_{t+\vre}[u_j,\mathsf{r}(v_i)].
$$
We introduce a measurable function $\mathcal{A}_t:\Omega\to \mathbb{R}$ which is defined piecewise by
\begin{equation}\label{eq_g_0}
\mathcal{A}_t(u):=\sum_{i=1}^n\int_{H_{t}[u_j,\mathsf{r}(v_i)]} F_{i}(h^{-1}\mathsf{p}_Y(u))\,d\rho(h)\quad\hbox{for $u\in \Omega_{j}$.}
\end{equation}
Then for $u\in \Omega$,
$$
\mathcal{A}_{t-\vre}(u)\le\sum_{i=1}^n\int_{H_{t}[u,\mathsf{r}(v_i)]}
F_{i}(h^{-1}\mathsf{p}_Y(u))\,d\rho(h)\le \mathcal{A}_{t+\vre}(u),
$$
and combining (\ref{eq_claim2_0}), (\ref{eq_claim2a_0}),
(\ref{eq_step1_0}), (\ref{eq_step1a_0}), we deduce that for $u\in \Omega$,
\begin{equation}\label{eq_g_a_0}
\mathcal{A}_{t-3\vre}(u)\le \pi_X(\Lambda_t)\phi(\mathsf{p}_X(u)) \le \mathcal{A}_{t+3\vre}(u).
\end{equation}

\subsection{The duality argument}

We would now like to exploit the information regarding the ergodic behavior of the $H$-orbits in $G/\Gamma$, and our next task is to prove a similar estimate for the averages $\pi_X(\Lambda^G_t)$.
For $u\in \Omega_{j}$, we have by (\ref{eq:round}) and invariance of $m$,
\begin{align*}
\int_G\int_{H_{t}[u,\mathsf{r}(ug)]} f(h^{-1}ug)\,d\rho(h)dm(g) 
&\le \sum_{i=1}^n\int_G\int_{H_{t+\vre}[u,\mathsf{r}(v_i)]} f_i(h^{-1}ug)\,d\rho(h)dm(g) \\
&=\sum_{i=1}^n\rho(H_{t+\vre}[u,\mathsf{r}(v_i)])\int_{G} f_{i}\, dm\\
&\le \sum_{i=1}^n\rho(H_{t+2\vre}[u_j,\mathsf{r}(v_i)])\int_{G} f_{i}\, dm,
\end{align*}
and similarly,
\begin{align*}
\int_G\int_{H_{t}[u,\mathsf{r}(ug)]} f(h^{-1}ug)\,d\rho(h)dm(g)
\ge\sum_{i=1}^n\rho(H_{t-2\vre}[u_j,\mathsf{r}(v_i)])\int_{G} f_{i}\, dm.
\end{align*}
As in \eqref{eq_g_0}, we introduce a function $\mathcal{A}^G_t:\Omega\to \mathbb{R}$ defined by
\begin{equation}\label{eq_d_0}
\mathcal{A}^G_t(u)=\sum_{i=1}^n\rho(H_{t}[u_j,\mathsf{r}(v_i)])\int_{G} f_{i}\, dm\quad 
\hbox{for $u\in \Omega_{j}$.}
\end{equation}
Then it follows from the above estimates combined with \eqref{eq_claim2b_0} and \eqref{eq_claim2a'_0}
that for $u\in \Omega$,
\begin{equation}\label{eq_d_b_0}
\mathcal{A}^G_{t-3\vre}(u)\le \pi_X(\Lambda^G_t)\phi(\mathsf{p}_X(u)) \le 
\mathcal{A}^G_{t+3\vre}(u).
\end{equation}

By (\ref{eq:measure}) and Lemma \ref{c:reg}(ii) combined with (A2$^\prime$), we have
\begin{equation}\label{eq_d_est_0}
\left\|\pi_X(\Lambda^G_{t+\vre})\phi\circ\mathsf{p}_X - \pi_X(\Lambda^G_{t})\phi\circ\mathsf{p}_X \right\|_{L^p(\Omega)}
\le \omega(\vre) \rho(H_t)\|\phi\|_{L^q(D)},
\end{equation}
where $\omega(\vre)\to 0$ as $\vre\to 0^+$. Combining (\ref{eq_d_b_0}) and (\ref{eq_d_est_0}), we deduce
that
\begin{align}\label{eq_d_est_00}
\left\|
\mathcal{A}^G_{t+\vre}-\mathcal{A}^G_{t}
\right\|_{L^p(\Omega)} &\le
\left\|\pi_X(\Lambda^G_{t+4\vre})\phi\circ\mathsf{p}_X - \pi_X(\Lambda^G_{t-3\vre})\phi\circ\mathsf{p}_X
\right\|_{L^p(\Omega)}\\
&\ll \omega(7\vre) \rho(H_t)\|\phi\|_{L^q(D)}.\nonumber
\end{align}
Now we show that
\begin{equation}\label{eq_g_d_norm_0}
\|\mathcal{A}_t-\mathcal{A}^G_t\|_{L^p(\Omega)}=o(\rho(H_t))\quad\hbox{as $t\to\infty$.}
\end{equation}
This is  where we utilise the mean ergodic theorem for $H$ acting in $L^p(B)\subset L^p(G/\Gamma)$.
Using the triangle inequality, Lemma \ref{lem:norm_est1}, and (CA1)--(CA2), we obtain
\begin{align*}
&\|\mathcal{A}_t-\mathcal{A}^G_t\|_{L^p(\Omega)}\le \sum_{j=1}^m\|\mathcal{A}_t-\mathcal{A}^G_t\|_{L^p(\Omega_j)} \\
=& \sum_{j=1}^m \left\| \sum_{i=1}^n \int_{H_{t}[u_j,\mathsf{r}(v_i)]}
  F_{i}(h^{-1}\mathsf{p}_Y(g))\,d\rho(h) 
- \sum_{i=1}^n\rho(H_{t}[u_j,\mathsf{r}(v_i)])\int_{G} f_{i}\, dm\right\|_{L^p(\Omega_j)}\\
\ll& \sum_{i=1}^n\sum_{j=1}^m
\rho(H_{t}[u_j,\mathsf{r}(v_i)])\left\|\mathcal{E}_{t}^{ij}\right\|_{L^p(B)}
\ll \rho(H_t)\sum_{i=1}^n\sum_{j=1}^m \left\|\mathcal{E}_{t}^{ij}\right\|_{L^p(B)},
\end{align*}
where
$$
\mathcal{E}_{t}^{ij}(y):=\left|\frac{1}{\rho(H_t[u_j,\mathsf{r}(v_i)])}\int_{H_t[u_j,\mathsf{r}(v_i)]}
F_i(h^{-1}y)\,d\rho(h)
- \int_{Y} F_i\, d\mu\right|.
$$
Since $\mathcal{E}_{t}^{ij}\to 0$  in $L^p(B)$ as $t\to\infty$
by the mean ergodic theorem for the family $\beta_t^{g_1,g_2}$, this proves (\ref{eq_g_d_norm_0}).

Using that
\begin{align*}
\|\mathcal{A}_{t+\vre} -\mathcal{A}_{t}\|_{L^p(\Omega)}\le &\|\mathcal{A}_{t+\vre}-\mathcal{A}^G_{t+\vre}\|_{L^p(\Omega)}
+\|\mathcal{A}^G_{t+\vre}-\mathcal{A}^G_{t}\|_{L^p(\Omega)}\\
&+\|\mathcal{A}^G_{t}-\mathcal{A}_{t}\|_{L^p(\Omega)}.
\end{align*}
we deduce from  (\ref{eq_d_est_00}) and (\ref{eq_g_d_norm_0}) that
\begin{equation}\label{eq:gamma0} 
\limsup_{t\to\infty}\frac{\|\mathcal{A}_{t+\vre}
  -\mathcal{A}_{t}\|_{L^p(\Omega)}}{\rho(H_t)}\ll \omega(7\vre)\|\phi\|_{L^q(D)}.
\end{equation}

Now we finally complete the proof by showing that the difference between the discrete sampling operators and the continuous ones converges to zero in norm, namely : 
$$
\left\| \pi_X(\Lambda_t)\phi-\pi_X(\Lambda^G_t)\phi\right\|_{L^p(D)}=o(\rho(H_t))\quad\hbox{as $t\to\infty$.}
$$
By \eqref{eq:measure}, 
$$
\left\| \pi_X(\Lambda_t)\phi-\pi_X(\Lambda^G_t)\phi\right\|_{L^p(D)}
\ll\left\| \pi_X(\Lambda_t)\phi\circ\mathsf{p}_X-\pi_X(\Lambda^G_t)\phi\circ\mathsf{p}_X\right\|_{L^p(\Omega)}.
$$
By (\ref{eq_g_a_0}),
\begin{align*}
&\left\| \pi_X(\Lambda_t)\phi\circ\mathsf{p}_X -\pi_X(\Lambda^G_t)\phi\circ\mathsf{p}_X\right\|_{L^p(\Omega)}\\
\le& \left\|\pi_X(\Lambda_t)\phi\circ\mathsf{p}_X -\mathcal{A}_{t-3\vre}\right\|_{L^p(\Omega)}
 +\left\|\mathcal{A}_{t-3\vre} -\pi_X(\Lambda^G_t)\phi\circ\mathsf{p}_X \right\|_{L^p(\Omega)}\\
\le& \left\|\mathcal{A}_{t+3\vre} -\mathcal{A}_{t-3\vre}\right\|_{L^p(\Omega)}
 +\left\|\mathcal{A}_{t-3\vre} -\pi_X(\Lambda^G_t)\phi\circ\mathsf{p}_X \right\|_{L^p(\Omega)},
\end{align*}
and by (\ref{eq_d_b_0}),
\begin{align*}
&\left\|\mathcal{A}_{t-3\vre} -\pi_X(\Lambda^G_t)\phi\circ\mathsf{p}_X \right\|_{L^p(\Omega)}\\
\le& \left\|\mathcal{A}_{t-3\vre} - \mathcal{A}^G_{t-3\vre}\right\|_{L^p(\Omega)}
+\left\|\pi_X(\Lambda^G_t)\phi\circ\mathsf{p}_X - \mathcal{A}^G_{t-3\vre}\right\|_{L^p(\Omega)}\\
\le & 
\left\|\mathcal{A}_{t-3\vre} - \mathcal{A}^G_{t-3\vre}\right\|_{L^p(\Omega)}
+\left\|\pi_X(\Lambda^G_t)\phi\circ\mathsf{p}_X - \pi_X(\Lambda^G_{t-6\vre})\phi\circ\mathsf{p}_X \right\|_{L^p(\Omega)}.
\end{align*}
Therefore, it follows from (\ref{eq:gamma0}), (\ref{eq_g_d_norm_0}), and (\ref{eq_d_est_0})
that
$$
\limsup_{t\to\infty}
\frac{\left\| \pi_X(\Lambda_t)\phi\circ\mathsf{p}_X -\pi_X(\Lambda^G_t)\phi\circ\mathsf{p}_X\right\|_{L^p(\Omega)}}{\rho(H_t)}
\ll (\omega(42\vre)+\omega(6\vre))\|\phi\|_{L^q(D)}.
$$
Since $\omega(\vre)\to 0$ as $\vre\to 0^+$, this completes the proof of Theorem \ref{th:mean}.

\section{The pointwise ergodic theorem}\label{sec:pointwise}

We now prove that the validity of the the pointwise ergodic theorem for $\pi_Y(\beta_t)$ implies its validity 
for $\pi_X(\lambda_t)$. 

\begin{thm}\label{th:pointwise}
Let $1<p\le \infty$.
Assume that {\rm (CA1), (CA2), (A1), (A2)} with $r=p/(p-1)$, {\rm (A3), and (S)} hold. 
Assume that for every $g_1,g_2\in G$ and every compact domain $B\subset Y$,
family $\beta_t^{g_1,g_2}$ satisfies the pointwise ergodic theorem 
in $L^p(B)$, namely, for every $F\in L^p(B)$,
$$
\lim\pi_Y(\beta_t^{g_1,g_2})F(y)\to\int_Y F\,
  d\mu\quad\hbox{for almost every $y\in B$.}
$$
Then for every compact domain $D\subset X$, 
the family $\lambda_t$ satisfies the pointwise ergodic theorem
in $L^p(D)$, namely,
for every $\phi\in L^p(D)$,
\begin{equation}\label{eq:mean}
\lim_{t\to\infty} \pi_X(\lambda_t)\phi(x)=\int_X \phi\,d\nu_x\quad\hbox{for almost every $x\in D$.}
\end{equation}
\end{thm}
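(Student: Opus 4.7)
\medskip

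\textbf{Proof proposal for Theorem \ref{th:pointwise}.}
The plan is to mirror the structure of the proof of Theorem \ref{th:mean}, replacing $L^p$-norm convergence by pointwise almost everywhere convergence at every step. By Theorem \ref{th_max_ineq}(i), the family $\pi_X(\lambda_t)$ is uniformly bounded on $L^p(D)$ for $1<p\le\infty$, so a standard Banach principle reduction shows that it suffices to establish \eqref{eq:mean} on a dense subset of $L^p(D)$. I choose the dense subset $L^q(D)^+$ for some $q>p$ with $r=q/(q-1)$ (so that condition (A2) applies), after the standard reduction $\phi=\phi^+-\phi^-$. Using condition (S) and a partition of unity, I may further restrict to $\phi\in L^q(D)^+$ whose support lies in a set on which the section $\mathsf{s}$ is continuous.

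Next I split the sampling operator into its discrete and continuous parts:
$$
\pi_X(\lambda_t)\phi(x)-\int_X \phi\, d\nu_x = \bigl(\pi_X(\lambda_t)\phi(x)-\pi_X(\lambda_t^G)\phi(x)\bigr) + \bigl(\pi_X(\lambda_t^G)\phi(x)-\int_X \phi\, d\nu_x\bigr).
$$
For the second summand, the computation preceding \eqref{eq:point} already establishes pointwise convergence to zero for \emph{every} $x\in X$ (using (CA1)--(CA2), (A3), and dominated convergence on $D$), so no further work is needed there.

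The main task is the first summand, for which I adapt the local analysis of Section 4. Retaining the notation $\Omega$, $\{v_i\}$, $\{u_j\}$, the functions $f$, $f_i$, $F_i$, and the pointwise enveloping functions $\mathcal{A}_t$ and $\mathcal{A}_t^G$ defined in \eqref{eq_g_0} and \eqref{eq_d_0}, the bracket inequalities \eqref{eq_g_a_0} and \eqref{eq_d_b_0} remain valid pointwise on $\Omega$. Thus for every $u\in\Omega$ and every fixed $\vre>0$,
$$
\bigl|\pi_X(\Lambda_t)\phi(\mathsf{p}_X(u))-\pi_X(\Lambda_t^G)\phi(\mathsf{p}_X(u))\bigr| \le |\mathcal{A}_{t+3\vre}(u)-\mathcal{A}_{t-3\vre}^G(u)| + |\mathcal{A}_{t-3\vre}(u)-\mathcal{A}_{t+3\vre}^G(u)|,
$$
so the whole matter reduces to two estimates: (a) that $(\mathcal{A}_t-\mathcal{A}_t^G)/\rho(H_t)\to 0$ pointwise a.e.\ on $\Omega$, and (b) that the pointwise oscillation $|\mathcal{A}_{t+\vre}^G(u)-\mathcal{A}_t^G(u)|$ is, after normalisation by $\rho(H_t)$, bounded by a quantity depending only on $\vre$ that vanishes as $\vre\to 0^+$. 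For (a) I use the piecewise-constant structure of $\mathcal{A}_t$ and $\mathcal{A}_t^G$: each is a finite linear combination (over $i,j$) of $\rho(H_t[u_j,\mathsf{r}(v_i)])$ multiplied by either $\pi_Y(\beta_t^{u_j,\mathsf{r}(v_i)})F_i(\mathsf{p}_Y(u))$ or $\int_G f_i\,dm = \int_Y F_i\,d\mu$; so the hypothesised pointwise ergodic theorem for $\beta_t^{g_1,g_2}$ applied to each $F_i\in L^p(B)$ (which is a finite countable set of assertions) yields pointwise a.e.\ convergence. For (b), by \eqref{eq_d_b_0}, the oscillation is controlled pointwise by $|\pi_X(\Lambda_{t+4\vre}^G)\phi(\mathsf{p}_X(u))-\pi_X(\Lambda_{t-3\vre}^G)\phi(\mathsf{p}_X(u))|$, and this is exactly the quantity estimated pointwise by Lemma \ref{c:reg}(i): it is $\ll \omega(7\vre)\rho(H_t)\|\phi\|_{L^q(D)}$ for every $x$.

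The main obstacle, and the reason to work with $L^q$ rather than $L^p$, is delivering (b) pointwise rather than only in $L^p$-norm: this is precisely why we need assumption (A2) with $r=p/(p-1)$ rather than merely (A2$'$). Once both (a) and (b) are in hand, I let $\vre=\vre_n\to 0^+$ along a countable sequence, take the intersection of the countably many full-measure sets on which (a) holds, and combine with (b) to deduce
$$
\limsup_{t\to\infty} \frac{|\pi_X(\Lambda_t)\phi(\mathsf{p}_X(u))-\pi_X(\Lambda_t^G)\phi(\mathsf{p}_X(u))|}{\rho(H_t)} \ll \omega(7\vre_n)\|\phi\|_{L^q(D)}
$$
for a.e.\ $u\in\Omega$, which tends to $0$. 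Dividing through by $\rho(H_t)$ and returning to $X$ via $\mathsf{p}_X$ (which pushes a.e.\ statements on $\Omega$ to a.e.\ statements on $D$ since $\mathsf{s}$ is a measurable section) gives \eqref{eq:mean} on the dense subclass, and the maximal inequality completes the proof.
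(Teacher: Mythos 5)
Your core argument coincides with the paper's: reduce to $\phi\ge 0$ with a continuous section, note that $\pi_X(\lambda_t^G)\phi(x)\to\int_X\phi\,d\nu_x$ for every $x$ by the computation preceding \eqref{eq:point}, sandwich both $\pi_X(\Lambda_t)\phi$ and $\pi_X(\Lambda_t^G)\phi$ between the piecewise-defined envelopes $\mathcal{A}_{t\pm3\vre}$ and $\mathcal{A}^G_{t\pm3\vre}$, apply the hypothesised pointwise ergodic theorem for $\beta_t^{u_j,\mathsf{r}(v_i)}$ to the finitely many functions $F_i$, and control the oscillation of $\mathcal{A}^G_t$ pointwise via Lemma \ref{c:reg}(i) together with (A2). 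All of this is exactly what the paper does.

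The gap is in your opening (and closing) reduction. Theorem \ref{th_max_ineq}(i) gives only the uniform bound $\|\pi_X(\lambda_t)\phi\|_{L^p(D)}\ll\|\phi\|_{L^p(D)}$ for each fixed $t$; the Banach principle for almost everywhere convergence requires control of the maximal function $\sup_t|\pi_X(\lambda_t)\phi|$ (at least in a weak sense), and uniform boundedness of the individual operators does not supply it. The strong maximal inequality of Theorem \ref{th_max_ineq}(ii) is not available either, because it presupposes a strong maximal inequality for the family $\beta_t$ on $Y$ --- a hypothesis that Theorem \ref{th:pointwise} does not make (it only assumes the pointwise ergodic theorem for $\beta_t^{g_1,g_2}$). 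So the phrase ``the maximal inequality completes the proof'' is unsupported. Fortunately, the passage to the subclass $L^q(D)$ with $q>p$ is unnecessary: the exponent $r=p/(p-1)$ in hypothesis (A2) is precisely the conjugate of $p$, so Lemma \ref{c:reg}(i) applies directly to $\phi\in L^p(D)$ and yields the pointwise oscillation bound $\ll\omega(\vre)\rho(H_t)\|\phi\|_{L^p(D)}$; likewise Lemma \ref{lem:norm_est2} places the $F_i$ in $L^p(B)$, where the hypothesised pointwise theorem applies. Running your steps (a) and (b) with $q=p$ therefore proves the statement for every $\phi\in L^p(D)$ with no density argument at all, which is how the paper proceeds.
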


\begin{proof}
In the proof we shall use notations introduced in the proof of Theorem \ref{th:mean}.
As in that proof, we reduce our argument to the case when $\phi\ge 0$ and the section
$\mathsf{s}$ is continuous on $D$. Moreover, because of (\ref{eq:point}), it is sufficient to show that
$$
\lim_{t\to\infty} (\pi_X(\lambda_t)\phi(x)-\pi_X(\lambda^G_t)\phi(x))= 0\quad\hbox{for almost every $x\in D$.}
$$
Let $\vre\in (0,1/12)$, $\Omega=D_0\mathsf{s}(D)$ where $D_0$ is a compact
subset of $H$ with positive measure, and $B=\mathsf{p}_Y(\Omega)$.
For $g\in \Omega$ and $x=\mathsf{p}_X(g)$, we have by (\ref{eq_g_a_0}) and
(\ref{eq_d_b_0}),
\begin{align*}
&\left| \pi_X(\Lambda_t)\phi(x) -\pi_X(\Lambda^G_t)\phi(x)\right|\\
\le& \left|\pi_X(\Lambda_t)\phi(x) -\mathcal{A}_{t-3\vre}(g)\right|
 +\left|\mathcal{A}_{t-3\vre}(g) -\pi_X(\Lambda^G_t)\phi(x) \right|\\
\le& \left|\mathcal{A}_{t+3\vre}(g) -\mathcal{A}_{t-3\vre}(g)\right|
 +\left|\mathcal{A}_{t-3\vre}(g) -\pi_X(\Lambda^G_t)\phi(x) \right|\\
\le&  \left|\mathcal{A}_{t+3\vre}(g) -\mathcal{A}_{t-3\vre}(g)\right|
+ \left|\mathcal{A}_{t-3\vre}(g) - \mathcal{A}^G_{t-3\vre}(g)\right|
+\left|\pi_X(\Lambda^G_t)\phi(x) - \mathcal{A}^G_{t-3\vre}(g)\right|\\
\le & \left|\mathcal{A}_{t+3\vre}(g) -\mathcal{A}_{t-3\vre}(g)\right|+
\left|\mathcal{A}_{t-3\vre}(g) - \mathcal{A}^G_{t-3\vre}(g)\right|
+\left|\mathcal{A}^G_{t+3\vre}(g) - \mathcal{A}^G_{t-3\vre}(g)\right|.
\end{align*}
We will estimate each of these three terms separately.

Recall that $\Omega=\sqcup_{j=1}^m \Omega_j$, and it follows from the definition of 
$\mathcal{A}_t$ and $\mathcal{A}_t^G$ (see (\ref{eq_g_0}) and (\ref{eq_d_0})) 
and (CA1)--(CA2) that for $g\in \Omega_j$,
\begin{align*}
&|\mathcal{A}_t(g)-\mathcal{A}^G_t(g)|\\
=& \left|\sum_{i=1}^n \int_{H_{t}[u_j,\mathsf{r}(v_i)]}
  F_{i}(h^{-1}\mathsf{p}_Y(g))\,d\rho(h) 
- \sum_{i=1}^n\rho(H_{t}[u_j,\mathsf{r}(v_i)])\int_{Y} F_{i}\, d\mu\right|\\
\ll& \sum_{i=1}^n \rho(H_t)\left| 
\frac{1}{\rho(H_{t}[u_j,\mathsf{r}(v_i)])}\int_{H_{t}[u_j,\mathsf{r}(v_i)]}
  F_{i}(h^{-1}\mathsf{p}_Y(g))\,d\rho(h) 
- \int_{Y} F_{i}\, d\mu
\right|.
\end{align*}
By Lemma \ref{lem:norm_est2}, we have $F_i\in L^p(B)$.
Hence, it follows from the pointwise ergodic theorem in $L^p(B)$ that
\begin{equation}\label{eq:gdfdrd}
\lim_{t\to\infty} \frac{|\mathcal{A}_t(g)-\mathcal{A}^G_t(g)|}{\rho(H_t)}=0
\end{equation}
for almost every $g\in \Omega$. By (\ref{eq_d_b_0}) and Lemma \ref{c:reg}(i) combined with (A2),
\begin{align}\label{eq:lslss}
\left|
\mathcal{A}^G_{t+3\vre}(g)-\mathcal{A}^G_{t-3\vre}(g)
\right| &\le
\left|\pi_X(\Lambda^G_{t+6\vre})\phi(x) - \pi_X(\Lambda^G_{t-6\vre})\phi(x)
\right|\\
& \ll \omega(12\vre) \rho(H_t)\|\phi\|_{L^p(D)}\nonumber 
\end{align}
on a set of full measure of $g\in \Omega$.
Since 
\begin{align*}
|\mathcal{A}_{t+3\vre}(g) -\mathcal{A}_{t-3\vre}(g)|\le &|\mathcal{A}_{t+3\vre}(g)-\mathcal{A}^G_{t+3\vre}(g)|
+|\mathcal{A}^G_{t+3\vre}(g)-\mathcal{A}^G_{t-3\vre}(g)|\\
&+|\mathcal{A}^G_{t-3\vre}(g)-\mathcal{A}_{t-3\vre}(g)|,
\end{align*}
combining (\ref{eq:gdfdrd}) and (\ref{eq:lslss}), we deduce that
\begin{align}\label{eq:lslss_s}
\limsup_{t\to\infty} \frac{|\mathcal{A}_{t+3\vre}(g)-\mathcal{A}_{t-3\vre}(g)|}{\rho(H_t)}
\ll \omega(12\vre) \rho(H_t)\|\phi\|_{L^p(D)}
\end{align}
on a set of full measure.
Finally, we deduce from (\ref{eq:gdfdrd}), (\ref{eq:lslss}), and (\ref{eq:lslss_s})
that on a set of full measure in $D$,
\begin{align*}
\limsup_{t\to\infty} \frac{\left| \pi_X(\Lambda_t)\phi(x)
    -\pi_X(\Lambda^G_t)\phi(x)\right|}{\rho(H_t)}\ll
\omega(12\vre) \|\phi\|_{L^p(D)}.
\end{align*}
Since $\omega(\vre)\to 0$ as $\vre\to 0^+$, this proves the theorem.
\end{proof}

\section{Quantitative mean  ergodic theorem}\label{sec:mean_quant}

\subsection{Admissibility}

The goal of this section is prove the quantitative mean and pointwise ergodic theorems for the normalized sampling operators 
$\pi_{X}(\lambda_t)$ defined in (\ref{eq:lambda}).
We shall use notation from Section \ref{sec:notation} and assume that the sets $G_t$ satisfy
the following additional regularity properties:

\begin{enumerate}
\item[(HA1)] There exist 
a basis $\{O_\vre\}_{\vre\in (0,1]}$ of symmetric neighborhoods  of the identity in $G$ 
and $c>0$ such that for every $\vre\in (0,1)$ and $t\ge t_0$
\begin{equation}\label{eq:ha1}
O_\vre \cdot G_t\cdot O_\vre \subset G_{t+c\vre}.
\end{equation}
Moreover, for all sufficiently small $\vre$,
there exists a nonnegative function $\chi_\vre\in C_c^l(H)$ such that
\begin{align}\label{eq_psi_e}
\hbox{\rm supp}(\chi_\vre)\subset O_\vre,\quad
 \int_H \chi_\vre \, d\rho=1,\quad  \|\chi_\vre\|_{L^q_l(H)}\ll \varepsilon^{-\kappa},
\end{align}
and for every compact $\Omega\subset G$ and $\vre\in (0,1)$, there exists a cover
\begin{equation}\label{eq:cover}
\Omega\subset \bigcup_{i=1}^{n_\vre} v_i O_\vre
\end{equation}
with $n_\vre\ll \vre^{-d}$.

\item[(HA2)] 
For every $u\in G$  and for a compact domain $\Omega\subset G$, there exist $c,\theta>0$
such that 
for every $t\ge t_0$ and $\vre\in (0,1)$,
$$
\left(\int_{\Omega} (\rho(H_{t+\vre}[u,v])- \rho(H_t[u,v]))^r\,dm(v)\right)^{1/r}\le  c\, \vre^\theta \rho(H_t).
$$ 

\item[(HA2$^\prime$)] 
For every compact $\Omega\subset G$, there exist $c,\theta>0$
such that 
for every $t\ge t_0$ and $\vre\in (0,1)$,
$$
\left(\int_{\Omega\times \Omega} (\rho(H_{t+\vre}[u,v])- \rho(H_t[u,v]))^r\,dm(u)dm(v)\right)^{1/r}\le  c\,\vre^\theta \rho(H_t).
$$ 

\item[(HS)] every $x\in X$ has a neighbourhood $U$ such that there exists a Lipschitz
(with respect to the neighbourhoods $O_\vre$) section $\mathsf{s}:U\to G$ of the map $\mathsf{p}_X$.

\end{enumerate}

We recall that when $l>0$ we assume that $G$ is a Lie group and $H$ is  a closed subgroup. 
In this case, we take $O_\vre$ to be the symmetric $\vre$-neighbourhoods of identity with respect to 
a fixed Riemannian metric in $G$.
Then (\ref{eq_psi_e}) holds with $\kappa=l+\dim(H)(1-1/q)$ and (\ref{eq:cover}) holds with $d=\dim(G)$.
Moreover, in this case (HS) also holds, and one can choose the section $\mathsf{s}$
to be smooth on $U$.


We say that a function $E(t)$ is {\it coarsely admissible} if 
there exists $c>0$ such that $\sup_{s\in [t,t+1]} E(s)\le c\, E(t)$ for all $t$.

\begin{thm}\label{th_dual_Ger}
Let $1\le p<q\le \infty$ and  $l\in\mathbb{Z}_{\ge 0}$.
Suppose that {\rm (CA1), (CA2), (HA1), (HA2$^\prime$)} with $r=pq/(q-p)$, and {\rm (HS)} hold. Assume that 
for every $g_1,g_2\in G$ and compact $B\subset Y$,
the family $\beta_t^{g_1,g_2}$ satisfies the quantitative mean ergodic
theorem in $L_l^q(B)^+$ with respect to $\|\cdot\|_{L^p(B)}$, and the error term $E(t)$ is 
coarsely admissible and uniform 
over  $g_1,g_2$ in a compact subset of $G$. Namely, for every $F\in L_l^p(B)^+$
and sufficiently large $t$,
$$
\left\| \pi_Y(\beta_t^{g_1,g_2})F-\int_Y F\,
  d\mu\right\|_{L^p(B)}\ll_{p,q,l,B} E(t)\,\|F\|_{L_l^{q}(B)}.
$$
Then for every compact $D\subset X$, $\phi\in L_l^q(D)^+$ and  sufficiently large $t$,
$$
\left\| \pi_X(\lambda_t)\phi- \pi_X(\lambda^G_t)\phi
\right\|_{L^p(D)}\ll_{p,q,l,D} 
E(t)^{\delta} \,\|\phi\|_{L_l^{q}(D)}
$$
with $\delta>0$ independent of $D$ and $\phi$. 
\end{thm}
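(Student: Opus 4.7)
The plan is to follow the geometric comparison argument used in the proof of Theorem~\ref{th:mean}, carrying out each step quantitatively with explicit dependence on the perturbation scale $\varepsilon$, and then optimizing $\varepsilon$ as a function of the error $E(t)$. As in Theorem~\ref{th:mean}, I would reduce to $\phi\in L^q_l(D)^+$ with a Lipschitz section $\mathsf{s}$ on a neighborhood of $\mathrm{supp}(\phi)$ (using (HS)). The essential new ingredient is to let the bump function depend on $\varepsilon$: choose $\chi_\varepsilon\in C_c^l(H)$ supported in $O_\varepsilon\cap H$, with $\int_H\chi_\varepsilon\,d\rho=1$ and $\|\chi_\varepsilon\|_{L^q_l(H)}\ll\varepsilon^{-\kappa}$ as provided by (HA1), and form the associated $f_\varepsilon$ on $G$ and $F_\varepsilon$ on $Y$ as in \eqref{eq:funcion}.

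The bracketing inequalities \eqref{eq_g_a_0} and \eqref{eq_d_b_0} remain valid with shift $O(\varepsilon)$ from (HA1), producing the piecewise-constant comparison functions $\mathcal{A}_t$ and $\mathcal{A}_t^G$ associated to a cover $\Omega\subset\bigcup_{i=1}^{n_\varepsilon}v_iO_\varepsilon$ and a partition $\Omega=\bigsqcup_{j=1}^{m_\varepsilon}\Omega_j$ where $n_\varepsilon,m_\varepsilon\ll\varepsilon^{-d}$. Writing
\[
|\pi_X(\Lambda_t)\phi-\pi_X(\Lambda^G_t)\phi|\le|\mathcal{A}_{t+3c\varepsilon}-\mathcal{A}_{t-3c\varepsilon}|+2|\mathcal{A}_{t-3c\varepsilon}-\mathcal{A}^G_{t-3c\varepsilon}|+|\mathcal{A}^G_{t+3c\varepsilon}-\mathcal{A}^G_{t-3c\varepsilon}|,
\]
we obtain two types of errors to estimate. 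The perturbation terms $\|\mathcal{A}^G_{t+\varepsilon}-\mathcal{A}^G_{t-\varepsilon}\|_{L^p(\Omega)}$ and their $\mathcal{A}$-analogue are bounded, via Lemma~\ref{c:reg}(ii) and hypothesis (HA2'), by $\varepsilon^\theta\rho(H_t)\|\phi\|_{L^q(D)}$. The comparison term $\|\mathcal{A}_t-\mathcal{A}^G_t\|_{L^p(\Omega)}$ is controlled by applying the hypothesized quantitative mean ergodic theorem to each of the finitely many $F_{i,\varepsilon}$; by Lemma~\ref{lem:norm_est2} we have $\|F_{i,\varepsilon}\|_{L^q_l(B)}\ll\varepsilon^{-\kappa}\|\phi\|_{L^q_l(D)}$, and summing over the $\varepsilon^{-O(d)}$ pieces yields a bound of the form $\rho(H_t)\,E(t)\,\varepsilon^{-N}\|\phi\|_{L^q_l(D)}$ for some explicit $N=\kappa+O(d)$, uniformly in $u_j,v_i$ by the assumed uniformity of $E(t)$ over compact sets.

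Combining these estimates and dividing through by $\rho(H_t)$, one obtains
\[
\|\pi_X(\lambda_t)\phi-\pi_X(\lambda^G_t)\phi\|_{L^p(D)}\ll\bigl(\varepsilon^\theta+\varepsilon^{-N}E(t)\bigr)\|\phi\|_{L^q_l(D)},
\]
where the coarse admissibility of $E$ is used to absorb the $O(\varepsilon)$ shifts in the bracketing. Optimizing by setting $\varepsilon=E(t)^{1/(\theta+N)}$ balances the two terms and produces the claimed bound with $\delta=\theta/(\theta+N)$, independent of $D$ and $\phi$.

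The principal obstacle is the careful bookkeeping of the $\varepsilon$-dependence through the chain of reductions: the Sobolev norm $\|\chi_\varepsilon\|_{L^q_l}$ blows up as $\varepsilon^{-\kappa}$ through Lemma~\ref{lem:norm_est2} (which must be revisited with explicit $\chi$-dependence, using Leibniz for $f_\varepsilon=\chi_\varepsilon(\mathsf{h})\phi(\mathsf{p}_X)$ when $l>0$), and the increasing number of partition pieces scales as $\varepsilon^{-d}$. The volume-regularity gain $\varepsilon^\theta$ from (HA2'), and the freedom to optimize $\varepsilon$, must compensate for these losses together with the ergodic-theoretic error $E(t)$. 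A secondary point, made straightforward by the hypothesis, is the uniformity of the rate $E(t)$ over compact sets of $(g_1,g_2)$, which is essential since the points $(u_j,\mathsf{r}(v_i))$ appearing in the definition of $\mathcal{A}_t$ vary with $\varepsilon$.
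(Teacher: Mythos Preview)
Your proposal is correct and follows essentially the same route as the paper: a quantitative version of the proof of Theorem~\ref{th:mean}, with the bump function $\chi_\varepsilon$ from (HA1), $\varepsilon$-scale covers and partitions, bracketing by $\mathcal{A}_t$ and $\mathcal{A}_t^G$, the two competing error terms $\varepsilon^\theta$ (from (HA2$'$)) and $E(t)\varepsilon^{-N}$ (from the quantitative mean ergodic theorem), and optimization $\varepsilon=E(t)^{1/(\theta+N)}$ giving $\delta=\theta/(\theta+N)$.

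One bookkeeping point you underspecify: when $l>0$ the localizations $f_{\varepsilon,i}$ must remain in $L^q_l$ so that the $F_{\varepsilon,i}$ lie in $L^q_l(B)^+$, which forces you to use a \emph{smooth} partition of unity $\{\psi_{\varepsilon,i}\}$ subordinate to the cover rather than characteristic functions. Since $\mathrm{supp}(\psi_{\varepsilon,i})\subset v_iO_\varepsilon$, one has $\|\psi_{\varepsilon,i}\|_{C^l}\ll\varepsilon^{-l}$, and Leibniz then gives $\|F_{\varepsilon,i}\|_{L^q_l(B)}\ll\varepsilon^{-l}\|f_\varepsilon\|_{L^q_l(\Omega)}\ll\varepsilon^{-(l+\kappa)}\|\phi\|_{L^q_l(D)}$, not $\varepsilon^{-\kappa}$ as you wrote. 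Together with the $n_\varepsilon m_\varepsilon\ll\varepsilon^{-2d}$ pieces this yields $N=2d+l+\kappa$; your $N=\kappa+O(d)$ is correct in form but omits the $l$ contribution. This does not affect the argument's validity, only the explicit value of $\delta$.
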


\begin{proof}
The proof of theorem will follows the same outline as the proof of Theorem \ref{th:mean}.
Throughout the proof, we shall use a parameter $\vre=\vre(t)\in (0,1)$ such that
$\vre(t)\to 0$ as $t\to\infty$, which will be specified later.
Because of (HS),
decomposing the function $\phi$ into a sum of functions with small support, we reduce the proof
to the situation when there exists a section $\mathsf{s}:D\to G$ of the factor map $\mathsf{p}_X:G\to H\backslash
G=X$ such that $\mathsf{s}|_D$ is Lipschitz. Moreover, when $G$ and $H$ are Lie groups,
we may assume that $\mathsf{s}$ is smooth on $D$.

Let the function $\chi_\vre\in C_c^l(H)$ be as in (\ref{eq_psi_e}).
We define the function $f_\vre: G\to\mathbb{R}$ as in (\ref{eq:funcion}).
Since $\mathsf{s}|_D$ is Lipschitz and $D$ is compact, it follows from (HA1) that 
there exists $c>0$ such that for all sufficiently small $\vre$,
\begin{align}
O_\vre\cdot G_t\cdot g^{-1} O_\vre g&\subset
G_{t+c\vre}\quad\hbox{for every $g\in\mathsf{s}(D)$,}\label{eq:pert_1}\\
G_t\cdot \mathsf{s}(y O_\vre)^{-1} &\subset G_{t+c\vre}\cdot \mathsf{s}(y)^{-1}\quad\hbox{for 
every $y\in D$}.\label{eq:pert2_1}
\end{align}
Therefore, we may argue as in the proof of Theorem \ref{th:mean}
(cf. (\ref{eq_claim2_0})--(\ref{eq_claim2a_0}) and (\ref{eq_claim2b_0})--(\ref{eq_claim2a'_0})) to show that for $u\in G$,
\begin{align}
\pi_X(\Lambda_t)\phi(\mathsf{p}_X(u))\le \sum_{\gamma\in\Gamma}\int_{H_{t+c\vre}[u,\mathsf{r}(u\gamma)]} f_\varepsilon(h^{-1}u\gamma)\,d\rho(h),\label{eq_claim2}\\
\pi_X(\Lambda_t)\phi(\mathsf{p}_X(u))\ge \sum_{\gamma\in\Gamma}\int_{H_{t-c\varepsilon}[u,\mathsf{r}(u\gamma)]} f_\varepsilon(h^{-1}u\gamma)\,d\rho(h),\label{eq_claim2a}
\end{align}
and
\begin{align}
\pi_X(\Lambda^G_t)\phi(\mathsf{p}_X(u))&\le \int_G\int_{H_{t+c\vre}[u,\mathsf{r}(ug)]} f_\vre (h^{-1}ug)\,d\rho(h), \label{eq_claim2'}\\
\pi_X(\Lambda^G_t)\phi(\mathsf{p}_X(u))&\ge \int_G\int_{H_{t-c\vre}[u,\mathsf{r}(ug)]}
f_\vre(h^{-1}ug)\,d\rho(h)dm(g),\label{eq_claim2a'}
\end{align}

It follows from (\ref{eq:pert2_1}) for every $u\in G$, $g\in H\mathsf{s}(D)$ and $g'\in HgO_\vre$,
\begin{equation}\label{eq_norm_est}
H_{t-c\vre}[u,\mathsf{r}(g')]\subseteq H_t[u,\mathsf{r}(g)]\subseteq H_{t+c\vre}[u,\mathsf{r}(g')].
\end{equation}

\subsection{Local analysis} 

Let $\Omega:=(\overline{{O}_1\cap H})^{-1}\mathsf{s}(D)$ and $B=\mathsf{p}_Y(\Omega)$. Then $\supp(f_\vre)\subset
\Omega$, and by (HA1), there exists a finite cover 
$$
\Omega\subset \bigcup_{i=1}^{n_\vre} v_i O_\vre
$$ 
with $v_i\in \Omega$ and $n_\vre\ll \vre^{-d}$. When $l=0$, we choose a family of 
bounded measurable functions $\{\psi_{\vre,i}\}_{i=1}^{n_\vre}$
such that $\sum_{i=1}^{n_\vre} \psi_{\vre,i}=1$ on  $\Omega$ and $\supp(\psi_{\vre,i})\subset v_iO_\vre$.
When $l>0$, we choose a smooth partition of unity $\{\psi_{\vre,i}\}_{i=1}^{n_\vre}$
satisfying the above properties.
Using the standard construction of the partition of unity (see, for instance,
\cite[Th.~2.13]{rudin}), we get the estimate
\begin{equation}\label{eq:pppppsss}
\|\psi_{\vre,i}\|_{C^l}\ll \vre^{-l}.
\end{equation}
Let $f_{\varepsilon,i}=f_\varepsilon\cdot\psi_{\vre,i}$ and 
$F_{\varepsilon,i}(\mathsf{p}_Y(g))=\sum_{\gamma\in\Gamma}f_{\varepsilon,i}(g\gamma)$.
For sufficiently small $\vre$, the map $\mathsf{p}_Y:G\to Y=G/\Gamma$ is a bijection
on $\supp(f_{\varepsilon,i})$, so that
\begin{align}\label{eq:nnn}
\int_Y F_{\vre,i} \, d\mu=\int_G f_{\vre,i}\, dm\quad\hbox{and}\quad
\|F_{\vre,i}\|_{L^q_l(B)}&\ll \|f_{\vre,i}\|_{L^q_l(\Omega)}.
\end{align}
Using (\ref{eq_norm_est}) we deduce as in the proof of Theorem \ref{th:mean} (cf. (\ref{eq_step1_0})--(\ref{eq_step1a_0})) that
\begin{align}\label{eq_step1}
\sum_{\gamma\in\Gamma} \int_{H_t[u,\mathsf{r}(u\gamma)]}
f_\varepsilon(h^{-1}u\gamma)\,d\rho(h)
\le\sum_{i=1}^{n_\vre}\int_{H_{t+c\vre}[u,\mathsf{r}(v_i)]} F_{\varepsilon,i}(h^{-1}\mathsf{p}_Y(u))\,d\rho(h),
\end{align}
and
\begin{align}\label{eq_step1a}
\sum_{\gamma\in\Gamma} \int_{H_t[u,\mathsf{r}(u\gamma)]} f_\varepsilon(h^{-1}u\gamma)\,d\rho(h)
\ge \sum_{i=1}^{n_\vre}\int_{H_{t-c\vre}[u,\mathsf{r}(v_i)]} F_{\varepsilon,i}(h^{-1}\mathsf{p}_Y(u))\,d\rho(h).
\end{align}

By (HA1) there exist a subset $\{u_j\}_{j=1}^{m_\vre}$ of $\Omega$ with $m_\vre\ll \vre^{-d}$
and a measurable partition 
$$
\Omega=\bigsqcup_{j=1}^{m_\vre} \Omega_j\quad\hbox{such that}\quad \Omega_j\subset u_j O_\vre.
$$
It follows from (\ref{eq:pert_1}) that for all $u\in \Omega_j$,
\begin{equation}\label{eq_norm_est2}
H_{t-c\vre}[u_j,\mathsf{r}(v_i)]\subseteq H_t[u,\mathsf{r}(v_i)]\subseteq H_{t+c\vre}[u_j,\mathsf{r}(v_i)].
\end{equation}
We introduce a  measurable function $\mathcal{A}_t:\Omega\to \mathbb{R}$ defined piecewise by
\begin{equation}\label{eq:omega_t}
\mathcal{A}_t(u)=\sum_{i=1}^{n_\vre}\int_{H_{t}[u_j,\mathsf{r}(v_i)]} F_{\varepsilon,i}(h^{-1}\mathsf{p}_Y(u))\,d\rho(h),\quad u\in \Omega_j.
\end{equation}
By (\ref{eq_norm_est2}), for $u\in \Omega$,
$$
\mathcal{A}_{t-c\vre}(u)\le\sum_{i=1}^{n_\vre}\int_{H_{t}[u,\mathsf{r}(v_i)]}
F_{\varepsilon,i}(h^{-1}\mathsf{p}_Y(u))\,d\rho(h)\le \mathcal{A}_{t+c\vre}(u).
$$
Therefore, combining (\ref{eq_claim2}), (\ref{eq_claim2a}),
(\ref{eq_step1}), (\ref{eq_step1a}), we deduce 
for $u\in \Omega$,
\begin{equation}\label{eq_g_a}
\mathcal{A}_{t-3c\vre}(u)\le \pi_X(\Lambda_t)\phi(\mathsf{p}_X(u))\le \mathcal{A}_{t+3c\vre}(u).
\end{equation}

We also prove similar estimate for the averages $\pi_X(\Lambda_t^G)$.
By (\ref{eq_norm_est}) and (\ref{eq_norm_est2}),  we have for $u\in \Omega_{j}$,
\begin{align*}
\int_G\int_{H_{t}[u,\mathsf{r}(ug)]} f(h^{-1}ug)\,d\rho(h)dm(g) 
&\le \sum_{i=1}^{n_\vre}\int_G\int_{H_{t+c\vre}[u,\mathsf{r}(v_i)]} f_{\vre,i}(h^{-1}ug)\,d\rho(h)dm(g) \\
&=\sum_{i=1}^n\rho(H_{t+c\vre}[u,\mathsf{r}(v_i)])\int_{G} f_{\vre,i}\, dm\\
&\le \sum_{i=1}^{n_\vre}\rho(H_{t+2c\vre}[u_j,\mathsf{r}(v_i)])\int_{G} f_{\vre,i}\, dm,
\end{align*}
and similarly,
\begin{align*}
\int_G\int_{H_{t}[u,\mathsf{r}(ug)]} f(h^{-1}ug)\,d\rho(h)dm(g)
\ge\sum_{i=1}^{n_\vre}\rho(H_{t-2c\vre}[u_j,\mathsf{r}(v_i)])\int_{G} f_{\varepsilon,i}\, dm.
\end{align*}
Hence, it follows from (\ref{eq_claim2'}) and (\ref{eq_claim2a'}) that the function $\mathcal{A}^G_t: \Omega\to \mathbb{R}$ defined by
\begin{equation}\label{eq:omegaG_t}
\mathcal{A}^G_t(u)=\sum_{i=1}^{n_\vre}\rho(H_{t}[u_j,\mathsf{r}(v_i)])\int_{G} f_{\varepsilon,i}\, dm,\quad u\in \Omega_{j},
\end{equation}
satisfies
\begin{equation}\label{eq_d_b}
\mathcal{A}^G_{t-3c\vre}(u)\le \pi_X(\Lambda_t^G)\phi(\mathsf{p}_X(u))\le \mathcal{A}^G_{t+3c\vre}(u).
\end{equation}
for $u\in \Omega$.

Since by (\ref{eq:measure}),
$$
\left\|\pi_X(\Lambda^G_{t+c\vre})\phi\circ\mathsf{p}_X - \pi_X(\Lambda^G_{t})\phi\circ\mathsf{p}_X
\right\|_{L^p(\Omega)}\ll
\left\|\pi_X(\Lambda^G_{t+c\vre})\phi - \pi_X(\Lambda^G_{t})\phi \right\|_{L^p(D)},
$$
it follows from Lemma \ref{c:reg}(ii) combined with (HA2$^\prime$) that
\begin{equation}\label{eq_d_est_0_0}
\left\|\pi_X(\Lambda^G_{t+c\vre})\phi\circ\mathsf{p}_X - \pi_X(\Lambda^G_{t})\phi\circ\mathsf{p}_X \right\|_{L^p(\Omega)}
\ll \vre^\theta \rho(H_t)\|\phi\|_{L^q(D)},
\end{equation}
Moreover, combining (\ref{eq_d_b}) and (\ref{eq_d_est_0_0}), we deduce
that
\begin{align}\label{eq_d_est_00_0}
\left\|
\mathcal{A}^G_{t+c\vre}-\mathcal{A}^G_{t}
\right\|_{L^p(\Omega)} &\le
\left\|\pi_X(\Lambda^G_{t+4\vre})\phi\circ\mathsf{p}_X - \pi_X(\Lambda^G_{t-3\vre})\phi\circ\mathsf{p}_X
\right\|_{L^p(\Omega)}\\
&\ll \vre^\theta \rho(H_t)\|\phi\|_{L^q(D)}.\nonumber
\end{align}

\subsection{The duality argument}

Our next task is to estimate $\|\mathcal{A}_t-\mathcal{A}^G_t\|_{L^p(\Omega)}$. Let
\begin{align*}
\mathcal{E}_t^{ij}(y)&=
\frac{1}{\rho(H_{t}[u_j,\mathsf{r}(v_i)])}\int_{H_{t}[u_j,\mathsf{r}(v_i)]} F_{\varepsilon,i}(h^{-1}y)\,d\rho(h) 
-\int_{Y} F_{\varepsilon,i}\, d\mu\\
&=\pi_Y(\beta^{u_j,\mathsf{r}(v_i)})F_{\varepsilon,i}(y)-\int_{Y} F_{\varepsilon,i}\, d\mu.
\end{align*}
It follows from the quantitative mean ergodic theorem in $L^q_l(B)^+$ with respect to 
$\|\cdot\|_{L^p(B)}$
that
\begin{align*}
\|\mathcal{E}_t^{ij}\|_{L^p(B)}\le E(t)\|F_{\varepsilon,i}\|_{L^q_l(B)}.
\end{align*}
By (\ref{eq:nnn}) and (\ref{eq:pppppsss}),
\begin{align*}
\|F_{\varepsilon,i}\|_{L^q_l(B)}\ll
\|f_{\varepsilon,i}\|_{L^q_l(\Omega)}=\|f_\varepsilon\cdot\psi_{\vre,i}\|_{L^q_l(\Omega)}
\ll \|\psi_{\vre,i}\|_{C^l}\cdot \|f_\varepsilon\|_{L^q_l(\Omega)}
\ll \vre^{-l} \|f_\varepsilon\|_{L^q_l(\Omega)}
\end{align*}
Recall that when $l>0$, we are assuming that $G$ and $H$ are Lie groups, and
the section $\mathsf{s}$ is smooth on $D$.
This implies that the map 
$$
\mathsf{p}^{-1}_X(D)\to H\times D: g\mapsto (\mathsf{h}(g),\mathsf{p}_X(g))
$$
is a diffeomorphism. Then it follows from 
the definition of $f_\vre$ (see (\ref{eq:funcion}))
that 
$$
\|f_\varepsilon\|_{L^q_l(\Omega)}\ll 
\|\chi_\vre\|_{L^q_l(H)} \|\phi\|_{L^q_l(D)}
\ll \vre^{-\kappa} \|\phi\|_{L^q_l(D)}.
$$
A similar estimate when $l=0$  follows from (\ref{eq:measure}) and (\ref{eq_psi_e}).
Hence, we conclude that
$$
\|\mathcal{E}_t^{ij}\|_{L^p(B)}\ll E(t)\vre^{-(l+\kappa)} \|\phi\|_{L^q_l(D)}.
$$
Now it follows from the triangle inequality, Lemma \ref{lem:norm_est1}, and (CA1)--(CA2) that
\begin{align}\label{eq:llll}
&\|\mathcal{A}_t-\mathcal{A}^G_t\|_{L^p(\Omega)}\le \sum_{j=1}^{m_\vre}\|\mathcal{A}_t-\mathcal{A}^G_t\|_{L^p(\Omega_j)} \nonumber\\
=& \sum_{j=1}^{m_\vre}\left\| \sum_{i=1}^{n_\vre}\int_{H_{t}[u_j,\mathsf{r}(v_i)]} F_{\varepsilon,i}(h^{-1}\mathsf{p}_Y(g))\,d\rho(h) 
-\sum_{i=1}^{n_\vre}\rho(H_{t}[u_j,\mathsf{r}(v_i)])\int_{G} f_{\varepsilon,i}\, dm\right\|_{L^p(\Omega_j)}\nonumber\\
\ll&  \sum_{i=1}^{n_\vre}\sum_{j=1}^{m_\vre}
\rho(H_{t}[u_j,\mathsf{r}(v_i)])\left\|\mathcal{E}_{t}^{ij}\right\|_{L^p(B)}\nonumber\\
\ll & \,\,\vre^{-2d} \rho(H_t) E(t)\vre^{-(l+\kappa)} \|\phi\|_{L^q_l(D)}.
\end{align}
Combining (\ref{eq_d_est_00_0}) and (\ref{eq:llll}), we obtain
\begin{align*}
&\|\mathcal{A}_{t+c\vre} -\mathcal{A}_{t}\|_{L^p(\Omega)}\\
\le &\|\mathcal{A}_{t+c\vre}-\mathcal{A}^G_{t+c\vre}\|_{L^p(\Omega)}
+\|\mathcal{A}^G_{t+c\vre}-\mathcal{A}^G_{t}\|_{L^p(\Omega)}
+\|\mathcal{A}^G_{t}-\mathcal{A}_{t}\|_{L^p(\Omega)}\\
\ll &
\left(\rho(H_{t+c\vre}) E(t+c\vre)+
\rho(H_{t}) E(t)\right)\vre^{-(2d+l+\kappa)} \|\phi\|_{L^q_l(D)}+\vre^\theta \rho(H_t)\|\phi\|_{L^q_l(D)}.
\end{align*}
Hence, it follows from (CA2) and coarse admissibility of the error term $E(t)$ that
\begin{equation}\label{eq:omega}
\|\mathcal{A}_{t+c\vre} -\mathcal{A}_{t}\|_{L^p(B)}
\ll  
(E(t)\vre^{-(2d+l+\kappa)}+\vre^\theta)\rho(H_t)\|\phi\|_{L^q(D)}.
\end{equation}
By \eqref{eq:measure} and (\ref{eq_g_a}),
\begin{align*}
&\left\| \pi_X(\Lambda_t)\phi-\pi_X(\Lambda^G_t)\phi\right\|_{L^p(D)}\\
\ll&\left\| \pi_X(\Lambda_t)\phi\circ\mathsf{p}_X-\pi_X(\Lambda^G_t)\phi\circ\mathsf{p}_X\right\|_{L^p(\Omega)}\\
\le& \left\|\pi_X(\Lambda_t)\phi\circ\mathsf{p}_X -\mathcal{A}_{t-3c\vre}\right\|_{L^p(\Omega)}
 +\left\|\mathcal{A}_{t-3c\vre} -\pi_X(\Lambda^G_t)\phi\circ\mathsf{p}_X \right\|_{L^p(\Omega)}\\
\le& \left\|\mathcal{A}_{t+3c\vre} -\mathcal{A}_{t-3c\vre}\right\|_{L^p(\Omega)}
 +\left\|\mathcal{A}_{t-3c\vre} -\pi_X(\Lambda^G_t)\phi\circ\mathsf{p}_X \right\|_{L^p(\Omega)},
\end{align*}
and by (\ref{eq_d_b}),
\begin{align*}
&\left\|\mathcal{A}_{t-3c\vre} -\pi_X(\Lambda^G_t)\phi\circ\mathsf{p}_X \right\|_{L^p(\Omega)}\\
\le& \left\|\mathcal{A}_{t-3c\vre} - \mathcal{A}^G_{t-3c\vre}\right\|_{L^p(\Omega)}
+\left\|\pi_X(\Lambda^G_t)\phi\circ\mathsf{p}_X - \mathcal{A}^G_{t-3c\vre}\right\|_{L^p(\Omega)}\\
\le & 
\left\|\mathcal{A}_{t-3c\vre} - \mathcal{A}^G_{t-3c\vre}\right\|_{L^p(\Omega)}
+\left\|\pi_X(\Lambda^G_t)\phi\circ\mathsf{p}_X - \pi_X(\Lambda^G_{t-6c\vre})\phi\circ\mathsf{p}_X \right\|_{L^p(\Omega)}.
\end{align*}
Therefore, it follows from (\ref{eq:omega}),  (\ref{eq:llll}) combined with 
coarse admissibility of $E(t)$, and (\ref{eq_d_est_0_0})
that
$$
\left\| \pi_X(\Lambda_t)\phi -\pi_X(\Lambda^G_t)\phi \right\|_{L^p(D)}
\ll(E(t)\vre^{-(2d+l+\kappa)}+\vre^\theta)\rho(H_t)\|\phi\|_{L^q_l(D)}.
$$
This estimate holds for all sufficiently small $\vre$.
In order to optimise it, we pick
\begin{equation}\label{eq:epsilon}
\vre=E(t)^{(\theta+2d+l+\kappa)^{-1}}.
\end{equation}
This proves the theorem with $\delta=\theta/(\theta+2d+l+\kappa)$.
\end{proof}

\section{Quantitative pointwise ergodic theorem}\label{sec:quant_pointwise}

We now turn to the quantitative pointwise ergodic theorem for the normalized sampling operators $\lambda_X(\lambda_t)$. 

\begin{thm}\label{th_dual_pointwise}
Let $1\le p<q\le \infty$ and $l\in\mathbb{Z}_{\ge 0}$.
Suppose that {\rm (CA1), (CA2), (HA1), (HA2)} with $r=q/(q-1)$, {\rm (HA2$^\prime$)} with $r=pq/(q-p)$, and {\rm (HS)} hold. Assume that 
for every $g_1,g_2\in G$ and every compact domain $B\subset Y$,
the family $\beta_t^{g_1,g_2}$ satisfies the quantitative mean ergodic
theorem in $L_l^q(B)^+$ with respect to $\|\cdot\|_{L^p(B)}$
with exponential rate, and the error term is uniform
over  $g_1,g_2$ in a compact subset of $G$.
Then  for every compact domain $D\subset X$, a function  $\phi\in L^p_l(D)^+$ 
and almost every $x\in D$, there exists $\delta>0$ such that
$$
\left| \pi_X(\lambda_t)\phi(x)- \pi_X(\lambda^G_t)\phi(x)
\right| \le C(\phi,x) e^{-\delta t}
$$
for all $t\ge t_0$. Furthermore, $\delta$ is idependent of $D$, $\phi$ and $x$, and $\norm{C(\phi,\cdot)}_{L^p(D)}\le  C_{p,q}\norm{\phi}_{L_l^q(D)}$. 
\end{thm}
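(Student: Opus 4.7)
The plan is to combine the $L^p$-exponential estimate produced by adapting the proof of Theorem \ref{th_dual_Ger} to the hypothesis $E(t) = e^{-\eta t}$, with a Chebyshev--Borel-Cantelli discretization in time, in order to produce the pointwise a.e.\ bound. As in the proof of Theorem \ref{th_dual_Ger}, for each scale $\vre \in (0,1)$ I would introduce the auxiliary functions $\mathcal{A}_t^{(\vre)}$ and $\mathcal{A}_t^{G,(\vre)}$ from (\ref{eq:omega_t})--(\ref{eq:omegaG_t}) that sandwich $\pi_X(\Lambda_t)\phi$ and $\pi_X(\Lambda_t^G)\phi$ respectively. Applying the quantitative mean ergodic hypothesis to each error $\mathcal{E}_t^{ij}$ yields, as in (\ref{eq:llll}),
$$\|\mathcal{A}_t^{(\vre)} - \mathcal{A}_t^{G,(\vre)}\|_{L^p(\Omega)} \ll \vre^{-(2d+l+\kappa)}\, e^{-\eta t}\, \rho(H_t)\, \|\phi\|_{L^q_l(D)},$$
while Lemma \ref{c:reg}(i) under (HA2) controls the volume-regularity fluctuation $|\mathcal{A}_{s_1}^{G,(\vre)}(u) - \mathcal{A}_{s_2}^{G,(\vre)}(u)| \ll |s_1-s_2|^\theta \rho(H_t)\|\phi\|_{L^q(D)}$ pointwise for every $u$. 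Optimizing $\vre = e^{-\eta t/(\theta+2d+l+\kappa)}$ and dividing by $\rho(H_t)$, as in Theorem \ref{th_dual_Ger}, combines these into the $L^p$-exponential rate
$$\|\pi_X(\lambda_t)\phi - \pi_X(\lambda_t^G)\phi\|_{L^p(D)} \ll e^{-\delta_0 t}\|\phi\|_{L^q_l(D)},$$
with $\delta_0 = \eta\theta/(\theta+2d+l+\kappa) > 0$ depending only on $p, q, l$ and the structural parameters.

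To upgrade this $L^p$ rate to a pointwise a.e.\ rate, I would fix a small $\eta' > 0$ and introduce the time net $\{s_n\}$ in $[t_0,\infty)$ with spacing $s_{n+1} - s_n = e^{-\eta' s_n}$. Applying Chebyshev's inequality at each $s_n$,
$$\mu\bigl\{ x \in D : |\pi_X(\lambda_{s_n})\phi(x) - \pi_X(\lambda_{s_n}^G)\phi(x)| > e^{-\delta s_n}\|\phi\|_{L^q_l(D)}\bigr\} \ll e^{-p(\delta_0-\delta)s_n},$$
whose sum over $n$ behaves like $\int e^{(\eta' - p(\delta_0-\delta))t}\, dt$ and converges when $\eta' < p(\delta_0-\delta)$. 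Borel-Cantelli then yields a full-measure subset of $D$ on which $|\pi_X(\lambda_{s_n})\phi(x) - \pi_X(\lambda_{s_n}^G)\phi(x)| \le e^{-\delta s_n}\|\phi\|_{L^q_l(D)}$ for all large $n$. To interpolate to arbitrary $t \in [s_n, s_{n+1}]$, invoke the sandwich at scale $\vre_n = e^{-\eta' s_n}$: the volume-regularity fluctuation $|\mathcal{A}^{G,(\vre_n)}_{s_{n+1}}(u) - \mathcal{A}^{G,(\vre_n)}_{s_n}(u)| \ll e^{-\eta'\theta s_n}\rho(H_{s_n})\|\phi\|_{L^q(D)}$ is dominated by $e^{-\delta s_n}\rho(H_{s_n})$ when $\eta'\theta \ge \delta$, and the crossover at nearby shifted times is handled by a parallel Chebyshev--Borel-Cantelli pass on the finer net of times $\{s_n \pm 3c\vre_n\}$.

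The maximal function $C_0(\phi, x) := \sup_n e^{\delta s_n}|\pi_X(\lambda_{s_n})\phi(x) - \pi_X(\lambda_{s_n}^G)\phi(x)|$ satisfies, by Minkowski's inequality, $\|C_0(\phi,\cdot)\|_{L^p(D)} \le \sum_n e^{-(\delta_0 - \delta)s_n}\|\phi\|_{L^q_l(D)}$, which is finite provided $\eta' < \delta_0 - \delta$. Combining with the pointwise interpolation error produces the constant $C(\phi,x)$ with $\|C(\phi,\cdot)\|_{L^p(D)} \ll \|\phi\|_{L^q_l(D)}$ and the claimed pointwise exponential bound. The principal technical obstacle lies in coordinating the three parameters so that the chain of inequalities $\delta \le \eta'\theta$, $\eta' < \delta_0 - \delta$, and $\eta' < p(\delta_0 - \delta)$ all hold simultaneously; the explicit choice $\delta = \delta_0\theta/(2(1+\theta))$ and $\eta' = 2\delta/\theta$ satisfies all three, and verifying that every implicit constant traced through Theorem \ref{th_dual_Ger}, Lemma \ref{c:reg}, and the Chebyshev--Borel-Cantelli summation depends only on $p, q, l$ and the structural parameters in (HA1)--(HA2$^\prime$), (CA1)--(CA2) yields the required uniformity of $\delta$ in $\phi$, $x$, $D$.
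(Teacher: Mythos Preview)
Your proposal is correct and follows essentially the same route as the paper's proof: obtain the $L^p$-exponential rate from Theorem~\ref{th_dual_Ger}, discretize time at exponentially fine spacing, sum the $L^p$-bounds to get an a.e.\ estimator along the sequence (for both $\pi_X(\Lambda_t)\phi-\pi_X(\Lambda_t^G)\phi$ and $\mathcal{A}_t-\mathcal{A}_t^G$), and interpolate to arbitrary $t$ via the sandwich (\ref{eq_g_a})--(\ref{eq_d_b}) together with the pointwise regularity bound from Lemma~\ref{c:reg}(i) under (HA2). The only stylistic difference is that the paper defines the estimator directly as the weighted $\ell^p$-sum $C(x,\phi)=\bigl(\sum_i e^{p\delta t_i/2}\rho(H_{t_i})^{-p}|\pi_X(\Lambda_{t_i})\phi(x)-\pi_X(\Lambda_{t_i}^G)\phi(x)|^p\bigr)^{1/p}$ and bounds $\|C\|_{L^p(D)}$ by Fubini, which makes your separate Chebyshev--Borel-Cantelli pass redundant (the $\ell^p$-sum already dominates each term and its finiteness a.e.\ is automatic once the $L^p$-norm is finite); otherwise the arguments coincide.
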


\begin{proof} 
This proof is a refinement of the proof of Theorem \ref{th_dual_Ger},
and we shall use some of the notation and estimates obtained there.
As in that proof, we reduce
our argument to the case when the section $\mathsf{s}$ is Lipschitz on $D$.
Moreover, if $G$ and $H$ are Lie groups, we reduce the proof to the case when
the section $\mathsf{s}$ is smooth on $D$.

By Theorem \ref{th_dual_Ger}, for some $\delta_1>0$,
\begin{equation}\label{eq:mean_est_2}
\left\|\pi_X(\Lambda_t)\phi-\pi_X(\Lambda^G_t)\phi\right\|_{L^p(D)}\ll e^{-\delta_1 t} \rho(H_t)
\|\phi\|_{L^q_l(D)}.
\end{equation}
Let $\Omega$ be a compact subset of $G$ as in the proof of Theorem \ref{th_dual_Ger},
and let $\mathcal{A}_t$ and $\mathcal{A}_t^G$ be functions on $\Omega$
defined as in \eqref{eq:omega_t} and \eqref{eq:omegaG_t}.
During the proof of Theorem \ref{th_dual_Ger}, we have established estimate (\ref{eq:llll})
with $\vre$ as in \eqref{eq:epsilon} which implies that
\begin{equation}\label{eq:mean_est_3}
\left\|\mathcal{A}_t-\mathcal{A}^G_t\right\|_{L^p(\Omega)}\ll e^{-\delta_2 t} \rho(H_t)
\|\phi\|_{L^q_l(D)}
\end{equation}
for some $\delta_2>0$. Let $\delta=\min\{\delta_1,\delta_2\}$.

We take an increasing sequence $\{t_i\}_{i\ge 0}$ that contains
all positive integers greater than $t_0$ and has uniform spacing $\lfloor e^{p\delta n/4}\rfloor^{-1}$
on the intervals $[n,n+1]$, $n\in \mathbb{N}$. Then
\begin{equation*}
t_{i+1}-t_i\ll e^{-p\delta \lfloor t_i\rfloor/4}
\end{equation*}
for all $i\ge 0$.
Let
$$
C(x,\phi):=\left(\sum_{i\ge 0} e^{p\delta t_i/2} \rho(H_{t_i})^{-p}
  \left|\pi_X(\Lambda_{t_i})\phi(x)-\pi_X(\Lambda^G_{t_i})\phi(x)\right|^p\right)^{1/p}.
$$
Then we have
\begin{align}\label{eq:hhh0}
\left|\pi_X(\Lambda_{t_i})\phi(x)-\pi_X(\Lambda^G_{t_i})\phi(x) \right|\le C(x,\phi) e^{-\delta t_i/2}\rho(H_{t_i})
\end{align}
for all $i\ge 0$.
It follows from (\ref{eq:mean_est_2}) that
\begin{align*}
\|C(\cdot ,\phi)\|^p_{L^p(D)}&=
\sum_{i\ge 0} \left\| e^{\delta t_i/2} \rho(H_{t_i})^{-1}
  \left|\pi_X(\Lambda_{t_i})\phi-\pi_X(\Lambda^G_{t_i})\phi\right|\right\|^p_{L^p(D)}\\
&\ll \sum_{i\ge 0} e^{-p\delta t_i/2}\|\phi\|^p_{L^q_l(D)}
\le \sum_{n\ge \lfloor t_0 \rfloor } e^{-p\delta n/2} \lfloor e^{p\delta n/4}\rfloor 
\|\phi\|_{L^q_l(D)}^p<\infty,
\end{align*}
and, in particular, $C(x,\phi)$ is finite for almost every $x\in D$.

Using similar argument, we deduce from \eqref{eq:mean_est_3} that
\begin{align}\label{eq:hhhh00}
\left| \mathcal{A}_{t_i}(u)-\mathcal{A}^G_{t_i}(u) \right|\le C^\prime(u,\phi) e^{-\delta t_i/2}\rho(H_{t_i})
\end{align}
for all $i\ge 0$, where the estimator $C^\prime(u,\phi)$ is finite for almost all $u\in \Omega$.

To finish the proof, we need to extend estimate (\ref{eq:hhh0}) to general $t$.
We pick $t_i<t$ such that
$$
\vre:=t-t_i\ll  e^{-p\delta  \lfloor t_i\rfloor /4}\ll e^{-p\delta  t/4}.
$$
Let $t^+$ be the least element of $\{t_i\}_{i\ge 0}$ that satisfies
$t_i\ge t+3c\vre$, and let 
$t^-$ be the greatest element of $\{t_i\}_{i\ge 0}$ that satisfies
$t_i\le t-3c\vre$. Note that
$$
t^+-t^-\ll e^{-p\delta  t/4}.
$$
We deduce from (\ref{eq_g_a}) that for $u\in \Omega$ and $x=\mathsf{p}_X(u)\in D$,
$$
\left|\pi_X(\Lambda_{t})\phi(x)-\pi_X(\Lambda_{t_i})\phi(x) \right|
\le \left|\mathcal{A}_{t^+}(u)-\mathcal{A}_{t^-}(u)\right|.
$$
Then it follows from (\ref{eq:hhhh00}),  (CA2), and (\ref{eq_d_est_00_0}) (where we use again the pointwise estimate arising from summing the differences 
$e^{\eta  t} \norm{\mathcal{A}^G_{t^+}-\mathcal{A}^G_{t^-}}_p$ over the sequence $t_i$, for suitable $\eta$)  
that for $u$ in a set of full measure in $\Omega$,
\begin{align*}
|\mathcal{A}_{t^+}(u)-\mathcal{A}_{t^-}(u)|
&\le |\mathcal{A}_{t^+}(u)-\mathcal{A}^G_{t^+}(u)|
+ |\mathcal{A}^G_{t^+}(u)-\mathcal{A}^G_{t^-}(u)|
+ |\mathcal{A}^G_{t^-}(u)-\mathcal{A}_{t^-}(u)|\\
&\ll_{\phi,u}  \; e^{-\delta t^+/2}\rho(H_{t^+})+(e^{-p\delta  t/4})^\theta\rho(H_{t^--3c\vre})+
e^{-\delta t^-/2}\rho(H_{t^-})\\
&\ll_{\phi,u} \; e^{-\delta't}\rho(H_t)  
\end{align*}
with some $\delta'>0$, and hence for $x=\mathsf{p}_X(u)$,
\begin{equation}\label{eq:hhh2}
\left|\pi_X(\Lambda_{t})\phi(x)-\pi_X(\Lambda_{t_i})\phi(x) \right|
\ll_{\phi, x} e^{-\delta'  t}\rho(H_t).
\end{equation}
Also, it follows from Lemma \ref{c:reg}(i) combined with {\rm (HA2)} that for almost every $x\in X$,
\begin{equation}\label{eq:hhh1}
\left|\pi_X(\Lambda^G_{t})\phi(x)-\pi_X(\Lambda^G_{t_i})\phi(x) \right|
\ll_{\phi, x} (e^{-p\delta  t/4})^\theta\rho(H_t).
\end{equation}

Finally, combining (\ref{eq:hhh2}),~(\ref{eq:hhh0}),~(\ref{eq:hhh1})
we conclude that for $x$ in a set of full measure in $D$,
\begin{align*}
\left|\pi_X(\Lambda_{t})\phi(x)-\pi_X(\Lambda^G_{t})\phi(x) \right|
\le&
\left|\pi_X(\Lambda_{t})\phi(x)-\pi_X(\Lambda_{t_i})\phi(x) \right|\\
&+\left|\pi_X(\Lambda_{t_i})\phi(x)-\pi_X(\Lambda^G_{t_i})\phi(x) \right|\\
&+\left|\pi_X(\Lambda^G_{t})\phi(x)-\pi_X(\Lambda^G_{t_i})\phi(x) \right|\\
\ll_{\phi,x} &\; e^{-\delta'' t}\rho(H_t)
\end{align*}
with some $\delta''>0$. 

It is clear from the foregoing proof that as $x$ varies over the compact domain $D$, the constant $C(\phi,x)$ implied in the last estimate 
satisfies the integrability properties stated in the theorem, namely $\norm{C(\phi,\cdot)}_{L^p(D)}\le  C_{p,q}\norm{\phi}_{L_l^q(D)}$. 
This completes the proof of Theorem \ref{th_dual_pointwise}.
\end{proof}

\section{Volume estimates}\label{sec:volume}

\subsection{Volume asymptotics on algebraic varieties}
Let $G\subset \hbox{SL}_d(\mathbb{R})$ be a real almost algebraic group, and 
$H$ a noncompact almost algebraic subgroup of $G$. 
Let $\rho$ denote a left Haar measure on $H$ and $m$ a left Haar measure on $G$.
We fix a non-negative proper homogeneous polynomial $P$ on $\hbox{Mat}_d(\mathbb{R})$, and consider the family of sets
\begin{equation}\label{eq:hh_t}
H_t:=\{h\in H:\, \log P(h)\le t\}.
\end{equation}
The aim of this section is to discuss the properties of the sets $H_t$ and their volumes.
In order to prove our main results stated in Section 1,
we will also need to consider more general families of sets defined by
\begin{equation}\label{eq:hhh_t}
H_t[g_1,g_2]:=\{h\in H:\, \log P(g_1^{-1}hg_2)\le t\}
\end{equation}
for $g_1,g_2\in G$. 

We recall the results \cite[Th.7.17--7.18]{gn}.
While these results were stated for algebraic sets, the proof,
which is based on resolution of singularities, applies
to semialgebraic sets as well and, in particular, to the almost algebraic group $H$.

\begin{thm}[\cite{gn}] \label{th:book}
\begin{enumerate}
\item[(i)] {\rm (volume asymptotics)}
There exist $a\in\mathbb{Q}_{\ge 0}$, $b\in\mathbb{Z}_{\ge 0}$, and $\delta_0>0$
such that 
$$
\rho(H_t)=e^{a t}\left(\sum_{i=0}^b c_i t^i \right) +O\left(e^{(a-\delta_0)t}\right)
$$
for all $t\ge t_0$, where $c_b>0$.

\item[(ii)] {\rm (volume regularity)} 
There exist $c,\theta>0$ such that the estimate
\begin{equation}\label{holder}
\rho(H_{t+\vre})-\rho(H_t)\le c\, \vre^\theta\rho(H_t)
\end{equation}
holds for all $t\ge t_0$ and $\vre\in (0,1)$.
\end{enumerate}
\end{thm}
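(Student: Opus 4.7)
The plan is to reduce the problem to the asymptotic analysis of an Igusa-type local zeta function, following the approach of \cite{gn}. Define, for $\Re(s)$ sufficiently large,
$$
Z(s) \;=\; \int_{H} (1 + P(h))^{-s}\, d\rho(h),
$$
which is absolutely convergent in some right half-plane because $H$ is semialgebraic and $P$ is proper and positive away from the origin. The asymptotics of $\rho(H_t)$ as $t\to\infty$ are governed, via a Tauberian theorem of Wiener--Ikehara--Delange type, by the rightmost pole of $Z(s)$ and its order.

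The crucial step is to establish that $Z(s)$ admits a meromorphic continuation to $\mathbb{C}$ whose poles lie in a finite union of arithmetic progressions on the real axis. For this, I would apply Hironaka's resolution of singularities: cover $H$ by finitely many semialgebraic charts in which $H$ and the level sets of $P$ are defined, and construct a proper birational (semialgebraic) morphism $\pi\colon\widetilde{H}\to H$ such that in local coordinates around every point of the exceptional divisor, $\pi^*P$ is a monomial $c\prod x_i^{N_i}$ and $\pi^*d\rho$ is a monomial measure $|u|\prod x_i^{M_i}\,dx$ times a smooth non-vanishing factor. Each chart then contributes an integral of the form $\int \prod x_i^{M_i - s N_i}\psi(x)\,dx$, which can be computed explicitly and exhibits simple meromorphic continuation with poles at negative rational points $s = (M_i+1)/N_i$. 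The rightmost such pole determines $a\in\mathbb{Q}_{\ge 0}$, its order determines $b+1$, and the positivity $c_b>0$ is seen from the fact that the leading-term contribution is a positive integral of smooth non-negative data over a non-empty semialgebraic chart. The error $O(e^{(a-\delta_0)t})$ is controlled by the spectral gap to the next pole, which is a strictly positive rational.

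The regularity estimate (ii) follows directly from (i). Setting $Q_b(t)=\sum_{i=0}^b c_i t^i$, a direct computation gives
$$
\rho(H_{t+\vre}) - \rho(H_t)
= e^{at}\bigl((e^{a\vre}-1)Q_b(t) + e^{a\vre}(Q_b(t+\vre)-Q_b(t))\bigr) + O(e^{(a-\delta_0)t}).
$$
Since $e^{a\vre}-1\ll \vre$ and $Q_b(t+\vre)-Q_b(t)\ll \vre\cdot Q_b(t)$ uniformly for $\vre\in(0,1)$ and $t\ge t_0$, the right-hand side is $\ll \vre\cdot \rho(H_t)$, which yields (ii) with $\theta=1$. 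The degenerate case $a=0$, $b=0$ is excluded by the standing assumption that $H$ is noncompact (which forces $\rho(H_t)\to\infty$), so the asymptotic expansion always dominates the difference.

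The main obstacle I expect is executing resolution of singularities in the real semialgebraic category rather than the complex algebraic one. One route is to embed $H$ into its Zariski closure, apply Hironaka over $\mathbb{C}$, then take the real points of the resolution and carefully track both the sign contributions and the semialgebraic structure; alternatively, one may invoke Bierstone--Milman-type real resolution results. The authors of \cite{gn} handle the algebraic case in detail and observe that the argument carries over to semialgebraic sets; making this transfer rigorous, and ensuring that the rationality of $a$, the integrality of $b$, and the positivity of $c_b$ all survive in the real setting, is the principal technical point.
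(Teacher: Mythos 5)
Your treatment of part (i) follows essentially the same route as the source this theorem is actually quoted from: the paper gives no proof of Theorem \ref{th:book} but cites \cite[Th.~7.17--7.18]{gn}, whose argument is precisely the Mellin-transform/resolution-of-singularities scheme you describe, and the only point the paper adds is that the argument extends from algebraic to semialgebraic sets --- which your last paragraph correctly identifies as the technical crux. One caveat on (i): a Wiener--Ikehara--Delange theorem only yields an error $o(e^{at}t^b)$; to get the stated $O(e^{(a-\delta_0)t})$ you must shift the contour past the rightmost pole, which requires polynomial growth bounds for $Z(s)$ on vertical lines. These do come out of the monomialization (the local integrals are explicit), but they are part of the work rather than a consequence of locating the poles.

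The genuine gap is in part (ii), which does \emph{not} follow from part (i). Differencing the expansion leaves you with
$$
\rho(H_{t+\vre})-\rho(H_t)\;\ll\;\vre\, e^{at}(1+t)^{b}\;+\;O\!\left(e^{(a-\delta_0)t}\right),
$$
and the second term is not dominated by $\vre^{\theta}\rho(H_t)$ uniformly in $\vre\in(0,1)$: for fixed $t$, letting $\vre\to 0^{+}$ sends $\vre^{\theta}\rho(H_t)$ to zero while $e^{(a-\delta_0)t}$ stays put, so your bound fails whenever $\vre$ is exponentially small compared to $t$. The point is that (ii), read as $\vre\to 0^{+}$, already asserts right-continuity of $t\mapsto\rho(H_t)$ with a uniform H\"older modulus; an asymptotic expansion with an additive error term is consistent with $\rho$ concentrating mass of size $e^{(a-\delta_0)t}$ on a thin shell $\{t<\log P\le t+\vre\}$, which would violate (ii). The regularity estimate therefore needs its own argument: in \cite{gn} (and in the averaged analogue, Proposition \ref{p:reg} of this paper) it is obtained by applying the resolution/monomialization directly to the shell volume $\rho(H_{t+\vre})-\rho(H_t)$, viewed as a function of the two parameters $(t,\vre)$, where the monomial integrals can be estimated explicitly. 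Relatedly, your claim that one may always take $\theta=1$ is unjustified: the Lipschitz bound is what the paper gets when $P$ is a \emph{norm} (via \cite[Prop.~7.3]{gn} and the triangle inequality), whereas for a general homogeneous polynomial only some $\theta\in(0,1]$ coming out of the resolution is asserted.
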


We note that since we are assuming that $H$ is noncompact, 
it follows that
$\rho(H_t)\to\infty$ as $t\to\infty$, and hence $(a,b)\ne (0,0)$.

\subsection{Polynomial volume growth of the restricted sets}

We say that the group $H$ is of {\it exponential type} if $a>0$, and 
of {\it subexponential type} otherwise. 
The following lemma gives a group-theoretic characterisation of these
notions. In particular, it implies that they
do not depend on a choice of the polynomial $P$,
and on a choice of the embedding of $H$ in  $\hbox{SL}_d(\mathbb{R})$.

\begin{lem}\label{l:growth}
The group $H$ is of subexponential type if and only if its Zariski closure 
is an almost direct product of a compact subgroup and an abelian $\RR$-diagonalisable subgroup.
\end{lem}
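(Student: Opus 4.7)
The plan is to combine the Levi-Chevalley decomposition of (the Zariski closure of) $H$ with the explicit expansion in Theorem \ref{th:book}(i). Since $H$ has finite index in its Zariski closure, volume growth rates are unaffected by replacing $H$ by this closure, so I assume $H$ is its own Zariski closure.

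For the forward direction, suppose $H = K \cdot A$ is an almost direct product with $K$ compact and $A$ an $\RR$-diagonalizable torus. Boundedness of $K$ in $\hbox{SL}_d(\RR)$ gives $|\log P(ka) - \log P(a)| = O(1)$ uniformly in $k \in K$ and $a \in A$, hence $\rho(H_t) \asymp \rho_A(A_t)$ up to a bounded shift in $t$. Choosing commuting $\RR$-diagonalizable generators $X_1,\ldots,X_r$ of the Lie algebra of $A^0$ (with $r = \dim A$), the matrix entries of $\exp(\sum s_i X_i)$ are sums of exponentials $e^{\alpha(s)}$ indexed by real weights $\alpha$, so $\log P(\exp(\sum s_i X_i)) = \max_\beta \beta(s) + O(1)$ for a finite set of real linear functionals $\beta$. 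The region $\{s \in \RR^r : \log P \le t\}$ is therefore a dilation by $t$ of a fixed bounded polytope, yielding $\rho_A(A_t) \asymp t^r$; in particular $a = 0$.

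For the reverse direction, assume the Zariski closure of $H$ is not such an almost direct product. Write $H = L \ltimes R_u$ with $L$ reductive; the center of $L$ is a real algebraic torus and hence always splits as an almost direct product of its anisotropic (compact) and $\RR$-split parts. If $R_u$ is trivial and the semisimple part of $L$ is compact, we land in the excluded case. So either $R_u$ is nontrivial, or $L$ has a non-compact semisimple factor---which, having positive $\RR$-rank, contains a root subgroup. In either case $H$ contains a nontrivial one-parameter unipotent subgroup $U = \{\exp(sN)\}$. The matrix entries of $\exp(sN)$ are polynomials in $s$ of positive degree, so $P|_U$ is a non-constant polynomial in $s$ of some degree $m \ge 1$, and $\rho_U(U_t) \gg e^{t/m}$ grows exponentially.

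To transfer this to $\rho(H_t)$, I choose a small compact transversal $V$ to $U$ at the identity so that Haar measure on $H$ factors locally as $\rho_U \times \rho_V$ up to a uniformly bounded Jacobian. Using the submultiplicative estimate $\log P(gh) \le c_1 \log P(g) + c_2 \log P(h) + c_3$---a consequence of homogeneity of $P$, submultiplicativity of the operator norm $\|gh\| \le \|g\|\|h\|$, and the properness bound $\|h\| \ll P(h)^{1/k'}$ on $H$ (which holds by a Lojasiewicz-type inequality for the proper homogeneous polynomial $P$ on the semialgebraic set $H$)---together with boundedness of $V$, I obtain $U_t \cdot V \subseteq H_{c_1 t + C'}$ for a constant $c_1 > 0$. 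Thus $\rho(H_{c_1 t + C'}) \gg \rho_U(U_t)\rho_V(V) \gg e^{t/m}$, which via Theorem \ref{th:book}(i) forces $a > 0$. The main obstacle will be this final transfer step: making the local product structure of Haar measure on $H$ precise, and extracting a usable submultiplicativity estimate for $\log P$ on $H$ from the algebraic properness of $P$, so that exponential growth along the one-parameter unipotent $U$ translates into an exponential lower bound for $\rho(H_t)$.
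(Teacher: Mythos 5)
Your argument is essentially correct and reaches the lemma by a route that differs from the paper's in both directions. For the ``subexponential'' direction you give an explicit weights-and-polytope computation showing $\rho_A(A_t)\asymp t^{\dim A}$; the paper leaves this direction essentially implicit (it follows from the comparison $P\asymp\|\cdot\|^{\deg P}$ and the standard polynomial growth of norm balls in a diagonalisable torus), so your version is a useful addition. For the converse, the paper works with the global Levi decomposition $H=LAU$ ($U$ the unipotent radical, normal), shows $U=1$ by the exponential growth of $U_t$ via $\|\exp(v)\|\le c\|v\|^d$, and then disposes of a noncompact $L$ by citing the known exponential volume growth of noncompact semisimple groups; you instead extract a single one-parameter unipotent subgroup in every non-excluded case, which is more elementary and self-contained. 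Two remarks on your version. First, the submultiplicativity you want is immediate: by \eqref{eq:compact2} one has $C^{-1}\|x\|^{\deg P}\le P(x)\le C\|x\|^{\deg P}$ on all of $\Mat_d(\RR)$, so $\log P(gh)\le\log P(g)+\log P(h)+O(1)$ with no Lojasiewicz input, and indeed $U_tV\subset H_{t+c}$ is exactly Lemma \ref{l:ccc_c}(i); no dilation $c_1t$ is needed. Second, the transfer step you flag is the one real gap: the local product structure of Haar measure near the identity does not by itself bound $\rho(U_t\cdot V)$ from below when $U_t$ is large. It can be closed either by following the paper --- when $U$ is the unipotent radical it is normal, so left Haar measure on $H$ factors globally as a product over $LA\times U$ and $\rho(H_{t+c})\ge\rho_{LA}(Q)\,\rho_U(U_t)$ directly --- or, for your single one-parameter $U$, by left-invariance: pick a maximal $VV^{-1}$-separated set $\{u_i\}\subset U_t$, which has cardinality $\gg\rho_U(U_t)$ since $U\cap VV^{-1}$ is a bounded neighbourhood of $e$ in $U$, and note that the translates $u_iV$ are disjoint, all contained in $H_{t+c}$, and each has the same left Haar measure. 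Either repair makes your proof complete.
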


\begin{proof}
Let $\|\cdot\|$ denote the Euclidean norm on $\hbox{Mat}_d(\mathbb{R})$.
Then since $P$ is non-negative proper and homogeneous of degree $\deg P$, it follows by compactness of the unit sphere that there exists $C>1$ such that
\begin{equation}\label{eq:compact2}
C^{-1}\, \|x\|^{\deg(P)}\le P(x)\le C\, \|x\|^{\deg(P)}
\end{equation}
for all $x\in \hbox{Mat}_d(\mathbb{R})$. Therefore, it is sufficient to prove the claim
for the sets $H_t=\{h\in H:\, \log \|h\|\le t\}$.

Since $H$ has finite index in its Zariski closure, it has exponential type if and only if
the algebraic envelop does. Hence, we may assume that $H$ is algebraic.
The group $H$ has the Levi decomposition $H=LAU$, where $L$ is a semisimple almost algebraic subgroup, 
$A$ is an abelian $\RR$-diagonalizable almost algebraic subgroup, and $U$ is a unipotent normal algebraic subgroup.
Moreover, $LA$ is an almost direct product of $L$ and $A$.
Under the product map, the left invariant measure $\rho$ on $H$ is equal (up to a constant factor)
to the product of the invariant measures on the (unimodular) factors. If $Q$ is a compact subset of $LA$
of positive measure, 
then there exists $c>0$ such that $\log \|kh\|\le \log \|h\|+c $ for all $k\in Q$ and $h\in H$.
Hence, 
$$
QU_t\subset H_{t+c}\quad\hbox{and}\quad \rho(H_{t+c})\gg \vol(U_t).
$$
This
implies that if $H$ is not of exponential type,
then $U$ is not of exponential type as well. 
Suppose that $U\ne 1$. 
The exponential map $\exp:\hbox{Lie}(U)\to U$
is a diffeomorphism and the invariant measure on $U$ is up to a constant equal to the image
under $\exp$ of the Lebesgue measure on $\hbox{Lie}(U)$. 
Since there exists $c>0$ such that $\|\exp(v)\|\le c\|v\|^d$
for all $v\in \hbox{Lie}(U)$, it follows that
$$
\vol(U_t)\ge \vol(\{v\in \hbox{Lie}(U):\, \|v\|\le (c^{-1}\, e^{t})^{1/d}\}).
$$
This gives a contradiction and shows that $U=1$.

If $L$ is not compact, then it is of exponential type
as follows from (see \cite[Sec.~7]{GW} and \cite{Mau}). Since for a compact $Q\subset A$,
there exists $c>0$ such that $L_t Q\subset H_{t+c}$.
As above, this would imply that $H$ is
of exponential type. Hence, we conclude that $L$ must be compact, which
completes the proof of the lemma.
\end{proof}
 
\subsection{The limiting density in the ergodic theorem}

Let $O_\vre$ denotes the symmetric neighborhood of identity in $G$
with respect to a Riemannian metric, and $G_t:=\{g\in G:\, \log P(g)\le t\}$.

\begin{lem}\label{l:ccc_c}
\begin{enumerate}
\item[(i)]
Given a compact subset $\Omega$ of $G$, there exists $c=c(\Omega)>0$, such that
for every $t\ge t_0$
$$
\Omega\cdot G_t\cdot \Omega\subset G_{t+c}.
$$
\item[(ii)]
There exists $c>0$ such that for every
$\vre\in (0,1)$ and $t\ge t_0$,
$$
O_\vre\cdot G_t\cdot O_\vre\subset G_{t+c\vre}.
$$
\end{enumerate}
\end{lem}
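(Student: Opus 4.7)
The plan is to exploit homogeneity of $P$ together with the norm comparison \eqref{eq:compact2} to reduce both statements to elementary polynomial estimates on $\hbox{Mat}_d(\mathbb{R})$. Let $k:=\deg P$, and recall that by \eqref{eq:compact2} we have $C^{-1}\|x\|^k \le P(x) \le C\|x\|^k$ for all $x\in\hbox{Mat}_d(\mathbb{R})$.

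For part (i), I would use submultiplicativity of the norm directly. Setting $M := \sup_{g\in\Omega}\|g\| < \infty$, for $g_1,g_2 \in \Omega$ and $g \in G_t$ we have
$$
P(g_1 g g_2) \le C \|g_1 g g_2\|^k \le C M^{2k}\|g\|^k \le C^2 M^{2k} P(g) \le C^2 M^{2k} e^t,
$$
so part (i) follows with $c = \log(C^2 M^{2k})$.

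For part (ii) the same argument is insufficient: the multiplicative constant $C^2$ from \eqref{eq:compact2} is unrelated to $\vre$ and would contaminate the bound with a non-vanishing additive term. The idea is instead to compare $P(g_1 g g_2)$ with $P(g)$ directly, without routing through the norm. Writing $g_1 = e + A_1$, $g_2 = e+A_2$ with $\|A_i\| \le c_0\vre$ (valid uniformly for $g_i\in O_\vre$, $\vre\le 1$), we get
$$
g_1 g g_2 = g + X, \qquad X := A_1 g + g A_2 + A_1 g A_2, \qquad \|X\| \le c_1 \vre\,\|g\|.
$$
Polarizing the homogeneous polynomial $P$ into a symmetric $k$-linear form $\widetilde P$ on $\hbox{Mat}_d(\mathbb{R})^k$ (so that $P(v)=\widetilde P(v,\dots,v)$), the multinomial expansion gives
$$
P(g+X) - P(g) = \sum_{j=1}^{k} \binom{k}{j} \widetilde P(\underbrace{g,\dots,g}_{k-j},\underbrace{X,\dots,X}_{j}),
$$
and each term is bounded by $c_P \|g\|^{k-j}\|X\|^j \le c_P (c_1\vre)^j \|g\|^k$. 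Summing and using $\vre\le 1$, this yields $|P(g_1gg_2)-P(g)| \le c_2\vre\,\|g\|^k$, and then \eqref{eq:compact2} turns $\|g\|^k$ into $C\,P(g)$, so
$$
P(g_1 g g_2) \le (1 + c_2 C \vre)P(g) \le e^{c_2 C\vre}\,e^t,
$$
which gives part (ii) with $c = c_2 C$.

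The only delicate point is the second paragraph: one must recognize that pulling norms outside $P$ introduces a fixed multiplicative loss that survives the limit $\vre\to 0^+$, and therefore the increment $P(g_1 g g_2)-P(g)$ has to be estimated by working inside the polynomial itself. Once the polarization identity is invoked this becomes routine, since every summand in the expansion contains at least one factor of $X$, producing the required $\vre$-linear bound uniformly in $g$.
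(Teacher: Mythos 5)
Your proof is correct and follows essentially the same route as the paper's: part (i) via the two-sided comparison \eqref{eq:compact2} between $P$ and a norm together with boundedness on the compact set, and part (ii) by estimating the increment $P(g_1gg_2)-P(g)\ll\vre\,\|g\|^{\deg P}\ll\vre\,P(g)$ rather than losing a fixed multiplicative constant. The polarization step merely makes explicit the bound that the paper asserts without detail, so there is nothing further to add.
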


\begin{proof}
In order to prove (i), it is sufficient to show that there exists $C>0$
such that for every $b_1,b_2\in \Omega$ and $x\in \hbox{Mat}_d(\mathbb{R})$, we have
$$
P(b_1xb_2)\le C\, P(x).
$$
Since $\Omega$ is compact, 
$$
P(b_1xb_2)\ll_\Omega \left(\max_{i,j} |x_{ij}|\right)^{\deg(P)},\quad x\in \hbox{Mat}_d(\mathbb{R}),
$$
and since $P$ is non-negative, proper and homogeneous, it follows by compactness that
\begin{equation}\label{eq:norm2}
\left(\max_{i,j} |x_{ij}|\right)^{\deg(P)} \ll P(x),\quad x\in \hbox{Mat}_d(\mathbb{R}).
\end{equation}
This completes the proof of (i).

To prove (ii), we observe that
for every $b_1,b_2\in O_\vre$ and $x\in \hbox{Mat}_d(\mathbb{R})$,
$$
P(b_1xb_2)-P(x)\ll \vre \left(\max_{i,j} |x_{ij}|\right)^{\deg(P)}\ll \vre\, P(x).
$$
This implies (ii).
\end{proof}

The following lemma follows immediately from Lemma \ref{l:ccc_c}.

\begin{lem}\label{l:ccc}
\begin{enumerate}
\item[(i)]
Given a compact subset $\Omega$ of $G$, there exists $c=c(\Omega)>0$, such that
for every $g_1,g_2\in G$, $b_1,b_2\in \Omega$, and $t\ge t_0$
$$
H_{t-c}[g_1,g_2]\subset H_t[g_1b_1,g_2b_2]\subset H_{t+c}[g_1,g_2].
$$
\item[(ii)]
There exists $c>0$ such that for every
$g_1,g_2\in G$, $b_1,b_2\in O_\vre$ with $\vre\in (0,1)$, and $t\ge t_0$,
$$
H_{t-c\vre}[g_1,g_2]\subset H_t[g_1b_1,g_2b_2]\subset H_{t+c\vre}[g_1,g_2].
$$
\end{enumerate}
\end{lem}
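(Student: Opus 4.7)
The plan is essentially a direct reduction to Lemma \ref{l:ccc_c} via the substitution $x = g_1^{-1} h g_2$. By definition, $h \in H_t[g_1 b_1, g_2 b_2]$ means that $b_1^{-1}(g_1^{-1} h g_2) b_2 \in G_t$, so the question becomes: how does the element $g_1^{-1} h g_2 \in \mathrm{Mat}_d(\RR)$ change under left and right multiplication by $b_1^{-1}$ and $b_2$? Lemma \ref{l:ccc_c} already answers this for elements of $G$, and the argument goes through verbatim for elements of $\mathrm{Mat}_d(\RR)$, since the proof of that lemma only invoked the polynomial bound $P(b_1 x b_2) \ll_\Omega P(x)$ valid for all $x \in \mathrm{Mat}_d(\RR)$.

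For part (i), I would set $\Omega' := \Omega \cup \Omega^{-1}$, which is still compact, and apply Lemma \ref{l:ccc_c}(i) to obtain a constant $c = c(\Omega')$ such that $\Omega' \cdot G_t \cdot \Omega' \subset G_{t+c}$. The right inclusion $H_t[g_1 b_1, g_2 b_2] \subset H_{t+c}[g_1, g_2]$ then follows by writing
\[
g_1^{-1} h g_2 \;=\; b_1 \bigl(b_1^{-1} g_1^{-1} h g_2 b_2\bigr) b_2^{-1} \;\in\; b_1 \, G_t \, b_2^{-1} \;\subset\; G_{t+c},
\]
and the left inclusion follows symmetrically: if $h \in H_{t-c}[g_1, g_2]$, then $b_1^{-1}(g_1^{-1} h g_2) b_2 \in b_1^{-1} G_{t-c} b_2 \subset G_t$, so $h \in H_t[g_1 b_1, g_2 b_2]$.

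For part (ii), the argument is identical, except that one invokes Lemma \ref{l:ccc_c}(ii) in place of (i). Since $O_\vre$ is a symmetric neighbourhood, both $b_1^{-1}$ and $b_2^{-1}$ lie in $O_\vre$, so the conclusion $O_\vre \cdot G_t \cdot O_\vre \subset G_{t+c\vre}$ applies to both directions of the inclusion, yielding the stated linear dependence of the slack on $\vre$.

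I do not foresee any real obstacle: both statements are book-keeping corollaries of Lemma \ref{l:ccc_c}, with the only subtlety being to remember to replace $\Omega$ by its symmetrisation so that inverses $b_i^{-1}$ are also controlled. No property of $H$ beyond the definition of $H_t[g_1, g_2]$ is used.
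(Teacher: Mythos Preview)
Your proposal is correct and matches the paper's approach exactly: the paper simply states that the lemma ``follows immediately from Lemma \ref{l:ccc_c}'' without giving any details, and your write-up supplies precisely those details, including the symmetrisation $\Omega' = \Omega \cup \Omega^{-1}$ and the observation that $O_\vre$ is already symmetric.
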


The next proposition justifies existence of the limit measures $\nu_x$
defined in (\ref{eq:nu_x}).

\begin{prop}\label{p:alpha}
For every $g_1,g_2\in G$, the limit
$$
\Theta(g_1,g_2):=\lim_{t\to\infty} \frac{\rho(H_t[g_1,g_2])}{\rho(H_t)}
$$
exists. Moreover, the function $\Theta$ is positive and continuous.
\end{prop}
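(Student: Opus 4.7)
My plan is to deduce both existence of the limit and continuity of $\Theta$ by applying Theorem \ref{th:book} not only to $H$ with the polynomial $P$, but also to each ``twisted'' polynomial $P_{g_1,g_2}(h) := P(g_1^{-1} h g_2)$, which again satisfies the hypotheses of that theorem.

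First I would check that $P_{g_1,g_2}$ is a non-negative proper homogeneous polynomial on $\mathrm{Mat}_d(\RR)$ of the same degree as $P$: homogeneity and non-negativity are obvious, properness follows because $h \mapsto g_1^{-1} h g_2$ is a linear automorphism of $\mathrm{Mat}_d(\RR)$, and $P_{g_1,g_2}(h)=0 \Leftrightarrow h=0$ for the same reason. Since $H_t[g_1,g_2] = \{h\in H:\,\log P_{g_1,g_2}(h)\le t\}$, Theorem \ref{th:book}(i) applies and yields an expansion
$$
\rho(H_t[g_1,g_2]) = e^{a'(g_1,g_2) t}\Bigl(\sum_{i=0}^{b'(g_1,g_2)} c_i'(g_1,g_2)\, t^i\Bigr) + O\!\left(e^{(a'(g_1,g_2)-\delta) t}\right)
$$
with positive leading coefficient. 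Applying Lemma \ref{l:ccc}(i) with the roles of $(g_1,g_2)$ and $(b_1,b_2)$ interchanged (and $\Omega=\{g_1,g_2\}$) I obtain $H_{t-c}\subset H_t[g_1,g_2]\subset H_{t+c}$ for some $c=c(g_1,g_2)$, and comparing with the asymptotics of $\rho(H_t)$ forces $a'(g_1,g_2)=a$ and $b'(g_1,g_2)=b$. Dividing the two expansions gives
$$
\frac{\rho(H_t[g_1,g_2])}{\rho(H_t)} \;\longrightarrow\; \frac{c_b'(g_1,g_2)}{c_b} \;=:\; \Theta(g_1,g_2),
$$
which exists and is strictly positive.

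For continuity, fix $(g_1,g_2)$ and consider $(g_1 b_1, g_2 b_2)$ with $b_1,b_2\in O_\vre$. By Lemma \ref{l:ccc}(ii),
$$
H_{t-c\vre}[g_1,g_2] \;\subset\; H_t[g_1 b_1, g_2 b_2] \;\subset\; H_{t+c\vre}[g_1,g_2].
$$
Apply the volume regularity estimate Theorem \ref{th:book}(ii) to the set family associated with $P_{g_1,g_2}$ to get
$$
\rho(H_{t+c\vre}[g_1,g_2]) - \rho(H_{t-c\vre}[g_1,g_2]) \;\ll_{g_1,g_2}\; \vre^\theta\, \rho(H_t[g_1,g_2]) \;\ll_{g_1,g_2}\; \vre^\theta\, \rho(H_t).
$$
Dividing by $\rho(H_t)$ and passing to the limit $t\to\infty$ yields $|\Theta(g_1 b_1, g_2 b_2)-\Theta(g_1,g_2)| \ll_{g_1,g_2} \vre^\theta$, which establishes (Hölder) continuity of $\Theta$.

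The main obstacle is a conceptual one rather than a technical one: one must recognise that Theorem \ref{th:book}, proved for $H_t$, applies verbatim to each $H_t[g_1,g_2]$ because the defining data is again a non-negative proper homogeneous polynomial. Once this is observed, existence follows from matching leading terms via Lemma \ref{l:ccc}(i), and continuity follows from Lemma \ref{l:ccc}(ii) combined with the Hölder-type volume regularity \eqref{holder}. Note that continuity is automatically local (the implicit constants depend on $(g_1,g_2)$), which is all that is required.
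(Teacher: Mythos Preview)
Your proof is correct and follows essentially the same approach as the paper: apply Theorem~\ref{th:book}(i) to the twisted polynomial $P_{g_1,g_2}$, use Lemma~\ref{l:ccc}(i) to identify the exponents $(a,b)$ as independent of $g_1,g_2$, and then use Lemma~\ref{l:ccc}(ii) for continuity. The only minor difference is in the continuity step: the paper computes the limit of the difference quotient directly from the asymptotic formula \eqref{eq:b_book}, obtaining $\frac{c(g_1,g_2)}{c(e,e)}(e^{ac\vre}-1)\ll\vre$ (a Lipschitz bound), whereas you invoke the H\"older regularity estimate Theorem~\ref{th:book}(ii) for $P_{g_1,g_2}$ to get $\ll\vre^\theta$; both arguments are valid and yield continuity.
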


\begin{proof}
It follows from Theorem \ref{th:book}(i) applied to the homogeneous polynomial $P(g_1^{-1}x g_2)$  that 
\begin{equation}\label{eq:b_book}
\rho(H_t[g_1,g_2])\sim c(g_1,g_2)e^{a(g_1,g_2)t}t^{b(g_1,g_2)} \quad\hbox{as $t\to\infty$,}
\end{equation}
for some $c(g_1,g_2)>0$, $a(g_1,g_2)\in\mathbb{Q}_{\ge 0}$, $b(g_1,g_2)\in \mathbb{Z}_{\ge 0}$.
Moreover, it follows from Lemma \ref{l:ccc}(i) that $a(g_1,g_2)$ and $b(g_1,g_2)$
are independent of $g_1,g_2\in G$. This implies that the limit exists and is positive.

To prove continuity, we observe that for every $b_1,b_2\in O_\vre$,
\begin{align*}
&|\Theta(g_1b_1,g_2b_2)-\Theta(g_1,g_2)|
= \lim_{t\to\infty} \frac{|\rho(H_t[g_1b_1,g_2b_2])-\rho(H_t[g_1,g_2])|}{\rho(H_t)}\\
=& \max \set{ \lim_{t\to\infty} \frac{\rho(H_t[g_1b_1,g_2b_2])-\rho(H_t[g_1,g_2])}{\rho(H_t)} \,,\,
\lim_{t\to\infty} \frac{\rho(H_t[g_1,g_2])-\rho(H_t[g_1b_1,g_2b_2])}{\rho(H_t)}},
\end{align*}
By Lemma \ref{l:ccc}(ii) and \eqref{eq:b_book},
\begin{align*}
\lim_{t\to\infty} \frac{\rho(H_t[g_1b_1,g_2b_2])-\rho(H_t[g_1,g_2])}{\rho(H_t)}
&\le \lim_{t\to\infty} \frac{\rho(H_{t+c\vre} [g_1,g_2])-\rho(H_t[g_1,g_2])}{\rho(H_t)}\\
&= \frac{c(g_1,g_2)}{c(e,e)}\left(e^{a(c\vre)}-1\right)\ll \vre.
\end{align*}
Similarly,
\begin{align*} 
\lim_{t\to\infty} \frac{\rho(H_t[g_1,g_2])-\rho(H_t[g_1b_1,g_2b_2])}{\rho(H_t)}
\le \frac{c(g_1,g_2)}{c(e,e)}\left(1- e^{-a(c\vre)}\right)\ll \vre.
\end{align*}
This proves continuity.
\end{proof}

In the case of groups of subexponential type, we have the following asymptotic formula for the measure of the sets 
$H_t[g_1,g_2]$, which is independent of $g_1,g_2$, generalizing  Theorem \ref{th:book}(i).

\begin{prop}\label{p:alpha_sub}
Let $H\subset G$ be of subexponential type.
Then there exists $c_b>0$ and $b\in\mathbb{N}$ such that
uniformly over $g_1,g_2$ in compact subsets of $G$,
$$
\rho(H_t[g_1,g_2])=c_b\, t^b+O(t^{b-1})
$$
for all $t\ge t_0$.
\end{prop}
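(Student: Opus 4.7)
The plan is to deduce the statement directly from two earlier results of this section: the polynomial asymptotic expansion of $\rho(H_t)$ furnished by Theorem \ref{th:book}(i), and the translation-stability of the ball family given by Lemma \ref{l:ccc}(i). The essential point is that in the subexponential regime the leading behaviour of $\rho(H_t)$ is purely polynomial in $t$, and polynomials are asymptotically insensitive to bounded shifts of their argument; this is what produces a single universal leading coefficient $c_b$, independent of $g_1,g_2$.

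First I would invoke Theorem \ref{th:book}(i) for the polynomial $P$. Since $H$ is of subexponential type (that is, $a=0$ in the notation of that theorem), the expansion reduces to
\[
\rho(H_t)=\sum_{i=0}^{b} c_i\,t^i+O(e^{-\delta_0 t})=c_b\,t^b+O(t^{b-1}),
\]
with $c_b>0$. Because $H$ is noncompact, $\rho(H_t)\to\infty$, which forces $b\ge 1$. Next, given any compact set $\Omega\subset G$, I would apply Lemma \ref{l:ccc}(i) with $g_1=g_2=e$ and $b_1,b_2$ ranging over $\Omega$, obtaining a constant $c=c(\Omega)>0$ such that
\[
H_{t-c}\subset H_t[g_1,g_2]\subset H_{t+c}
\]
for every $g_1,g_2\in\Omega$ and every sufficiently large $t$. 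Taking $\rho$ of this inclusion and substituting the expansion from the previous sentence yields
\[
c_b(t-c)^b+O(t^{b-1})\le \rho(H_t[g_1,g_2])\le c_b(t+c)^b+O(t^{b-1}),
\]
and both bounds collapse to $c_b t^b+O(t^{b-1})$ with the implied constant depending only on $c$, hence only on $\Omega$. This is precisely the claimed uniform expansion.

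The argument encounters no genuine obstacle: it is essentially a bookkeeping step combining polynomial growth with bounded coarse translation-invariance of the family $\{H_t\}$. The only point deserving a moment's attention is the claim that the leading coefficient $c_b$ in the expansion of $\rho(H_t[g_1,g_2])$ is actually independent of $g_1,g_2$; this is guaranteed by the two-sided sandwich above, which forces it to coincide with the leading coefficient appearing in the expansion of $\rho(H_t)=\rho(H_t[e,e])$ itself.
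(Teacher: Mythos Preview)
Your proposal is correct and follows essentially the same route as the paper: apply Lemma~\ref{l:ccc}(i) with $g_1=g_2=e$ to sandwich $H_t[g_1,g_2]$ between $H_{t-c}$ and $H_{t+c}$ for a uniform $c=c(\Omega)$, then insert the polynomial expansion $\rho(H_t)=c_b t^b+O(t^{b-1})$ from Theorem~\ref{th:book}(i) with $a=0$. The only cosmetic difference is that the paper first reduces to the case of a Euclidean norm via the comparison \eqref{eq:compact2}; this step is not strictly needed for the argument (Lemma~\ref{l:ccc}(i) already applies to the original polynomial $P$) and is included mainly to support the subsequent remark that the result extends to arbitrary norms.
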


\begin{proof}
Let $\|\cdot\|$ be a Euclidean norm on $\hbox{\rm Mat}_d(\mathbb{R})$,
and $H_t'[g_1,g_2]$ the corresponding balls in $H$. It follows from
(\ref{eq:compact2}) that there exists $c>0$ such that for every
$g_1,g_2\in G$ and $t\ge t_0$,
$$
H'_{t-c}[g_1,g_2] \subset H_t[g_1,g_2]\subset H'_{t+c}[g_1,g_2].
$$
Therefore, it is sufficient to prove the claim of the lemma for a Euclidean norm.

By Lemma \ref{l:ccc}(i), there exists $c>0$, uniform over $g_1,g_2$ in compact sets,
such that for every $t\ge t_0$,
$$
H_{t-c}\subset H_t[g_1,g_2]\subset H_{t+c}.
$$
Since by Theorem \ref{th:book}(i), we have
$$
\rho(H_t)=c_b\, t^b+O(t^{b-1}),
$$
this implies the proposition.
\end{proof}

\begin{rem} \label{r:gen_norm} {\rm
Proposition \ref{p:alpha_sub} implies that for the groups of subexponential type,
the regularity properties of sets $H_t[g_1,g_2]$ are straightforward to establish.
In particular, the function $\Theta$ in Proposition \ref{p:alpha} is constant,
and the claim of Proposition \ref{p:reg} below follows directly from Proposition \ref{p:alpha_sub}.

We also note that the argument of Proposition \ref{p:alpha_sub} applies to sets
defined by $P(x)=\|x\|$ where $\|\cdot \|$ is a general norm on $\hbox{Mat}_d(\mathbb{R})$, not necessarily a {\it polynomial} one. 
This implies that Theorem \ref{th:main1} holds for the averages along the sets
$\Gamma_t:=\{\gamma\in\Gamma:\, \log\|\gamma\|\le t\}$ defined by general norms.
}\end{rem}

\subsection{Volume regularity properties : average admissibility} 

The following proposition gives an averaged  version of Theorem \ref{th:book}(ii). 

\begin{prop}\label{p:reg}
Let $1\le r<\infty$ and $\Omega$ be a compact subset of $G$.
\begin{enumerate}
\item[(i)] For every $u\in G$,
there exist $c=c(u,\Omega,r)>0$ and $\theta=\theta(u,\Omega,r)>0$ such that the estimate 
$$
\left(\int_{\Omega} (\rho(H_{t+\vre}[u,v])- \rho(H_t[u,v]))^r\,dm(v)\right)^{1/r}\le  c\,\vre^\theta \rho(H_t)
$$ 
holds for all $t\ge t_0$ and $\vre\in (0,1)$.
\item[(ii)]
There exist $c=c(\Omega,r)>0$ and $\theta=\theta(\Omega,r)>0$ such that the estimate 
$$
\left(\int_{\Omega\times \Omega} (\rho(H_{t+\vre}[u,v])- \rho(H_t[u,v]))^r\,dm(u)dm(v)\right)^{1/r}\le  c\,\vre^\theta \rho(H_t)
$$ 
holds for all $t\ge t_0$ and $\vre\in (0,1)$.
\end{enumerate}
\end{prop}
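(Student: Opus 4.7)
The plan is to reduce the $L^r$ estimate to an $L^1$ estimate via a uniform pointwise upper bound, and then to deduce the $L^1$ estimate from Fubini's theorem combined with a semialgebraic analogue of Theorem \ref{th:book}(ii) applied to an auxiliary polynomial on a product space. First I would establish a uniform pointwise upper bound
$$
\rho(H_{t+\vre}[u,v]) - \rho(H_t[u,v]) \le M\,\rho(H_t), \qquad v\in\Omega
$$
(and an analogous estimate for $u,v\in\Omega$ in part (ii)), with $M$ depending only on $u$ and $\Omega$ (resp.\ only on $\Omega$). This follows from Lemma \ref{l:ccc}(i), which provides a constant $c=c(u,\Omega)$ with $H_{t+\vre}[u,v]\subset H_{t+\vre+c}$, together with the doubling estimate $\rho(H_{t+c+1})\ll\rho(H_t)$ coming from the asymptotic expansion in Theorem \ref{th:book}(i). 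Setting $X_v:=\rho(H_{t+\vre}[u,v])-\rho(H_t[u,v])$, the bound $X_v\le M\rho(H_t)$ yields $X_v^r\le (M\rho(H_t))^{r-1}X_v$, so
$$
\int_\Omega X_v^r\,dm(v) \le (M\rho(H_t))^{r-1}\int_\Omega X_v\,dm(v).
$$

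The second step is to express the $L^1$ average as the product measure of a thin semialgebraic shell. By Fubini,
$$
\int_\Omega X_v\,dm(v) = (\rho\otimes m)\bigl(E_u(t,\vre)\bigr),
$$
where $E_u(t,\vre):=\{(h,v)\in H\times\Omega:\, t<\log P(u^{-1}hv)\le t+\vre\}$, which after enlarging $\Omega$ to a compact semialgebraic neighbourhood if necessary is itself semialgebraic. I would then apply the semialgebraic analogue of Theorem \ref{th:book}(ii) to the polynomial $\tilde P_u(h,v):=P(u^{-1}hv)$ on $H\times\Omega$; this is legitimate because, as noted in the excerpt, the proof of Theorem \ref{th:book}(ii) rests on resolution of singularities and extends to arbitrary semialgebraic sets. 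It produces constants $c,\theta>0$ with
$$
(\rho\otimes m)(E_u(t,\vre)) \le c\,\vre^\theta\,(\rho\otimes m)\bigl(\{(h,v)\in H\times\Omega:\, \log P(u^{-1}hv)\le t\}\bigr).
$$
By Fubini and Lemma \ref{l:ccc}(i), the right-hand volume is bounded by $m(\Omega)\sup_{v\in\Omega}\rho(H_t[u,v])\ll\rho(H_t)$. Combining this $L^1$ estimate with the pointwise bound from the first step and taking $r$-th roots yields the desired inequality, with the exponent $\theta$ of the proposition given by $\theta/r$.

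For part (ii) the plan is identical, with the additional integration over $u\in\Omega$ handled by regarding the semialgebraic shell in $H\times\Omega\times\Omega$ defined by $\tilde P(h,u,v)=P(u^{-1}hv)$ and running the same two-step argument. The main obstacle I anticipate is verifying that the resolution-of-singularities argument underlying Theorem \ref{th:book}(ii) produces constants $c$ and $\theta$ that are uniform as $u$ (and $v$) vary in the compact set $\Omega$; concretely, one must check that the local monomial estimates obtained after a log-resolution of $\tilde P_u$ (or $\tilde P$) depend continuously on the auxiliary parameters, which is the key technical point behind the proposition.
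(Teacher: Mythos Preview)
Your approach is correct and in fact simpler than the paper's. The paper first treats the case $r=n\in\mathbb{N}$ by studying the function $v(t)=\int_\Omega \rho(H_t[u,v])^n\,dm(v)$, rewriting it as the $(\rho^{\otimes n}\otimes m)$-measure of the sublevel set of the semialgebraic function $\Psi(v,h_1,\ldots,h_n)=\max_i P(u^{-1}h_iv)$ on $\Omega\times H^n$, and applying the resolution-of-singularities argument of \cite[Th.~7.17]{gn} to obtain $v(t+\vre)-v(t)\le c\vre^\theta v(t)$; the elementary inequality $(a-b)^n\le a^n-b^n$ then yields the $L^n$ estimate, and general $r$ is reduced to the integer case $n=\lfloor r\rfloor+1$ by H\"older. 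Your reduction via the pointwise bound $X_v^r\le (M\rho(H_t))^{r-1}X_v$ collapses everything to the $L^1$ case, so you only need resolution of singularities on $H\times\Omega$ rather than on $H^n\times\Omega$, and you avoid the detour through integer exponents. Both routes lead to an exponent of the form $\theta/r$ (or $\theta_1/n$) in the end.

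One comment on your closing worry: the uniformity-in-parameters obstacle you flag is precisely what your own scheme sidesteps. Because you treat $v$ (and, in part (ii), $u$) as an \emph{integration} variable rather than a parameter, the single application of the resolution argument to $\tilde P_u$ on $H\times\Omega$ (resp.\ $\tilde P$ on $H\times\Omega\times\Omega$) already produces constants that are automatically ``uniform''---there is no residual parameter dependence. The genuine uniformity problem the paper alludes to (see the remark following the proposition) arises only when one seeks a \emph{pointwise} H\"older bound $\rho(H_{t+\vre}[u,v])-\rho(H_t[u,v])\le c\vre^\theta\rho(H_t)$ with $c,\theta$ independent of $u,v$; that would indeed require tracking how the resolution data varies with the coefficients of $P(u^{-1}\,\cdot\,v)$, and it is avoided here by the averaged formulation.
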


\begin{proof}
Since the proofs of (i) and (ii) is very similar, we only present the proof of (i).

Without loss of generality, we may assume that $\Omega$ is bounded semialgebraic set.
We first prove the assertion when $r=n$ is an integer.  
Let 
$$
v(t):=\int_{\Omega} \rho(H_{t}[u,v])^n\,dm(v).
$$
We claim that 
for some $c_1,\theta_1>0$ this function satisfies the estimate
\begin{equation}\label{eq:vvv}
v(t+\vre)-v(t)\le c_1\,\vre^{\theta_1}\, v(t)
\end{equation}
for all $t\ge t_0$ and $\vre\in (0,1)$.
We note that by Lemma \ref{l:ccc}(i) and Theorem \ref{th:book}(i),
$v(t)\ll \rho(H_t)^n$.
Therefore, using the inequality $(a-b)^n\le a^n-b^n$ with  $a\ge b\ge 0$, 
we conclude that (\ref{eq:vvv}) implies (ii) with $p=n$.

Now to prove (\ref{eq:vvv}), we observe that
\begin{align*}
v(t)&=\int_{\Omega\times H^n} \chi_{\{\log P(u^{-1}h_1v)\le t,\cdots, \log P(u^{-1}h_nv)\le t\}}\,
dm(v)d\rho(h_1)\cdots d\rho(h_n) \\
&=\int_{\Omega\times H^n} \chi_{\{\log \Psi(u,v,h_1,\ldots,h_n)\le t\}}\,
dm(v)d\rho(h_1)\ldots d\rho(h_n),
\end{align*}
where $\Psi(v,h_1,\ldots,h_n)=\max\{P(u^{-1}h_1v),\ldots, P(u^{-1}h_nv)\}$ is a semialgebraic function
on $\Omega\times H^n$. 
Let $\overline{H}$ denote the projective closure of $H$.
Then $\Psi^{-1}$ is  
a semialgebraic function on $\Omega\times \overline{H}^n$
that  vanishes on the complement of $\Omega\times H^n$.
Now we can apply the argument of \cite[Theorems~7.17]{gn} to deduce \eqref{eq:vvv}.

To prove (i) for general $r\ge 1$, we observe that H\"older's inequality with
$q=(\lfloor r\rfloor +1)/r$ gives
\begin{align*}
&\int_{\Omega} (\rho(H_{t+\vre}[u,v])- \rho(H_t[u,v]))^r\,dm(v)\\
\le &\;m(\Omega)^{1-1/q} \left(\int_{\Omega\times \Omega} (\rho(H_{t+\vre}[u,v])- \rho(H_t[u,v]))^{rq}\,dm(v)\right)^{1/q}.
\end{align*}
Hence, the general estimate follows from the case when $r$ is an integer. 
\end{proof}

\begin{rem} {\rm Let us note that the quality of the estimate stated in Proposition \ref{p:reg}(i) is a key ingredient controlling the quality of the mean and pointwise ergodic theorems. 
Obtaining results of the quality stated in Theorem \ref{th:main5} hinges upon establishing an estimate
which is uniform in the rate $\theta$ and the constant $c$ as $u$ varies in compact sets in $G$. Since
${\sf s}(x)$ and ${\sf s}(y)$ are not in $H$, the integral in Proposition \ref{p:reg}(i) depends
non-trivially on ${\sf s}(x)$ and ${\sf s}(y)$,
so there is no obvious way to exploit invariance of the measure. Rather, the proofs of Proposition \ref{p:reg}(i) and of  Proposition \ref{p:infinity} below apply resolution of singularities to
the parametric family of polynomial maps $h\mapsto P({\sf s}(x)^{-1} h {\sf s}(y)).$
The estimate produced as a result of this procedure for a given polynomial in the family  depends on its coefficients, 
which in turn depend non-trivially on ${\sf s}(x)$ and ${\sf s}(y)$. We  will establish uniform estimates for the  parametric family  of polynomials that arises when $P$ is a norm, and this accounts for the appearance of this assumption in Theorem \ref{th:main5}. 
}
\end{rem}

\subsection{Volume regularity properties : subanalytic functions} 

The following proposition refines Theorem \ref{th:book}(i) and generalizes the discussion to the case of a general subanalytic function. 

\begin{prop}\label{p:infinity}
Let $a\in\mathbb{Q}_{\ge 0}$ and $b\in\mathbb{Z}_{\ge 0}$ be as in Theorem \ref{th:book}(i).
Then for every nonnegative continuous subanalytic function $\phi\ne 0$
with compact support and $x\in X$, 
there exists $\delta>0$ such that for all $t\ge t_0$,
\begin{equation}\label{eq:asymptt}
\int_{G_t} \phi(x g)\, dm(g)=e^{a t}\left(\sum_{i=0}^b c_i(\phi,x) t^i \right) +O_{\phi,x}\left(e^{(a-\delta)t}\right),
\end{equation}
where $c_b(\phi,x)>0$.
\end{prop}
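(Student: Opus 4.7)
The plan is to reduce the asymptotics of $\int_{G_t} \phi(xg)\, dm(g)$ to those of the volume function $\rho(H_t[\mathsf{s}(x), \mathsf{s}(y)])$ from Theorem~\ref{th:book}(i), integrated against $\phi$. First, the same change-of-variables computation as in equation~\eqref{eq:long} (with $G_t$ in place of $G_{t+\vre} - G_t$), together with the compact support of $\phi$, yields
\begin{equation*}
\int_{G_t} \phi(xg)\, dm(g) = \int_D \phi(y)\, \rho(H_t[\mathsf{s}(x), \mathsf{s}(y)])\, d\xi(y),
\end{equation*}
where $D = \supp(\phi)$. Decomposing $\phi$ via a subanalytic partition of unity subordinate to finitely many coordinate charts on $D$, we may assume that $y \mapsto \mathsf{s}(y)$ is subanalytic on $D$.

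Next, I would establish a parametric refinement of Theorem~\ref{th:book}(i): for $y$ in the compact subanalytic set $D$ there exist subanalytic functions $c_0(x, \cdot), \ldots, c_b(x, \cdot)$ on $D$ and a single $\delta > 0$ such that
\begin{equation*}
\rho(H_t[\mathsf{s}(x), \mathsf{s}(y)]) = e^{at}\sum_{i=0}^b c_i(x, y)\, t^i + R(t, x, y),
\qquad |R(t, x, y)| \le C(x)\, e^{(a-\delta)t},
\end{equation*}
uniformly in $y \in D$. The exponents $a, b$ are the same as in Theorem~\ref{th:book}(i) by Proposition~\ref{p:alpha}, depending only on $H$. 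The expansion is produced by running the resolution-of-singularities argument of \cite[Thms~7.17--7.18]{gn} on the subanalytic family of polynomial maps $h \mapsto P(\mathsf{s}(x)^{-1} h\, \mathsf{s}(y))$ parameterised by $y \in D$, or equivalently by invoking the Lion--Rolin preparation theorem for subanalytic functions.

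Multiplying by $\phi(y)$ and integrating over $D$ then yields \eqref{eq:asymptt} with
\begin{equation*}
c_i(\phi, x) := \int_D \phi(y)\, c_i(x, y)\, d\xi(y),
\end{equation*}
and error term $O_{\phi, x}(e^{(a-\delta) t})$; local boundedness of the subanalytic $c_i(x, \cdot)$ on the compact set $D$ ensures integrability. For positivity of $c_b(\phi, x)$, Theorem~\ref{th:book}(i) applied at each $y$ gives $c_b(x, y) > 0$; since $c_b(x, \cdot)$ is continuous and strictly positive on $D$ and $\phi \ge 0$ is nonzero with compact support, we conclude $c_b(\phi, x) > 0$. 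The main obstacle is the parametric uniformity of the expansion in the second step: the implicit constants in Theorem~\ref{th:book}(i) a priori depend on the coefficients of the defining polynomial, which here vary with $y$, so obtaining a single exponent $\delta$ valid uniformly over $D$ requires a parametric resolution-of-singularities argument (or equivalently the uniformity built into the subanalytic preparation theorem).
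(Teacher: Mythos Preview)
Your reduction to the identity
\[
v(t):=\int_{G_t}\phi(xg)\,dm(g)=\int_D\phi(y)\,\rho(H_t[\mathsf{s}(x),\mathsf{s}(y)])\,d\xi(y)
\]
is exactly what the paper does, and your use of a partition of unity to ensure a good section on $D$ is also the same first step. The divergence is in how to extract the asymptotics from here.

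You propose a \emph{parametric} version of Theorem~\ref{th:book}(i): expand $\rho(H_t[\mathsf{s}(x),\mathsf{s}(y)])$ for each $y$, with coefficients and error uniform over $y\in D$, and then integrate. As you yourself note, the uniformity is the hard part: a priori the resolution data, and hence $\delta$ and the implied constant, depend on the coefficients of $h\mapsto P(\mathsf{s}(x)^{-1}h\,\mathsf{s}(y))$, which vary with $y$. A parametric resolution or a Lion--Rolin preparation argument could in principle handle this, but you have not carried it out, and it is genuinely more delicate than the non-parametric case.

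The paper sidesteps this difficulty entirely. Rather than treating the integral as a $y$-parametrised family of integrals over $H$, it treats $v(t)$ as a \emph{single} integral over the product $D\times H$. Concretely, it passes to the Mellin-type transform
\[
f(s)=\int_0^\infty t^{-s} v(\log t)\,dt
=(s-1)^{-1}\int_{D\times H}\phi(y)\,P(\mathsf{s}(x)^{-1}h\,\mathsf{s}(y))^{-s+1}\,d\xi(y)\,d\rho(h),
\]
observes that $(y,h)\mapsto P(\mathsf{s}(x)^{-1}h\,\mathsf{s}(y))^{-1}$ extends to a semianalytic function on $D\times\overline{H}$ (here $\mathsf{s}$ is taken analytic on $D$), and then runs the argument of \cite[Th.~7.17]{gn} once, on this single subanalytic integrand, using Parusi\'nski's rectilinearisation in place of Hironaka. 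No parametric uniformity is needed: the variable $y$ has simply been absorbed into the domain of integration. The identification of $a,b$ with the values from Theorem~\ref{th:book}(i), and the positivity of $c_b(\phi,x)$, are then obtained by the sandwich $\rho(H_{t-c})\ll v(t)\ll\rho(H_{t+c})$ coming from Lemma~\ref{l:ccc}(i), much as you suggest.

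So your overall strategy is sound, but the paper's route is both simpler and avoids precisely the obstacle you flagged.
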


\begin{proof}
Decomposing $\phi$ into a sum of subanalytic functions with compact supports, we reduce
the proof to the case when $\supp(\phi)$ is contained a compact semianalytic set $D$,
and there exists a section $\mathsf{s}: X\to G$ of the factor map
$\mathsf{p}_X:G\to H\backslash G=X$ such that $\mathsf{s}|_{D}$ is analytic.
It follows from invariance of $m$ and \eqref{eq:measure} that
\begin{align*}
v(t)&:=\int_{G_t} \phi(x g)\, dm(g)
=
\int_{G_t} \phi(\mathsf{p}_X(\mathsf{s}(x)g))\, dm(g)\\
&=\int_{(y,h):\, \mathsf{s}(x)^{-1}h\mathsf{s}(y)\in G_t} \phi(\mathsf{p}_X(h\mathsf{s}(y)))\, d\rho(h)d\xi(y)\\
&=\int_D\phi(y)\rho(H_t[\mathsf{s}(x),\mathsf{s}(y)])\,d\xi(y).
\end{align*}
We also note that it follows from  \eqref{eq:measure} that the measure $\xi$ is given by an 
analytic differential form on $D$. We consider the transform of $v(t)$:
$$
f(s)=\int_0^\infty  t^{-s} v(\log t)dt.
$$
Note that it follows from Lemma \ref{l:ccc}(i) and Theorem \ref{th:book}(i) that for some $c>0$,
\begin{equation}\label{eq:uuper}
v(t)\le \|\phi\|_\infty \xi(D)\rho(H_{t+c})\ll e^{at}t^b
\end{equation}
and, in particular, the integral $f(s)$ converges
when  $\hbox{Re}(s)$ is sufficiently large.
In this region, we have
\begin{align*}
f(s)&=\int_D\phi(y)\left(\int_0^\infty t^{-s} \rho(H_{\log t}[\mathsf{s}(x),\mathsf{s}(y)])dt\right) \,d\xi(y)\\
&=(s-1)^{-1} \int_D\phi(y) \left(\int_H P(\mathsf{s}(x)^{-1}h\mathsf{s}(y))^{-s+1} d\rho(h)\right)\,d\xi(y) \\
&=(s-1)^{-1} \int_{D\times H} \phi(y) P(\mathsf{s}(x)^{-1}h\mathsf{s}(y))^{-s+1} d\xi(y)d\rho(h). 
\end{align*}
We observe that the map $(y\times h)\mapsto P(\mathsf{s}(x)^{-1}h\mathsf{s}(y))^{-1}$ extends
to a semianalytic function $D\times \overline{H}$, where $\overline{H}$ denotes the projective
closure of $H$, and vanishes on $D\times (\overline{H}-H)$.
Now to finish the proof, we can apply the argument of \cite[Theorems~7.17]{gn},
but instead of the Hironaka resolution of singularities, we use the rectilinearization 
of subanalytic functions \cite[Theorem~2.7]{par}.
Hence, we conclude that \eqref{eq:asymptt} holds, but
a priori the parameters $a$ and $b$ in \eqref{eq:asymptt} may depend on $\phi$ and $x$.
However, we observe that since $\phi$ is continuous, there exists bounded open $O\subset X$ and 
$m_0>0$ such that $\phi(y)\ge m_0$ for all $y\in O$. Therefore, we deduce from Lemma \ref{l:ccc}(i)
and Theorem \ref{th:book}(i) that for some $c>0$, 
$$
v(t)=\int_X\phi(y)\rho(H_t[\mathsf{s}(x),\mathsf{s}(y)])\,d\xi(y)
\ge m_0\xi(O)\rho(H_{t-c})\gg e^{at}t^b.
$$
Combining this estimate with \eqref{eq:uuper}, we conclude that $a$ and $b$
are independent of $\phi$ and $x$. 
\end{proof} 

\begin{rem} {\rm
Let $G=\prod_{i=1}^l G_i$ where $G_i\subset \hbox{SL}_{d_i}(\mathbb{R})$ be a real
almost algebraic group, and let $P_i$'s be non-negative proper homogeneous polynomials on $\hbox{Mat}_{d_i}(\mathbb{R})$.
For a non-compact real almost algebraic subgroup $H$ of $G$, we consider the sets
$$
H_t:=\{h\in H:\, \log(P_1(h_1)\cdots P_l(h_l))\le t\},
$$
which appear in number-theoretic applications as height function 
Then the arguments developed in this section apply to such sets, and the theorems
established in the Introduction hold for averages supported on the sets
$$
\Gamma_t:=\{\gamma\in \Gamma:\, \log(P_1(\gamma_1)\cdots P_l(\gamma_l))\le t\}.
$$
}\end{rem}

\section{Ergodic theory of algebraic subgroups}\label{sec:alg}

As noted in the introduction, the possibility of applying the duality principle to establish ergodic theorems for (properly normalized) sampling operators for $\Gamma$ acting on $H\setminus G$ depends on the validity 
of ergodic theorems for  averages on $H$ acting on $G/\Gamma$. We therefore turn now to consider the ergodic theory of algebraic subgroups, namely to consider an algebraic group $G$ acting by measure-preserving transformation on a probability space $Y$,  and to the study of the action restricted to  an algebraic subgroup $H$. In the discussion of this problem it is natural to consider  spaces $Y$ more general than just $G/\Gamma$ (so that $G$ is no longer transitive), and also general closed subgroups of $G$ which are not necessarily algebraic. 

We will first consider in \S \ref{sec:erg-poly} the case where the volume growth of the restricted sets $H_t$ is subexponential, where one can apply the traditional arguments regarding regular F\o lner families to obtain ergodic theorems for actions on a general probability space $Y$.

We will then assume that $H$ is a subgroup of a connected semisimple Lie group $G$, which acts on a probability space $Y$ with a strong spectral gap, namely such that each of the  simple components of $G$ has a spectral gap. Under this assumption we will prove quantitative mean, maximal and pointwise ergodic theorems, of two kinds. In  \S \ref{sec:erg-sobolev}  and \S \ref{sec:pointwise-sobolev}  we assume that  $Y$ is a manifold and establish quantitative mean ergodic theorem in Sobolev spaces, for a  general  closed subgroup, including of course  connected algebraic subgroups $H$. When the volume growth of $H_t$ is exponential, the rate of convergence in the mean ergodic theorem will be exponentially fast, and we also establish then an exponentially fast pointwise ergodic theorem for bounded functions. 

In \S  \ref{sec: Iwasaw} and \S \ref{sec:erg-Iwasaw} we turn from Sobolev spaces to Lebesgue spaces. First, in \S \ref{sec: Iwasaw} we will establish 
spectral estimates for ergodic averages in actions of general non-amenable closed subgroups which are non-amenably embedded --- a term we will define and explain there. In \S \ref{sec:erg-Iwasaw} we use these estimates  and establish exponentially fast maximal, mean and pointwise ergodic theorems for these averages acting in Lebesgue spaces. The proof will in fact only depend on a mild regularity assumption on the averages, an assumption that is always satisfied when the subgroup is algebraic and the averages are defined by a homogeneous proper non-negative polynomial.  

Let $H\subset G \subset \hbox{SL}_d(\mathbb{R})$ be closed 
noncompact subgroup  with a left Haar measure $\rho$.
For an arbitrary measure-preserving action of $H$ on a probability space $(Y,\mu)$, 
we consider the family of averaging operators $\pi_Y(\beta_t):L^p(Y)\to L^p(Y)$
defined by 
$$
\pi_Y(\beta_t)F(y)=\frac{1}{\rho(H_t)} \int_{H_t} F(h^{-1} y)\, d\rho(h),\quad F\in L^p(Y),
$$
where $H_t$ are the balls associated with a proper non-negative homogeneous polynomial, as defined in Section \ref{sec:volume}.
We will also consider the sets $H_t[g_1,g_2]\subset H$, defined via the embedding $H\subset G$
(see \eqref{eq:ss_t}).

\subsection{Ergodic theorems in the presence  of subexponential  growth}\label{sec:erg-poly}


\begin{thm}\label{th:erg_sub}
Keeping the notation of the preceding paragraph,  assume that $H$ and $G$ are almost algebraic, the sets $H_t\subset H$  have subexponential volume growth and that $H$ 
acts on an arbitrary probability measure space $(Y,\mu)$ preserving the measure. Then the averages $\pi_Y(\beta_t)$ satisfy the following. 
\begin{enumerate}
\item[(i)] {\rm Weak-type $(1,1)$-maximal inequality. }
For every $F\in L^1(Y)$ and $\delta>0$,
$$
\mu\left(\left\{\sup_{t\ge t_0} |\pi_Y(\beta_t)F|>\delta \right\}\right)\ll \frac{\|F\|_{L^1(Y)}}{\delta}.
$$
\item[(ii)] {\rm Strong maximal inequality.}
For $1<p\le \infty$ and every $F\in L^p(Y)$,
$$
\left\|\sup_{t\ge t_0} |\pi_Y(\beta_t)F|\right\|_{L^p(Y)}\ll_p \|F\|_{L^p(Y)}.
$$
\item[(iii)] {\rm Mean and pointwise ergodic theorem.}
For every $1\le p<\infty$ and $F\in L^p(Y)$,  
the averages $\pi_Y(\beta_t)F$ converges almost everywhere and in $L^p$-norm as $t\to\infty$. In the ergodic case, the limit is $\int_Y Fd\mu$. 
\end{enumerate}
The same results hold without change for each of the families $\pi_Y(\beta_t^{g_1,g_2})$  supported on $H_t[g_1,g_2]$, for any $g_1,g_2\in G$. 
\end{thm}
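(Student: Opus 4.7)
The plan is to reduce to classical Euclidean ergodic theory for dilates of convex sets, and then transfer via Calder\'on's principle to any probability-preserving action of $H$. By Lemma \ref{l:growth}, the subexponential growth hypothesis forces the Zariski closure of $H$ to be an almost direct product $K\cdot A$, with $K$ compact and $A$ an abelian $\R$-diagonalizable group. After passing to a subgroup of finite index (which only modifies the averages by a bounded factor), we may assume $H=K\cdot A$. Since averaging over $K$ is a bounded projection onto the $K$-invariants, the essential content lies in the $A$-averages, and we can work on the Lie algebra $\mathfrak{a}\simeq \R^n$ via the exponential map. After simultaneously diagonalizing $A\subset \SL_d(\R)$, the restriction $P|_A$ takes the form $\sum c_\alpha e^{\alpha(X)}$ for finitely many nonzero linear functionals $\alpha$ on $\mathfrak{a}$, so that $\log P(\exp X)=\max_\alpha\alpha(X)+O(1)$. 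Hence the balls $A_t$ correspond, up to bounded perturbation, to dilates $tC_1$ of a fixed compact convex polyhedron $C_1=\{X\in\mathfrak{a}:\max_\alpha\alpha(X)\le 1\}$, with compactness following from properness of $P$.

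For such convex dilates in $\R^n$, classical Hardy--Littlewood--Wiener theory (using Vitali-type covering applied to translates of $C_1$) yields the weak $(1,1)$ maximal inequality, and Marcinkiewicz interpolation together with the trivial $L^\infty$ bound then delivers the strong $(p,p)$ maximal inequality for all $1<p\le \infty$. Since $A$ is amenable, the Calder\'on transference principle transfers these inequalities verbatim to any measure-preserving action of $A$ on $(Y,\mu)$, which yields parts (i) and (ii).

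For part (iii), the mean ergodic theorem in $L^p$ follows from the F\o lner property of $\{H_t\}$: by Proposition \ref{p:alpha_sub}, $\rho(H_t)=c_b t^b+O(t^{b-1})$ with $b>0$ (since $H$ is noncompact), so $\rho(H_{t+c}\sm H_{t-c})/\rho(H_t)=O(t^{-1})$, giving asymptotic invariance of the averages under left translation by any fixed $h\in H$. Standard amenable mean ergodic theory then identifies the $L^p$-limit as the conditional expectation onto the $H$-invariants. The pointwise ergodic theorem then follows from the Banach principle: the strong maximal inequality combined with a.e.\ convergence on a norm-dense subset (for instance, bounded measurable functions, where convergence reduces to the mean ergodic theorem) implies a.e.\ convergence on all of $L^p$.

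For the skew averages $\pi_Y(\beta_t^{g_1,g_2})$, Lemma \ref{l:ccc}(i) gives $H_{t-c}\subset H_t[g_1,g_2]\subset H_{t+c}$ for some $c$ depending only on a compact set containing $g_1,g_2$, and Proposition \ref{p:alpha_sub} then yields $\rho(H_t[g_1,g_2])/\rho(H_t)\to 1$. Writing $\pi_Y(\beta_t^{g_1,g_2})-\pi_Y(\beta_t)$ as a sum of a scalar factor tending to zero times the maximal operator for $\beta_{t+c}$ and an error supported on $H_{t+c}\sm H_{t-c}$, the established results immediately transfer. The main technical point is the sandwiching reduction and the verification that the convex-dilate geometry of $A_t$ genuinely qualifies for classical differentiation theory; everything else is an application of standard amenable machinery.
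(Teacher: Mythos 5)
Your architecture is sound and the maximal-inequality part goes through, but by a genuinely different route than the paper's. The paper does not pass through the structure theory of $H$ for the covering argument at all: it verifies directly from submultiplicativity of the gauge ($P(hh')\ll P(h)P(h')$ and $P(h^{-1})\ll P(h)^{d-1}$, hence $H_t\cdot H_t^{-1}\subset H_{dt+O(1)}$, which has volume $\ll\rho(H_t)$ by the polynomial asymptotics) that $\{H_t\}$ is a \emph{regular} F\o lner family, and then invokes the general Tempelman-type theorem for regular F\o lner families in amenable groups. Your reduction to $K\cdot A$, identification of $A_t$ with dilates of a convex polyhedron, and appeal to Euclidean differentiation plus Calder\'on transference is a legitimate alternative that makes the geometry explicit, but it costs you the finite-index reduction, the factorization through the $K$-average, and the sandwiching $KA_{t-c}\subset H_t\subset KA_{t+c}$, all of which need to be spelled out; moreover your description of $P|_A$ as $\sum c_\alpha e^{\alpha(X)}$ tacitly assumes positivity of the coefficients, and the correct justification of $\log P(\exp X)=\deg(P)\max_i\chi_i(X)+O(1)$ is the two-sided comparison $P\asymp\|\cdot\|^{\deg P}$. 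The regularity condition $\rho(H_tH_t^{-1})\ll\rho(H_t)$ is exactly what your convexity is secretly buying you, so the two proofs rest on the same covering mechanism.

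The one step that fails as written is in part (iii): you claim a.e.\ convergence on the dense subset of bounded measurable functions, ``where convergence reduces to the mean ergodic theorem.'' It does not: norm convergence of $\pi_Y(\beta_t)F$ gives a.e.\ convergence only along a subsequence, and boundedness of $F$ is of no help (for $\Z$-actions this would amount to deducing Birkhoff's theorem from von Neumann's). The Banach principle requires a dense subclass on which pointwise convergence is established by hand; the standard choice is the $H$-invariant functions together with the span of coboundaries $F=G-\pi_Y(h_0)G$ with $G\in L^\infty$. For such $F$ the F\o lner property gives $|\pi_Y(\beta_t)F(y)|\le \|G\|_\infty\,\rho(H_t\triangle h_0H_t)/\rho(H_t)\to 0$ at every point, and density of this subclass in $L^p$ is itself a consequence of the mean ergodic theorem. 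With that substitution the argument closes; without it, part (iii) is not proved.
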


Before starting the proof, let us observe that by Lemma \ref{l:growth},  whenever the sets $H_t$ have subexponential volume growth, the group $H$ is in fact 
amenable, so that we can use classical methods from the ergodic theory
of amenable groups (see, for instance, \cite{n1} for a recent survey, and \cite{AAB} for a detailed discussion).
We say that a family of subsets $\{B_t\}$ of $H$ is {\it asymptotically invariant, or uniform F\o lner} 
if for every compact subset $Q$ of $H$,
\begin{equation}\label{eq:folner}
\lim_{t\to\infty} \frac{\rho(Q B_t \triangle B_t)}{\rho(B_t)}=0.
\end{equation}
We say that a family $\{B_t\}$ is {\it regular} if
\begin{equation}\label{eq:regular}
\rho(B_t\cdot B_t^{-1})\ll \rho(B_t).
\end{equation}

\begin{prop}\label{p:folner}
Assume that the sets $H_t\subset H$ have subexponential volume growth.
Then the family $\{H_t\}$ is uniform F\o lner and regular.
\end{prop}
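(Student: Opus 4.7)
The plan is to combine the sharp polynomial volume asymptotic of Theorem~\ref{th:book}(i), specialized to the subexponential regime $a=0$, with the containment bounds from Lemma~\ref{l:ccc_c}. Since $H$ is noncompact we have $\rho(H_t)\to\infty$, so Theorem~\ref{th:book}(i) forces $b\ge 1$ and gives
\[
\rho(H_t)=c_b\,t^b+O(t^{b-1}), \qquad c_b>0.
\]

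For the F\o lner property I would argue as follows. Given a compact $Q\subset H$, replace it by $Q\cup\{e\}$ (still compact) so that $H_t\subset QH_t$. Applying Lemma~\ref{l:ccc_c}(i) with $\Omega=Q$ supplies a constant $c_Q>0$ such that $QG_t\subset G_{t+c_Q}$, and intersecting with $H$ yields $QH_t\subset H_{t+c_Q}$. Hence $QH_t\triangle H_t=QH_t\setminus H_t\subset H_{t+c_Q}\setminus H_t$, so
\[
\rho(QH_t\triangle H_t)\le \rho(H_{t+c_Q})-\rho(H_t)=c_b\bigl((t+c_Q)^b-t^b\bigr)+O(t^{b-1})=O(t^{b-1}).
\]
Dividing by $\rho(H_t)\sim c_b t^b$ gives a ratio of order $t^{-1}$, which tends to $0$, verifying \eqref{eq:folner}.

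For the regularity property I would exploit that $H\subset \SL_d(\RR)$. The cofactor formula yields a constant $C_d>0$ such that $\|h^{-1}\|\le C_d\|h\|^{d-1}$ for every $h\in\SL_d(\RR)$. Combining submultiplicativity of $\|\cdot\|$ with the comparison \eqref{eq:compact2} between $P$ and $\|\cdot\|^{\deg P}$, for any $h_1,h_2\in H_t$ one obtains
\[
\log P(h_1 h_2^{-1})\le dt+C
\]
for a constant $C$ depending only on $d$ and $P$. Since $H$ is a group, $h_1h_2^{-1}\in H$, so $H_t\cdot H_t^{-1}\subset H_{dt+C}$, and a further application of Theorem~\ref{th:book}(i) gives
\[
\rho(H_t\cdot H_t^{-1})\le \rho(H_{dt+C})\sim c_b d^b\, t^b\ll \rho(H_t),
\]
which establishes \eqref{eq:regular}.

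The main (and essentially only) nontrivial technical point is the cofactor bound $\|h^{-1}\|\le C_d\|h\|^{d-1}$, which is what allows one to absorb the two-sided product $H_t H_t^{-1}$ back into a single ball $H_T$; once this is in hand, both assertions reduce to routine manipulations with the polynomial volume asymptotic supplied by Theorem~\ref{th:book}(i).
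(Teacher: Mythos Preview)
Your proof is correct and follows essentially the same approach as the paper. Both arguments use the polynomial asymptotic $\rho(H_t)\sim c_b t^b$ from Theorem~\ref{th:book}(i) together with the containment $QH_t\subset H_{t+c}$ for the F{\o}lner property, and the cofactor bound $\|h^{-1}\|\ll\|h\|^{d-1}$ combined with the norm--polynomial comparison \eqref{eq:compact2} to obtain $H_t\cdot H_t^{-1}\subset H_{dt+C}$ for regularity; the paper phrases the latter directly in terms of $\max_{ij}|h_{ij}|$ rather than routing through \eqref{eq:compact2}, but this is the same computation.
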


\begin{proof}
Since $H$ is of subexponential type, by Theorem \ref{th:book}(i),
\begin{equation}\label{eq:sub_exp}
\rho(H_t)\sim c_b\, t^b\quad\hbox{ as $t\to \infty$}
\end{equation} 
for some $c_b>0$ and $b\in \mathbb{N}$.

Without loss of generality, we may assume that the compact set $Q$ in (\ref{eq:folner}) 
contains the identity. 
There exists $c>0$ such that $H_t Q\subset H_{t+c}$, so that
$$
\limsup_{t\to\infty} \frac{\rho(Q H_t \triangle H_t)}{\rho(H_t)}\le
\limsup_{t\to\infty} \frac{\rho(H_{t+c}- H_t)}{\rho(H_t)}=0
$$
by \eqref{eq:sub_exp}. This proves that $\{H_t\}$ is uniform F\o lner. Clearly by property (CA2) namely coarse admissibility of $H_t$, the families $H_t[g_1,g_2]$ are also F\o lner. 

To prove the second claim, we observe that by 
for every $h,h'\in H$,
$$
P(h\cdot h')\ll \left(\max_{ij} |h_{ij}|\right)^{\deg(P)}\left(\max_{ij} |h'_{ij}|\right)^{\deg(P)}
\ll P(h)P(h'),
$$
and
$$
P(h^{-1})\ll \left(\max_{ij} |h_{ij}|\right)^{(d-1)\deg(P)}\ll P(h)^{d-1}.
$$
Therefore, there exists $c>0$ such that
$$
H_t\cdot H_t^{-1}\subset H_{dt+\log(2c)}.
$$
Hence, it follows from \eqref{eq:sub_exp} that $\{H_t\}$ is a regular F\o lner family, and similarly the same holds for each family $H_t[g_1,g_2]$.
\end{proof}

\begin{proof}[Proof of Theorem \ref{th:erg_sub}]
Since the family of sets $H_t$ is uniform F\o lner and regular.
The theorem is a partial case of \cite[Th.~6.6]{n1}.
\end{proof}

\subsection{Quantitative mean ergodic theorem in Sobolev spaces}\label{sec:erg-sobolev}

 In the present subsection we turn to establish a quantitative mean ergodic theorem for the operators $\pi_Y(\beta_t)$ in Sobolev spaces. This result will be used \S \ref{sec:pointwise-sobolev} below in the proof of the quantitative pointwise ergodic theorem in Sobolev spaces. We begin by considering a general closed subgroup $H$, contained in a semisimple Lie group $G$ acting smoothly on a manifold $Y$ with a strong spectral gap, namely each of  the simple factors of $G$ has a spectral gap. This  condition is of course necessary in order to obtain norm decay along subgroups.  
\begin{thm} {\rm Quantitative mean ergodic theorem in Sobolev spaces.}\label{th:mean_quant_H}
Assume that 
\begin{itemize}
\item the group $H$ is an arbitrary closed subgroup of a connected semisimple Lie group $G$
with finite centre, and $G$ acts on a manifold $Y$ preserving a probability measure $\mu$,
\item the representation of every simple factor of $G$ on $L_0^2(Y)$ is isolated from the trivial representation.
\end{itemize}
Then there exist $l\in\mathbb{N}$ and $t_0>0$ such that for every $1<p<\infty$, 
a compact domain $B$ of $Y$, and $F\in L^p_l(B)$, the following estimate holds with $\kappa_p>0$, for all $t\ge t_0$,
$$
\left\|\pi_Y(\beta_t)F-\int_Y F\, d\mu\right\|_{L^p(Y)}\ll_{p,B} \rho(H_t)^{-\kappa_p}\,
\|F\|_{L^p_l(B)}
$$
Furthermore, the rate of convergence applies to each famiy $\pi_Y(\beta_t^{g_1,g_2})$  supported on the
sets $H_t[g_1,g_2]$, uniformly when $g_1,g_2\in G$ vary in compact sets in $G$. 
\end{thm}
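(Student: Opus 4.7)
The plan is to obtain the $L^2$-rate from matrix coefficient decay, then pass to general $L^p$ via Sobolev embedding on the compact domain $B$ and Riesz--Thorin interpolation against a trivial $L^\infty$-bound. The uniformity claim will follow by rerunning the argument with the appropriate volume comparability provided by Section~\ref{sec:volume}.

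First I would target the $L^2$-estimate. Replace $F$ by $F - \int F\, d\mu$ to work in $L^2_0(Y)$. By the strong spectral gap hypothesis, the Cowling--Haagerup--Howe integrability theory together with Harish-Chandra bounds yields an integer $l_0$ and an exponent $q < \infty$ such that for every $g \in G$ and every smooth $u, v \in L^2_0(Y)$,
$$|\langle \pi_Y(g)u, v\rangle| \ll \|u\|_{L^2_{l_0}(Y)} \|v\|_{L^2_{l_0}(Y)}\, \Xi_G(g)^{1/q},$$
where $\Xi_G$ is the Harish-Chandra $\Xi$-function on $G$, and the Sobolev smoothness on $Y$ controls smoothness along the $G$-action via the action map. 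Expanding $\|\pi_Y(\beta_t)F\|_2^2$ by Cauchy--Schwarz, applying this matrix coefficient bound, and changing variables in $h_1^{-1}h_2 = g$ yields
$$\|\pi_Y(\beta_t)F\|_2^2 \ll \|F\|_{L^2_{l_0}(Y)}^2\, \rho(H_t)^{-1} \int_{H_tH_t^{-1}} \Xi_G(g)^{1/q}\, d\rho(g).$$
By Lemma~\ref{l:ccc_c}, $H_t H_t^{-1} \subset H_{Ct}$; combined with the exponential decay of $\Xi_G$ along polar coordinates and the volume expansion from Theorem~\ref{th:book}, one checks for $q$ taken sufficiently large (allowed by iterating the spectral gap via tensor powers) that the integral is $\ll \rho(H_t)^{1-2\kappa}$ for some $\kappa>0$. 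Consequently $\|\pi_Y(\beta_t)F\|_{L^2(Y)} \ll \rho(H_t)^{-\kappa}\,\|F\|_{L^2_{l_0}(Y)}$.

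Next I would interpolate to $L^p$. Choose $l \ge l_0$ large enough (say $l > \dim Y$) so that $L^p_l(B) \hookrightarrow L^\infty(B)$ for every $p > 1$. Since $F$ is supported in the compact domain $B$ of finite $\mu$-measure,
$$\|F\|_{L^\infty(Y)} \ll_{B,l} \|F\|_{L^p_l(B)} \quad\text{and}\quad \|F\|_{L^2_{l_0}(Y)} \ll_{B,l} \|F\|_{L^p_l(B)}.$$
Combining the trivial bound $\|\pi_Y(\beta_t)F - \int F\,d\mu\|_{L^\infty(Y)} \le 2\|F\|_{L^\infty}$ with the $L^2$-rate above, Riesz--Thorin interpolation (directly for $2 \le p < \infty$; for $1 < p < 2$ via the dual inequality or via interpolation against the $L^1(Y)$-norm, which is under control since $\mu(Y)$ is finite) produces
$$\left\|\pi_Y(\beta_t)F - \textstyle\int F\,d\mu\right\|_{L^p(Y)} \ll_{p,B} \rho(H_t)^{-\kappa_p}\,\|F\|_{L^p_l(B)}$$
with $\kappa_p > 0$ an explicit function of $\kappa$ and $p$.

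For the uniformity statement, the argument repeats with $H_t[g_1,g_2]$ in place of $H_t$: the matrix coefficient estimate is independent of $g_1, g_2$, and Lemma~\ref{l:ccc} together with Proposition~\ref{p:alpha} guarantee that $\rho(H_t[g_1,g_2])$ is comparable to $\rho(H_t)$ and that $H_t[g_1,g_2]\,H_t[g_1,g_2]^{-1} \subset H_{C't}$, with $C'$ bounded as $(g_1,g_2)$ ranges over a compact subset of $G \times G$. The main obstacle is the matrix coefficient integral: one must control $\int_{H_tH_t^{-1}} \Xi_G^{1/q}$ sharply enough to beat $\rho(H_t)$ by a positive power. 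This is a tension between the Harish-Chandra exponential decay along $G$ (governed by roots of $G/K$) and the volume growth of $H_t$ from Theorem~\ref{th:book}, and it is this step that requires both the strong spectral gap (yielding $q$ that may be chosen arbitrarily large by tensoring) and the regularity of the balls $H_t$ furnished by the polynomial $P$.
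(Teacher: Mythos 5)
Your architecture (matrix coefficient decay $\to$ $L^2$-rate $\to$ interpolation to $L^p$) matches the paper's, and the interpolation step is essentially fine (the paper uses a Sobolev-space interpolation theorem, Theorem \ref{th:interpolation}, where you use Sobolev embedding plus H\"older; both are workable). The genuine gap is in the $L^2$ step. After the change of variables you must show $\int_{H_t^{-1}H_t}\Xi_G(g)^{1/q}\,d\rho(g)\ll \rho(H_t)^{1-2\kappa}$, and you offer no argument for this beyond "one checks." Two problems. First, your proposed mechanism is backwards: the tensor-power trick gives you \emph{some} finite $q$ with $\pi^{\otimes q}$ weakly contained in the regular representation, and \emph{increasing} $q$ \emph{weakens} the bound $\Xi_G^{1/q}$ (it tends to $1$), so you cannot "take $q$ sufficiently large" to make the integral small; $q$ is imposed on you by the spectral gap and works against you. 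Second, even with the best possible $q$, the dilation $H_t^{-1}H_t\subset H_{Ct}$ is governed by $P(h^{-1})\ll P(h)^{d-1}$, so $C$ can be substantially larger than $1$, and $\rho(H_{Ct})\approx\rho(H_t)^{C}$; for a subgroup such as the full unipotent upper-triangular group $H=N\subset\mathrm{SL}_3(\mathbb{R})$ one computes that $\int_{H_{Ct}}\Xi_G^{1/q}\,d\rho$ grows like a \emph{positive power larger than one} of $\rho(H_t)$, so the estimate you need is simply false along this route. Remember the theorem is claimed for an \emph{arbitrary} closed subgroup $H$, including amenably embedded ones, so no genuinely "spectral" smallness of the $H$-average is available.

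The paper closes exactly this gap by never forming the set $H_t^{-1}H_t$. It takes the matrix coefficient bound in the form $|\langle\pi_Y(g)F,F\rangle|\ll P(g)^{-\kappa}\|F\|^2_{L^2_l(B)}$ (from Kleinbock--Margulis) and splits the double integral over $H_t\times H_t$ according to a threshold: on $\{P(h_2^{-1}h_1)<e^s\}$ it bounds the integrand by a constant (since $\inf_H P>0$) and the measure of the region by $\rho(\{h\in H: P(h)<e^s\})\,\rho(H_t)\ll e^{as}\rho(H_t)$ --- using only the crude bound $\rho(H_s)\ll e^{as}$, valid for \emph{every} closed subgroup --- while on the complementary region the integrand is $\le e^{-\kappa s}$, contributing $e^{-\kappa s}\rho(H_t)^2$. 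Optimizing $e^s=\rho(H_t)^{1/(a+\kappa)}$ yields $\|\pi_Y(\beta_t)F\|_{L^2}^2\ll\rho(H_t)^{-\kappa/(a+\kappa)}\|F\|_{L^2_l(B)}^2$ with no reference to dilated balls or to integrals of $\Xi_G$ along $H$. You should replace your change-of-variables step by this splitting; the rest of your outline (and the uniformity over $g_1,g_2$ via Lemma \ref{l:ccc_c} and Theorem \ref{th:book}) then goes through.
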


We remark that the mean ergodic Theorem \ref{th:mean_quant_H} above,  in the case where the  subgroup $H$ is either amenable, or non-amenable but amenably embedded in $G$, is the best possible result of its kind. Indeed, in these cases, while quantitative mean ergodic theorems will presently be shown to hold in Sobolev spaces,  they definitely do not hold in Lebesgue spaces : the norm of the operators $\pi_Y^0(\beta_t)|_{L^2(Y)\to L^2(Y)}$ is either identically $1$ (when $H$ is amenable) or converges to $1$ (when $H$ is amenably embedded). 

 As noted already, under the assumption that $H$ is non-amenable and not embedded amenably, we will derive in Theorem \ref{thm:non-amenable-erg} in \S \ref{sec:erg-Iwasaw} below the stronger exponentially fast mean and pointwise ergodic theorems in Lebesgue space $L^p$, $ 1 < p < \infty$, going beyond the results in Sobolev spaces.

\begin{proof}
We first consider the case when $p=2$. 
By \cite[2.4.3]{KM1}, there exist $l\in\mathbb{N}$ and $\kappa>0$ such that
for every $F_1,F_2\in L^2_l(B)$ with zero integrals, we have
$$
|\left<\pi_Y(g)F_1,F_2\right>|\ll \|g\|^{-\kappa} \|F_1\|_{2,l}\|F_2\|_{2,l},
$$
where $\|\cdot\|_{2,l}$ denotes the Sobolev norm as defined in \cite{KM1}.
While these Sobolev norms are different from the Sobolev norms
$\|\cdot\|_{L^2_l(B)}$ that we use in our paper, 
it is clear that $\|\cdot\|_{2,l}\ll_B \|\cdot\|_{L^2_l(B)}$
on $L^2_l(B)$.

It follows from the above estimate
that for every $F\in L^2_l(B)$ with zero integral,
\begin{align*}
\|\pi_Y(\beta_t)F\|_{L^2(Y)}^2
= &\frac{1}{\rho(H_t)^2} \int_{H_t\times H_t}\left<\pi_Y(h_1)F,\pi_Y(h_2)F\right>\, d\rho(h_1)d\rho(h_2)\\
= &\frac{1}{\rho(H_t)^2} \int_{H_t\times H_t}\left<\pi_Y(h_2^{-1}h_1)F,F\right>\, d\rho(h_1)d\rho(h_2)\\
\le&\frac{1}{\rho(H_t)^2} \int_{H_t\times H_t}P(h_2^{-1}h_1)^{-\kappa} \|F\|_{L^2_l(B)}^2\, d\rho(h_1)d\rho(h_2).
\end{align*}
We observe that since the polynomial $P$ is proper on $\hbox{Mat}_d(\mathbb{R})$, we have
$$
\inf\{P(h):\, h\in H\}>0,
$$
and there exists $a>0$ such that
\begin{align*}
\rho(H_s)\ll e^{as}\,.
\end{align*}
This simple estimate follows for an algebraic subgroup $H$  from the much sharper asymptotic result stated in Theorem \ref{th:book}(i), but holds true for any closed subgroup $H$ whatsoever. 

Therefore, we deduce that
\begin{align*}
&\int_{H_t\times H_t}P(h_2^{-1}h_1)^{-\kappa} \, d\rho(h_1)d\rho(h_2)\\
=&\int_{(h_1,h_2)\in H_t\times H_t: P(h_2^{-1}h_1)< e^s} P(h_2^{-1}h_1)^{-\kappa} \,
d\rho(h_1)d\rho(h_2)\\
&+\int_{(h_1,h_2)\in H_t\times H_t: P(h_2^{-1}h_1)\ge e^s} P(h_2^{-1}h_1)^{-\kappa} \, d\rho(h_1)d\rho(h_2)\\
\le &\int_{(h,h_2)\in H\times H_t: P(h)< e^s} P(h)^{-\kappa} \, d\rho(h)d\rho(h_2)
+e^{-\kappa s} \rho(H_t)^2\\
\ll&\, \rho(\{h\in H:\,P(h)<e^s\})\rho(H_t)+ e^{-\kappa s} \rho(H_t)^2\\
\ll&\, e^{as}\rho(H_t)+ e^{-\kappa s} \rho(H_t)^2.
\end{align*}
Now taking $s$ such that $e^s=\rho(H_t)^{1/(a+\kappa)}$,
we conclude that for all $t\ge t_0$,
$$
\|\pi_Y(\beta_t)F\|_{L^2(B)}^2\ll 
\rho(H_t)^{-\kappa/(a+\kappa)} \|F\|_{L^2_l(B)}^2,\quad F\in L^2_l(B).
$$
This proves the theorem for $p=2$, for the family $\pi_Y(\beta_t)$. The case of the family $\pi_Y(\beta_t^{g_1,g_2})$,  as well as the uniformity as $g_1,g_2$ vary in compact sets in $G$ 
is similar. 
  
In order to complete the proof in general, we observe that the linear operator
$$
\mathcal{A}_t(F)=\pi_Y(\beta_t)F-\int_Y F\, d\mu
$$
satisfies the estimates
\begin{align*}
&\|\mathcal{A}_t\|_{L^{1}_{l}(B)\to L^{1}(Y)}\ll 1,\\
&\|\mathcal{A}_t\|_{L^{2}_{l}(B)\to L^{2}(Y)}\ll \rho(H_t)^{-\kappa/(a+\kappa)},\\
&\|\mathcal{A}_t\|_{L^{\infty}_{l}(B)\to L^{\infty}(Y)}\ll 1 .
\end{align*}
Hence, the general case follows from Theorem \ref{th:interpolation} below.
\end{proof}

\begin{thm}[\cite{bs}]\label{th:interpolation}
Let $1\le p_1\le p_2\le \infty$, $l\in \mathbb{N}$, $B$ a compact domain in $Y$, and let
$$
\mathcal{A}:L^{p_1}_{l}(B)+L^{p_2}_{l}(B)\to L^{p_1}(Y)+L^{p_2}(Y)
$$
be a linear operator such that
\begin{align*}
&\mathcal{A}(L^{p_1}_{l}(B))\subset L^{p_1}(Y),\quad 
\|\mathcal{A}\|_{L^{p_1}_{l}(B)\to L^{p_1}(Y)}\le M_1,\\
&\mathcal{A}(L^{p_2}_{l}(B))\subset L^{p_2}(Y),\quad
\|\mathcal{A}\|_{L^{p_2}_{l}(B)\to L^{p_2}(Y)}\le M_2.
\end{align*}
Then for every $p$ such that $\frac{1}{p}=\frac{1-\theta}{p_1}+ \frac{\theta}{p_2}$
with $\theta\in (0,1)$,
\begin{align*}
\mathcal{A}(L^{p}_{l}(B))\subset L^{p}(Y),\quad
\|\mathcal{A}\|_{L^{p}_{l}(B)\to L^{p}(Y)}\ll M_1^{1-\theta}M_2^\theta.
\end{align*}
\end{thm}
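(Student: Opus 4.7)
The plan is to apply the complex interpolation method of Calder\'on. The theorem will follow from two ingredients: the general Riesz-Thorin-Calder\'on interpolation theorem for linear operators between complex interpolation spaces, and the identification of the Sobolev space $L^p_l(B)$ as the complex interpolation space between $L^{p_1}_l(B)$ and $L^{p_2}_l(B)$ for the appropriate value of $\theta$.

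First I would recall the abstract form of the interpolation theorem, due to Calder\'on: given compatible Banach pairs $(X_0,X_1)$ and $(Z_0,Z_1)$, any linear operator $T:X_0+X_1\to Z_0+Z_1$ satisfying $\|T\|_{X_j\to Z_j}\le M_j$ for $j=0,1$ extends to a bounded operator
$$
T:[X_0,X_1]_\theta\longrightarrow[Z_0,Z_1]_\theta
$$
with operator norm at most $M_0^{1-\theta}M_1^\theta$, where $[\cdot,\cdot]_\theta$ denotes the complex interpolation functor. Taking this as a black box, one obtains the theorem immediately once the Sobolev interpolation identity below is established.

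Second I would establish the identification
$$
[L^{p_1}_l(B),L^{p_2}_l(B)]_\theta \;\simeq\; L^p_l(B)
$$
with equivalent norms, where $1/p=(1-\theta)/p_1+\theta/p_2$. Using the partition of unity $\{\psi_i\}$ and charts $\{\omega_i\}$ appearing in the definition \eqref{eq:sobolev_norm}, this reduces to the Euclidean statement where $B$ is a compact domain in $\mathbb{R}^d$. The key tool here is the \emph{retraction-coretraction principle}: the differentiation map
$$
D:\phi\longmapsto (\mathcal{D}^s\phi)_{0\le|s|\le l}
$$
is an isomorphism of $L^p_l(B)$ onto a closed subspace of $\bigoplus_{|s|\le l} L^p(\mathbb{R}^d)$, complemented by a projection that is bounded simultaneously for all $1\le p\le\infty$ (via a Stein-type extension operator for $B$). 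Combined with the classical Calder\'on identity $[L^{p_1}(\mathbb{R}^d),L^{p_2}(\mathbb{R}^d)]_\theta=L^p(\mathbb{R}^d)$, the retraction-coretraction lemma in interpolation theory transfers this identification to the Sobolev setting. An identical argument applies to the pair $(L^{p_1}(Y),L^{p_2}(Y))$ on the target side, where the interpolation identity reduces directly to Riesz-Thorin.

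Applying the abstract theorem with $X_j=L^{p_j}_l(B)$ and $Z_j=L^{p_j}(Y)$ to the operator $\mathcal{A}$ then yields $\|\mathcal{A}\|_{L^p_l(B)\to L^p(Y)}\ll M_1^{1-\theta}M_2^\theta$, as required. The main technical obstacle is the clean formulation of the retraction-coretraction in the presence of the coordinate charts and partition of unity used to define $\|\cdot\|_{L^p_l(B)}$: one must verify that the extension operator and the differentiation map are bounded with constants depending only on $B$ and $l$, uniformly in $p\in[p_1,p_2]$. This requires no new ideas beyond the standard theory developed in \cite{bs}, merely a careful execution which we omit.
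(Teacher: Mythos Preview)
Your strategy via Calder\'on's complex interpolation is reasonable, but it differs from the paper's route and contains a gap at the retraction step.

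The paper argues via the \emph{real} interpolation functor $(\cdot,\cdot)_{\theta,q}$ rather than the complex one. After localising to a single chart, it invokes the DeVore--Scherer theorem \cite[Cor.~5.13]{bs}, which identifies $(L^1_l(\mathbb{R}^d),L^\infty_l(\mathbb{R}^d))_{1-1/p,p}=L^p_l(\mathbb{R}^d)$ directly, and then applies the reiteration theorem \cite[Th.~5.12]{bs} to pass from the extreme pair $(1,\infty)$ to an arbitrary pair $(p_1,p_2)$. The operator bound then follows from the general interpolation inequality \cite[Cor.~1.12]{bs}. This route is robust at the endpoints $p_1=1$ and $p_2=\infty$, which matters since the application in Theorem~\ref{th:mean_quant_H} interpolates between $L^1_l$, $L^2_l$ and $L^\infty_l$.

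Your argument hinges on the claim that the image of the differentiation map $D:\phi\mapsto(\mathcal{D}^s\phi)_{|s|\le l}$ is complemented in $\bigoplus L^p(\mathbb{R}^d)$ by a projection bounded simultaneously for all $1\le p\le\infty$. The justification you give---``via a Stein-type extension operator for $B$''---is a non sequitur: a Stein extension operator extends functions from a domain to $\mathbb{R}^d$; it does not produce a bounded left inverse $R:\bigoplus L^p\to L^p_l$ for $D$. The standard constructions of such a retraction (Bessel potentials, Mikhlin multipliers, Riesz transforms) are bounded only for $1<p<\infty$, so as written your argument does not cover the endpoint cases. You would need either a different construction of the retraction valid at $p=1,\infty$, or a direct citation for the complex interpolation identity $[L^{p_1}_l,L^{p_2}_l]_\theta=L^p_l$ including endpoints. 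The paper's real-interpolation approach via DeVore--Scherer sidesteps this issue entirely.
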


\begin{proof} 
This theorem is a consequence for the results on interpolation
of linear operators that can be found in \cite[Ch.~5]{bs}.
Indeed, using a partition of unity, one can reduce the proof
to the case when the support of $\phi\in L^{p}_{l}(B)$ is contained in
a single coordinate chart.
Then by the DeVore--Scherer Theorem (see \cite[Cor.~5.13]{bs}),
the interpolation space $(L^{1}_{l}(\mathbb{R}^d),L^{\infty}_{l}(\mathbb{R}^d))_{1-1/p,p}$
is precisely $L^{p}_{l}(\mathbb{R}^d)$, and by the reiteration theorem
\cite[Th.~5.12]{bs}, 
$$
L^{p}_{l}(\mathbb{R}^d)=(L^{p_1}_{l}(\mathbb{R}^d),L^{p_2}_{l}(\mathbb{R}^d))_{\theta,p}
$$
where $\theta$ is given by $\frac{1}{p}=\frac{1-\theta}{p_1}+ \frac{\theta}{p_2}$.
Similarly,
$$
L^{p}(\mathbb{R}^d)=(L^{p_1}(\mathbb{R}^d),L^{p_2}(\mathbb{R}^d))_{\theta,p}
$$
(see \cite[Th.~1.9]{bs}).
Therefore, Theorem \ref{th:interpolation} is a consequence of \cite[Cor.~1.12]{bs}.
\end{proof}

\subsection{Quantitative pointwise ergodic theorem in Sobolev spaces}\label{sec:pointwise-sobolev}
Let us now note the following general fact based on the representation theory of a semisimple Lie group $G$, which applies 
to any of its closed subgroups, not just the algebraic ones. 

\begin{thm}\label{th:erg_exp}
Assume that 
\begin{itemize}
\item $G$ is a connected semisimple Lie group 
with finite centre acting smoothly on a manifold $Y$ preserving a probability measure $\mu$,
\item the representation of every simple factor of $G$ on $L^2_0(Y)$ is isolated from the trivial representation.
\item $H$ is an arbitrary closed subgroup of $G$,  and the restricted sets  $H_t$  have 
exponential volume growth, namely $\rho(H_t)\gg e^{at}$ for some $a > 0$. 
\end{itemize}
Then there exist $l\in\mathbb{N}$ and $t_0\ge 0$ such that
\begin{enumerate}
\item[(i)] {\rm Strong maximal inequality.} If the sets $H_t$ satisfy the rough monotonicity property,
that is, $\rho(H_{\lfloor t\rfloor+1})\ll \rho(H_t)$, then 
for every $1<p\le\infty$, a compact domain $B$ of $Y$, and $F\in L^p_l(B)^+$,
$$
\left\|\sup_{t\ge t_0} \pi_Y(\beta_t)F\right\|_{L^p(Y)}\ll_{p,B} \|F\|_{L_l^p(B)}.
$$

\item[(ii)] {\rm Quantitative pointwise theorem.}
If the sets $H_t$ satisfy in addition the Holder regularity property as stated in equation (\ref{holder}), then  
for every $1<p<\infty$, a compact domain $B$ of $Y$,
a bounded function $F\in L_l^p(B)$, and almost every $y\in Y$,
\begin{align*}
\left|\pi_Y(\beta_t)F(y)-\int_Y F\, d\mu \right|\le 
 C_p(y,F) e^{-\delta_p t},\quad t\ge t_0,
\end{align*}
where $\delta_p>0$ and the estimator $C_p(y,F)$ satisfies
\begin{align*}
\|C_p(\cdot, F)\|_{L^p(Y)}\ll_{p,B} \|F\|_{L_l^p(B)}+ \|F\|_{\infty}.
\end{align*}

\end{enumerate}
When $H$ is  a connected almost algebraic subgroup, the sets $H_t$ are indeed roughly monotone and H\"older-regular, and the foregoing 
assertions apply to each family $\pi_Y(\beta_t^{g_1,g_2})$, uniformly as $g_1,g_2$ vary in compact sets in $G$.  
\end{thm}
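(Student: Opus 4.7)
The plan is to derive both assertions from the quantitative mean ergodic theorem in Sobolev spaces (Theorem \ref{th:mean_quant_H}) by the standard ``$\ell^p$-summation plus interpolation between times'' argument, using the exponential volume growth $\rho(H_t) \gg e^{at}$ to convert the polynomial rate $\rho(H_t)^{-\kappa_p}$ into an exponential rate $e^{-\delta_p t}$.

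For part (i), I would discretize at integer times. Setting $A_n(y) := \pi_Y(\beta_n)F(y) - \int_Y F\, d\mu$, pick any $0<\delta_p' < \delta_p$ and define
$$
C_p(y,F) := \Bigl(\sum_{n \geq t_0} e^{p\delta_p' n}\, |A_n(y)|^p\Bigr)^{1/p}.
$$
By Theorem \ref{th:mean_quant_H} together with the hypothesis $\rho(H_t)\gg e^{at}$, each term satisfies $\|A_n\|_{L^p(Y)}^p \ll e^{-p\delta_p n}\|F\|_{L^p_l(B)}^p$, so $\|C_p(\cdot,F)\|_{L^p(Y)}^p$ is dominated by a convergent geometric series times $\|F\|_{L^p_l(B)}^p$. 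This yields $|A_n(y)| \leq C_p(y,F) e^{-\delta_p' n}$ for a.e. $y$. To pass from integer to all $t$, use $F\geq 0$ and rough monotonicity: for $t \in [n,n+1]$, since $H_t \subseteq H_{n+1}$ and $\rho(H_{n+1})/\rho(H_n)\ll 1$,
$$
\pi_Y(\beta_t)F(y) \leq \frac{\rho(H_{n+1})}{\rho(H_t)} \pi_Y(\beta_{n+1})F(y) \ll \pi_Y(\beta_{n+1})F(y).
$$
Hence $\sup_{t\ge t_0}\pi_Y(\beta_t)F(y) \ll \int_Y F\,d\mu + \sup_n |A_n(y)| \leq \int_Y F\, d\mu + C_p(y,F)$, and taking $L^p$ norms (using $\|F\|_{L^1} \ll_B \|F\|_{L^p_l(B)}$ since $\supp F \subset B$ compact) yields the desired maximal inequality.

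For part (ii), I would refine the sequence of times to exploit the H\"older regularity \eqref{holder}. Fix a small $\eta>0$ and choose an increasing sequence $\{t_i\}$ containing every integer $n\geq t_0$, with uniform spacing $\lfloor e^{\eta n}\rfloor^{-1}$ on each $[n,n+1]$. Then $t_{i+1}-t_i \ll e^{-\eta \lfloor t_i\rfloor}$. Define
$$
\widetilde{C}_p(y,F) := \Bigl(\sum_i e^{p\delta_p' t_i}\, |A_{t_i}(y)|^p\Bigr)^{1/p},
$$
where $A_{t_i}(y) := \pi_Y(\beta_{t_i})F(y)-\int F d\mu$. Because there are $\ll e^{\eta n}$ sample points per unit interval and each contributes $\ll e^{-p(\delta_p-\delta_p')n}\|F\|_{L^p_l(B)}^p$, the series for $\|\widetilde{C}_p(\cdot,F)\|_{L^p(Y)}^p$ converges provided $\eta < p(\delta_p-\delta_p')$. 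For general $t\in [t_i,t_{i+1}]$ and bounded $F$, a direct splitting combined with H\"older regularity gives
$$
\bigl|\pi_Y(\beta_t)F(y)-\pi_Y(\beta_{t_i})F(y)\bigr| \ll \|F\|_\infty \cdot \frac{\rho(H_{t_{i+1}})-\rho(H_{t_i})}{\rho(H_{t_i})} \ll \|F\|_\infty e^{-\eta\theta t_i},
$$
and combining this with the bound at $t_i$ yields
$$
|A_t(y)| \ll \bigl(\widetilde{C}_p(y,F) + \|F\|_\infty\bigr) e^{-\delta_p'' t}
$$
for $\delta_p'' := \min(\delta_p',\eta\theta) > 0$, proving the pointwise estimate with the required $L^p$-bound on the estimator $C_p(y,F) := \widetilde{C}_p(y,F)+\|F\|_\infty$.

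The main obstacle is balancing $\eta$ against the mean rate $\delta_p$ so that both the $\ell^p$-sum for $\widetilde{C}_p$ converges and the H\"older interpolation error $e^{-\eta\theta t_i}$ remains exponentially small; this forces $\eta\in(0,p(\delta_p-\delta_p'))$, which is nonempty. For the final assertion, when $H$ is almost algebraic and $H_t$ is defined by a proper non-negative homogeneous polynomial $P$, Theorem \ref{th:book}(i)--(ii) supplies both rough monotonicity (from the $e^{at}(c_b t^b + \cdots)$ expansion) and H\"older regularity \eqref{holder} for each of the families $H_t[g_1,g_2]$, with constants uniform as $g_1,g_2$ vary in compact sets, by Lemma \ref{l:ccc} and the uniform statement in Theorem \ref{th:mean_quant_H}. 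Applying the argument above to $\pi_Y(\beta_t^{g_1,g_2})$ then yields the stated uniform conclusion.
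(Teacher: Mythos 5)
Your proposal is correct and follows essentially the same route as the paper: both parts are derived from the exponential mean rate of Theorem \ref{th:mean_quant_H} by summing a weighted $\ell^p$ series of errors along a discrete (for (ii), exponentially refined) sequence of times, passing to all $t$ via rough monotonicity and positivity in (i) and via H\"older regularity of $\rho(H_t)$ together with boundedness of $F$ in (ii). The only detail worth adding is the trivial $p=\infty$ case of the maximal inequality, which the paper dispenses with in one line before discretizing.
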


\begin{rem}{\rm We note that the maximal inequality just stated can be improved to an exponential-maximal inequality. The proof uses the analytic interpolation theorem, the estimates established below, and the interpolation results stated in Section \S \ref{sec:erg-sobolev}, in a manner analogous to \cite{mns}. Such an argument renders superfluous the assumption in Theorem \ref{th:erg_exp}(ii) that the function is bounded.}
\end{rem} 

\begin{proof}[Proof of Theorem \ref{th:erg_exp}(i)]
This estimate clearly holds for $p=\infty$, so let us assume that $p<\infty$.
Since $H_t$ have exponential growth, 
it follows from Theorem \ref{th:mean_quant_H} that there exists $\delta_p>0$
such that for every $F\in L^p_l(B)$, 
$$
\left\|\pi_Y(\beta_t)F-\int_Y F\, d\mu \right\|_{L^p(Y)}\ll e^{-\delta_p t}
\|F\|_{L_l^p(B)},\quad t\ge t_0.
$$
Hence, for the function $C(y,F)$ defined by
$$
C(y,F):=\sum_{n\in \mathbb{N}:n\ge t_0} \left|\pi_Y(\beta_n)F(y)-\int_Y F\, d\mu \right|,
$$
we deduce from the triangle inequality that
$$
\|C(\cdot,F)\|_{L^p(Y)}\ll \|F\|_{L_l^p(B)}.
$$
Therefore,
$$
\left\|\sup_{n\in\mathbb{N}:\,n\ge t_0}\left|\pi_Y(\beta_n)F-\int_Y F\, d\mu \right| \right\|_{L^p(Y)}\ll \|F\|_{L_l^p(B)}.
$$
Then
\begin{align*}
\left\|\sup_{n\in\mathbb{N}:\,n\ge t_0}\left|\pi_Y(\beta_n)F\right| \right\|_{L^p(Y)}
\le& \left\|\sup_{n\in\mathbb{N}:\,n\ge t_0}\left|\pi_Y(\beta_n)F-\int_Y F\, d\mu \right|
\right\|_{L^p(Y)} + \left|\int_Y F\, d\mu\right|\\
 \ll& \|F\|_{L_l^p(B)}.
\end{align*}

Finally, to complete the proof, we assume that $F\ge 0$. Then it follows from 
Theorem \ref{th:book}(i) that when $H$ is a connected almost algebraic group $\rho(H_{\lfloor t\rfloor+1})\ll \rho(H_t)$ for all $t\ge t_0$. Whenever this rough monotonicity 
property holds, we have 
\begin{align*}
\pi_Y(\beta_t)F(y)&=\frac{1}{\rho(H_t)}\int_{H_t} F(h^{-1}y)\, d\rho(y)
\le \frac{1}{\rho(H_t)}\int_{H_{\lfloor t\rfloor+1}} F(h^{-1}y)\, d\rho(y)\\
&\ll \pi_Y(\beta_{\lfloor t\rfloor+1})F(y).
\end{align*}
In conjunction with the previous estimate, this completes the proof.
\end{proof}

\begin{proof}[Proof of Theorem \ref{th:erg_exp}(ii)]
As already noted, it follows from Theorem \ref{th:mean_quant_H} that for some $\delta>0$,
\begin{equation}\label{eq:mean_est}
\left\|\pi_Y(\beta_t)F-\int_Y F\, d\mu\right\|_{L^p(Y)}\ll e^{-\delta t}
\|F\|_{L^p_l(B)},\quad t\ge t_0.
\end{equation}
We take an increasing sequence $\{t_i\}_{i\ge 0}$ that contains
all positive integers greater than $t_0$ and has spacing $\lfloor e^{p\delta n/4}\rfloor^{-1}$
on the intervals $[n,n+1]$, $n\in \mathbb{N}$. Then
\begin{equation*}
t_{i+1}-t_i\le e^{-p \delta \lfloor t_i\rfloor/4}
\end{equation*}
for all $i\ge 0$.
It follows from (\ref{eq:mean_est}) that
\begin{align*}
&\int_Y\left(\sum_{i\ge 0} e^{p\delta t_i/2}
  \left|\pi_Y(\beta_{t_i})F(y)- \int_Y F\, d\mu\right|^p\right)\, d\mu(y)\\
\ll& \sum_{i\ge 0} e^{-p\delta t_i/2}\|F\|^p_{L^p_l(B)}
\le \sum_{n\ge \lfloor t_0 \rfloor } e^{-p\delta n/2} \lfloor e^{p\delta n/4}\rfloor 
\|F\|^p_{L^p_l(B)}
\ll \|F\|^p_{L^p_l(B)}.
\end{align*}
Hence, if we set
$$
C(y,F):=\left(\sum_{i\ge 0} e^{p\delta t_i/2}
  \left|\pi_Y(\beta_{t_i})F(y)- \int_Y F_{t_i}\, d\mu\right|^p\right)^{1/p},
$$
then 
\begin{align*}
\left|\pi_Y(\beta_{t_i})F(y)- \int_Y F\, d\mu\right|\le C(y,F)e^{-p\delta t_i/2}
\end{align*}
for all $i\ge 0$, and 
$$
\|C(\cdot,F)\|_{L^p(Y)}\ll \|F\|_{L^p_l(B)}.
$$
For every  $t\ge t_0$, there exists $t_i<t$ such that
$$
t-t_i\ll e^{-p\delta  \lfloor t_i\rfloor /4}\ll e^{-p\delta  t/4}.
$$
Then
\begin{align*}
\left|\pi_Y(\beta_{t})F(y)- \int_Y F\, d\mu\right| \le &
\left|\pi_Y(\beta_{t})F(y)-\pi_Y(\beta_{t_i})F(y)\right|\\
&+ \left|\pi_Y(\beta_{t_i})F(y)- \int_Y F\, d\mu\right|,
\end{align*}
and the following computation completes the proof
\begin{align*}
&\left|\pi_Y(\beta_{t})F(y)-\pi_Y(\beta_{t_i})F(y)\right|\\
=& \left| \frac{1}{\rho(H_t)} \int_{H_t} F(h^{-1}y)\,d\rho(y)
-\frac{1}{\rho(H_{t_i})} \int_{H_{t_i}} F(h^{-1}y)\,d\rho(y)\right|\\
=& \left| \frac{1}{\rho(H_t)} \int_{H_t} F(h^{-1}y)\,d\rho(y)
-\frac{1}{\rho(H_{t})} \int_{H_{t_i}} F(h^{-1}y)\,d\rho(y)\right|\\
&+ \left| \frac{1}{\rho(H_t)} \int_{H_{t_i}} F(h^{-1}y)\,d\rho(y)
-\frac{1}{\rho(H_{t_i})} \int_{H_{t_i}} F(h^{-1}y)\,d\rho(y)\right|\\
\le& 2\frac{\rho(H_t-H_{t_i})}{\rho(H_t)} \|F\|_\infty\ll 
(e^{-p\delta  t/4})^\theta  \|F\|_\infty,
\end{align*}
where the last estimate follows from our assumption that $H_t$ is H\"older-regular. For a connected almost algebraic group $H$, Theorem \ref{th:book}(ii) shows that 
this property does indeed hold.

Finally, the proof for each family  $\pi_Y(\beta_t^{g_1,g_2})$  supported on $H_t[g_1,g_2]$ is similar, and the uniformity as $g_1,g_2$ vary in compact sets in $G$ follows from the uniform norm estimate established in Theorem \ref{th:mean_quant_H}. 
\end{proof}

For future reference below we also recall the following result about ergodic theory of semisimple groups
which is a variation on \cite[Th.~4.2]{gn}:

\begin{thm}\label{th:erg_semi}
Assume that $H\subset G\subset \hbox{\rm SL}_d(\RR)$, $G$ is an algebraic subgroup and $P$ a homogeneous polynomial. Assume also that 
\begin{itemize}
\item the group $H$ is connected and semisimple, 
and it acts on the probability space $(Y,\mu)$ preserving the measure,
\item the representation of every simple factor of $H$ on $L^2_0(Y)$ is isolated from the trivial representation.
\end{itemize}
Then, for the operators $\pi_Y(\beta_t)$ supported on the restricted sets $H_t$, we have for $t\ge t_0$,
\begin{enumerate}
\item[(i)] {\rm Strong exponential maximal inequality}.
For every $1<p < q\le \infty$ and $F\in L^p(Y)$, with $\delta^\prime_{p,q}> 0$
$$
\left\|\sup_{t\ge t_0}e^{\delta^\prime_{p,q}t} |\pi^0_Y(\beta_t)F|\right\|_{L^p(Y)}\ll_p \|F\|_{L^q(Y)}\,.
$$

\item[(ii)] {\rm Quantitative mean ergodic theorem.}
For every $1\le p\le q\le \infty$ with $(p,q)\ne (1,1)$ and $(p,q)\ne (\infty,\infty)$,
and $F\in L^q(Y)$, with $\delta_{p,q}>0$ 
$$
\left\|\pi_Y(\beta_t)F-\int_YF\, d\mu\right\|_{L^p(Y)}\ll_{p,q} e^{-\delta_{p,q}t}\|F\|_{L^q(Y)}\,.
$$
\end{enumerate}
Furthermore, the same conclusion holds without change for the operators $\pi_Y(\beta_t^{g_1,g_2})$ supported on $H_t[g_1,g_2]$, as $g_1$, $g_2$ vary over compact sets 
in $G$. 
\end{thm}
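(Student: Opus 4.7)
The plan is to prove the theorem in three main steps: establish the fundamental $L^2$-exponential decay for $\pi_Y(\beta_t)$ on $L_0^2(Y)$, extend to Lebesgue spaces by interpolation, and finally derive the exponential maximal inequality by sampling at integer times combined with volume regularity. For Step~1, the strong spectral gap on each simple factor of $H$ yields, through the standard decay of matrix coefficients, an exponent $\eta \in (0,1)$ such that for every $F_1, F_2 \in L_0^2(Y)$,
$$
|\inn{\pi_Y(h)F_1, F_2}| \ll \Xi_H(h)^\eta \norm{F_1}_{L^2(Y)} \norm{F_2}_{L^2(Y)}\,,
$$
where $\Xi_H$ is the Harish-Chandra spherical function on $H$. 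Expanding $\norm{\pi_Y(\beta_t)F}_{L^2(Y)}^2$ as a double integral over $H_t \times H_t$, using invariance of $\rho$ to reduce to $\inn{\pi_Y(h_1^{-1}h_2)F, F}$, and combining this with the volume asymptotics of Theorem~\ref{th:book}(i), I would obtain
$$
\norm{\pi_Y(\beta_t)F}_{L^2(Y)} \ll e^{-\theta t}\, \norm{F}_{L^2(Y)}, \quad F \in L_0^2(Y)\,,
$$
for some $\theta > 0$. This is exactly the $(p,q) = (2,2)$ case of (ii).

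For Step~2, the trivial norm bounds $\norm{\pi_Y(\beta_t)}_{L^p(Y) \to L^p(Y)} \le 1$ at $p \in \{1, \infty\}$ combine with the $L^2$-exponential estimate above; Riesz--Thorin interpolation applied to the zero-mean operator $F \mapsto \pi_Y(\beta_t)F - \int_Y F\, d\mu$ then yields, for every $1 < p < \infty$, an exponential decay rate $\delta_{p,p} > 0$ in the $(p,p)$ direction. For general $1 \le p \le q \le \infty$, the inclusion $\norm{\cdot}_{L^p(Y)} \le \norm{\cdot}_{L^q(Y)}$ (coming from the probability measure) upgrades the $(p,p)$ estimate to the $(p,q)$ estimate. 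The excluded cases $(1,1)$ and $(\infty,\infty)$ are exactly the non-interior endpoints at which interpolation yields no rate.

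For Step~3, the exponential strong maximal inequality, I would first sample at integer times: summing the mean ergodic estimates geometrically,
$$
\biggnorm{\sum_{n \ge t_0} e^{\delta' n} \abs{\pi_Y(\beta_n)F - \int_Y F\, d\mu}}_{L^p(Y)} \ll \sum_{n \ge t_0} e^{(\delta' - \delta_{p,q})n}\, \norm{F}_{L^q(Y)}\,,
$$
which is finite provided $\delta' < \delta_{p,q}$, yielding $\norm{\sup_n e^{\delta' n}|\pi^0_Y(\beta_n)F|}_{L^p(Y)} \ll \norm{F}_{L^q(Y)}$. To pass from a discrete supremum to the continuous one, I would compare $\pi_Y(\beta_t)$ for $t \in [n, n+1]$ with $\pi_Y(\beta_n)$: the correction involves a factor of order $(\rho(H_{n+1}) - \rho(H_n))/\rho(H_n)$ which is uniformly bounded by the coarse admissibility property (CA2), together with positive comparison operators $\pi_Y(\beta_{n+1})|F|$ that are controlled by the same discrete maximal analysis; the exponential rate survives up to a harmless adjustment of $\delta'$.

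Uniformity of all estimates as $g_1, g_2$ range over compact subsets of $G$ follows automatically: Theorem~\ref{th:book} provides volume estimates with constants depending continuously on the coefficients of the homogeneous polynomial $P(g_1^{-1} \cdot g_2)$, while the Harish-Chandra decay of matrix coefficients does not involve $g_1, g_2$ at all. The main obstacle will be executing Step~1 cleanly: securing a genuine exponential rate rather than a polynomial one requires a careful integration of $\Xi_H^\eta$ against the polynomial height function over the Cartan decomposition of $H$, and a uniform handling of each simple factor provided by the strong spectral gap hypothesis; it is precisely here that the ``strong'' (factor-by-factor) nature of the spectral gap, as opposed to a mere Kazhdan gap for $H$, becomes indispensable.
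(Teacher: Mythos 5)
Your architecture (decay of matrix coefficients $\Rightarrow$ $L^2$ operator-norm decay $\Rightarrow$ interpolation $\Rightarrow$ discretized maximal inequality) is the standard one, and it is essentially what underlies the result; the paper itself simply cites \cite[Th.~4.2]{gn}, whose proof runs through the Kunze--Stein phenomenon and a spectral transfer principle rather than a direct integration of $\Xi_H^\eta$. Two points, one minor and one a genuine gap.

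First, the minor point: the pointwise bound $|\langle \pi_Y(h)F_1,F_2\rangle|\ll \Xi_H(h)^\eta\|F_1\|_{L^2}\|F_2\|_{L^2}$ does \emph{not} hold for arbitrary $F_1,F_2\in L^2_0(Y)$; the Cowling--Haagerup--Howe estimate carries dimension factors of the $K$-isotypic spans and is only uniform over $K$-finite vectors. What the strong spectral gap actually gives is that $(\pi_Y^0|_H)^{\otimes 2N}$ is weakly contained in $\mathrm{reg}_H$ for some $N$, whence $\|\pi_Y^0(\beta_t)\|\ll\|\mathrm{reg}_H(\beta_{t+c})\|^{1/2N}$ (Theorem \ref{Kfinite estimate}), and the norm of $\mathrm{reg}_H(\beta_t)$ is then controlled by $\rho(H_t)^{-1}\int_{H_{t+c}}\Xi_H\,d\rho$, which decays exponentially by the argument of Proposition \ref{prop:vol-decay}. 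Your conclusion for Step 1 is correct, but the route must go through operator norms, not pointwise matrix coefficients of general vectors. Step 2 (interpolation against the trivial $L^1$ and $L^\infty$ bounds, then monotonicity of $L^p$-norms on a probability space) is fine.

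The genuine gap is in Step 3, in the passage from integer times to the continuous supremum. For $t\in[n,n+1]$ and $F\ge 0$ one has
$$
\bigl|\pi_Y(\beta_t)F-\pi_Y(\beta_n)F\bigr|\;\le\;\frac{\rho(H_{n+1})-\rho(H_n)}{\rho(H_n)}\,\pi_Y(\beta_{n+1})F + \hbox{(similar terms)},
$$
and (CA2) only makes the prefactor $O(1)$, not small. Since $\pi_Y(\beta_{n+1})F\to\int_Y F\,d\mu>0$, the weighted correction $e^{\delta' t}|\pi_Y(\beta_t)F-\pi_Y(\beta_n)F|$ is of order $e^{\delta' n}$ and blows up; your discrete maximal analysis controls $e^{\delta' n}|\pi_Y^0(\beta_n)F|$, not $e^{\delta' n}\pi_Y(\beta_n)|F|$ itself. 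To rescue the exponential maximal inequality one must refine the mesh to spacing $\sim e^{-cn}$ on $[n,n+1]$ and invoke the H\"older volume regularity $\rho(H_{t+\vre})-\rho(H_t)\ll\vre^\theta\rho(H_t)$ of Theorem \ref{th:book}(ii) so that the prefactor becomes $e^{-c\theta n}$, while the number of mesh points $e^{cn}$ is still absorbed by the exponential decay of the discrete terms; this is exactly the device used in the proofs of Theorems \ref{th_dual_pointwise} and \ref{th:erg_exp}(ii). Even then, for unbounded $F\in L^q$ the annular averages over $H_{t}\setminus H_{t_i}$ must be handled by a positive-operator comparison at consecutive mesh points (or by the analytic interpolation argument of \cite{mns} that the paper invokes for precisely this purpose), since the bound $\|F\|_\infty$ used in Theorem \ref{th:erg_exp}(ii) is not available. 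As written, your Step 3 does not yield statement (i).
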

\begin{proof} 
For any given family $\pi_Y(\beta_t^{g_1,g_2})$ the result is a direct consequence of  \cite[Th.~4.2]{gn}. 
The main ingredient in the proof is the strong spectral gap, which implies the representation $\pi_Y^0$ 
on $L^2_0(Y)$ restricted to $H$ is strongly $L^v$, for some $v < \infty$. Using the transfer principle, the Kunze-Stein phenomenon, and coarse admissibility of $H_t$, there is a uniform  norm bound  of the operators, as $g_1,g_2$ vary in a compact set, namely 
 $\norm{\pi_Y^0(\beta_t^{g_1,g_2})}\ll e^{-\kappa_v t}$, with $\kappa_v > 0$.  This implies that  the proof of   \cite[Th.~4.2]{gn} applies uniformly as $g_1,g_2$ vary in a compact set. 
 \end{proof}

\subsection{Spectral and volume estimates on non-amenable algebraic subgroups}\label{sec: Iwasaw}

Let $H$ be a closed almost connected subgroup of $\hbox{SL}_d(\mathbb{R})$.
We fix a non-negative proper homogeneous polynomial $P$ on $\hbox{Mat}_d(\mathbb{R})$
and set
$$
H_t=\{h\in H:\, \log P(h)\le t\}.
$$
We recall that $\beta_t$ denote the Haar-uniform probability measure on $H_t$. 
 
We denote by $R$ the amenable radical of $H$, that is, the maximal closed connected
normal amenable subgroup of $H$. 
We denote by $\rho_R$ and $\rho_H$ the corresponding Haar measures.

\begin{defi}\label{NA embedding}
{\rm 
We say that $H$ is {\it non-amenably embedded} (w.r.t. the gauge function $P$) 
if 
\begin{equation}\label{eq:nonamean}
\limsup_{t\to\infty} \frac{\log \rho_R(R\cap H_t)}{\log \rho_H(H_t)}<1.
\end{equation}
}
\end{defi}
 
We note that if $H$ is non-amenably embeded w.r.t. one homogeneous polynomial as above, then it is non-amenably embedded w.r.t. all of them, so that this notion is independent of the homogeneous polynomial chosen to verify it. 
 
Of course, if $H$ is non-amenably embedded (w.r.t. any gauge function $P$ as above), then $H$ is a non-amenable group. 
Let us recall the following definition \cite[Ch.5]{gn}

\begin{defi} {\it Groups with an Iwasawa decomposition.}
{\rm 
\begin{enumerate}\item[(i)] 
An lcsc group $H$ has an Iwasawa decomposition 
if it has two closed
amenable subgroups $K$ and $Q$, with $K$ compact and $H=KQ$.

\item[(ii)] The {\it Harish-Chandra $\Xi$-function} associated with the 
Iwasawa decomposition $H=KQ$ of the unimodular group $H$ 
is given by 
$$\Xi_H(h)=\int_K \delta^{-1/2}(hk)dk$$
where $\delta$ is the left modular function of 
$Q$, extended to a left-$K$-invariant
function on $H=KQ$. (Thus, if $m_Q$ is left Haar measure on $Q$, 
$\delta(q) m_Q$ is right invariant, and 
$dm_H=dm_K \delta(q) dm_Q$.)
\end{enumerate}
}
\end{defi}

We begin by stating the following basic spectral estimates for 
Iwasawa groups, which follows from \cite[Ch. 5, Prop. 5.9]{gn}.


\begin{thm}\label{Kfinite estimate}
Let $H$ be a unimodular lcsc 
group with an Iwasawa decomposition,  
and $\pi$ a strongly continuous
unitary representation of $H$. 
Assume that the sets $H_t$ are coarsely admissible (i.e, satisfy condition (CA2) from \S\ref{sec:CA}).
Then there exists $c>0$ such that for every $t\ge t_0$, the following estimates hold.
\begin{enumerate}
\item[(i)]
 If $\pi$  
is weakly contained in the regular representation, and in particular if $\pi$ is the regular
representation $\text{\rm reg}_H$ itself, then 
$$\norm{\pi(\beta_t)}\ll  \frac{1}{\vol (H_{t+c})}\int_{H_{t+c}} \Xi_H(h)\,d\rho_H(h)\,.$$
\item[(ii)] If $\pi^{\otimes 2N}$ is weakly contained in the regular representation of $H$, then
$$\norm{\pi(\beta_t)}\ll \norm{\text{\rm reg}_H(\beta_{t+c})}^{\frac{1}{2N}}\,.$$
\end{enumerate}
\end{thm}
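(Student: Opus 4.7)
Both parts follow by combining the Cowling--Haagerup--Howe (CHH) convolution inequality for Iwasawa groups, which bounds operator norms of the regular representation by integrals against the Harish-Chandra function $\Xi_H$, with the coarse admissibility properties (CA1)--(CA2) of $\{H_t\}$. The latter will allow us to absorb any lack of bi-$K$-symmetry of $\beta_t$ into a uniform shift $t\mapsto t+c$.

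\textbf{Part (i).} Weak containment of $\pi$ in $\mathrm{reg}_H$ immediately gives $\|\pi(\beta_t)\|\le\|\mathrm{reg}_H(\beta_t)\|$, reducing the task to the regular representation. The CHH inequality will then be applied to the bi-$K$-symmetrization $\tilde\beta_t=m_K\ast\beta_t\ast m_K$, where $m_K$ is the normalized Haar measure on $K$. By (CA1), $\tilde\beta_t$ is supported in $KH_tK\subset H_{t+c}$, and by (CA2) its density is dominated by a constant multiple of $1/\rho_H(H_{t+c})$, so the CHH inequality applied to $\tilde\beta_t$ yields
$$\|\mathrm{reg}_H(\tilde\beta_t)\|\le\int_H\tilde\beta_t(h)\,\Xi_H(h)\,d\rho_H(h)\ll\frac{1}{\rho_H(H_{t+c})}\int_{H_{t+c}}\Xi_H(h)\,d\rho_H(h).$$
The comparison $\|\pi(\beta_t)\|\ll\|\mathrm{reg}_H(\tilde\beta_t)\|$ is handled via the identity $\|\pi(\beta_t)\|^2=\|\pi(\beta_t^\ast\ast\beta_t)\|$ together with the observation that $\pi(k)$ is unitary for $k\in K$, so bi-$K$-averaging the self-adjoint operator $\pi(\beta_t^\ast\ast\beta_t)$ leaves its operator norm unchanged.

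\textbf{Part (ii).} The hypothesis gives $\|\pi^{\otimes 2N}(f)\|\le\|\mathrm{reg}_H(f)\|$ for $f\in L^1(H)$; the additional ingredient is the tensor-power inequality $\|\pi(\beta_t)\|^{2N}\le\|\pi^{\otimes 2N}(\beta_t)\|$, which is derived as follows. Discretizing $\beta_t=\sum c_i\delta_{h_i}$ and setting $x_{ij}=\langle\pi(h_j^{-1}h_i)v,v\rangle$ for a unit vector $v$, the identity $\|\pi(\beta_t)v\|^2=\sum_{i,j}c_ic_jx_{ij}\ge 0$, the triangle inequality, and Jensen's inequality applied to the convex function $y\mapsto y^{2N}$ on $[0,\infty)$ with probability weights $c_ic_j$, together yield
$$\|\pi(\beta_t)v\|^{4N}\le\sum_{i,j}c_ic_j|x_{ij}|^{2N}=\|(\pi^{\otimes N}\otimes\bar\pi^{\otimes N})(\beta_t)(v^{\otimes N}\otimes\bar v^{\otimes N})\|^2,$$
upon recognizing $|x_{ij}|^{2N}$ as a diagonal matrix coefficient of $\pi^{\otimes N}\otimes\bar\pi^{\otimes N}$. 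Taking the supremum over $v$ and invoking the weak containment of $\pi^{\otimes N}\otimes\bar\pi^{\otimes N}$ in $\mathrm{reg}_H$ (which follows from the hypothesis on $\pi^{\otimes 2N}$ together with $\bar\pi\cong\pi$ in the tempered framework), followed by taking $2N$-th roots, gives the stated estimate after absorbing the shift $c$ supplied by Part (i).

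\textbf{Main obstacle.} The principal technical difficulty is reconciling the bi-$K$-invariance required by the sharpest form of the CHH inequality with the polynomial gauge $P$ defining $H_t$, which carries no intrinsic bi-$K$-symmetry. Coarse admissibility (CA1)--(CA2) is precisely the tool that resolves this: the symmetrization $KH_tK$ is contained in $H_{t+c}$ with volume comparable to $\rho_H(H_t)$, so bi-$K$-averaging costs only a uniform multiplicative constant and a fixed shift in the gauge parameter, both of which are absorbed into the implicit constants of the final estimates.
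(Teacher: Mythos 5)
Your overall strategy coincides with the one in the cited source \cite[Ch.~5, Prop.~5.9]{gn}: Herz--Cowling--Haagerup--Howe majorization of $\|\mathrm{reg}_H(F)\|$ by $\int F\,\Xi_H\,d\rho_H$ for bi-$K$-invariant $F\ge 0$, combined with (CA1)--(CA2) to absorb the $K$-symmetrization into a shift $t\mapsto t+c$, and the tensor-power trick for (ii). However, the justification of the key comparison in part (i) is not valid. You claim that bi-$K$-averaging the self-adjoint operator $\pi(\beta_t^\ast\ast\beta_t)$ ``leaves its operator norm unchanged'' because each $\pi(k)$ is unitary. But bi-$K$-averaging is not conjugation by a unitary: $\pi(m_K\ast\nu\ast m_K)=\pi(m_K)\,\pi(\nu)\,\pi(m_K)$ is the \emph{compression} of $\pi(\nu)$ by the orthogonal projection $\pi(m_K)$ onto the $K$-invariant vectors. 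Compression can only decrease the operator norm (and may annihilate it if $\pi$ has few $K$-invariant vectors), whereas your argument needs the inequality in the opposite direction, namely $\|\pi(\beta_t)\|\ll\|\mathrm{reg}_H(\tilde\beta_t)\|$. As written, this step fails.

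There are two standard repairs. (a) Work with matrix coefficients rather than operators: weak containment in $\mathrm{reg}_H$ gives the averaged estimate $\int_{K\times K}|\langle\pi(k_1 g k_2)v,w\rangle|\,dk_1\,dk_2\le\Xi_H(g)\,\|v\|\,\|w\|$; then, for unit vectors $v,w$, unimodularity and the inclusion $k_1^{-1}H_t k_2^{-1}\subset H_{t+c}$ from (CA1) give
$\int_{H_t}|\langle\pi(h)v,w\rangle|\,d\rho_H(h)\le\int_{H_{t+c}}\bigl(\int_{K\times K}|\langle\pi(k_1 g k_2)v,w\rangle|\,dk_1\,dk_2\bigr)d\rho_H(g)\le\int_{H_{t+c}}\Xi_H\,d\rho_H$,
and (CA2) converts the prefactor $\rho_H(H_t)^{-1}$ into $\rho_H(H_{t+c})^{-1}$. (b) If you insist on first reducing to $\mathrm{reg}_H$ and then symmetrizing, replace the unitarity claim by the monotonicity of $f\mapsto\|\mathrm{reg}_H(f)\|$ on nonnegative $f$ (which holds because $|\langle\mathrm{reg}_H(g)u,w\rangle|\le\langle\mathrm{reg}_H(g)|u|,|w|\rangle$), together with the pointwise domination $\beta_t\le C\,m_K\ast\beta_{t+c}\ast m_K$ furnished by (CA1)--(CA2); this positivity argument is special to the regular representation and is exactly what your unitarity assertion should have been. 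Finally, in part (ii) the inequality chain is correct, but the phrase ``$\bar\pi\cong\pi$ in the tempered framework'' is not a theorem: $\pi^{\otimes 2N}$ weakly contained in $\mathrm{reg}_H$ does not formally yield $\pi^{\otimes N}\otimes\bar\pi^{\otimes N}\prec\infty\cdot\mathrm{reg}_H$ (only its square is controlled, which would cost a factor of $2$ in the exponent). In the applications in this paper $\pi$ acts on $L^2_0(G/\Gamma)$ and is self-conjugate via $F\mapsto\bar F$, so the exponent $1/(2N)$ is legitimate there, but this should be stated as a hypothesis rather than deduced from temperedness.
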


We now apply the previous general estimates to the case of subgroups of $\hbox{SL}_d(\RR)$.

\begin{prop}\label{prop:vol-decay}
Let $H\subset  \hbox{\rm SL}_d(\RR)$ be closed, unimodular and almost connected subgroup, and  suppose
that the group $H$ is nonamenably embedded. 
Assume that the sets $H_t$ are coarsely admissible (i.e, satisfy condition (CA2) from \S\ref{sec:CA})
and satisfy $\rho_H(H_t)\sim c_b\, e^{at}t^b$ as $t\to\infty$, with $c_b,a >0$. 
Then
\begin{enumerate}
\item[(i)] For every $p>0$ and $t\ge t_0$,
$$
\int_{H_t} \Xi_H(h)^p\,d\rho_H(h)\ll_p \rho_H(H_t)^{1-\delta_p},
$$
where $\delta_p>0$.
\item[(ii)] The convolution norm of $\text{\rm reg}_H(\beta_t)$ as operators on $L^2(H)$ satisfies the decay estimate 
$$\norm{\text{\rm reg}_H(\beta_t)}_{L^2(H)\to L^2(H)} \ll \vol (H_t)^{-\kappa}\,,$$
with $\kappa > 0$.
\end{enumerate} 

Furthermore, (ii) applies to each family  $\pi_Y(\beta_t^{g_1,g_2})$  supported on $H_t[g_1,g_2]$, uniformly 
as $g_1,g_2$ vary in compact sets in $G$. 
In particular, the assertions above hold when $H$ is an almost algebraic group which is non-amenably
embedded in a semisimple group $G\subset \hbox{\rm SL}_d(\RR)$. 
\end{prop}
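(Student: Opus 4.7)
The strategy is to establish (i) first and then derive (ii) as an immediate consequence via Theorem~\ref{Kfinite estimate}(i). The uniform version over compact sets of $g_1,g_2$ will follow from the comparison Lemma~\ref{l:ccc} (in particular part (i) there).

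\emph{Setting up the structure for (i).} Since $H$ is non-amenably embedded, it is non-amenable, so the amenable radical $R$ is a proper closed normal subgroup and the quotient $H/R$ is semisimple with no compact factors modulo the centre. I would fix an Iwasawa decomposition $H/R = K_0 A_0 N_0$ of the semisimple quotient and lift it to a decomposition $H = KQ$, where $K$ is a compact lift of $K_0$ and $Q = A N R$ is amenable, containing the full radical $R$. The Harish--Chandra function $\Xi_H$ is then essentially a function on $H/R$: it is bi-$K$-invariant, equal to $1$ on $R$ (since $R \subset Q$ and the modular function $\delta|_R \equiv 1$ because $H$ is unimodular and $R$ is normal), and on the Cartan direction $A$ of the semisimple quotient it satisfies the classical bound $\Xi_H(a) \ll \delta(a)^{-1/2}(1 + \log\delta(a))^{r}$ where $r$ is the split rank.

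\emph{Integration across the radical.} The next step is to split the integral using the fibration $H \to H/R$. By Fubini and the fact that $\rho_H$ disintegrates as $\rho_R$ times Haar measure on $H/R$,
$$
\int_{H_t} \Xi_H^p \, d\rho_H \;\ll\; \rho_R(R \cap H_{t+c})\,\int_{(H/R)_{t+c}} \Xi_{H/R}^p \, d\rho_{H/R},
$$
where the constant $c$ accounts for the lift of the Iwasawa decomposition and coarse admissibility of the gauge. On the semisimple quotient, the exponential decay of $\Xi_{H/R}$ combined with the polynomial-times-exponential volume growth gives $\int_{(H/R)_s} \Xi_{H/R}^p \, d\rho_{H/R} \ll \rho_{H/R}((H/R)_s)^{1 - \delta'_p}$ for some $\delta'_p > 0$. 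Combining this with the non-amenable embedding hypothesis \eqref{eq:nonamean}, which ensures that $\rho_R(R \cap H_t)$ grows with a strictly smaller exponential rate than $\rho_H(H_t)$, yields $\int_{H_t} \Xi_H^p \, d\rho_H \ll \rho_H(H_t)^{1 - \delta_p}$ for a suitable $\delta_p > 0$, establishing (i).

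\emph{Derivation of (ii) and uniformity.} Applying Theorem~\ref{Kfinite estimate}(i) to the regular representation and then invoking (i) with $p = 1$:
$$
\|\text{reg}_H(\beta_t)\|_{L^2(H) \to L^2(H)} \;\ll\; \frac{1}{\rho_H(H_{t+c})}\int_{H_{t+c}} \Xi_H \, d\rho_H \;\ll\; \rho_H(H_{t+c})^{-\delta_1} \;\ll\; \rho_H(H_t)^{-\kappa},
$$
where the final comparison uses coarse admissibility (CA2). For uniformity in $g_1,g_2$, Lemma~\ref{l:ccc}(i) sandwiches $H_t[g_1,g_2]$ between $H_{t-c}$ and $H_{t+c}$ for some $c$ depending only on the compact set in which $g_1,g_2$ vary, so the whole chain of estimates carries over verbatim with uniform constants.

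\emph{Main obstacle.} The principal technical difficulty is the ``integration across the radical'' step: one must verify that the Iwasawa-type decomposition $H = KQ$ behaves compatibly with the polynomial gauge $P$, so that the fibres of $H_t$ over $(H/R)_{t+c}$ have Haar measure uniformly comparable to $\rho_R(R \cap H_{t+c})$. A secondary, more bookkeeping-style issue is the passage to the finite-index identity component when $H$ is merely almost connected, which requires handling the coset sum without losing the exponent $\delta_p$.
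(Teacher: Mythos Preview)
Your overall architecture matches the paper's: you correctly identify that $\Xi_H$ factors through the semisimple quotient (the paper writes $\Xi_H(rl)=\Xi_L(l)$ for the Levi factor $L$), you derive (ii) from (i) via Theorem~\ref{Kfinite estimate}(i) exactly as the paper does, and the uniformity in $g_1,g_2$ via coarse admissibility is right.

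The gap is precisely the step you flag as the main obstacle, and it is genuine rather than merely technical. The Fubini bound
\[
\int_{H_t}\Xi_H^p\,d\rho_H \;\ll\; \rho_R(R\cap H_{t+c})\int_{(H/R)_{t+c}}\Xi_{H/R}^p\,d\rho_{H/R}
\]
requires the fibre measure $\rho_R(\{r:rl\in H_t\})$ to be bounded uniformly in $l$ by $\rho_R(R\cap H_{t+c})$ for a \emph{fixed} $c$. But from $\|r\|\le\|rl\|\cdot\|l^{-1}\|$ one only gets $\{r:rl\in H_t\}\subset R\cap H_{t+c\,d(eK,lK)}$, where the shift grows with the distance of $l$ from the identity in $L/K$; there is no uniform $c$. (Nor is the projection of $H_t$ to $H/R$ contained in a fixed $(H/R)_{t+c}$, for the same reason with the roles of $r$ and $l$ reversed.) So the factorization you propose cannot be established.

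The paper circumvents this with a threshold split rather than a product bound. Writing $H=RL$ (Levi decomposition) and letting $L(s)$ be the Riemannian ball of radius $s$ in $L/K$, one fixes a small $\delta>0$ and decomposes $H_t$ into $RL(\delta t)\cap H_t$ and its complement. On the first piece $\Xi_H\le 1$, and \emph{now} the fibre argument works because $l$ is confined to $L(\delta t)$: one gets $RL(\delta t)\cap H_t\subset (R\cap H_{t+c\delta t})\,L(\delta t)$, whence by the non-amenable embedding hypothesis the measure is $\le\rho_H(H_t)^{1-\kappa/2}$ for $\delta$ small. On the complement every element has $L$-component outside $L(\delta t)$, so $\Xi_H(rl)^p=\Xi_L(l)^p\ll e^{-p\eta\delta t}$ uniformly, and the integral there is $\ll e^{-p\eta\delta t}\rho_H(H_t)$. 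Both pieces are $\ll\rho_H(H_t)^{1-\delta_p}$, and no uniform fibre comparison is ever needed.
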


\begin{proof}
Since $H$ has finitely many connected components, without loss of generality,
we may assume that $H$ is connected. Let $H=RL$ be the decomposition of $H$ 
where $L$ is a connected semisimple subgroup without compact factors, and $R$ is the amenable radical. The existence of such a decomposition is an
 immediate consequence of the Levi decomposition. 
A left Haar measure $\rho_H$ on $H$ is given by product of a left Haar measure $\rho_R$ on $R$
and a Haar measure $\rho_L$ on $L$. We can further identify an Iwasawa decomposition of $H$ in the form $H=KQ$, where $K\subset L$ is a maximal compact subgroup, and 
$Q=Q_L R$, where $Q_L\subset L$ is a minimal parabolic subgroup of $L$. 

When $H$ is non-amenably embedded in $G$, by  \eqref{eq:nonamean}
there exists $\kappa>0$ such that 
\begin{equation}\label{eq:nonamean2}
\rho_R(R\cap H_t)\le \rho_H(H_t)^{1-\kappa}
\end{equation}
for all sufficiently large $t$. 

By our assumption, there exist $c_b>0$, $a\in \mathbb{Q}_{> 0}$ and $b\in \mathbb{Z}_{\ge 0}$
such that 
\begin{equation}\label{eq:vol1}
\rho_H(H_t)\sim c_b\, e^{at}t^b\quad \hbox{as $t\to\infty$.}
\end{equation}
For a non-compact almost algebraic subgroup $H$ this follows from Theorem \ref{th:book}.

Let $L(s):=\{l\in L;\, d(eK,lK)\le s\}$ denote the ball of radius $s$ with  respect to the Cartan-Killing metric
on the symmetric space $L/K$ of $L$. We use the estimate
\begin{equation}\label{eq:vol2}
\rho_L(L(s))\ll e^{a' s}, \quad s\ge 0,
\end{equation}
with $a'>0$.

Since the coordinates of the matrices in the set $L(s)=L(s)^{-1}$ are bounded by $e^{a''s}$ with some $a''>0$,
up to a multiplicative constant, arguing as in Lemma \ref{l:ccc_c}, we conclude that there exists $c>0$ such that
for every $s>0$,
$$
H_t\cdot L(s)^{-1}\subset H_{t+cs}.
$$
Let $\delta>0$. Using \eqref{eq:nonamean2}, \eqref{eq:vol1} and \eqref{eq:vol2}, we deduce that
\begin{align*}
\rho_H(RL(\delta t)\cap H_t)&\le \rho_H((R\cap H_{t+c\delta t})L(\delta t))
=\rho_R(R\cap H_{t+c\delta t})\rho_L(L(\delta t))\\
&\le \rho_H( H_{t+c\delta t})^{1-\kappa}\rho_L(L(\delta t))\ll e^{\theta t},
\end{align*}
where $\theta=(1+c\delta)(1-\kappa)a+\delta a'$ and $t$ is sufficiently large.
Hence, taking $\delta$ sufficiently small, we obtain $\theta<(1-\kappa/2)a$ and hence
\begin{equation}\label{eq:Bbbb1}
\rho_H(RL(\delta t)\cap H_t)\le \rho_H(H_t)^{1-\kappa/2}
\end{equation}
for all sufficiently large $t$.

To conclude the proof of Proposition \ref{prop:vol-decay}, we recall the well-known estimate on the  Harish-Chandra function $\Xi_L$ 
on the semisimple Lie group $L$ (see e.g. \cite[\S 4.6]{GV}): for every $a$ in the positive Weyl chamber,
$$
\Xi_L(a)  \ll e^{-\rho_L(\log a)}(1+\norm{\log a})^d,
$$
where $\rho_L$ denotes the half-sum of positive roots.
Given $l\in L$,
let its Cartan decomposition be given by $l=kak^\prime$, where $k,k^\prime\in K$
and $a$ in the positive Weyl chamber. Note that
$$
d(K,lK)=d(K,aK)=\|\log a\|.
$$
Since there exists $\eta'>0$ such that
$$
\rho_L(\log a)\ge \eta^\prime \norm{\log a}
$$
for $a$ in the positive Weyl chamber, it follows that there exists $\eta>0$ such that for $l \in L$,
\begin{equation}\label{eq:Bbbb2}
\Xi_L(l)=\Xi_L(a)  \ll e^{-\rho_L(\log a)}(1+\norm{\log a})^d   \ll e^{- \eta\, d(K,aK)}= e^{- \eta\, d(eK,lK)}\,.
\end{equation}
Furthermore, the Harish-Chandra $\Xi_H$-function of the group $H$ satisfies 
$$
\Xi_H (rl)=\Xi_H(lr)=\Xi_L(l)
$$
for all $l\in L$ and $r\in R$. Indeed, every element in the 
amenable radical $R$ acts trivially on the homogeneous space $H/Q$, since $R$ is a normal subgroup of $H$
contained in $Q$, so that $rhQ=hr^\prime Q=hQ$.   Hence,
$$
\Xi_H(h)=\int_{H/Q}\sqrt{r_m(h,yQ)}dm(yQ),
$$
where $r_m(h,yQ)$ is the Radon-Nikodym derivative of the
unique $K$-invariant probability measure $m$ on $H/Q$,  which is invariant under left and right translations by $r\in R$. 

Using bounds \eqref{eq:Bbbb1} and \eqref{eq:Bbbb2}, we obtain that
for all sufficiently large $t$,
$$
\int_{RL(\delta t)\cap H_t} \Xi_H(h)^p\,d\rho_H(h)\ll_p \rho_H(H_t)^{1-\kappa/2},
$$
and 
$$
\int_{H_t-RL(\delta t)} \Xi_H(h)^p\,d\rho_H(h)\ll_p e^{-p\eta \delta t}\rho_H(H_t).
$$
This implies (i), and (ii) follows from Theorem \ref{Kfinite estimate}. Clearly, when (CA1)
is satisfied, the validity of the norm estimate stated in (ii) implies its validity for the families $H_t[g_1,g_2]$, uniformly as $g_1,g_2$ vary over compact sets in $G$.  
\end{proof}

\subsection{Quantitative ergodic theorems for non-amenable algebraic subgroups}\label{sec:erg-Iwasaw}

We can now state the following general ergodic theorem in Lebesgue $L^p$-spaces, which applies in particular to algebraic subgroups $H$ of a semisimple Lie group $G$. 

\begin{thm}\label{thm:non-amenable-erg}
Assume that $H\subset \hbox{\rm SL}_d(\RR)$ is a unimodular non-amenably embedded closed subgroup, 
and that the restricted sets $H_t$ satisfy the H\"older property as in Theorem \ref{th:book}(ii)
and have  volume asymptotic $c_b\, t^{b}e^{at}$ with $c_b,a>0$. 
Assume that $H\subset G$, where $G$ is semisimple, and $G$ acts on a probability space $Y$ preserving an
ergodic probability measure, such that the representation of $G$ in $L^2_0(Y)$ has a strong spectral
gap. Then 


\begin{enumerate}
\item The family $\pi_Y(\beta_t)$ satisfies the $(L^p, L^r)$-exponentially fast 
 mean ergodic theorem for  $1 < r\le p < \infty$,  namely there exists 
$\delta_{p,r}>0$ such that for every $f\in L^p(Y)$,
$$
\left\|\pi_Y(\beta_t)f-\int_Y f\,d\mu\right\|_{L^r(Y)}\ll_{p,r} 
e^{-\delta_{p,r} t}\|f\|_{L^p(Y)}
$$
for all $t\ge t_0$.
\item The family $\pi_Y(\beta_t)$ satisfies the $(L^p,L^r)$-exponential strong 
maximal inequality  for  $1< r <  p< \infty$, namely there exist $t_0>0$ and
$\delta_{p,r}>0$ such that for every $f\in L^p(Y)$,
$$
\left\|\sup_{t\ge t_0} e^{\delta_{p,r} t}
 \left|\pi_Y(\beta_t)f-\int_Y f\,d\mu\right|\right\|_{L^r(Y)}
\ll_{p,r}\|f\|_{L^p(Y)}.
$$
\item The family $\beta_t$ satisfies the {\it $(L^p,L^r)$-exponentially fast 
pointwise ergodic theorem} for 
$1<r <  p< \infty$, namely there exists $\delta_{p,r}>0$ such that for every $f\in L^p(Y)$ and $t\ge t_0$,
$$
\left|\pi_Y(\beta_t)f(y)-\int_Y f\,d\mu\right|\le B_{p,r}(y,f)
e^{-\delta_{p,r} t}
\quad\hbox{ for $\mu$-a.-e. $y\in Y$}
$$
with the estimator $B_{p,r}(y,f)$ satisfying the norm estimate 
$$
\|B_{p,r}(\cdot,f)\|_{L^r(Y)}\ll_{p,r}\|f\|_{L^p(Y)}.
$$

\end{enumerate} 

Furthermore, the same results hold without change for the operators $\pi_Y(\beta_t^{g_1,g_2})$ supported on the sets $H_t[g_1,g_2]$, uniformly when  $g_1,g_2$ vary over compact sets in $G$. 

In particular,  the conclusions hold when $H$ is an almost algebraic non-amenably embedded subgroup.

\end{thm}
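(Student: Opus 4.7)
The plan is to follow the strategy established in the proof of Theorem \ref{th:erg_semi} (i.e.\ \cite[Th.~4.2]{gn}), with Proposition \ref{prop:vol-decay}(ii) replacing the volume/spectral estimates available for semisimple $H$. The argument reduces to a quantitative $L^2$ operator norm estimate for $\pi^0_Y(\beta_t)$ acting on $L^2_0(Y)$, from which the three statements follow by interpolation and classical maximal-function techniques.

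First, I would establish the key $L^2$ operator norm decay
$$
\|\pi^0_Y(\beta_t)\|_{L^2_0(Y)\to L^2_0(Y)} \ll e^{-\delta t}
$$
for some $\delta>0$. By the strong spectral gap hypothesis, the restriction of $\pi^0_Y$ to $H$ is strongly $L^v$ for some $v<\infty$, and hence $(\pi^0_Y|_H)^{\otimes 2N}$ is weakly contained in the regular representation of $H$ for a suitable positive integer $N$. Theorem \ref{Kfinite estimate}(ii) then yields
$$
\|\pi^0_Y(\beta_t)\| \ll \|\mathrm{reg}_H(\beta_{t+c})\|^{1/(2N)},
$$
and Proposition \ref{prop:vol-decay}(ii) bounds the right-hand side by $\rho_H(H_t)^{-\kappa/(2N)}$, which is exponentially small in $t$ since $\rho_H(H_t) \gg e^{at}$ with $a>0$. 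Here Proposition \ref{prop:vol-decay}(ii) is the crucial input and is precisely where the non-amenable embedding hypothesis enters.

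Second, I would interpolate between this $L^2$ decay and the trivial $L^p \to L^p$ bound (of norm at most $1$) via Riesz-Thorin to deduce the $(L^p,L^r)$ exponential mean ergodic theorem of part (1), with an explicit rate $\delta_{p,r}>0$ depending on the interpolation parameter. The strong maximal inequality in part (2) is then obtained by summing the $L^r$-estimates from part (1), weighted by $e^{\delta_{p,r} t_n}$, along a geometric sequence $t_n = n\eta$ for small $\eta>0$; the Hölder regularity \eqref{holder} from Theorem \ref{th:book}(ii) and the rough monotonicity from Theorem \ref{th:book}(i) bridge between consecutive values of $t$. Part (3), the exponentially fast pointwise ergodic theorem, follows by combining the mean ergodic theorem with the maximal inequality through a standard Borel-Cantelli argument applied along a suitable geometric subsequence.

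The main obstacle lies in the first step, and specifically in the simultaneous use of Proposition \ref{prop:vol-decay}(ii) and the strong spectral gap: one needs the former to produce exponential norm decay for averaging operators in the regular representation of $H$, and the latter to transfer this decay to $\pi^0_Y|_H$ via tensor-power containment. The uniformity over $g_1,g_2$ varying in compact subsets of $G$ follows from the uniformity statement in Proposition \ref{prop:vol-decay}(ii) together with the coarse admissibility of the sets $H_t[g_1,g_2]$ (cf. Lemma \ref{l:ccc}).
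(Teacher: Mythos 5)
Your proposal is correct and follows essentially the same route as the paper: the strong spectral gap yields tensor-power containment of $\pi^0_Y|_H$ in a multiple of $\mathrm{reg}_H$, Theorem \ref{Kfinite estimate} together with Proposition \ref{prop:vol-decay} gives the exponential $L^2$ norm decay, interpolation gives part (1), and the discretization/H\"older-regularity argument (the paper defers to \cite[Th.~5.7]{gn} for this) gives parts (2) and (3), with uniformity in $g_1,g_2$ coming from the same sources you cite.
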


\begin{proof}
Since the representation $\pi^0_Y$ of $G$ in $L_0^2(Y)$ has a strong spectral gap, it follows that for
some even $k$, $\left(\pi_Y^0\right)^{\otimes k}$ is isomorphic to a subrepresentation of $\infty
\cdot\text{\rm reg}_G$, i.e. to a subrepresentation of a multiple of the regular representation of $G$
(see the discussion in \cite[Ch. 5]{gn} for more details). It therefore follows that the restriction of
$\pi_Y^0$ to the closed subgroup $H$ has the same property, namely that $\left(\pi_Y^0|_H\right)^{\otimes
  k}\subset \infty \cdot \text{\rm reg}_H$. By Theorem \ref{Kfinite estimate} and 
 Proposition \ref{prop:vol-decay} it follows that the exponentially fast mean ergodic theorem holds in $L^2$, uniformly for $\pi_Y(\beta_t^{g_1,g_2})$ as $g_1,g_2$ vary over compact sets in $G$.  Using interpolation, it also holds as stated in part (i) of Theorem \ref{thm:non-amenable-erg}.  The fact that under the regularity conditions stated in the Theorem, together with the norm decay established in part (i), the assertions of part (ii) and part (iii) follows is proved in detail in \cite[Ch. 5]{gn} in the proof of Theorem 5.7. 
\end{proof}

\section{Completion of the proof of the main theorems}\label{sec:proof}

We write the algebraic homogeneous space $X$ of $G$ as a factor space
$X\simeq H\backslash G$
where $H$ is an almost algebraic subgroup of $G$.
The main theorems stated in the introduction will be deduced
from the ergodic theory for the action of $H$ on $Y\simeq G/\Gamma$ developed 
in Section \ref{sec:alg} combined with the ergodic-theoretic duality results developed
in Sections \ref{sec:max_ineq}--\ref{sec:quant_pointwise}, and with the volume regularity properties 
established in Section \ref{sec:volume}.

\subsection{Regularity properties of the sampling sets}
We apply the results of Sections \ref{sec:max_ineq}--\ref{sec:volume} to the sets
$$
G_t=\{g\in G:\, \log P(g)\le t\}\quad\hbox{and}\quad
H_t[g_1,g_2]=\{h\in H:\, \log P(g_1^{-1}hg_2)\le t\},
$$
where $P$ is either a non-negative proper homogeneous polynomial or norm on $\hbox{Mat}_d(\RR)$.
Let us verify that these sets
satisfy the regularity properties used in Sections \ref{sec:max_ineq}--\ref{sec:quant_pointwise}. 
When $P$ is a non-negative proper homogeneous polynomial,
we use the results established in Section \ref{sec:volume}.
Property (CA1) follows from Lemma~\ref{l:ccc_c}(i),
and (\ref{eq:ha1}) of property (HA1) (and, in particular, (A1)) follows from Lemma~\ref{l:ccc_c}(i).
To complete verification of (HA1) we observe that a 
function $\chi_\vre$ satisfying (\ref{eq_psi_e}) can be constructed by identifying 
neighbourhoods of the identity in $H$ with neighbourhoods of the origin in the Euclidean space
and taking $\chi_\vre(x)=\vre^{-\dim(H)} \chi(\vre x)$ for a fixed
$\chi\in C_c^l(\mathbb{R}^{\dim(H)})$. This shows that (\ref{eq_psi_e}) holds with
$\kappa=(l+\dim(H)(1-1/q)$. Since property (\ref{eq:cover}) follows from the corresponding
property of the Euclidean space, we conclude that (HA1) holds.
Property (CA2) follows from Theorem \ref{th:book}(i).
Properties (A2), (A2$^\prime$), (HA2), (H2$^\prime$) with $1\le r<\infty$
are established in Proposition \ref{p:reg}. Property (A3) is a consequence of 
Proposition \ref{p:alpha}. When $P$ is a norm, properties (CA1), (A1), and (HA1)
directly follow from norm properties. The argument of \cite[Prop.~7.3]{gn} gives
the estimate
$$
\rho(H_{t+\vre}[u,v])-\rho(H_{t}[u,v])\le c\,\vre\rho(H_t)
$$
for all $t\ge t_0$ and $\vre\in (0,1)$, where $c$ is uniform over $u,v$ in compact sets.
This implies conditions (CA1), (A2), (A2$^\prime$), (HA2), (HA2$^\prime$).
Condition (A3) follows from the asymptotic formula for $\rho(H_t)$ established in \cite{GW,Mau}.
Finally, Properties (S) and (HS) are standard in the theory of homogeneous
spaces of Lie groups. Therefore, we conclude that the results established in
Sections \ref{sec:max_ineq}--\ref{sec:quant_pointwise} apply in our setting. 

\subsection{The limiting density}
It follows from Theorem \ref{th:book}(i) that for some $c_b>0$,
$a\in\mathbb{Q}_{\ge 0}$, and $b\in\mathbb{Z}_{\ge 0}$,
\begin{equation}\label{eq:asssss}
\rho(H_t)= c_b\, e^{at}t^b+O(e^{at}t^{b-1}).
\end{equation}
We recall that  by Lemma \ref{l:growth}, $a=0$ if and only the Zariski closure of $H$ is
an almost direct product of a compact subgroup and an abelian diagonalisable subgroup
as in Theorem \ref{th:main1}. The quantity $V(t):=e^{at}t^b$ is the correct normalisation for our averages.

Recall that we defined in (\ref{eq:nu_x})) 
$$
 d\nu_x(y)=\left(\lim_{t\to\infty} \frac{\rho(H_t[\mathsf{s}(x),\mathsf{s}(y)])}{\rho(H_t)}\right)
d\xi(y)
$$
Here the limit exists and is positive and continuous by Proposition \ref{p:alpha},
and the measure $\xi$ is defined by (\ref{eq:measure}).
One can verify that the measures $\nu_x$ are canonically defined, i.e.,
they are independent of a choice of the section $\mathsf{s}$ and the Haar measure $\rho$ on $H$.

We denote $\tilde{\nu}_x$, $x\in X$, the family of measures on $X$ defined similarly by using  the alternative normalization 
\begin{equation}\label{eq:v_x_x}
d\tilde{\nu}_x(y)=\left(\lim_{t\to\infty} \frac{\rho(H_t[\mathsf{s}(x),\mathsf{s}(y)])}{e^{at}t^b}\right)
d\xi(y).
\end{equation}
so that $\tilde{\nu}_x=c_b\, \nu_x$. 

If the group $H$ is of subexponential type, then
it follows from Proposition \ref{p:alpha_sub} that 
\begin{equation}\label{eq:vv_x}
\tilde{\nu}_x=c_b\,\xi, 
\end{equation}
and since $H$ is unimodular,
this gives the unique (up to scalar) $G$-invariant measure on $X$.

\subsection{Proof of Theorem \ref{th:main1}, Theorem \ref{th:main2} and Theorem \ref{th:main3}}
Let us now compare the averages 
$$
\pi_{X}(\tilde\lambda_t)\phi(x)=\frac{1}{e^{at} t^b}\sum_{\gamma\in \Gamma_t} \phi(x\gamma)
$$
 with the averages
$$
\pi_{X}(\lambda_t)\phi(x)=\frac{1}{\rho(H_t)}\sum_{\gamma\in \Gamma_t} \phi(x\gamma)
$$
which formed the subject of the discussion in  Sections \ref{sec:max_ineq}--\ref{sec:quant_pointwise}.
It follows from (\ref{eq:asssss}) that
$$
|\pi_{X}(\tilde\lambda_t)\phi|\ll |\pi_{X}(\lambda_t)\phi|.
$$
Therefore,
Theorem \ref{th:main1}(i) follows from Theorem \ref{th_max_ineq}(ii) combined with Theorem \ref{th:erg_sub}(ii),
Theorem \ref{th:main2}(i) follows from Theorem \ref{th_max_ineq}(ii) combined with Theorem
\ref{th:erg_exp}(i), and Theorem \ref{th:main3}(i) follows from Theorem \ref{th_max_ineq}(ii) combined 
with Theorem \ref{th:erg_semi}(i), provide that the group $H$ is connected.
In general, its connected component $H^0$ has finite index in $H$, and we can apply
the previous argument to the finite cover $H^0\backslash G$ which implies 
Theorem \ref{th:erg_semi}(i) for $H\backslash G$.

Similarly, by (\ref{eq:asssss}), 
$$
|\pi_{X}(\tilde\lambda_t)\phi(x)-c_b\,\pi_{X}(\lambda_t)\phi(x)|
\ll t^{-1} |\pi_{X}(\lambda_t)\phi(x)|, 
$$
and by Theorem \ref{th_max_ineq}(i),
\begin{equation}\label{eq:lllll1} 
\|\pi_{X}(\tilde\lambda_t)\phi-c_b\,\pi_{X}(\lambda_t)\phi\|_{L^p(D)}
\ll t^{-1} \|\phi\|_{L^p(D)}.
\end{equation}
Hence, Theorem \ref{th:main1}(ii) follows from Theorem \ref{th:mean} combined with Theorem
\ref{th:erg_sub}(iii), and
Theorem \ref{th:main1}(iv) follows from Theorem \ref{th:pointwise} combined with Theorem   
\ref{th:erg_sub}(iii).

Combining Theorem \ref{th:mean} with Theorem \ref{th:mean_quant_H}, 
we deduce that Theorem \ref{th:main2}(ii) holds for $\phi\in L^p_l(D)^+$ with $p>1$.
Since it sufficient to prove convergence for a dense family of functions
(see the proof of Theorem \ref{th:mean}), this implies the claim of Theorem \ref{th:main2}(ii).
Theorem \ref{th:main2}(iii) follows from Theorem \ref{th:pointwise} combined with Theorem
\ref{th:erg_exp}(ii).

To prove Theorem \ref{th:main1}(iii),
we observe that by Theorem \ref{th_dual_Ger} and Theorem \ref{th:mean_quant_H},
for some $\delta>0$,
\begin{equation}\label{eq:lllll2}
\left\| \pi_X(\lambda_t)\phi(x)- \pi_X( \lambda^G_t)\phi
\right\|_{L^p(D)}\ll 
t^{-\delta} \|\phi\|_{L_l^{q}(D)},
\end{equation}
where 
\begin{align*}
\pi_X(\lambda^G_t)\phi(x) 
&=
\frac{1}{\rho(H_t)}\int_{G_t} \phi(\mathsf{p}_X(\mathsf{s}(x)g))\, dm(g)\\
&=\frac{1}{\rho(H_t)}\int_{(y,h):\, \mathsf{s}(x)^{-1}h\mathsf{s}(y)\in G_t} \phi(\mathsf{p}_X(h\mathsf{s}(y)))\, d\rho(h)d\xi(y)\\
&=\int_X\phi(y)\frac{\rho(H_t[\mathsf{s}(x),\mathsf{s}(y)])}{\rho(H_t)}\,d\xi(y).
\end{align*}
It follows from Proposition \ref{p:alpha_sub} that
\begin{equation}\label{eq:lllll3}
\left|\pi_X(\lambda^G_t)\phi(x)-\int_X\phi\, d\xi\right| \ll t^{-1}\|\phi\|_{L^1(D)}
\end{equation}
uniformly as $x$ varies in compact sets. Therefore, combining estimates (\ref{eq:lllll1}),
(\ref{eq:lllll2}), and (\ref{eq:lllll3}), we deduce Theorem \ref{th:main1}(iii).

Theorem \ref{th:main2}(iv) is deduced from Theorem \ref{th_dual_pointwise} combined with Theorem \ref{th:mean_quant_H}.
 It follows from Theorem \ref{th_dual_pointwise} that 
for some $\delta>0$ and almost every $x\in D$,
$$
\left| \sum_{\gamma\in\Gamma_t} \phi(x\gamma)- \int_{G_t}\phi(xg)\,dm(g) \right| \ll_{\phi,x} e^{-\delta
  t}\rho(H_t)
\ll e^{(a-\delta)t}t^b.
$$
Since by Proposition \ref{p:infinity},
$$
\int_{G_t} \phi(x g)\, dm(g)=e^{a t}\left(\sum_{i=0}^b c_i(\phi,x) t^i \right) +O_{\phi,x}\left(e^{(a-\delta)t}\right),
$$
this implies Theorem \ref{th:main2}(iv).

 Theorem \ref{th:main3}(ii) is deduced similarly from
Theorem \ref{th_dual_pointwise} and Theorem \ref{th:erg_semi}(ii), when $H$ is connected.
In general, we reduce the argument to the action on the space $H^0\backslash G$
which is a finite cover of $X$. This implies Theorem \ref{th:main3}(ii) in general. Theorem \ref{th:main3}(iii) is derived 
from Theorem \ref{th_dual_pointwise} combined with Theorem \ref{th:erg_semi} and  Theorem \ref{thm:non-amenable-erg} in the same manner as 
Theorem \ref{th:main2}(iv), using that the norm estimates of $\pi_Y(\beta_t)$ are available in Lebesgue
spaces, rather than just Sobolev spaces, uniformly as 
$g_1,g_2$ vary in a compact set. 

\subsection{Proof of Theorem \ref{th:main5}}
First, let us note that to prove each of the three statement in Theorem \ref{th:main5}, it suffices to prove them for non-negative functions in the function space under consideration. 
Thus we can assume that the function $\phi$ is non-negative, when convenient. 

We begin by proving the mean ergodic theorem stated in Theorem \ref{th:main5}(i). By Theorem \ref{th:erg_semi}(ii), Theorem \ref{th_dual_Ger} (applying the case $l=0$), and Theorem \ref{thm:non-amenable-erg}, we conclude that for some $\delta > 0$,
\begin{equation}\label{eq:lllll2_1}
\left\| \pi_X(\lambda_t)\phi(x)- \pi_X( \lambda^G_t)\phi (x)
\right\|_{L^p(D)}\ll 
e^{-\delta t} \|\phi\|_{L^{q}(D)}.
\end{equation}
Since 
$$\pi_X(\lambda^G_t)\phi(x) =
\int_X\phi(y)\frac{\rho(H_t[\mathsf{s}(x),\mathsf{s}(y)])}{\rho(H_t)}\,d\xi(y),
$$
and the density of $\nu_x$ with respect to $\xi$ is given by 
$$ \lim_{t\to \infty}\frac{\rho(H_t[\mathsf{s}(x),\mathsf{s}(y)])}{\rho(H_t)}=\Theta(\mathsf{s}(x),\mathsf{s}(y)),$$
it suffices to estimate
\begin{equation}\label{kernel}
\abs{ \pi_X(\lambda^G_t)\phi(x)- \int_D \phi\, d\nu_x}=\abs{\int_X \phi(y)\left( \frac{\rho(H_t[\mathsf{s}(x),\mathsf{s}(y)])}{\rho(H_t)}-\Theta(\mathsf{s}(x),\mathsf{s}(y))\right)d\xi(y)}.
\end{equation}

Since we assume that the homogeneous polynomial in question is a norm, $H$ is semisimple,  and the volume growth of $H_t$ is purely exponential, we can appeal to \cite[Thm. 3]{Mau}, where the following regularity property is established for the volume of $H_t[g_1,g_2]$  :
$$\rho(H_t[g_1,g_2])=c(g_1,g_2)e^{at} +O(e^{(a-\delta)t})\,,$$
with $\delta> 0$ independent of $g_1,g_2$, and $c(g_1,g_2)$ and the implied constant uniform as $g_1,g_2$ vary in compact sets in $G$. 
This immediately implies an exponential decay estimate of the kernel that appears in equation (\ref{kernel}) and the quantitative mean ergodic theorem follows.

The exponential-maximal inequality, namely, the estimate for the quantity 
$$
\sup_{t\ge t_0}e^{\delta_{p,w} t}\abs{\pi_X(\lambda_t)\phi(x)-\int_X \phi\, d\nu_x}$$
stated in Theorem \ref{th:main5}(ii)
follow from the exponential decay estimate just established on the norms
$\norm{\pi_X(\lambda_t)\phi(x)-\int_X\phi\, d\nu_x}_{L^p(D)}$. This follows from the same argument 
as already used in the proof of Theorem \ref{th_dual_pointwise}. 

The exponentially fast pointwise ergodic theorem stated in Theorem \ref{th:main5}(iii) follows directly from Theorem \ref{th_dual_pointwise} (applying the case $l=0$), together 
with the pointwise estimate arising from equation (\ref{kernel}) using the volume asymptotics just cited.


Finally, to prove the statement in Remark  \ref{b> 0} that when $H$ is semisimple,
but the growth is not necessarily purely exponential (namely $b > 0$), the arguments cited above of
\cite{GW,Mau} yield 
$$\rho(H_t[g_1,g_2])=c(g_1,g_2)e^{at}t^b +O(e^{at}t^{b-\delta})\,,$$
uniformly as $ g_1,g_2$ vary  in compact sets.

This implies a rate of decay estimate for the kernel that appears in equation (\ref{kernel}), and 
repeating the foregoing arguments using this  estimate, we deduce that the the quantitative mean and pointwise ergodic theorems with speed $t^{-\eta_p}$.

\begin{rem}{\rm 
We note that the argument establishing the volume asymptotics used above in \cite{GW} and \cite{Mau} use the triangle inequality for norms in an 
essential way, as well as the explicit formulas for the invariant measure on semisimple
groups. On the other hand, the argument with resolution of singularities used in Theorem \ref{th:book}
applies to general groups. A very similar estimate holds, but its uniformity in $x,y$
is not clear. Whenever such uniformity is established, equation (\ref{kernel}) will provide quantitative results as in Theorem \ref{th:main5}. }
\end{rem}

\section{Examples and applications}\label{sec:examples}

We now turn to discuss some examples in detail. We have formulated most of the results in general, but remind the reader that in examples 11.1, 11.4, 11.5 and 11.7 below, if we choose  the homogeneous polynomial $P$ to be norm with purely exponential volume growth of balls, then the stronger results of Theorem \ref{th:main5} apply.

\subsection{Quadratic surfaces}\label{sec:quad}
We discuss the action on the de-Sitter space mentioned in the introduction
(see (\ref{eq:quad0})).  
We observe that the group $G:=\hbox{SO}_{d,1}(\mathbb{R})^0$
acts transitively on $X$, and $X\simeq H\backslash G$
where 
$$
H=\hbox{Stab}_G(e_1)=\left(
\begin{tabular}{ll}
1 & 0\\
0 & $\hbox{SO}_{1,d-1}(\mathbb{R})^0$ 
\end{tabular}
\right).
$$
When $d=2$, the group $H$ is a one-dimensional
$\mathbb{R}$-diagonalisable subgroup. Hence, we are in the setting of 
Theorem \ref{th:main1} in this case. When $d\ge 3$, $H$ is a simple almost algebraic group,
and the representation of $H$ on $L^2_0(G/\Gamma)$ is isolated
from the trivial representation. We will now see that we are in fact in the setting of
Theorem \ref{th:main5}, namely for our choice of norm  the volume growth is purely exponential.

Let us compute the normalization factor $V(t)$ and the limit measures.
Let 
$$
K_0=\left(
\begin{tabular}{ccc}
1 & 0 & 0\\
0 & $\hbox{SO}_{d-1}(\mathbb{R})$ & 0\\
0 & 0 & 1
\end{tabular}
\right)\quad\hbox{and}\quad
B^+=\left\{ b_s=
\left(
\begin{tabular}{ccc}
id & 0 & 0\\
0 & $\cosh s$ & $\sinh s$ \\
0 & $\sinh s$  & $\cosh s$
\end{tabular}
\right)\right\}_{s\ge 0}.
$$
Then we have the Cartan decomposition 
$$
H=K_0B^+K_0,
$$
and a Haar measure
with respect to this decomposition is given by
$$
d\rho(k_1, s, k_2)= dk_1\, (\sinh s)^{d-2}ds \, dk_2,\quad (k_1,s,k_2)\in K_0\times \mathbb{R}_{\ge
  0}\times K_0,
$$
where $dk_1$ and $dk_2$ denote the probability Haar measures on $K_0$.
Since
$$
\|k_1b_s k_2\|=\|b_s\|=e^s+O(1),
$$ 
and it follows that (up to the factor $2^{2-d}$)
$$
\rho(H_t)\sim \left\{ 
\begin{tabular}{l}
$t$\quad\quad\quad \hbox{when $d=2$},\\
$e^{(d-2)t}$\quad\hbox{when $d\ge 3$}.
\end{tabular}
\right.
$$
This gives the normalization factor $V(t)$.

To compute the limit measure, we use that $H$ is a symmetric subgroup of $G$.
We have the Cartan decomposition for $G$ with respect to $H$ given by
$$
G=HAK,
$$
where
$$
K=\left(
\begin{tabular}{cc}
1 & 0\\
0 & $\hbox{SO}_{d}(\mathbb{R})$
\end{tabular}
\right)\quad\hbox{and}\quad
A=\left\{ a_r=
\left(
\begin{tabular}{ccc}
$\cosh r$ & 0 & $\sinh r$ \\
0  & id & 0\\
$\sinh r$  & 0 & $\cosh r$
\end{tabular}
\right)\right\}_{r\in \mathbb{R}}.
$$
We note that the $A$-component of this decomposition is unique, and 
when $r\ne 0$, the $K$-component is unique modulo $K_0$.
Therefore, $X\simeq H\backslash G$ can be identified (up to measure zero)
with $A\times K_0\backslash K$. 
More explicitly, the identification is given by 
the polar coordinates (\ref{eq:polar}).
We use this identification to give
the section $\mathsf{s}:X\to G$. A Haar measure on $G$ with respect to the Cartan
decomposition is given by
$$
dm(h,r,\omega)=d\rho(h)\, (\cosh r)^{d-1}dr\, d\omega, \quad (h,r,\omega)\in H\times \mathbb{R}\times K_0\backslash K,
$$
where $d\omega$ denotes the probability
Haar measure on $K_0\backslash K$. We normalise $m$, so that $m(G/\Gamma)=1$.
It follows that the measure $\xi$ appearing in (\ref{eq:measure}) is equal up to a constant to
$$
d\xi(r,\omega)=(\cosh r)^{d-1}dr\, d\omega, \quad (r,\omega)\in \mathbb{R}\times K_0\backslash K.
$$
In fact, this is a Haar measure on $X$. 
Using that the norm is $K$-invariant, and $A$ commutes with $K_0$,
we deduce that for $x_1=e_1 a_{r_1}\omega_1$, $h=k_1b_sk_2$, $x_2=e_1
a_{r_2}\omega_2$, we have 
\begin{align*}
\|\mathsf{s}(x_1)^{-1} h \mathsf{s}(x_2)\|&= \|\omega_1^{-1} a_{r_1}^{-1} k_1b_sk_2 a_{r_2}\omega_2\|
= \|a_{r_1}^{-1} b_s a_{r_2}\|\\
&=c(r_1,r_2)e^s+O_{r_1,r_2}(1),
\end{align*}
where
$$
c(r_1,r_2)=\left(1+(\sinh r_1)^2\right)^{1/2}\left(1+(\sinh r_2)^2\right)^{1/2}.
$$
This implies that
$$
\lim_{t\to\infty} \frac{\rho(H_t[\mathsf{s}(x_1),\mathsf{s}(x_2)])}{V(t)}
=c(r_1,r_2)^{-(d-2)},
$$
and the limit measure is given by
$$
d\nu_v(r,\omega)=
\left(1+v_d^2\right)^{-(d-2)/2}\left(1+(\sinh r)^2\right)^{-(d-2)/2}(\cosh r)^{d-1}dr\, d\omega.
$$

Since for $d\ge 3$ the group $H$ is simple and has a spectral gap in $G/\Gamma$, and we have chosen a
norm such that $H_t$ has purely exponential volume growth, Theorem \ref{th:main5} applies. We conclude
that for every $\phi\in L^p(X)$, $p>1$,
 with compact support
and almost every $v\in X$, the following quantitative pointwise convergence theorem holds :
\begin{equation}\label{eq:quad1_1}
\frac{1}{e^{(d-2)t}} \sum_{\gamma\in\Gamma_t}
  \phi(v\gamma)= \frac{c_{d}(\Gamma)}{\left(1+v_d^2\right)^{(d-2)/2}} \int_{X} \phi(r,\omega)\, 
\frac{(\cosh r)^{d-1}dr\, d\omega}{\left(1+(\sinh r)^2\right)^{(d-2)/2}}
+O_{p,v,\phi}(e^{-\delta_p t})
\end{equation}
for some $c_d(\Gamma)>0$, and with a fixed $\delta_p>0$, independent of $v$ and $\phi$ .

\subsection{Projective spaces}\label{sec:proj}
We return to the action on the projective space discussed in the Introduction
(see (\ref{eq:projective})).
We observe that $\mathbb{P}^{d-1}(\mathbb{R})$ is a homogeneous space of $G:=\hbox{SL}_d(\mathbb{R})$,
and $\mathbb{P}^{d-1}(\mathbb{R})\simeq H\backslash G$ where
$$
H:=\left(\begin{tabular}{cc} $\star$ & 0\\ $\star$ & $\star$ \end{tabular}\right)\subset G
$$
is the maximal parabolic subgroup of $G$. Since $G=HK$ with $K:=\hbox{SO}_d(\mathbb{R})$,
a Haar measure on $G$ is given by
$$
\int_G f\, dm(g)=\int_{(K\cap H)\backslash K}\int_H f(hk)\, d\rho(h)d\xi(k),\quad f\in L^1(G),
$$
where $\rho$ is the left Haar measure on $H$ and $\xi$ a Haar measure on $(K\cap H)\backslash K$.
We normalise $\xi$ to be the probability measure and normalise $\rho$ so that $m(G/\Gamma)=1$.
Then under the identification $\mathbb{P}^{d-1}(\mathbb{R})\simeq (K\cap H)\backslash K$,
the measure $\xi$ is the measure appearing in (\ref{eq:measure}). 
It follows from $K$-invariance of $\|\cdot\|$ and \cite[Appendix~1]{drs} that
$$
\rho(H_t)=m(G_t)\sim c\, e^{(d^2-d)t}\quad\hbox{as $t\to\infty$}
$$
with $c>0$. Hence, the correct normalisation factor in (\ref{eq:projective}) is $V(t)=e^{(d^2-d)t}$.
By (\ref{eq:v_x_x}), the limit measure is given by
$$
\nu_v(u)=\Theta(v,u)d\xi(u),\quad u,v\in (K\cap H)\backslash K,
$$
where
\begin{align*}
\Theta(v,u)&=\lim_{t\to\infty}\frac{\rho(H_t[v,u])}{e^{(d^2-d)t}}=\Theta(e,e),
\end{align*}
by the $K$-invariance of the norm. This completes verification of (\ref{eq:projective}).
It is clear that the above argument applies to other compact homogeneous spaces of
$\hbox{SL}_d(\mathbb{R})$ such as the Grassmann varieties and the flag variety.

The limit formula (\ref{eq:projective}) but without an error estimate was
obtained in \cite{G1}.

\subsection{Spaces of frames}\label{sec:frame}
Let $\Gamma$ be a lattice in $\hbox{SL}_d(\mathbb{R})$ and $\Gamma_t=\{\gamma\in\Gamma:\,
\log\|\gamma\|\le t\}$
denote the norm balls with respect to the standard Euclidean norm
$\|\gamma\|=\left(\sum_{i,j=1}^d \gamma_{ij}^2\right)^{1/2}$.
We consider the action of $\Gamma$ on the space $X=\prod_{i=1}^k \mathbb{R}^d$
with $k<d$.  We demonstrate that one can analyse the asymptotic distribution  of the
averages $\sum_{\gamma\in \Gamma_t} \phi(x\gamma)$
on $X$ with a help of Theorem \ref{th:main2}. This question was studied in \cite{G2}
(and for the two dimensional case in \cite{L1,n,lp,mw}). Although the method of \cite{G2}
allows to compute the asymptotics of $\sum_{\gamma\in \Gamma_t} \phi(x\gamma)$, it
is not capable to give a rate of convergence.
Theorem \ref{th:main2}(iv) implies
that  for any nonnegative continuous subanalytic function
$\phi\in L_l^1(\mathbb{R}^d)$ with compact support, and for almost every $v\in X$,
there exists $\delta>0$ such that
\begin{equation}\label{eq:frames}
\frac{1}{e^{(d-1)(d-k)t}} \sum_{\gamma\in\Gamma_t}
  \phi(v\gamma)= \frac{c_{d,k}(\Gamma)}{\vol(v)^{d-1}}\int_{X} \phi(w)\, \frac{dw}{\vol(w)}
  +O_{\phi,v}(e^{-\delta t}),
\end{equation}
where $c_{d,k}(\Gamma)>0$, $\vol(v)$ denotes the Euclidean volume of the $k$-dimensional parallelepiped
spanned by the tuple of vectors in $v$, and $dw$ denotes the measure on $X$
which is the product of the Lebesgue measures on $\mathbb{R}^d$. 

To deduce (\ref{eq:frames}) from Theorem \ref{th:main2} we observe that the subset of $X$
consisting of linearly independent vectors is a single orbit of the group $G:=\hbox{SL}_d(\mathbb{R})$
which has full measure on $X$. Therefore, up to measure zero $X\simeq H\backslash G$ where
$$
H:=\left(\begin{tabular}{cc} $id$ & 0\\ $\star$ & $\star$ \end{tabular}\right)\subset G,
$$
and we are in the setting of Theorem \ref{th:main2}. It remains to compute the normalisation
factor $V(t)$ and the limit measure, and that has already been done in \cite{G2} (see \cite[Theorem~3]{G2}).

\subsection{Dense projections}\label{sec:projections}
Let $H\subset \hbox{SL}_n(\mathbb{R})$ and $L\subset \hbox{SL}_m(\mathbb{R})$
be connected semisimple groups, $G=H\times L$, and let $\Gamma$ be 
an lattice in $G$ such that its image under the natural projection map $\pi: G\to L$
is dense. We investigate the distribution of $\pi(\Gamma)$ in $L$.
We fix Euclidean norms on $\hbox{Mat}_n(\mathbb{R})$ and $\hbox{Mat}_m(\mathbb{R})$
and the set 
$$
\Gamma_t=\{\gamma=(h,\ell):\, \|(h,\ell)\|:=\sqrt{\|h\|^2+\|\ell\|^2}<e^t\}.
$$
Let us assume that the representation of every simple factor of $H$ on $L^2_0(G/\Gamma)$ 
is isolated from the trivial representation. This is known to be the case when $G$ has no compact factors
and also when $\Gamma$ is a congruence subgroup (see \cite{ks}).
In this case, Theorem \ref{th:main3} implies pointwise almost sure convergence  with respect to a Haar measure $\lambda$ on $L$.
Namely, there exist $a\in \mathbb{Q}_{>0}$, and $b\in \mathbb{Z}_{\ge 0}$
such that for every non-negative continuous subanalytic
function $\phi$ on $L$ with compact support and for almost every $x\in L$,
we have the asymptotic expansion 
\begin{equation}\label{eq:dense}
\frac{1}{e^{at}t^b}\sum_{\gamma\in\Gamma_t}
  \phi(x\pi(\gamma))=\int_L\phi\, d\lambda+\sum_{i=1}^b c_i(\phi,x)t^{-i}+O_{\phi,x}(e^{-\delta t})
\end{equation}
with $\delta>0$.

To deduce formula (\ref{eq:dense}) from Theorem \ref{th:main5}, all we need to do is to identify the limit measure.  We choose the section
$\mathsf{s}(\ell)=(e,\ell)$. Then the measure $\xi$ in (\ref{eq:measure}) is equal to $\lambda$.
Since for $h\in H$ and $l_1,l_2$ in a compact subset of $H$,
$$
\|(e,\ell_1^{-1})\cdot (h,e)\cdot (e,\ell_2)\|=\|(h,\ell_1^{-1}\ell_2)\|=\|h\|+O(1),
$$
it follows that
$$
\rho(H_t[\mathsf{s}(\ell_1),\mathsf{s}(\ell_2)])\sim \rho (H_t)\quad\hbox{as $t\to\infty$.}
$$
Hence, by (\ref{eq:v_x_x}), the limit measure is a Haar measure on $L$.

{\it Quantitative equidistribution.} Let us note that under our assumption here Theorem \ref{th:main5} holds as well, so that in particular, the quantitative mean ergodic theorem is valid.  In \cite{gn3} we apply this fact to $\tilde{\lambda}_t$  and derive that quantitative equidistribution holds in this case. Namely,  for H\"older functions convergence holds for
{\it every} $\ell\in L$ with a fixed rate, and with the implied constant uniform over $\ell$ in compact sets. When the volume growth is purely exponential, the rate of equidistribution  is $e^{-\delta t}$, and otherwise the rate is $t^{-\eta}$. 
Previously, the problem of distribution of dense projections was investigated in
\cite[Sec.~1.5.2]{GW}, but the method of \cite{GW} does not yield any error term.

\subsection{Values of quadratic form}\label{sec:quad2}

Let $Q$ be a nondegenerate indefinite quadratic form in $d$ variables with $d\ge 3$,
signature $(p,q)$.
Given a tuple of vectors $v=(v_1,\ldots,v_d)$ in $\mathbb{R}^d$, we denote by
$\bar Q(v)$ the corresponding Gram matrix:
$$
\bar Q(v):=(Q(v_i,v_j))_{i,j=1,\ldots d}\in \hbox{Mat}_d(\mathbb{R}).
$$
We denote by $\mathcal{F}_d$ the set of unimodular frames, namely the set of $d$-tuples of  
vectors $v=(v_1,\ldots,v_d)$ in $\mathbb{R}^d$ satisfying  $\det(v_1,\ldots,v_d)=1$.  Let 
$\mathcal{F}_d(\mathbb{Z})$ denote the subset of unimodular frames  with integral coordinates.

Note that for any unimodular frame, the representation of the quadratic form $Q$ as a matrix w.r.t. the frame has the same determinant, which we will denote by $\Delta$. 
Thus for $v\in \mathcal{F}_d$, we have $\bar Q(v)\in \mathcal{Q}_{p,q}(\Delta)$
where $\mathcal{Q}_{p,q}(\Delta)$ denotes the set of symmetric matrices
with signature $(p,q)$ and determinant $\Delta$.
We also use the same notation for the corresponding set of quadratic forms.
It is known that for almost all quadratic forms $Q$ in the space of nondegenerate
quadratic forms of given dimension, the set $\bar Q(\mathcal{F}_d(\mathbb{Z}))$
is dense in $\mathcal{Q}_{p,q}(\Delta)$.  The distribution of $\bar Q(\mathcal{F}_d(\mathbb{Z}))$
in $\mathcal{Q}_{p,q}(\Delta)$ was investigated in \cite[Sec.~1.5.1]{GW}, and here we show
that the asymptotic formula from \cite{GW} holds with an exponentially decaying error term for almost all quadratic forms
$Q\in \mathcal{Q}_{p,q}(\Delta)$.

We observe that the group $G:=\hbox{SL}_d(\mathbb{R})$
acts transitively on $\mathcal{Q}_{p,q}(\Delta)$ by 
$$
x\mapsto {}^t g x g,\quad x\in \mathcal{Q}_{p,q}(\Delta), \; g\in G,
$$
and the space $\mathcal{Q}_{p,q}(\Delta)$ can be identified with $H\backslash G$ where $H\simeq \hbox{SO}_{p,q}(\mathbb{R})$.
Moreover, with respect to this action,
$$
\bar Q(\mathcal{F}_d(\mathbb{Z}))=x_Q\cdot \Gamma,
$$
where $x_Q$ denotes the matrix corresponding to $Q$, and $\Gamma=\hbox{SL}_d(\mathbb{Z})$.

Using Theorem \ref{th_dual_pointwise} we deduce that for $\phi\in L^s(D)^+$, $s>1$, and some $\delta > 0$ (independent of $\phi$ and $v$) and for almost all  $Q$
$$
\sum_{v\in \mathcal{F}_d(\mathbb{Z}):\, \sum_i\|v_i\|^2< e^t}\phi(Q(v))=\int_{v\in \mathcal{F}_d:\, \sum_i\|v_i\|^2<
  e^t}\phi(Q(v)) \,dm(v)+O_{s,\phi,Q}\left(e^{(p(q-1)-\delta) t}\right).
$$
Here $m$ denote the $G$-invariant measure on $\mathcal{F}_d\simeq G$, normalized so that $m(G/\Gamma)=1$. 
 The main term in the volume growth of the sets $H_t$ has been shown in \cite[proof of [Cor. 1.3, pp. 104-106]{GW}  to be given by $Be^{p(q-1)t}$ when $p < q$, 
where $B$ is a suitable normalizing constant depending only of the group. Thus the error estimate is $\hbox{vol}(H_t)e^{-\delta t}$. 

When $p=q$, the volume growth of $H_t$ has main term $Bte^{tp(p-1)}$ \cite{GW}, but nevertheless the error estimate in the preceding statement is again $\vol(H_t)e^{-\delta t}=te^{(p(p-1)-\delta)t}$, 
as follows from Theorem \ref{th_dual_pointwise}. 

Note however that we have chosen a norm to define the sets $H_t$, the group $H$ is simple (provided $(p,q)\neq (2,2) $), and $H$ has a spectral gap in $L^2(G/\Gamma)$. Therefore  Theorem \ref{th:main5} applies in the present case. As already noted,  the volume growth of $H_t$ is purely exponential if and only if $p\neq q$, and in the latter  case the normalized sampling operators $\lambda_t$ converge exponentially fast in $L^2$-norm and pointwise almost everywhere to the integral of $\phi$ w.r.t. the limiting density,  for every $\phi\in L^s(\mathcal{Q}_{p,q}(\Delta))$, $s>1$, with compact support and almost every $Q\in \mathcal{Q}_{p,q}(\Delta)$,

\subsection{Affine actions of solvable groups}\label{sec:affine1}
We now turn to discuss the affine action (\ref{eq:affine}) mentioned in the introduction.
It is clear that ergodicity of this action is equivalent to ergodicity of the
action of the matrix $a$ on the torus $\mathbb{R}^d/\Delta$. Hence, this action is ergodic if and only if
the matrix $a$ has no roots of unity as eigenvalues.

To see that Theorem \ref{th:main1} applies to this case, let us first consider the case when
all the eigenvalues of the matrix $a$ are positive. Then $a$ can embedded in a one-parameter
algebraic subgroup $H$ of $\hbox{SL}_d(\mathbb{R})$. We consider the exponential solvable group $G:=H\ltimes \mathbb{R}^d$
which is naturally an algebraic subgroup $\hbox{SL}_{d+1}(\mathbb{R})$ and contains $\Gamma$ as a lattice.
Then $\mathbb{R}^d\simeq H\backslash G$, and the sets $\Gamma_t$ defined in (\ref{eq:ggamma_t})
are given by $\Gamma_t=\{\gamma\in \Gamma:\, \log \|\gamma\|'\le t\}$
with respect to a suitable chosen norm $\|\cdot\|'$ on $\hbox{Mat}_{d+1}(\mathbb{R})$.
Hence, we are in the framework of Theorem \ref{th:main1}, which holds 
for the normalized sampling operators  supported on the sets defined by general norms (see Remark \ref{r:gen_norm}).

We now compute the normalization factor $V(t)$ and the limit measures $\nu_v$, $v\in \mathbb{R}^d$,
following the general formulas from Section \ref{sec:proof}.
Let $\rho$ be the Haar measure on $H$ for which $\rho(H/\left<a\right>)=1$. Then
$$
\rho(H_t)\sim t\quad\hbox{as $t\to\infty$.}
$$
Hence, the correct normalization factor is $V(t)=t$. 
The measure
$$
dm(h,x)=d\rho(h)\frac{dx}{\vol(\mathbb{R}^d/\Delta)},\quad (h,v)\in H\ltimes \mathbb{R}^d,
$$
is the Haar measure on $G$ such that $m(G/\Gamma)=1$. For the section
$\mathsf{s}(x)=(e,x)$ of the factor map $G\to \mathbb{R}^d\simeq H\backslash G$, the
corresponding measure $\xi$, defined by (\ref{eq:measure}), is
$d\xi(x)=\frac{dx}{\vol(\mathbb{R}^d/\Delta)}$,
and according to (\ref{eq:vv_x}) it is equal to the limit measure 
appearing in (\ref{eq:affine1}).

Finally, in the case when the matrix $a$ has negative real eigenvalues 
one can apply the previous argument to the index 2 index subgroup $\left<a^2\right>\ltimes \Delta$
of $\Gamma$ to verify the claim.

We remark that another interesting collection of examples for which Theorem \ref{th:main1} applies arises in the case of dense subgroups of nilpotent groups. For a different approach 
to equidistribution results for dense nilpotent groups we refer to \cite{Br}.

\subsection{Affine actions of lattices}\label{sec:affine2}
Consider the affine action of the group $\Gamma=\hbox{SL}_d(\mathbb{Z})\ltimes \mathbb{Z}^d$ on 
the Euclidean space $\mathbb{R}^d$.  It is natural to consider $\Gamma$ as a
subgroup of $\hbox{SL}_{d+1}(\mathbb{R})$. We fix a Euclidean norm $\hbox{Mat}_{d+1}(\mathbb{R})$
and define the sets $\Gamma_t$ with respect to this norm. As we shall verify, Theorem \ref{th:main5}
applies to the normalized sampling operators $\sum_{\gamma\in \Gamma_t} \phi(x\gamma)$
on $\mathbb{R}^d$.  Therefore, we deduce that for every $\phi\in L^p(\RR^d)$ of compact support, for almost every $\in \RR^d$, and for a fixed $\delta_p > 0$ independent of $\phi$ and $x$, 
\begin{equation}\label{eq:affine_2}
\frac{1}{e^{(d^2-d)t}} \sum_{\gamma\in\Gamma_t}
  \phi(v\gamma)= \frac{c_d}{(1+\|v\|^2)^{d/2}}\int_{\mathbb{R}^d} \phi(x)\, dx+O_{p,\phi,x}\left(e^{-\delta_p t}\right)\,,
\end{equation}
where $c_d=\pi^{d^2/2}\Gamma(d/2)^{-1}\Gamma((d^2-d+2)/2)^{-1}\zeta(2)^{-1}\cdots \zeta(d)^{-1}$.

To verify (\ref{eq:affine_2}), we consider the group $\Gamma$ as a lattice subgroup in the group 
$G:=\hbox{SL}_d(\mathbb{R})\ltimes \mathbb{R}^d$,
which is naturally an algebraic subgroup of $\hbox{SL}_{d+1}(\mathbb{R})$
under the embedding 
$$
(h,x)\mapsto \left(\begin{tabular}{ll} $h$ & 0 \\ $x$ & 1\end{tabular}  \right),\quad (h,x)\in G.
$$
Then $\mathbb{R}^d$ is a homogeneous
space of $G$ with respect to the action by the affine transformations, and
$\mathbb{R}^d\simeq H\backslash G$ where $H=\hbox{SL}(d,\mathbb{R})$.
The action $\hbox{SL}_d(\mathbb{Z})$ on the torus $\mathbb{R}^d/\mathbb{Z}^d$
has spectral gap.
Since the action of $H$ on $G/\Gamma$ is isomorphic to the action induced from this action, it follows that 
it has spectral gap as well.  Thus $H$ is simple and acts with a spectral gap on $L^2(G/\Gamma)$, we have defined the sets $H_t$ using a norm, and we will see below that the rate of growth of $H_t$ is purely exponential. The assumption of  Theorem \ref{th:main5} are therefore satisfied. 

It remains to compute the formulas for the normalization factor $V(t)$ and the limit measure 
$\nu_v$ (following the recipe of Section \ref{sec:proof}).
We fix a Haar measures $\rho$ on $H$ such that $\rho(H/\hbox{SL}_d(\mathbb{Z}))=1$. Then
$$
dm(h,x)=d\rho(h)dx,\quad (h,x)\in G,
$$
is the Haar measure on $G$ such that $m(G/\Gamma)=1$.
Hence, if we take the section $\mathsf{s}:\mathbb{R}^d\to G$ to be $\mathsf{s}(x)=(e,x)$,
the measure $\xi$, defined by (\ref{eq:measure}), is the Lebesgue measure on $\mathbb{R}^d$.
Since
\begin{equation}\label{eq:ass_ll}
\rho(H_t)\sim c_d\, e^{(d^2-d)t} \quad \hbox{as $t\to\infty$}.
\end{equation}
(see \cite[Appendix 1]{drs}), it follows that the normalization factor should be $V(t)=e^{(d^2-d)t}$.
By (\ref{eq:v_x_x}), the limit measure is given by 
$$
\nu_v(x)=\Theta((e,v),(e,x))dx,
$$
where
\begin{align*}
\Theta((e,v),(e,x))&=\lim_{t\to\infty}\frac{\rho(H_t[(e,v),(e,x)])}{e^{(d^2-d)t}}.
\end{align*}
We have
\begin{align*}
\rho(H_t[(e,v),(e,x)])&=\rho(\{h\in H:\, \log \|(e,v)^{-1}\cdot (h,0)\cdot (e,x)\|<t\})\\
&=\rho(\{h\in H:\, \|(h,-vh+x)\|<e^t\})\\
&=\rho(\{h\in H:\, (\|h\|^2+\|vh-x\|^2)^{1/2}<e^{t}\}).
\end{align*}
By the triangle inequality,
$$
\|(h,-vh)\|-\|x\|\le \|(h,-vh+x)\|\le \|(h,-vh)\|+\|x\|.
$$
This implies that the above limit is independent of $x$.
Moreover, since the norm is invariant under $k\in\hbox{SO}_d(\mathbb{R})$, we obtain that
\begin{align*}
\rho(\{h\in H:\, (\|h\|^2+\|vkh\|^2)^{1/2}<e^{t}\})
&=\rho(\{h\in H:\, (\|k^{-1}h\|^2+\|vh\|^2)^{1/2}<e^{t}\})\\
&=\rho(\{h\in H:\, (\|h\|^2+\|vh\|^2)^{1/2}<e^{t}\}).
\end{align*}
Therefore,
\begin{align*}
&\rho(\{h\in H:\, (\|h\|^2+\|vh\|^2)^{1/2}<e^{t}\})\\
=&\rho\left(\left\{h\in H:\, \left(\sum_{i=1}^{d-1} \|{e}_i h\|^2+(1+\|v\|^2) \|e_dh\|^2\right)^{1/2}<e^{t}\right\}\right)
\end{align*}
where $\{e_i\}_{i=1}^d$ is the standard basis of $\mathbb{R}^d$.
Let
$$
h_v=\hbox{diag}\left(1,\ldots,1,(1+\|v\|^2)^{1/2}\right)=(1+\|v\|^2)^{1/(2d)}h_v'\in\hbox{GL}_d(\mathbb{R}).
$$
Then since $h_v'\in H=\hbox{SL}_d(\mathbb{R})$, we get
\begin{align*}
\rho(\{h\in H:\, (\|h\|^2+\|vh\|^2)^{1/2}<e^{t}\}) &=\rho(\{h\in H:\, \|h_vh\|<e^{t}\})\\
&=\rho(\{h\in H:\, \|h\|<(1+\|v\|^2)^{-1/(2d)} e^{t}\})\\
&\sim c_d \,(1+\|v\|^2)^{-(d-1)/(2)} \, e^{(d^2-d)t}
\end{align*}
as $t\to\infty$, by (\ref{eq:ass_ll}). This explains the formula for the limit measure in
(\ref{eq:affine_2}).

We also note that using the method of \cite{GW}, which is based on the Ratner's theory of unipotent flows,
one can prove that 
$$
\lim_{t\to\infty}\frac{1}{e^{(d^2-d)t}} \sum_{\gamma\in\Gamma_t}
  \phi(v\gamma)= \frac{c_d}{(1+\|v\|^2)^{d/2}}\int_{\mathbb{R}^d} \phi(x)\, dx
$$
for every irrational $v\in \mathbb{R}^d$.

\end{document}